\numberwithin{equation}{section}
\newtheorem{thm}{Theorem}[section]
\newtheorem{lemma}[thm]{Lemma}
\newtheorem{prop}[thm]{Proposition}
\newtheorem{cor}[thm]{Corollary}
{\theorembodyfont{\rmfamily}
\newtheorem{defn}[thm]{Definition}
\newtheorem{ex}[thm]{Example}
\newtheorem{rmk}[thm]{Remark}
}
\newcommand{\qed}{\hfill \mbox{\raggedright \rule{.07in}{.1in}}}
\newenvironment{proof}{\vspace{1ex}\noindent{\bf
Proof}\hspace{0.5em}}{\hfill\qed\vspace{1ex}}
\newenvironment{pfof}[1]{\vspace{1ex}\noindent{\bf Proof of
#1}\hspace{0.5em}}{\hfill\qed\vspace{1ex}}
\newcommand{\R}{{\mathbb R}}
\newcommand{\C}{{\mathbb C}}
\newcommand{\Z}{{\mathbb Z}}
\newcommand{\D}{{\mathbb D}}
\newcommand{\N}{{\mathbb N}}
\renewcommand{\P}{{\mathbb P}}
\newcommand{\Lip}{\operatorname{Lip}}
\renewcommand{\Re}{\operatorname{Re}}
\renewcommand{\Im}{\operatorname{Im}}
\newcommand{\SMALL}{\textstyle}
 \newcommand{\BIG}{\displaystyle}
\title{Operator renewal theory and mixing rates 
for dynamical systems with infinite measure}
 \author{Ian Melbourne \thanks{Department of Mathematics, University of Surrey,
Guildford, Surrey GU2 7XH, UK}
 \and
 Dalia Terhesiu \thanks{
Department of Mathematics, University of Surrey,
Guildford, Surrey GU2 7XH, UK}
}
\date{24 August, 2010.   Revised 10 August, 2011. \\
Minor corrections 5 March, 2015.}
\begin{document}

\maketitle

 \begin{abstract}
We develop a theory of operator renewal sequences in the
context of infinite ergodic theory.  For large classes of dynamical
systems preserving an infinite measure, we determine the asymptotic behaviour
of iterates $L^n$ of the transfer operator.
This was previously  an intractable problem.

Examples of systems covered by our results include (i) parabolic
rational maps of the complex plane and (ii) (not necessarily
Markovian) nonuniformly expanding interval maps with indifferent
fixed points.

In addition, we give a particularly simple proof of pointwise dual ergodicity 
(asymptotic behaviour of $\sum_{j=1}^nL^j$) for the class of systems under
consideration.

In certain situations, including Pomeau-Manneville intermittency maps, we 
obtain higher order expansions for $L^n$ and rates of mixing.
Also, we obtain error estimates in the associated Dynkin-Lamperti arcsine laws.

\vspace{1ex}
This version includes minor corrections in Sections 10 and 11, and corresponding modifications of certain statements in Section 1.  All main results are unaffected.  In particular, Sections 2--9 are unchanged from the published version.
 \end{abstract}


\paragraph{AMS Classification codes:}    
37A40,   37A25,   37A50,   60K05

\newpage

\section{Introduction} 
\label{sec-intro}

In finite ergodic theory, much recent interest has focussed on the statistical
properties of smooth dynamical systems with strong hyperbolicity
(expansion/contraction) properties.   Landmark results include the proof of
exponential decay of correlations for certain classes of uniformly
hyperbolic flows~\cite{Chernov98,Dolgopyat98,Liverani04} and
planar dispersing 
billiards~\cite{Young98}.  The latter is part of a general 
scheme~\cite{Young98,Young99}
for estimating decay of correlations, or mixing rates, for discrete time
dynamical systems.   

For systems with subexponential decay of correlations, most approaches yielded
only  upper bounds for mixing rates.  
Sarig~\cite{Sarig02} introduced a powerful new technique, {\em operator
renewal theory}, to obtain precise asymptotics and hence sharp mixing rates.
This is an extension of scalar renewal theory from probability theory
to the operator situation.  The technique was substantially
extended and refined by Gou\"ezel~\cite{Gouezel04, Gouezel05}.   

Garsia \& Lamperti~\cite{GarsiaLamperti62} developed a theory of renewal
sequences with infinite mean in the probabilistic setting.
The techniques are very different from the finite mean case, and draw heavily 
on the theory of regular 
variation~\cite{BinghamGoldieTeugels,Feller66,Karamata33}.   
A natural question is to develop renewal operator theory in the infinite
mean case.  

\vspace{-2ex}
\paragraph{Renewal sequences}
In probability theory, renewal sequences relate return probabilities to
a specified ``nice'' set with first return probabilities.    The analogue in
ergodic theory arises in the study of first return maps.

Let $(X,\mu)$ be a measure space (finite or infinite), and 
$f:X\to X$ a conservative 
measure preserving map.   Fix $Y\subset X$ with $\mu(Y)>0$.
Let $\varphi:Y\to\Z^+$ be the first return time 
$\varphi(y)=\inf\{n\ge1:f^ny\in Y\}$ (finite almost everywhere by 
conservativity).  Let $L:L^1(X)\to L^1(X)$ denote the transfer operator 
(Perron-Frobenius operator) for $f$ and define 
\[
T_nv=1_YL^n(1_Yv),\enspace n\ge0, \qquad R_nv=1_YL^n(1_{\{\varphi=n\}}v),\enspace n\ge1.
\]
Note that $T_n$ and $R_n$ can be viewed as operators on $L^1(Y)$ with $T_0=I$.
Thus $T_n$ corresponds to returns to $Y$ and $R_n$ corresponds to first 
returns to $Y$.   The relationship $T_n=\sum_{j=1}^n T_{n-j}R_j$
generalises the classical notion of renewal sequences in probability theory.

In the infinite mean setting of Garsia \& Lamperti~\cite{GarsiaLamperti62},
a crucial requirement is that the first return probabilities have regularly
varying tails.   In our setting it is natural to assume that the return time 
probabilities have regularly varying tails.
Indeed, many of the results in the
infinite ergodic theory literature rely crucially on such an 
assumption~\cite{Aaronson81,Thaler98,ThalerZweimuller06,Zweimuller00}.
Under this assumption, and certain functional-analytic hypotheses on the
operators $R_n$, we obtain detailed results 
(Theorems~\ref{thm-conv},~\ref{thm-bound},~\ref{thm-liminf})
on the asymptotic behaviour
of the operators $T_n$ as $n\to\infty$.   
This has strong ramifications for the asymptotics of the iterates
$L^n$, and hence for the underlying dynamical system.

\vspace{-2ex}
\paragraph{Maps with indifferent fixed points}
An important class of examples is provided by interval
maps with indifferent fixed
points, in particular  the Pomeau-Manneville intermittency
maps~\cite{PomeauManneville80} which are uniformly expanding away from
an indifferent fixed point at $0$.   
To fix notation, we focus on the version studied
by Liverani~{\em et al.}~\cite{LiveraniSaussolVaienti99}:
\begin{align} \label{eq-LSV}
fx=\begin{cases} x(1+2^\alpha x^\alpha), & 0<x<\frac12 \\ 2x-1 & \frac12<x<1
\end{cases}.
\end{align}

When $\alpha=0$, this is the doubling map, and Lebesgue measure is 
invariant, ergodic and exponentially mixing.  For $\alpha\in(0,1)$,
there is still a unique ergodic invariant probability measure $\mu$ 
absolutely continuous with respect to Lebesgue measure, but the rate of
mixing is polynomial: 
$\int_X v\,w\circ f^n \,d\mu-\int_X v\,d\mu \int_Xw\,d\mu \le 
C_{v,w}n^{-(\beta-1)}$
where $\beta=\frac{1}{\alpha}$ for all $w\in L^\infty(X)$ and all
$v$ sufficiently regular (for example, H\"older continuous).  
Hu~\cite{Hu04} proved that this decay rate is optimal;
a special case of the theory of~\cite{Gouezel04,Sarig02}.

For $\alpha\ge1$, we are in the situation of infinite ergodic theory.
There no longer exists an absolutely continuous invariant probability
measure, but there is a unique (up to scaling)
$\sigma$-finite, absolutely continuous invariant measure $\mu$.   
Previous studies established {\em pointwise dual ergodicity}: 
$a_n^{-1}\sum_{j=1}^nL^jv \to {\rm const}\,\int_X v\,d\mu$ almost
everywhere for all 
$v\in L^1(X)$, where $a_n=n^{\beta}$ for $\beta\in(0,1)$ and
$a_n=n/\log n$ when $\beta=1$.   

An important refinement is the limit theorem of
Thaler~\cite{Thaler95} where the convergence of $a_n^{-1}\sum_{j=1}^nL^jv$
is shown to be uniform on compact subsets of $(0,1]$
for all observables of the form $v=u/h$ where $u$ is Riemann integrable
and $h$ is the density.  

The results of~\cite{Thaler95} are formulated for Markov maps of the interval
with indifferent fixed points.
Zweim\"uller~\cite{Zweimuller98,Zweimuller00} relaxed the Markov
condition and systematically studied a large class of
non-Markovian nonuniformly
expanding interval maps,  called \emph{AFN maps}.  (See Section~\ref{sec-AFN}
for a precise definition of AFN map.)  In 
particular,~\cite{Zweimuller98} obtained a spectral decomposition into
 basic (conservative and ergodic) sets and proved that for each basic set there
is a $\sigma$-finite absolutely continuous invariant measure, unique up to
scaling.  
The results in~\cite{Thaler95} on uniform pointwise dual
ergodicity were extended in~\cite{Zweimuller00} to the class of AFN maps.

Understanding the asymptotics of $L^n$, rather than $\sum_{j=1}^nL^j$,
turns out to be a much more difficult problem, even for~\eqref{eq-LSV}.
Previously, the sole success in this direction was obtained by 
Thaler~\cite{Thaler00}.
However, the class of systems covered by~\cite{Thaler00} is quite restrictive
and includes the family~\eqref{eq-LSV} only for $\beta=1$.   

It is this situation that we have sought to redress in this paper.
It is convenient to describe our main results in the setting of AFN maps
$f:X\to X$,
though our general theory goes much further, as described later on.
Let $X'\subset X$ denote the complement of the indifferent fixed
points.   For any compact subset $A\subset X'$,
the construction in~\cite{Zweimuller98} yields 
a suitable first return set $Y$
containing $A$.   Fix such a set $Y$ with first return time function
$\varphi:Y\to\Z^+$.   Then we assume that the tail
probabilities are regularly varying:
$\mu(\varphi>n)=\ell(n)n^{-\beta}$ where $\beta\in(0,1]$ and $\ell(x)$ is slowly varying
($\ell:(0,\infty)\to(0,\infty)$ is measurable and $\lim_{x\to\infty}\ell(\lambda x)/\ell(x)=1$ for all $\lambda>0$).
(For~\eqref{eq-LSV}, $\ell$ is asymptotically constant
and $\beta=\frac{1}{\alpha}$.)

Now we can state our results for AFN maps.
Set $d_\beta ={\SMALL\frac{1}{\pi}}\sin\beta\pi$ for $\beta\in(0,1)$ and 
$d_1=1$.  Define $m(x)=\ell(x)$ for $\beta\in(0,1)$
and $m(x)= \sum_{j=1}^{[x]}\ell(j)j^{-1}$ for $\beta=1$.

\begin{thm} \label{thm-AFN}
Suppose that $f:X\to X$ is a topologically mixing AFN map with $\sigma$-finite
absolutely continuous invariant measure $\mu$ (with density $h$) and regularly
varying tail probabilities.
\begin{itemize}
\item[(a)] If $\beta\in(\frac12,1]$, then 
$\lim_{n\to\infty} m(n)n^{1-\beta}L^n v=d_\beta\int_X v\,d\mu$
uniformly on compact subsets of $X'$ 
for all $v:X\to\R$ of the form $v=u/h$ where $u$ is Riemann integrable on $X$.

\item[(b)] If $\beta\in(0,1]$, then for $v:X\to\R$ that is $\mu$-integrable and Riemann integrable, there is a subset $E\subset\N$ of zero
density such that
$\lim_{n\to\infty,\;n\not\in E} m(n)n^{1-\beta}L^n v=d_\beta\int_X v\,d\mu$
pointwise on $X'$.

Moreover, if $v\ge0$, then 
$\liminf_{n\to\infty} m(n)n^{1-\beta}L^n v=d_\beta\int_X v\,d\mu$
pointwise on $X'$.

\item[(c)] If $\beta\in(0,\frac12)$, then $L^nv=O(\ell(n)n^{-\beta})$ uniformly
on compact subsets of $X'$
for all measurable $v:X\to\R$ such that $vh$ is bounded.
\end{itemize}
\end{thm}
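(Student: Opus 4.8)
\emph{Inducing.}
The plan is to obtain Theorem~\ref{thm-AFN} from the abstract operator renewal results, Theorems~\ref{thm-conv}, \ref{thm-bound} and~\ref{thm-liminf}, by inducing on a first return set, applying those results to the operators $T_n$, and then un-inducing.  Fix a compact set $A\subset X'$.  Using Zweim\"uller's construction (Section~\ref{sec-AFN}) choose a first return set $Y\supset A$, bounded away from the indifferent fixed points, on which the first return map $F=f^\varphi\colon Y\to Y$ is uniformly expanding with bounded distortion and large images.  Take $\mathrm{BV}(Y)$ as the Banach space; then the transfer operator $\hat L=R(1)=\sum_{n\ge1}R_n$ of $F$ (with respect to $\mu|_Y$, which is $F$-invariant) has a spectral gap on $\mathrm{BV}(Y)$, with leading eigenvalue $1$ and one-dimensional spectral projection $P$, where $Pw$ is proportional to $\bigl(\int_Y w\,d\mu\bigr)1_Y$.

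\emph{The induced asymptotics.}
Next I would verify that $\{R_n\}$ and the generating function $R(z)=\sum_{n\ge1}R_nz^n$ satisfy the functional-analytic hypotheses of the general theory: operator-norm continuity of $z\mapsto R(z)$ on the closed unit disc; invertibility of $I-R(z)$ for $z\in\partial\D\setminus\{1\}$ (this is where topological mixing of $f$ is used); and the regularly varying tail estimate $\bigl\|\sum_{k>n}R_k-\mu(\varphi>n)P\bigr\|_{\mathrm{BV}(Y)}=o\bigl(\mu(\varphi>n)\bigr)$, which follows from the assumed regular variation of $\mu(\varphi>n)=\ell(n)n^{-\beta}$ together with the geometric fact that the cylinders $\{\varphi=n\}$ with $n$ large consist of orbit segments spending $\approx n$ steps near the indifferent fixed points, so that on $\mathrm{BV}(Y)$ the operator $R_n$ behaves like a constant multiple of $\mu(\varphi=n)\,P$ up to lower order.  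With these hypotheses in force, Theorems~\ref{thm-conv}, \ref{thm-bound} and~\ref{thm-liminf} give: for $\beta\in(\tfrac12,1]$, $m(n)n^{1-\beta}T_n\to d_\beta P$ in operator norm on $\mathrm{BV}(Y)$; for $\beta\in(0,\tfrac12]$, $\ell(n)n^{1-\beta}T_n\to d_\beta P$ along $\N\setminus E_0$ for some $E_0$ of zero density, with $\liminf_n\ell(n)n^{1-\beta}T_nw\ge d_\beta Pw$ whenever $w\ge0$; and for $\beta\in(0,\tfrac12)$, $\|T_n\|_{\mathrm{BV}(Y)}=O(\ell(n)n^{-\beta})$.

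\emph{Un-inducing.}
Let $\psi\colon X\to\{0,1,2,\dots\}$ be the first entry time to $Y$ (so $\psi\equiv0$ on $Y$) and put $B_kv=1_YL^k(1_{\{\psi=k\}}v)$; thus $B_0v=1_Yv$ and, for $k\ge1$, $B_kv$ is supported on $Y$.  Since $f^n\bigl(\{\psi>n\}\bigr)\subset X\setminus Y$ we have $1_YL^n(1_{\{\psi>n\}}v)=0$, and grouping preimages of a point of $A\subset Y$ under $f^n$ according to the last time before $n$ they lie in $Y$ gives the exact identity
\[
1_YL^nv=\sum_{k=0}^{n}T_{n-k}B_kv .
\]
Now the observable enters decisively: if $\xi$ is of bounded variation on $X$ and $\mu$-integrable then $\xi\to0$ at each indifferent fixed point, and comparing the known blow-up rate of the invariant density with the geometry of the cells $\{\psi=k\}$ shows that $\|B_kv\|_{\mathrm{BV}(Y)}=o(k^{-1})$; in particular $\tilde v:=\sum_{k\ge0}B_kv$ converges in $\mathrm{BV}(Y)$, while $\int_YB_kv\,d\mu=\int_{\{\psi=k\}}v\,d\mu$ and conservativity give $\int_Y\tilde v\,d\mu=\int_Xv\,d\mu$.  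Splitting the sum into $k\le K$, $K<k\le n/2$ and $n/2<k\le n$ and using, respectively, the convergence of $m(n)n^{1-\beta}T_{n-k}$ for each fixed $k$, the bound $m(n)n^{1-\beta}\|T_{n-k}\|=O(1)$ for $k\le n/2$ together with $\sum_{k>K}\|B_kv\|\to0$, and the decay $\|B_kv\|=o(k^{-1})$ (which makes the last sum $o(1)$), one obtains $m(n)n^{1-\beta}(1_YL^nv)\to d_\beta P\tilde v=d_\beta\bigl(\int_Xv\,d\mu\bigr)1_Y$ in $\mathrm{BV}(Y)$, hence uniformly on $A$ because $\mathrm{BV}(Y)\hookrightarrow L^\infty(Y)$; this is~(a) when $u$ has bounded variation.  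For~(b) the same splitting works with the density-one convergence of the $T_j$ and the $O$-estimate controlling the tail, the various $k$-dependent exceptional sets being combined into a single zero-density set by a diagonal argument, while the $\liminf$ statement for $v\ge0$ follows from $B_kv\ge0$, $T_jB_kv\ge0$ and $\liminf_j\ell(j)j^{1-\beta}T_j\ge d_\beta P$ by Fatou.  For~(c) one estimates $\|1_YL^nv\|_{\mathrm{BV}(Y)}\le\sum_k\|T_{n-k}\|\,\|B_kv\|$ using $\|T_n\|=O(\ell(n)n^{-\beta})$.

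\emph{Riemann integrable $u$, and the main obstacle.}
To drop the hypothesis $u\in\mathrm{BV}$, I would sandwich $u$ between step functions $u^-\le u\le u^+$ with $\int(u^+-u^-)|\xi|\,d\mu$ arbitrarily small (using that $u$ is a.e.\ continuous and $|\xi|\,d\mu$ is a finite measure absolutely continuous with respect to Lebesgue), split $\xi=\xi^+-\xi^-$ into its bounded-variation, $\mu$-integrable, nonnegative parts, apply the previous steps to the bounded-variation observables $\xi^\pm u^\pm$, and squeeze using positivity of $L^n$; all bounds being uniform, this preserves the uniform-on-compacta and pointwise conclusions.  I expect the main obstacle to be the verification of the operator renewal hypotheses above: for genuinely non-Markovian AFN maps the return map $F$ has countably many branches, so the operator-norm continuity of $R(z)$, the aperiodicity of $I-R(z)$ on $\partial\D\setminus\{1\}$, and above all the regularly varying tail estimate for $\sum_{k>n}R_k$ must be extracted from careful quantitative versions of Zweim\"uller's distortion and expansion estimates.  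A second, more elementary but indispensable, point is the observation used when un-inducing that $\mu$-integrability of $\xi$ (not mere boundedness) forces $\|B_kv\|_{\mathrm{BV}(Y)}=o(k^{-1})$; without it the ``middle'' terms $T_{n-k}B_kv$ could not be shown negligible.
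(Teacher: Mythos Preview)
Your overall architecture matches the paper's: induce on a first return set $Y$ with $\mathcal{B}=\mathrm{BV}(Y)$, apply Theorems~\ref{thm-conv}--\ref{thm-liminf} to $T_n$, un-induce via $1_YL^nv=\sum_{k=0}^nT_{n-k}B_kv$ (the paper writes $B_kv=L^kv_k$ with $v_k=1_{X_k}v$, $X_k=\{\psi=k\}$), and finish with a Riemann sandwich. One minor correction: the tail hypothesis you state, $\|\sum_{k>n}R_k-\mu(\varphi>n)P\|=o(\mu(\varphi>n))$, is neither what the paper assumes nor needed; (H1) is just $\|R_n\|\le C\mu(\varphi=n)$, which together with (H2) and regular variation of $\mu(\varphi>n)$ already drives the abstract theorems. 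So your anticipated ``main obstacle'' is lighter than you think.

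There is, however, a genuine gap in the un-inducing step. You claim $\|B_kv\|_{\mathrm{BV}(Y)}=o(k^{-1})$ and immediately conclude that $\sum_kB_kv$ converges in $\mathrm{BV}(Y)$; but $o(k^{-1})$ does \emph{not} imply summability (e.g.\ $1/(k\log k)$), and you rely on $\sum_{k>K}\|B_kv\|\to0$ both in the three-range splitting for~(a) and in the convolution bound for~(c). The paper obtains summability by a structural identity rather than a decay rate: each preimage in $X_k$ of a point $x\in Y$ under $f^k$ is $fy$ for a unique $y\in\{\varphi=k+1\}$ with $Fy=x$, so $(L^kv_k)(x)=\sum_{a:\,\varphi|_a=k+1}g(y_a)^{-1}G(y_a)\,v(fy_a)$, and the standard bound $\|1_aG\|\ll\mu(a)$ together with the AFU estimates on $Y$ gives $\|L^kv_k\|_{\mathrm{BV}(Y)}\ll\mu(\varphi=k+1)\,\|1_{X_k}v\|_{\mathrm{BV}(X_k)}$. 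Summing, $\sum_k\mu(\varphi=k+1)\|1_{X_k}v\|_{\mathrm{BV}(X_k)}\le\|v\|_{\mathrm{BV}}$ since $\sum_k\mu(\varphi=k)=1$ and variations over the disjoint $X_k$ add. With this estimate your splitting argument (essentially the proof of Theorem~\ref{thm-B}) and the convolution $\{\|T_n\|\}\star\{\mu(\varphi=n)\}\ll\ell(n)n^{-\beta}$ for part~(c) go through. Note finally that $\mu$-integrability of $\xi$ is not what makes $\|B_kv\|$ summable---that comes from the BV structure---it is needed only so that $\int_Xv\,d\mu$ is finite and the sandwich against Riemann integrable $u$ works.
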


\begin{rmk}  
(i)  It is known that the asymptotic behaviour of $L^n$ might be
complicated.  Chung~\cite[Section I.10]{Chung} gives an example of a null recurrent Markov chain for which the ratio of $n$-step transition probabilities $p_{ij}^n/p_{k\ell}^{(n)}$ has no limit as $n\to\infty$ (the regular variation assumption on the return time probabilities is violated).   
Hajian \& Kakutani~\cite{HajianKakutani64} (see also~\cite[Proposition~1.4.7]{Aaronson}) prove that
 there always exist {\em weakly wandering} sets $W$ of positive measure.   For such sets, $\int_WL^n1_W\,d\mu=0$ for infinitely many $n$.
 (Such indicator functions $1_W$ do not lie in our class of observables $v$).

\vspace{1ex}
\noindent(ii)
In the special case of the family~\eqref{eq-LSV},
Theorem~\ref{thm-AFN}(a) recovers the result of Thaler~\cite{Thaler00} for
$\beta=1$ and the cases $\beta<1$ are new.
Parts~(b,c) are probably not optimal for~\eqref{eq-LSV} but are the best 
one can expect in the general setting, see Remark~\ref{rmk-Gouezel}.

 \vspace{1ex}
 \noindent (iii)  In addition to yielding convergence results for $L^n$
 (rather than $\sum_{j=1}^n L^j$) our methods also cover much wider 
 classes of observables than was previously possible.   An indicative example
 is the family~\eqref{eq-LSV} where $\beta\in(\frac12,1]$.    
 There is a constant $C>1$ such that $C^{-1}x^{-\frac{1}{\beta}}\le h(x)
 \le Cx^{-\frac{1}{\beta}}$.
 Consider observables of the form $v(x)=x^q$.
 Whereas the results of~\cite{Thaler95,Zweimuller00} yield uniform convergence
 of $\sum_{j=1}^n L^jv$ (and of $L^nv$ when $\beta=1$) on compact subsets 
 of $(0,1]$ if and only if $\beta q\ge1$, our results apply for $\beta(q+1)>1$ 
(see Theorem~\ref{thm-PMsecond_q}).
 
\vspace{1ex}
\noindent(iv) An immediate consequence of Theorem~\eqref{thm-AFN}(a) is that 
if $\beta\in(\frac12,1]$, $v$ is of the required form $v=u/h$, and 
$w\in L^1(X)$ is supported on a compact subset of $X'$, then 
$\lim_{n\to\infty}m(n)\int_X v\,w\circ f^n\,d\mu=
d_\beta\int_Xv\,d\mu\int_X w\,d\mu$.   

\vspace{1ex}
\noindent(v)  
The situation changes considerably if $\int_X v\,d\mu=0$.
We have the following result which has no counterpart in standard renewal 
theory, though Gou\"ezel~\cite{Gouezel04} proves an analogous (and equally 
unexpected) result in the case of finite ergodic theory:
\end{rmk}

\begin{thm} \label{thm-zero}
Suppose that $f:X\to X$ is a topologically mixing AFN map with regularly
varying tail probabilities, $\beta\in(0,1)$.
Suppose that $v$ is of bounded variation and is supported on a compact subset
of $X'$.   If $\int_X v\,d\mu=0$, then
$L^nv=O(\ell(n)n^{-\beta})$ uniformly on compact subsets of $X'$.
\end{thm}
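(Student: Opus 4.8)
The plan is to reduce the statement to a bound on the first-return operators $T_n$ on a suitable set $Y$, and then to analyse the generating function $T(z)=\sum_{n\ge0}T_nz^n$ via the spectral and Tauberian arguments developed for Theorems~\ref{thm-conv},~\ref{thm-bound},~\ref{thm-liminf}; the point is that the hypothesis $\int_X v\,d\mu=0$ cancels the dominant singularity of $T(z)v$ at $z=1$, forcing the coefficients $T_nv$ to decay faster than in the generic case of Theorem~\ref{thm-AFN}(a). For the reduction: fix a compact $K\subset X'$ on which the uniform bound is sought and, by the construction of Zweim\"uller, choose a first-return set $Y$ (return time $\varphi$, induced map $f_Y$) with $K\cup\operatorname{supp}v\subset Y$; enlarging $Y$ in this way leaves intact the functional-analytic hypotheses on $f_Y$ (its transfer operator on $\operatorname{BV}(Y)$ has a spectral gap) and the regular variation of $\mu(\varphi>n)$, with the same index $\beta$. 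Since $v=1_Yv$ we have $T_nv=1_YL^n(1_Yv)=1_YL^nv$, hence $L^nv=T_nv$ on $Y\supseteq K$; moreover $\int_Y v\,d\mu=\int_X v\,d\mu=0$ and $v|_Y\in\operatorname{BV}(Y)$. Since $\operatorname{BV}(Y)$ embeds continuously in $L^\infty(Y)$, it is enough to show $T_nv=O(\ell(n)n^{-\beta})$ in $\operatorname{BV}(Y)$.

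Next I would set up the standard spectral decomposition. For $|z|<1$, $T(z)=(I-R(z))^{-1}$ with $R(z)=\sum_{n\ge1}R_nz^n$, and $R(1)=\sum_nR_n$ is the transfer operator of $f_Y$, which has a simple leading eigenvalue $1$ and a spectral gap. Because $z\mapsto R(z)$ is continuous into $\mathcal{L}(\operatorname{BV}(Y))$ up to $z=1$, perturbation theory gives, for $z$ near $1$, a splitting $R(z)=\lambda(z)P(z)+Q(z)$ with $\lambda(1)=1$, $P(z)$ the (rank-one) eigenprojection, and $\|Q(z)^n\|$ uniformly exponentially small, so that
$$T(z)=\frac{P(z)}{1-\lambda(z)}+(I-Q(z))^{-1}\qquad(z\ \text{near}\ 1),$$
the second term extending continuously to $z=1$. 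Topological mixing of $f$ (the aperiodicity argument: $1\notin\operatorname{spec}R(z)$ for $z\in\overline{\D}\setminus\{1\}$) lets $T(z)$ extend continuously to $\overline{\D}\setminus\{1\}$. Finally, exactly as in the proofs of Theorems~\ref{thm-conv},~\ref{thm-bound},~\ref{thm-liminf}, the tail hypothesis $\mu(\varphi>n)=\ell(n)n^{-\beta}$ gives $R(1)-R(z)\asymp(1-z)^\beta\ell(1/(1-z))$, hence $1-\lambda(z)\asymp(1-z)^\beta\ell(1/(1-z))$ as $z\to1$, and every quantity in the above decomposition inherits this $(1-z)^\beta\ell(1/(1-z))$-regularity near $z=1$.

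The heart of the matter is to use $\int_Y v\,d\mu=0$. As the leading eigenvalue is simple, $P(z)v=\eta(z)(v)\,h(z)$, where $h(z)$ is the eigenfunction of $R(z)$ and $\eta(z)$ the eigenfunctional of $R(z)^*$ normalised by $\eta(z)(h(z))=1$; thus $(1-\lambda(z))^{-1}P(z)v=\dfrac{\eta(z)(v)}{1-\lambda(z)}\,h(z)$. Now $\eta(1)$ is a multiple of $\int_Y(\cdot)\,d\mu$, so the hypothesis forces $\eta(1)(v)=0$, whence $\eta(z)(v)=\eta(z)(v)-\eta(1)(v)=O\big((1-z)^\beta\ell(1/(1-z))\big)$, the \emph{same} order as $1-\lambda(z)$. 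The a priori pole of $T(z)v$ at $z=1$ is therefore cancelled: $(1-\lambda(z))^{-1}P(z)v$ remains bounded and extends continuously to $z=1$. (For generic $v$ one has instead $\eta(z)(v)\to\eta(1)(v)\ne0$, producing a $(1-z)^{-\beta}$ blow-up and the slow rate $(\ell(n)n^{1-\beta})^{-1}$ of Theorem~\ref{thm-AFN}(a).) Consequently $T(z)v$ extends to $\overline{\D}$, is regular off $z=1$, and near $z=1$ equals a constant plus singular terms each of the form $(1-z)^{\gamma}\times(\text{slowly varying})$ with $\gamma>0$ (the relevant exponents being $\gamma=\beta$ and $\gamma=1-\beta$). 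By the Karamata--Tauberian estimates already established for Theorems~\ref{thm-conv},~\ref{thm-bound},~\ref{thm-liminf}, each such term contributes Taylor coefficients that are $O(n^{-1-\gamma})$ up to a slowly varying factor, and this is $o(\ell(n)n^{-\beta})$ because $1+\gamma>\beta$. Hence $T_nv=O(\ell(n)n^{-\beta})$ in $\operatorname{BV}(Y)$, and so $\sup_K|L^nv|=O(\ell(n)n^{-\beta})$.

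The main obstacle is the cancellation step: one must check that $\eta(z)(v)-\eta(1)(v)$ vanishes to \emph{precisely} the order $(1-z)^\beta\ell(1/(1-z))$ of $1-\lambda(z)$ --- ``no worse'' is what makes the singularity cancel, and the matching leading coefficients come for free from the common leading term of $R(1)-R(z)$ --- and then to push this through the Tauberian step with a general slowly varying $\ell$ and uniformly in $\operatorname{BV}(Y)$. That error bookkeeping, together with verifying stability of the functional-analytic hypotheses under enlarging $Y$, is where the work lies, but it uses no ideas beyond those already in the proofs of Theorems~\ref{thm-conv},~\ref{thm-bound},~\ref{thm-liminf}.
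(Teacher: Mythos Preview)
Your reduction to the first-return set $Y$ and your cancellation argument are correct and match the paper: once $Pv=\int_Yv\,d\mu=0$, one has $P(\theta)v=(P(\theta)-P)v=O(\ell(1/\theta)\theta^\beta)$ by Corollary~\ref{cor-H1}, and since $(1-\lambda(\theta))^{-1}=O(\ell(1/\theta)^{-1}\theta^{-\beta})$ this gives $\|T(\theta)v\|\ll\|v\|$ uniformly on the circle. This is exactly the first sentence of the paper's proof of Theorem~\ref{thm-bound}(c).

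The gap is in your final step. Boundedness of $\theta\mapsto T(\theta)v$ alone gives no rate for its Fourier coefficients, and your claimed decomposition ``constant plus terms $(1-z)^\gamma\times(\text{slowly varying})$ with $\gamma\in\{\beta,1-\beta\}$'' is neither established nor correct: the function $(1-\lambda(\theta))^{-1}(P(\theta)-P)v$ is a ratio of two quantities each of size $\ell(1/\theta)\theta^\beta$, but nothing forces it to have a clean power-type expansion, and the remainder $(I-R(\theta))^{-1}Q(\theta)v$ is merely bounded with no further structure. The ``Karamata--Tauberian estimates'' you invoke are not what is used in Theorems~\ref{thm-conv}--\ref{thm-liminf} (those are direct Fourier estimates following Garsia--Lamperti), and in any case Tauberian theorems run from power-series asymptotics to partial-sum asymptotics, not to individual coefficient decay. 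Note also that your claimed rate $O(n^{-1-\gamma})$ would be \emph{strictly better} than the theorem's $O(\ell(n)n^{-\beta})$, which should be a warning sign.

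The paper extracts the rate by a different, very short argument: since $\|T(\theta)v\|\ll\|v\|$, the resolvent identity
\[
\{T(\theta)-T(\theta-\pi/n)\}v=T(\theta)\,(R(\theta)-R(\theta-\pi/n))\,T(\theta-\pi/n)v
\]
gives $\|\{T(\theta)-T(\theta-\pi/n)\}v\|\ll \ell(1/\theta)^{-1}\theta^{-\beta}\cdot\ell(n)n^{-\beta}\cdot\|v\|$, using $\|T(\theta)\|\ll\ell(1/\theta)^{-1}\theta^{-\beta}$ on the left factor and Proposition~\ref{prop-H1} on the middle one. The standard $\pi/n$-shift identity $2T_nv=\int_0^{2\pi}\{T(\theta)-T(\theta-\pi/n)\}v\,e^{-in\theta}\,d\theta$ then yields $T_nv\ll\ell(n)n^{-\beta}\|v\|$ immediately, since $\ell(1/\theta)^{-1}\theta^{-\beta}$ is integrable. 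No expansion of $T(z)v$ into explicit singular pieces is needed or attempted.
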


\vspace{-3ex}
\paragraph{Second order asymptotics and rates of mixing}
In certain situations, including the family~\eqref{eq-LSV}, 
the tail probabilities
satisfy $\mu(\varphi>n)=cn^{-\beta}+O(n^{-q})$ for some $q>1$, $c>0$.   
It is then possible to obtain higher order expansions of $L^nv$ on compact
sets for bounded variation observables $v$ supported
on a compact subset of $(0,1]$.    For example, in the 
specific case of~\eqref{eq-LSV}, $\beta\in(\frac12,1)$, we prove that 
$|n^{1-\beta}\int_0^1 v\,w\circ f^n\,d\mu-\int_0^1 v\,d\mu\int_0^1 w\,d\mu|
\le Cn^{-\gamma}$ where $\gamma=\min\{1-\beta,\beta-\frac12\}$.
The rate of mixing is sharp for $\beta\ge\frac34$, and 
we obtain precise second order asymptotics provided $\beta>\frac34$. 

These and related results such as error rates in
the Dynkin-Lamperti arcsine laws~\cite{Dynkin61,Lamperti58} are discussed in 
Section~\ref{sec-second}.

\vspace{1ex}
The remainder of the paper is organised as follows.
In Section~\ref{sec-ORT}, we describe the general framework
for our results on the renewal operators $T_n$.
Sections~\ref{sec-T},~\ref{sec-Fourier} and~\ref{sec-half}
contain the proofs for $\frac12<\beta<1$.
In Section~\ref{sec-one}, we cover the case $\beta=1$.
In Section~\ref{sec-pde}, we give a particularly elementary, self-contained,
proof of pointwise dual ergodicity for all $\beta$.
In Section~\ref{sec-zerohalf}, we prove our results for 
$0<\beta\le\frac12$.
In Section~\ref{sec-second}, we formulate and prove results on 
higher order asymptotics.
In Section~\ref{sec-L}, we show how to pass from $T_nv=1_YL^n(1_Yv)$ to $L^n$.
Finally in Section~\ref{sec-examples}, we show that our theory applies to 
large classes of examples including AFN maps (in particular, we prove 
Theorem~\ref{thm-AFN}) and systems for which the
first return map is Gibbs-Markov.   The latter includes parabolic rational
maps of the complex plane~\cite{ADU93}.

\vspace{-2ex}
\paragraph{Notation}
We use ``big O'' and $\ll$ notation interchangeably, writing
$A_n=O(a_n)$ or $A_n\ll a_n$ as $n\to\infty$ if there is a constant
$C>0$ such that $\|A_n\|\le Ca_n$ for all $n\ge1$ (for $A_n$ operators
and $a_n\ge0$ scalars).    We write $A_n\sim c_nA$
as $n\to\infty$ if $\lim_{n\to\infty}\|A_n/c_n-A\|=0$ (for $A_n,A$ operators
and $c_n>0$ scalars).

\section{General framework}
\label{sec-ORT}

Let $(X,\mu)$ be an infinite measure space, and $f:X\to X$ a conservative 
measure preserving map.   Fix $Y\subset X$ with $\mu(Y)\in(0,\infty)$ and
rescale $\mu$ so that $\mu(Y)=1$.
Let $\varphi:Y\to\Z^+$ be the first return time $\varphi(y)=\inf\{n\ge1:f^ny\in Y\}$
and define the first return map $F=f^\varphi:Y\to Y$.

The return time function $\varphi:Y\to\Z^+$ satisfies 
$\int_Y \varphi\,d\mu=\infty$.
Our crucial assumption is that the tail probabilities are regularly varying:
\begin{itemize}
\item[]
$\mu(y\in Y:\varphi(y)>n)=\ell(n)n^{-\beta}$ where $\ell$ is slowly varying and
$\beta\in(0,1]$.
\end{itemize}

Recall that the transfer operator $R:L^1(Y)\to L^1(Y)$ for the first return 
map $F:Y\to Y$ is defined via the formula
$\int_Y Rv\,w\,d\mu = \int_Y v\,w\circ F\,d\mu$, $w\in L^\infty(Y)$. 
Let $\D=\{z\in\C:|z|<1\}$ and
$\bar\D=\{z\in\C:|z|\le1\}$.
Given $z\in\bar\D$, we define $R(z):L^1(Y)\to L^1(Y)$ to be
the operator $R(z)v=R(z^\varphi v)$.   Also, for each $n\ge1$, we define
$R_n:L^1(Y)\to L^1(Y)$, $R_nv=R(1_{\{\varphi=n\}}v)$.
It is easily verified that $R(z)=\sum_{n=1}^\infty R_nz^n$.

Our assumptions on the first return map $F:Y\to Y$ are functional-analytic in
nature.
We assume that there is a function space $\mathcal{B}\subset L^\infty(Y)$
containing constant functions,
with norm $\|\;\|$ satisfying $|v|_\infty\le \|v\|$ for $v\in\mathcal{B}$,
such that for some constant $C>0$:
\begin{itemize}
\item[(H1)] For all $n\ge1$,
$R_n:\mathcal{B}\to\mathcal{B}$ is a bounded linear 
operator with $\|R_n\|\le C\mu(\varphi=n)$.
\end{itemize}

In particular, $z\mapsto R(z)$ is a continuous family of bounded linear 
operators on $\mathcal{B}$ for $z\in\bar\D$.   
Since $R(1)=R$ and $\mathcal{B}$ contains constant functions, 
$1$ is an eigenvalue of $R(1)$.  We require:

\begin{itemize}
\item[(H2)] \begin{itemize}
\item[(i)] The eigenvalue $1$ is simple and isolated
in the spectrum of $R(1)$.
\item[(ii)] For $z\in\bar\D\setminus\{1\}$, the spectrum of $R(z)$ does
not contain $1$.
\end{itemize}
\end{itemize}
Denote the spectral projection corresponding to the simple eigenvalue $1$
for $R(1)$ by $P$.  Then $(Pv)(y)\equiv\int_Y v\,d\mu$.

\subsection{Asymptotics of $T_n$}
\label{sec-conv}

We state our main results for the operators $T_n$.
Since $T_nv=1_YL^n(1_Yv)$, we obtain
precise results for the convergence of $L^nv$ on $Y$
for observables $v$ supported on $Y$.   The restriction to $Y$ is lifted
in Section~\ref{sec-L}.
Throughout, we assume regularly varying tails $\mu(\varphi>n)=\ell(n)n^{-\beta}$
and hypotheses (H1) and (H2).
Set $d_\beta ={\SMALL\frac{1}{\pi}}\sin\beta\pi$ for $\beta\in(0,1)$ and 
$d_1=1$.  Define $m(n)=\ell(n)$ for $\beta\in(0,1)$
and $m(n)= \sum_{j=1}^n\ell(j)j^{-1}$ for $\beta=1$.

In some of our statements, the observable $v$ is not mentioned.   Here,
we are speaking of convergence of operators on the Banach space
$\mathcal{B}$.
So for example, Theorem~\ref{thm-conv} states that 
$\sup_{v\in\mathcal{B},\;\|v\|=1}\|m(n)n^{1-\beta}T_nv-d_\beta\int_Y v\, d\mu\|\to0$ as $n\to\infty$.
Since $\mathcal{B}$ is embedded in $L^\infty(Y)$, this immediately
implies almost sure convergence (at a uniform rate) on $Y$.    
Redefining sequences on a set of measure zero, we obtain uniform convergence on $Y$.    For brevity, we will speak of uniform convergence throughout this paper without further comment.

\begin{thm}   \label{thm-conv}
If $\beta\in(\frac12,1]$,  then
$\lim_{n\to\infty}m(n) n^{1-\beta}T_n =d_\beta P$.
\end{thm}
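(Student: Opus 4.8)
The plan is to follow the classical strategy of scalar renewal theory, adapted to the operator setting, via a detour through generating functions. Introduce $T(z) = \sum_{n=0}^\infty T_n z^n$ for $z \in \D$. The renewal relation $T_n = \sum_{j=1}^n T_{n-j} R_j$ (with $T_0 = I$) translates, after summing the convolution, into the operator identity $T(z) = (I - R(z))^{-1}$ for $z \in \D$, provided $I - R(z)$ is invertible there; this invertibility should follow from (H1)--(H2), since for $|z|<1$ the spectral radius considerations and (H2)(ii) keep $1$ out of the spectrum of $R(z)$. So the whole problem becomes: understand the boundary behaviour of $(I - R(z))^{-1}$ as $z \to 1$, extract the leading singularity, and then apply a Karamata-type Tauberian theorem for power series with operator coefficients to read off the asymptotics of $T_n$.

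The key step is the local expansion of $R(z)$ near $z = 1$. Using the spectral projection $P$ for the simple isolated eigenvalue $1$ of $R(1)$, write $R(z) = \lambda(z) P(z) + Q(z)$ where $P(z)$ is the perturbed eigenprojection, $\lambda(z)$ the perturbed eigenvalue, and $Q(z)$ the complementary part with spectral radius bounded away from $1$ near $z=1$; standard perturbation theory gives that $z \mapsto P(z), Q(z)$ are continuous (indeed the $(I-Q(z))^{-1}$ piece contributes a bounded, continuous term). The singular part is $\frac{1}{1-\lambda(z)} P(z)$. Now $1 - \lambda(z) = \int_Y (1 - z^\varphi)\,d\mu + (\text{lower order})$, and the regularly varying tail hypothesis $\mu(\varphi > n) = \ell(n) n^{-\beta}$ forces, by a standard computation with $1 - z^\varphi = (1-z)\sum_{k<\varphi} z^k$ and summation by parts, that $1 - \lambda(z) \sim c_\beta (1-z)^\beta \ell\!\left(\tfrac{1}{1-z}\right)$ as $z \to 1$ along the reals (with the appropriate complex version along $\bar\D$), where $c_\beta$ is the constant making $d_\beta$ come out right — for $\beta=1$ the slowly varying correction $m$ appears instead. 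Hence $T(z) \sim \frac{1}{c_\beta (1-z)^\beta \ell(1/(1-z))} P$ near $z=1$.

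The final step is Tauberian: from $T(z) \sim C(1-z)^{-\beta}/\ell(1/(1-z)) \cdot P$ one wants to conclude $T_n \sim \frac{1}{\Gamma(\beta)} C n^{\beta - 1} \ell(n)^{-1} P$, i.e. $m(n) n^{1-\beta} T_n \to d_\beta P$. Here is where the restriction $\beta > \tfrac12$ enters and where I expect the main obstacle to lie. The naive Tauberian theorem requires monotonicity (or at least a one-sided bound) on the coefficients, which operators do not possess. The route around this — essentially Gou\"ezel's refinement of Sarig's argument — is to control $\sum_n T_n z^n$ on the whole circle $|z| = r \to 1$, not just near $z = 1$: one needs that $(I - R(z))^{-1}$ stays bounded, with an integrable singularity, away from $z = 1$ on $\bar\D$, which is exactly what (H2)(ii) plus continuity gives; then a contour-integral / Fourier-analytic inversion $T_n = \frac{1}{2\pi}\int_{-\pi}^\pi T(re^{i\theta}) e^{-in\theta}\,d\theta$, with careful estimates splitting the integral into a neighbourhood of $\theta = 0$ (where the regular variation and an Abelian argument for $(1-e^{i\theta})^{-\beta}$-type kernels apply) and its complement (where things are uniformly bounded), yields the result. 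The constraint $\beta > \tfrac12$ is needed so that the relevant error terms — coming from the $\int_Y (1-z^\varphi)^2$-type second-order contributions to $1 - \lambda(z)$, which are $O((1-z)^{2\beta})$ or $O((1-z)\,\text{(slowly varying)})$ — are genuinely of smaller order than the main term $(1-z)^\beta$ after inversion, equivalently $n^{1-2\beta} = o(n^{1-\beta}/\ell(n))$ fails to be summable-controllable once $\beta \le \tfrac12$; that borderline is precisely why the cases $\beta \le \tfrac12$ are quarantined into Theorems~\ref{thm-bound} and~\ref{thm-liminf}. So the hard part is not the algebra but establishing the uniform boundedness and integrability of the resolvent on $\bar\D \setminus \{1\}$ and then pushing the Fourier-inversion estimate through with the regularly varying weights — the details of Sections~\ref{sec-Fourier} and~\ref{sec-half}.
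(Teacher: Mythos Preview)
Your outline has the right scaffolding --- the generating function $T(z)=(I-R(z))^{-1}$, the spectral decomposition isolating $(1-\lambda(z))^{-1}P$, the asymptotic $1-\lambda(e^{i\theta})\sim c_\beta\ell(1/\theta)\theta^\beta$, and Fourier inversion on the circle --- and this is indeed the paper's route.  But there is a genuine gap in your account of the critical step, and with it a misdiagnosis of where $\beta>\tfrac12$ enters.

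You write that away from $\theta=0$ the resolvent is ``uniformly bounded'', and that the constraint $\beta>\tfrac12$ comes from second-order contributions $O((1-z)^{2\beta})$ in the expansion of $1-\lambda(z)$.  Neither is correct.  The relevant splitting is at the $n$-dependent scale $\theta=a/n$, and on the region $[a/n,\pi]$ one still has only $\|T(\theta)\|\ll\ell(1/\theta)^{-1}\theta^{-\beta}$, which is unbounded as $\theta\to0$.  The paper (Lemma~\ref{lem-GL}, following Garsia--Lamperti) handles $\int_{a/n}^\pi T(\theta)e^{-in\theta}\,d\theta$ by the shift $\theta\mapsto\theta-\pi/n$ and the resolvent identity
\[
T(\theta)-T(\theta-\pi/n)=T(\theta)\bigl(R(\theta)-R(\theta-\pi/n)\bigr)T(\theta-\pi/n),
\]
which produces a factor $\|T(\theta)\|\,\|T(\theta-\pi/n)\|\ll \theta^{-2\beta}\ell(1/\theta)^{-2}$ multiplied by $\|R(\theta)-R(\theta-\pi/n)\|\ll \ell(n)n^{-\beta}$.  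Integrating $\theta^{-2\beta}$ over $[a/n,\pi]$ is what forces $2\beta>1$; this is the sole place the restriction $\beta>\tfrac12$ appears.  The second-order terms in $1-\lambda$ play no role here: they contribute $O(1)$ to $(1-\lambda(\theta))^{-1}$ after $T(\theta)=(1-\lambda(\theta))^{-1}P+O(1)$, and the resulting error in the Fourier integral is harmless for every $\beta\in(0,1)$.

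Two smaller points.  First, a Karamata Tauberian theorem applied to $T(s)$ for $s\to1^-$ gives only the Ces\`aro asymptotic $\sum_{j\le n}T_j$ (this is precisely the content of Section~\ref{sec-pde}); it cannot recover $T_n$ itself without a monotonicity hypothesis you do not have.  Second, the case $\beta=1$ is not a minor variant: $T(e^{i\theta})$ is no longer in $L^1$, and Section~\ref{sec-one} proceeds instead through $\Re T(e^{i\theta})$ with a substantially more delicate dominated-convergence argument (Lemma~\ref{lem-R1} and Corollary~\ref{cor-R1}).
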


The next result gives upper bounds on the decay rate of $T_n$
for $\beta\le\frac12$, and an improved upper bound for $\beta\ge\frac12$
when the observable is of mean zero.

\begin{thm} \label{thm-bound}
\begin{itemize}
\item[(a)]
If $\beta=\frac12$, then
$T_n \ll \ell(n) n^{-\frac12}\int_{1/n}^\pi \ell(1/\theta)^{-2}\theta^{-1}\,
d\theta$.
\item[(b)] If $\beta\in(0,\frac12)$, then
$T_n \ll \ell(n) n^{-\beta}$.
\item[(c)] 
If $\beta\in(0,1)$, and $v\in\mathcal{B}$ satisfies $\int_Y v\,d\mu=0$,
then $T_nv \ll \ell(n) n^{-\beta}$.
\end{itemize}
\end{thm}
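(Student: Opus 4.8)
The plan is to work with the generating function $T(z)=\sum_{n\ge0}T_nz^n$ on $\bar\D$ and exploit the operator renewal identity $T_n=\sum_{j=1}^n T_{n-j}R_j$, which in generating-function form reads $T(z)=(I-R(z))^{-1}$ for $z\in\D$. By (H1) the family $R(z)$ extends continuously to $\bar\D$, and by (H2) the operator $I-R(z)$ is invertible for every $z\in\bar\D\setminus\{1\}$; the only obstruction to inverting is the simple isolated eigenvalue at $z=1$. Near $z=1$ we use the spectral decomposition $R(z)=\lambda(z)P(z)+Q(z)$, where $\lambda(z)$ is the perturbed leading eigenvalue with $\lambda(1)=1$, $P(z)$ the associated rank-one spectral projection with $P(1)=P$, and $Q(z)$ the complementary part with spectral radius bounded away from $1$. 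Then
\[
(I-R(z))^{-1}=\frac{P(z)}{1-\lambda(z)}+(I-Q(z))^{-1}(I-P(z)),
\]
and the second term is uniformly bounded in a neighbourhood of $1$ (and continuous on the rest of $\bar\D$), so it contributes only $O(1)$ to $T(z)$ and hence, by a routine tail argument, a contribution to $T_n$ that is negligible relative to $\ell(n)n^{-\beta}$. The whole asymptotic behaviour is therefore governed by the scalar term $P(z)/(1-\lambda(z))$.

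The next step is to pin down the scalar expansion $1-\lambda(z)$ as $z\to1$. Writing $\lambda(z)=\mathbb{E}(z^\varphi)$-type quantity — more precisely $1-\lambda(z)=P(I-R(z))P/\,(\text{normalisation})$ acting on constants, which equals $\sum_{n\ge1}\mu(\varphi=n)(1-z^n)+$ (lower order from the eigenprojection perturbation) — one invokes Karamata-type Tauberian/Abelian estimates for regularly varying tails: since $\mu(\varphi>n)=\ell(n)n^{-\beta}$ with $\beta\in(0,1)$, one gets $1-\lambda(e^{-s})\sim \Gamma(1-\beta)\,\beta^{-1}\cdot s^\beta\ell(1/s)$-type behaviour as $s\to0^+$, and correspondingly along the unit circle $1-\lambda(e^{i\theta})\sim c_\beta|\theta|^\beta\ell(1/|\theta|)$ up to a complex constant with nonzero imaginary part when $\beta\ne1$. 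Thus $P(z)/(1-\lambda(z))$ is, to leading order, $(1-\lambda(z))^{-1}P$, with $|1-\lambda(e^{i\theta})|^{-1}\asymp |\theta|^{-\beta}\ell(1/|\theta|)^{-1}$.

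For parts (a) and (b) — the case $\int_Y v\,d\mu$ unrestricted, $\beta\le\frac12$ — we pass back from $T(z)$ to $T_n$ via the Fourier coefficient formula $T_n=\frac{1}{2\pi}\int_{-\pi}^{\pi}T(e^{i\theta})e^{-in\theta}\,d\theta$, which requires justifying that $T(e^{i\theta})$ is integrable on $[-\pi,\pi]$. The singularity of $(1-\lambda(e^{i\theta}))^{-1}$ at $\theta=0$ is of order $|\theta|^{-\beta}\ell(1/|\theta|)^{-1}$, which is integrable precisely because $\beta<1$; for $\beta\le\frac12$ this function is even square-integrable, which is what makes the crude bound $\|T_n\|\le\frac{1}{2\pi}\int|T(e^{i\theta})|\,d\theta$ insufficient and instead forces the more delicate split used to get the stated rate. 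The mechanism for (b), $\beta\in(0,\tfrac12)$: split the contour into $|\theta|<1/n$ and $1/n\le|\theta|\le\pi$. On the inner piece $|T(e^{i\theta})|\ll |\theta|^{-\beta}\ell(1/|\theta|)^{-1}$ so the integral over it is $\ll n^{\beta-1}\ell(n)^{-1}\ll$... — actually one integrates against $e^{-in\theta}$ and uses that the inner contribution is $\ll \int_0^{1/n}\theta^{-\beta}\ell(1/\theta)^{-1}d\theta\ll n^{\beta-1}\ell(n)^{-1}$, while on the outer piece one integrates by parts in $\theta$ (using that $T(e^{i\theta})$ is $C^1$ away from $0$ with derivative $\ll|\theta|^{-\beta-1}\ell(1/|\theta|)^{-1}$) to gain a factor $1/n$, giving $\ll n^{-1}\int_{1/n}^\pi\theta^{-\beta-1}\ell(1/\theta)^{-1}d\theta\ll n^{-1}\cdot n^{\beta}\ell(n)^{-1}\cdot$ — hmm, the bookkeeping must be turned around: the claimed answer is $T_n\ll \ell(n)n^{-\beta}$, i.e. the inverse of the singularity strength, so the correct heuristic is that $T_n$ behaves like $(1-\lambda(1/n))^{-1}$ inverted appropriately; I would follow the Garsia–Lamperti scheme, writing $T_n$ via the renewal equation and the known scalar result $\sum_{j\le n}T_j\sim d_\beta^{-1}n^{\beta}\ell(n)^{-1}P$ (this is essentially pointwise dual ergodicity, proved later in Section~\ref{sec-pde}), then controlling $T_n=(\sum_{j\le n}T_j)-(\sum_{j\le n-1}T_j)$ together with monotonicity-type a priori bounds on $T_n$ coming from (H1) to upgrade the Cesàro statement to the pointwise rate $O(\ell(n)n^{-\beta})$. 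Part (a), $\beta=\tfrac12$, is the borderline where $|\theta|^{-1/2}\ell(1/|\theta|)^{-1}$ may fail to be square-integrable, so the outer-contour estimate produces the extra logarithmic-type factor $\int_{1/n}^\pi\ell(1/\theta)^{-2}\theta^{-1}d\theta$, exactly as in the statement; here one follows Gou\"ezel's refinement of the Fourier argument, splitting the circle at scale $1/n$ and estimating each dyadic annulus separately.

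For part (c) — mean-zero observables, any $\beta\in(0,1)$ — the key simplification is that applying $T(z)$ to $v$ with $Pv=0$ kills the singular term: $P(z)v=(P(z)-P)v$ and $P(z)-P=O(1-\lambda(z))\cdot(\text{something})$, more precisely $P(z)-P$ is continuous at $z=1$ and vanishes there, and in fact $\|P(z)-P\|\ll |1-z|^{?}$; combined with $(1-\lambda(z))^{-1}\ll|\theta|^{-\beta}\ell(1/|\theta|)^{-1}$ one finds $T(z)v = \frac{P(z)v}{1-\lambda(z)}+O(1)v$ where the first term is now only of size $\ll |\theta|^{\epsilon-\beta}\ell(1/|\theta|)^{-1}$ or, better, using the expansion of $R(z)v$ to one more order (writing $R(z)v = Rv + (R(z)-R)v$ and noting $(R(z)-R)v = \sum_n R_n(z^n-1)v$ has an extra tail-summability gain), one gets an extra factor $|\theta|^{\beta}$ relative to the generic term, leaving an integrable, indeed better-than-(b) behaved, integrand; the Fourier inversion then yields $T_nv\ll\ell(n)n^{-\beta}$ with room to spare. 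I expect the genuine technical heart — and the main obstacle — to be the uniform control of the non-principal part $(I-Q(z))^{-1}(I-P(z))$ and of the error terms in the scalar expansion of $1-\lambda(z)$ uniformly on the whole circle (not just near $1$), and the careful dyadic decomposition at the borderline exponent $\beta=\tfrac12$ in part (a); these are precisely the points that Sections~\ref{sec-Fourier}, \ref{sec-half} and \ref{sec-zerohalf} are presumably devoted to, and I would structure the proof so that parts (b) and (c) are quick corollaries of the clean Fourier-inversion estimate while part (a) gets its own dyadic argument.
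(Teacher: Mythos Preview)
Your setup --- spectral decomposition, the identification $T(\theta)\sim(1-\lambda(\theta))^{-1}P$ near $\theta=0$, the observation that $Pv=0$ makes $T(\theta)v$ bounded --- is correct and matches the paper. But the mechanism you propose for extracting decay of $T_n$ from $T(\theta)$ has a genuine gap.

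Integration by parts in $\theta$ does not work here: it would require $\theta\mapsto T(e^{i\theta})$, hence $\theta\mapsto R(e^{i\theta})$, to be $C^1$, which in turn needs $\sum_n n\,\mu(\varphi=n)<\infty$, i.e.\ finite mean --- contrary to hypothesis. All one has is the modulus-of-continuity estimate $\|R(\theta+h)-R(\theta)\|\ll \ell(1/h)h^\beta$ (Proposition~\ref{prop-H1}). Likewise your fallback --- upgrading the Ces\`aro asymptotic $\sum_{j\le n}T_j\sim c\,n^\beta\ell(n)^{-1}P$ to a pointwise bound via ``monotonicity'' --- has no basis: the $T_n$ are operators, not nonnegative scalars, and even in the scalar case such a Tauberian step needs a side condition that is not available.

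The device the paper uses in place of integration by parts is the Garsia--Lamperti shift-by-$\pi/n$ trick already carried out in Lemma~\ref{lem-GL}: writing $I=\int_{a/n}^\pi T(\theta)e^{-in\theta}\,d\theta=-\int T(\theta-\pi/n)e^{-in\theta}\,d\theta$, one obtains $2I=I_1+I_2+I_3$ where the main term is
\[
I_3=\int_{(a+\pi)/n}^\pi\{T(\theta)-T(\theta-\pi/n)\}e^{-in\theta}\,d\theta,
\]
and the difference is estimated by the \emph{resolvent identity}
\[
T(\theta)-T(\theta-\pi/n)=T(\theta)\bigl(R(\theta)-R(\theta-\pi/n)\bigr)T(\theta-\pi/n).
\]
The middle factor contributes $\ell(n)n^{-\beta}$ and each outer factor contributes $\ell(1/\theta)^{-1}\theta^{-\beta}$, so $I_3\ll \ell(n)n^{-\beta}\int_{1/n}^\pi\ell(1/\theta)^{-2}\theta^{-2\beta}\,d\theta$. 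For $\beta<\tfrac12$ this integral converges, giving (b); for $\beta=\tfrac12$ it is exactly the extra factor in (a). The remaining pieces of the Theorem~\ref{thm-conv} argument are all $O(\ell(n)^{-1}n^{-(1-\beta)})$, which is dominated by $\ell(n)n^{-\beta}$ when $\beta\le\tfrac12$. No dyadic decomposition is needed.

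For (c) the same trick works but more cheaply: since $Pv=0$, Lemma~\ref{lem-AD}(b) gives $\|T(\theta-\pi/n)v\|\ll\|v\|$ uniformly, so only \emph{one} factor of $\ell(1/\theta)^{-1}\theta^{-\beta}$ survives in the resolvent identity, and $\int_0^\pi\ell(1/\theta)^{-1}\theta^{-\beta}\,d\theta<\infty$ for every $\beta\in(0,1)$. Note that your observation $T(\theta)v=O(1)$ alone yields only $T_nv=O(1)$; the shift trick is what produces the decay.
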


As indicated in~\cite{GarsiaLamperti62}, the estimate in 
Theorem~\ref{thm-bound}(b) is essentially optimal.
However, we recover certain aspects of Theorem~\ref{thm-conv}
even for $\beta\le\frac12$.
Recall that $E\subset\N$ has {\em density zero} if 
$\lim_{n\to\infty} \frac1n\sum_{j=1}^n 1_E(j)=0$.

\begin{thm}   \label{thm-liminf}
Let $\beta\in(0,\frac12]$ and $v\in\mathcal{B}$. 
\begin{itemize}
\item[(a)] For all $y\in Y$, there exists a set $E$ of zero density such that 
$\lim_{n\to\infty,\;n\not\in E} \ell(n)n^{1-\beta}(T_nv)(y) = 
d_\beta \int_Y v\,d\mu$.
\item[(b)]  If $v\ge0$, then
$\liminf_{n\to\infty} \ell(n)n^{1-\beta}T_nv = d_\beta \int_Y v\,d\mu$ 
pointwise on $Y$.
\end{itemize}
\end{thm}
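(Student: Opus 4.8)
\medskip
\noindent\textbf{Proof plan.}\enspace The plan is to peel off the only genuinely singular part of the operator generating function, reducing both statements to the behaviour of a scalar renewal sequence, and then to invoke the Garsia--Lamperti theorem for that sequence; part~(a) is then deduced from part~(b) by a Tauberian argument. First I would run the Fourier analysis of Sections~\ref{sec-T}--\ref{sec-Fourier}. From $T(z)=(I-R(z))^{-1}$ and the spectral decomposition $R(z)=\lambda(z)P(z)+N(z)$ near $z=1$ one obtains, for $z=e^{i\theta}$,
\[
T(e^{i\theta})=\frac{P}{1-\lambda(e^{i\theta})}+E(e^{i\theta}),\qquad
E(e^{i\theta})=\frac{P(e^{i\theta})-P}{1-\lambda(e^{i\theta})}+(I-N(e^{i\theta}))^{-1}(I-P(e^{i\theta})).
\]
Because $\|R_n\|\ll\mu(\varphi=n)$ and $\mu(\varphi>k)=\ell(k)k^{-\beta}$ is regularly varying, $R(e^{i\theta})$ --- hence $P(e^{i\theta})$, $N(e^{i\theta})$ and $E(e^{i\theta})$ --- is smooth in $\theta$ on $[-\pi,\pi]\setminus\{0\}$ and has at $\theta=0$ a singularity of order at most $|\theta|^{\beta}\ell(1/|\theta|)$, while $1-\lambda(e^{i\theta})$ is regularly varying of index $\beta$ at $0$ and, by (H2), nonzero elsewhere. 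Hence the Fourier coefficients of $E$ satisfy $\|E_n\|\ll\ell(n)n^{-1-\beta}=o\bigl(\ell(n)^{-1}n^{\beta-1}\bigr)$, which are the same Fourier-coefficient estimates used in the proof of Theorem~\ref{thm-conv}. Writing $F(z)=\sum_k\mu(\varphi=k)z^k$ and $u_n$ for the scalar renewal sequence of the return-time distribution ($\sum_n u_nz^n=(1-F(z))^{-1}$), and using that $1-\lambda(e^{i\theta})$ agrees with $1-F(e^{i\theta})$ up to a term of strictly smaller order at $\theta=0$, one arrives at
\[
T_n v=u_n\,Pv+\tilde T_n v,\qquad \|\tilde T_n\|=o\bigl(\ell(n)^{-1}n^{\beta-1}\bigr),
\]
uniformly on $Y$. (For $\beta>\tfrac12$ the normalised $u_n$ converges and this recovers Theorem~\ref{thm-conv}; the point of $\beta\le\tfrac12$ is that it does not.)

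The substantive input --- and the step I expect to be the main obstacle --- is the Garsia--Lamperti analysis~\cite{GarsiaLamperti62} of $u_n$ for $\beta\le\tfrac12$: (i)~the elementary Karamata/Tauberian estimate $\sum_{j\le n}u_j\sim\frac{d_\beta}{\beta}\,n^{\beta}/\ell(n)$, coming from $(1-F(s))^{-1}\sim\Gamma(1-\beta)^{-1}(1-s)^{-\beta}\ell(1/(1-s))^{-1}$ as $s\uparrow1$; and (ii)~the nontrivial lower bound $\liminf_{n\to\infty}\ell(n)n^{1-\beta}u_n=d_\beta$. Fact~(ii) is the heart of the matter and the reason $\liminf$ replaces $\lim$: it is the Garsia--Lamperti regular-variation argument --- truncate $1/(1-F(s))$ at $s=1-1/n$, compare with the partial sums in~(i), and use that $(u_n)$ is a genuine renewal sequence --- and one has to verify that it carries over to general slowly varying $\ell$. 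Granting (i)--(ii), part~(b) is immediate: the decomposition gives $\ell(n)n^{1-\beta}(T_nv)(y)=\bigl(\int_Y v\,d\mu\bigr)\ell(n)n^{1-\beta}u_n+o(1)$ uniformly in $y\in Y$, so for $v\ge0$, where $\int_Y v\,d\mu\ge0$, one gets $\liminf_{n}\ell(n)n^{1-\beta}(T_nv)(y)=d_\beta\int_Y v\,d\mu$ for every $y$; the case $\int_Y v\,d\mu=0$ forces $v\equiv0$ and is trivial.

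For part~(a) I would first upgrade (i)--(ii) to density-one convergence of $(u_n)$ by a standard dyadic/Tauberian step. Put $c_j=\ell(j)j^{1-\beta}u_j$ and $\rho_j=\ell(j)^{-1}j^{\beta-1}$. By~(ii), $(d_\beta-c_j)^{+}\to0$, hence $\sum_{j\le N}\rho_j(d_\beta-c_j)^{+}=o(N^{\beta}/\ell(N))$; subtracting from~(i), written as $\sum_{j\le N}\rho_j c_j\sim d_\beta\sum_{j\le N}\rho_j$, gives $\sum_{j\le N}\rho_j(c_j-d_\beta)^{+}=o(N^{\beta}/\ell(N))$ too. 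Since $\rho_j$ is regularly varying of index $\beta-1\in(-1,-\tfrac12]$, so that $\rho_j\asymp\rho_N$ on dyadic blocks $(N/2,N]$ and $\sum_{j\le N}\rho_j\asymp N^{\beta}/\ell(N)$, this forces $\{j:c_j>d_\beta+\varepsilon\}$ to have density zero for every $\varepsilon>0$, while $\{j:c_j<d_\beta-\varepsilon\}$ is finite; hence $\ell(n)n^{1-\beta}u_n\to d_\beta$ along $n\notin E$ for a single density-zero set $E$. Feeding this into the decomposition (whose error is $o(\ell(n)^{-1}n^{\beta-1})$ for each fixed $v\in\mathcal B$) gives $\ell(n)n^{1-\beta}(T_nv)(y)\to d_\beta\int_Y v\,d\mu$ for all $n\notin E$, uniformly in $y\in Y$ --- in fact with $E$ independent of $v$ and $y$, which is more than~(a) asks.
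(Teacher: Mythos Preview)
Your reduction to the scalar renewal sequence via a decomposition $T_n=u_nP+\tilde T_n$ with $\|\tilde T_n\|=o(\ell(n)^{-1}n^{\beta-1})$ is the whole argument, and it is not justified. Two concrete problems:

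\emph{First}, the regularity you assume for $R(e^{i\theta})$ away from $\theta=0$ is not available. Hypothesis~(H1) only gives $\sum_n\|R_n\|<\infty$, i.e.\ continuity of $\theta\mapsto R(e^{i\theta})$; there is no smoothness on $[-\pi,\pi]\setminus\{0\}$, and hence none for $P(\theta)$, $\lambda(\theta)$, or your $E(\theta)$. The Fourier-coefficient estimate $\|E_n\|\ll\ell(n)n^{-1-\beta}$ does not follow.

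\emph{Second}, and decisively: test your decomposition on a mean-zero $v\in\mathcal{B}$. Then $u_nPv=0$, so your claim becomes $\|T_nv\|=o(\ell(n)^{-1}n^{\beta-1})$. But Theorem~\ref{thm-bound}(c) gives only $T_nv=O(\ell(n)n^{-\beta})$, and for $\beta<\tfrac12$ one has $\ell(n)n^{-\beta}\big/\bigl(\ell(n)^{-1}n^{\beta-1}\bigr)=\ell(n)^2n^{1-2\beta}\to\infty$. So the error term you need is strictly smaller than what the available estimates give, by an unbounded factor; there is no reason to expect it to hold in this generality, and your own Fourier argument does not supply it. (For $\beta>\tfrac12$ the gap closes, which is exactly why Theorem~\ref{thm-conv} works there and fails here.)

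The paper sidesteps the decomposition entirely. For the $\liminf$ lower bound it exploits \emph{positivity} of the transfer operator $R$: writing $T=(I-R)^{-1}=I+R+\cdots+R^{m-1}+R^m(I-R)^{-1}$ gives $T_nv\ge T^{(m)}_nv$ for $v\ge0$, and choosing $m=b_n\sim b\,\ell(n)^{-1}n^{\beta}$ (the operator analogue of Garsia--Lamperti's Theorem~3.6.1) yields $\ell(n)n^{1-\beta}T^{(b_n)}_n\to d_bP$ with $d_b\to d_\beta$ as $b\to0$. This establishes (b) directly at the operator level without ever comparing to the scalar sequence $u_n$. Part~(a) then follows from (b) together with the Ces\`aro asymptotic $\ell(n)n^{-\beta}\sum_{j\le n}(T_jv)(y)\to\beta^{-1}d_\beta\int v$ (pointwise dual ergodicity, Theorem~\ref{thm-pde}) via the Tauberian Proposition~\ref{prop-zerodensity} --- which is essentially the density argument you sketch, but applied to $f_n=(T_nv)(y)$ itself rather than to $u_n$.
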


\begin{rmk} \label{rmk-Gouezel}    
In general, Theorem~\ref{thm-conv} fails for $\beta\le\frac12$.
However, there is the possibility of proving the result for all $\beta$
under additional hypotheses.   Indeed,
Gou\"ezel~\cite{GouezelPC} informs us that he is able to prove 
Theorem~\ref{thm-conv} for all $\beta\in(0,1)$ under the additional
assumption that $\mu(\varphi=n)=O(\ell(n)n^{-(\beta+1)}$).
In particular, Gou\"ezel's result applies to the family~\eqref{eq-LSV}.

It is worth recalling the situation from the scalar case
(where the $T_n$ are probabilities instead of operators).
Under the additional assumption $\mu(\varphi=n)=O(\ell(n)n^{-(\beta+1)})$,
Garsia \& Lamperti~\cite{GarsiaLamperti62} were able to extend 
Theorem~\ref{thm-conv} to the case $\beta\in(\frac14,\frac12)$.   
This is the only part of~\cite{GarsiaLamperti62} that we are unable to
generalise to the operator setting.
However, an argument of Doney~\cite{Doney97} applies to all $\beta\in(0,1)$
and according to
Gou\"ezel~\cite{GouezelPC} this argument can be extended to the operator
case.
\end{rmk}

\begin{rmk} \label{rmk-DK}
An immediate consequence of Theorem~\ref{thm-conv} is that $Y$ 
is a Darling-Kac set whenever $\beta>\frac12$.    
We refer to Aaronson~\cite[Chapter~3]{Aaronson} and Aaronson, Denker \& Urbanksi~\cite[Section~1]{ADU93} for 
definitions and numerous consequences.   Other consequences include the
Dynkin-Lamperti arcsine laws, see Thaler~\cite{Thaler98}.   
Indeed our main result significantly simplifies the derivation of the 
arcsine laws, see~\cite{Thaler00}.  
\end{rmk}

\subsection{Preliminaries}

For convenience, we state Karamata's Theorem on the integration
of regularly varying sequences~\cite{BinghamGoldieTeugels, Feller66}.

\begin{prop} \label{prop-Karamata}   
Suppose that $\ell$ is slowly varying.
\begin{itemize}
\item[(a)] 
If $p>-1$, then
$\sum_{j=1}^n\ell(j)j^p\sim \frac{1}{p+1}\ell(n)n^{p+1}$ as $n\to\infty$.
\item[(b)] The function $\tilde\ell(x)
=\sum_{j=1}^{[x]}\ell(j)j^{-1}$   is slowly varying and
$\lim_{n\to\infty}\ell(n)/\tilde\ell(n)=0$.~\qed
\end{itemize}
\end{prop}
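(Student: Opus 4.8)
The plan is to recall that this is the classical discrete form of Karamata's theorem and to indicate how each part follows from the standard machinery of regular variation, as in~\cite{BinghamGoldieTeugels,Feller66}. For part~(a), the natural route is to pass from the sum to an integral. First I would observe that since $\ell$ is slowly varying, the function $x\mapsto \ell(x)x^p$ is regularly varying of index $p>-1$, and that $\sum_{j=1}^n \ell(j)j^p$ differs from $\int_1^n \ell(x)x^p\,dx$ by a lower-order term: monotonicity of regularly varying functions up to slowly varying factors (the uniform convergence theorem for slow variation) lets one sandwich consecutive terms, so the sum and the integral are asymptotically equal. Then the integral form of Karamata's theorem gives $\int_1^n \ell(x)x^p\,dx \sim \frac{1}{p+1}\ell(n)n^{p+1}$; the cleanest self-contained derivation of this is via the representation $\ell(x)=c(x)\exp\big(\int_1^x \varepsilon(t)t^{-1}\,dt\big)$ with $c(x)\to c>0$ and $\varepsilon(t)\to 0$, substituting and integrating by parts, or alternatively by a direct $\varepsilon$-$\delta$ argument using $\ell(\lambda x)/\ell(x)\to 1$ uniformly on compact $\lambda$-ranges. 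Either way the $p+1>0$ hypothesis is exactly what makes the tail contributions near $x=n$ dominate.

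For part~(b), the claim that $\tilde\ell(x)=\sum_{j=1}^{[x]}\ell(j)j^{-1}$ is slowly varying is the borderline case $p=-1$ of the same circle of ideas, but now the conclusion is different: the sum diverges (slowly). I would argue that $\tilde\ell(x)\sim \int_1^x \ell(t)t^{-1}\,dt$ by the same sum-to-integral comparison, and then show the integral is slowly varying directly: for fixed $\lambda>1$,
\[
\int_1^{\lambda x}\ell(t)t^{-1}\,dt - \int_1^{x}\ell(t)t^{-1}\,dt = \int_x^{\lambda x}\ell(t)t^{-1}\,dt \le \ell(x)\sup_{t\in[x,\lambda x]}\frac{\ell(t)}{\ell(x)}\,\log\lambda = o\!\left(\int_1^x \ell(t)t^{-1}\,dt\right),
\]
the last step because $\int_1^x \ell(t)t^{-1}\,dt\to\infty$ while the right-hand side is $O(\ell(x))=o\!\big(\int_1^x\ell(t)t^{-1}\,dt\big)$. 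This simultaneously proves $\lim_{n\to\infty}\ell(n)/\tilde\ell(n)=0$: that limit is precisely the statement that $\ell(x)$ is negligible compared to its own logarithmic integral, which holds because the integral diverges (if it converged to a finite limit $\tilde\ell(\infty)$ the ratio would still tend to $0$ provided $\ell(n)\to 0$, but in the slowly varying borderline case divergence is generic and in any event the inequality above delivers the bound).

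The main obstacle, such as it is, is purely expository: deciding how much of the regular-variation toolkit to quote versus reprove. Nothing here is deep — the only genuine analytic input is the uniform convergence theorem for slowly varying functions (that $\ell(\lambda x)/\ell(x)\to1$ uniformly for $\lambda$ in compact subsets of $(0,\infty)$), which underwrites both the sum-to-integral passage and the $o(\cdot)$ estimates above. Since the paper is content to cite~\cite{BinghamGoldieTeugels,Feller66}, I would keep the proof to a few lines: state that (a) is Karamata's theorem in the form given in those references, and that (b) follows from applying it with $p=-1$ together with the elementary observation that a divergent integral dominates its integrand's antiderivative-free pieces. The $\qed$ is already placed in the statement, so no separate proof environment is needed beyond a brief remark if the authors wish to include one at all.
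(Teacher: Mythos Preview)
Your reading is correct: the paper gives no proof of this proposition at all --- the \qed\ in the statement signals that it is quoted directly from the cited references~\cite{BinghamGoldieTeugels,Feller66}. Your sketch of the classical argument (sum-to-integral comparison via uniform convergence of $\ell(\lambda x)/\ell(x)$, then Karamata's integral theorem for~(a) and the divergent-integral argument for~(b)) is accurate and matches the standard treatment in those references, so nothing more is needed here.
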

The following consequence of (H1) and regularly varying tails is standard.

\begin{prop} \label{prop-H1}
There is a constant $C>0$ such that
$\|R(\rho e^{i(\theta+h)})-R(\rho e^{i\theta})\|\le C m(1/h)h^\beta$ 
and $\|R(\rho)-R(1)\|\le C m(\frac{1}{1-\rho})(1-\rho)^\beta$ for
all $\theta\in[0,2\pi)$, $\rho\in(0,1]$, $h>0$.
\end{prop}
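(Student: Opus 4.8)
The plan is to estimate the two differences directly from the series representation $R(z)=\sum_{n=1}^\infty R_nz^n$, using (H1) to replace $\|R_n\|$ by $C\mu(\varphi=n)$ and then summing against the elementary scalar bounds $|z^{n+h}-z^n|$. For the first inequality, writing $z=\rho e^{i\theta}$, we have
\[
R(\rho e^{i(\theta+h)})-R(\rho e^{i\theta})=\sum_{n=1}^\infty R_n\,\rho^n e^{in\theta}\bigl(e^{inh}-1\bigr),
\]
so by (H1),
\[
\|R(\rho e^{i(\theta+h)})-R(\rho e^{i\theta})\|\le C\sum_{n=1}^\infty \mu(\varphi=n)\,\min\{2,nh\}.
\]
The right-hand side is a purely scalar quantity, and I would split the sum at $n\approx 1/h$: for $n\le 1/h$ use $\min\{2,nh\}\le nh$ and for $n>1/h$ use $\min\{2,nh\}\le 2$. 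The tail sum $\sum_{n>1/h}\mu(\varphi=n)=\mu(\varphi>1/h)$ is, up to the slowly varying correction, of order $h^\beta$ times $\ell(1/h)$; the head sum $h\sum_{n\le 1/h}n\,\mu(\varphi=n)$ is handled by Abel summation (or Karamata, Proposition~\ref{prop-Karamata}(a)) applied to $\mu(\varphi>n)=\ell(n)n^{-\beta}$, which gives $\sum_{n\le N}n\,\mu(\varphi=n)\asymp N\,\mu(\varphi>N)\asymp N^{1-\beta}\ell(N)$ when $\beta\in(0,1)$, hence $h\cdot(1/h)^{1-\beta}\ell(1/h)=h^\beta\ell(1/h)$. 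Both pieces are thus $O(\ell(1/h)h^\beta)=O(m(1/h)h^\beta)$ for $\beta\in(0,1)$; the case $\beta=1$ is where the $\log$-type correction enters and is exactly why the statement is phrased with $m$ rather than $\ell$, so there one applies Proposition~\ref{prop-Karamata}(b) instead: $\sum_{n\le N}n\,\mu(\varphi=n)\asymp N\tilde\ell(N)=Nm(N)$, giving $h\cdot(1/h)m(1/h)=m(1/h)=m(1/h)h^1$.

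The second inequality is the same computation with $z=\rho$ real and the comparison sequence $z^n=1$: from $R(\rho)-R(1)=\sum_n R_n(\rho^n-1)$ and $|\rho^n-1|\le\min\{1,n(1-\rho)\}$ (valid for $\rho\in(0,1]$) we get
\[
\|R(\rho)-R(1)\|\le C\sum_{n=1}^\infty\mu(\varphi=n)\min\{1,n(1-\rho)\},
\]
and the identical head/tail split at $n\approx 1/(1-\rho)$ yields $O(m(1/(1-\rho))(1-\rho)^\beta)$.

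The only mildly delicate points are bookkeeping ones: one must be a little careful that the constants in the head-sum estimate are uniform in $\theta$ and $\rho$ (they are, since the scalar bounds $\min\{2,nh\}$ and $\min\{1,n(1-\rho)\}$ do not involve $\theta$, and $\rho^n\le 1$), and one must invoke the right case of Karamata's theorem according to whether $\beta<1$ or $\beta=1$. I expect the main obstacle — really the only non-bookkeeping step — to be the uniform version of the partial-sum asymptotics $\sum_{n\le N}n\,\mu(\varphi=n)\asymp N\,\mu(\varphi>N)$ together with its $\beta=1$ analogue; once this scalar fact is in hand (and it is precisely Proposition~\ref{prop-Karamata} combined with summation by parts), the operator statement follows immediately from (H1) with no extra work, since every estimate has been reduced to a scalar one. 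A remark: since the bound is claimed "for all $\theta\in[0,2\pi)$, $\rho\in(0,1]$, $h>0$," one should also note it is trivial (or follows from $\|R(z)\|\le\sum\mu(\varphi=n)=1$ and boundedness of $m(1/h)h^\beta$ away from $h=0$) in the range where $1/h$ or $1/(1-\rho)$ is bounded, so that only the asymptotic regime $h\to 0$, $\rho\to 1$ needs the argument above.
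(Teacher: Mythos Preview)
Your approach is essentially identical to the paper's: expand $R(z)=\sum R_nz^n$, invoke (H1) to reduce to a scalar sum, split at $N=[1/h]$, and apply Karamata (Proposition~\ref{prop-Karamata}) to the head sum while the tail sum is exactly $\mu(\varphi>N)$. The paper's proof is a three-line sketch of precisely this.

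There is, however, a computational slip in your $\beta=1$ case. Abel summation gives
\[
\sum_{n=1}^N n\,\mu(\varphi=n)=\sum_{n=0}^{N-1}\mu(\varphi>n)-N\mu(\varphi>N)\sim \tilde\ell(N)-\ell(N)\sim \tilde\ell(N)=m(N),
\]
not $N\tilde\ell(N)$ as you wrote. With $N=1/h$ this yields a head sum of order $h\,m(1/h)=m(1/h)h^\beta$, which is the desired bound; your version would give $m(1/h)$, too large by a factor $1/h$, and indeed your final line ``$m(1/h)=m(1/h)h^1$'' is not an identity. The error is purely arithmetical and the argument is otherwise correct.
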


\begin{proof}   We sketch the calculation for the first estimate.    By
Proposition~\ref{prop-Karamata},
\[
R(\rho e^{i(\theta+h)})-R(\rho e^{i\theta}) 
\ll h\sum_{j=1}^Nj\mu(\varphi=j)+\sum_{j>N}\mu(\varphi=j)
\ll hm(N)N^{1-\beta}+m(N)N^{-\beta},
\]
so the result follows with $N=[h^{-1}]$.
\end{proof}

By (H2), there exists $\epsilon>0$ such that $R(z)$ has a continuous family of
simple eigenvalues $\lambda(z)$ for $z\in \bar\D\cap B_\epsilon(1)$ with
$\lambda(1)=1$.  Let $P(z):\mathcal{B}\to\mathcal{B}$ denote the 
corresponding family of spectral
projections, with complementary projections $Q(z)=I-P(z)$ and $P(1)=P$.
Also, let $v(z)\in\mathcal{B}$ denote the corresponding family of 
eigenfunctions normalised so that $\int_Y v(z)\,d\mu=1$ for all $z$.   
In particular, $v(1)\equiv1$.

\begin{cor}    \label{cor-H1}
The estimates for $R(z)$ in Proposition~\ref{prop-H1}
are inherited by the families $P(z)$, $Q(z)$, $\lambda(z)$ and  $v(z)$,
where defined.  
\end{cor}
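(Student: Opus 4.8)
The plan is to deduce the estimates for $P(z)$, $Q(z)$, $\lambda(z)$ and $v(z)$ from those for $R(z)$ by using the standard Dunford-Taylor (resolvent integral) representations of these quantities together with the H\"older continuity of $R(z)$ supplied by Proposition~\ref{prop-H1}. First I would fix a small circle $\gamma$ in $\C$ enclosing the eigenvalue $1$ but no other point of the spectrum of $R(1)$; by (H2)(i) and the continuity of $z\mapsto R(z)$ on $\bar\D$ (a consequence of (H1)), there is $\epsilon>0$ so that for all $z\in\bar\D\cap B_\epsilon(1)$ the only spectrum of $R(z)$ inside $\gamma$ is the simple eigenvalue $\lambda(z)$, and one has the uniform bound $\sup\{\|(\zeta-R(z))^{-1}\|:\zeta\in\gamma,\ z\in\bar\D\cap B_\epsilon(1)\}=:M<\infty$. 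Then
\[
P(z)=\frac{1}{2\pi i}\oint_\gamma(\zeta-R(z))^{-1}\,d\zeta ,\qquad
\lambda(z)=\frac{1}{2\pi i}\oint_\gamma \zeta\,(\zeta-R(z))^{-1}\,d\zeta\bigm/ \operatorname{tr}\!\big(P(z)\big)
\]
(or, avoiding traces, $\lambda(z)P(z)=\frac{1}{2\pi i}\oint_\gamma \zeta(\zeta-R(z))^{-1}d\zeta$, and then recover $\lambda(z)$ by applying this to the eigenfunction or dividing out $P(z)$ on a fixed one-dimensional range once $\epsilon$ is small enough that $P(z)$ is close to $P$ and hence has one-dimensional range).

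The key computational step is the second-resolvent (Hilbert) identity
\[
(\zeta-R(z_1))^{-1}-(\zeta-R(z_2))^{-1}=(\zeta-R(z_1))^{-1}\big(R(z_1)-R(z_2)\big)(\zeta-R(z_2))^{-1},
\]
which gives $\|(\zeta-R(z_1))^{-1}-(\zeta-R(z_2))^{-1}\|\le M^2\|R(z_1)-R(z_2)\|$ uniformly for $\zeta\in\gamma$. Integrating over $\gamma$ yields $\|P(z_1)-P(z_2)\|\le C\|R(z_1)-R(z_2)\|$ and $\|\lambda(z_1)P(z_1)-\lambda(z_2)P(z_2)\|\le C\|R(z_1)-R(z_2)\|$; since $Q(z)=I-P(z)$ the same bound holds for $Q$; and since $\|\lambda(z_1)-\lambda(z_2)\|$ differs from $\|\lambda(z_1)P(z_1)-\lambda(z_2)P(z_2)\|$ by a term controlled by $\|\lambda\|_\infty\|P(z_1)-P(z_2)\|$, the eigenvalue inherits the bound as well. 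Feeding in the two estimates of Proposition~\ref{prop-H1}, namely $\|R(\rho e^{i(\theta+h)})-R(\rho e^{i\theta})\|\le Cm(1/h)h^\beta$ and $\|R(\rho)-R(1)\|\le Cm(\tfrac{1}{1-\rho})(1-\rho)^\beta$, reproduces exactly the claimed inequalities for $P$, $Q$, $\lambda$. For the eigenfunction $v(z)$, I would write $v(z)=P(z)1/(\int_Y P(z)1\,d\mu)$: the numerator $P(z)1$ inherits the H\"older bound from $P(z)$ directly, and since $\int_Y P(z)1\,d\mu\to 1$ as $z\to1$, shrinking $\epsilon$ keeps this scalar bounded away from $0$, so the quotient rule for H\"older-continuous functions (with bounded denominator bounded below) transfers the estimate to $v(z)$, using also that $\|v(z)\|$ stays bounded near $z=1$.

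I expect the only real subtlety — not a deep obstacle but the point needing care — is the bookkeeping at the boundary $|z|=1$: Proposition~\ref{prop-H1} is stated for $\rho\in(0,1]$ and arbitrary $h>0$, so the domain $\bar\D\cap B_\epsilon(1)$ includes boundary points, and I must make sure the uniform resolvent bound $M$ and the spectral separation hold on this closed set; this follows from (H1) (continuity of $R(z)$ on all of $\bar\D$) and (H2) (no spectrum at $1$ away from $z=1$, eigenvalue $1$ isolated at $z=1$) plus a compactness argument, exactly as in the paragraph preceding the corollary where $\epsilon$ is introduced. A second minor point is to confirm that two points $z_1,z_2\in\bar\D\cap B_\epsilon(1)$ at the distances relevant to Proposition~\ref{prop-H1} (either two points on the same circle $|z|=\rho$ separated by arc $h$, or $\rho$ and $1$) are joined appropriately so that the single-step Hilbert-identity estimate applies; no chaining is needed since the bound is for the direct difference. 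Everything else is the routine transfer of a modulus-of-continuity estimate through analytic (Dunford) functional calculus, so I would present it compactly rather than grinding the constants.
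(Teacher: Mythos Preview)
Your proposal is correct and is precisely the standard perturbation-theoretic argument the paper has in mind: the paper's own proof is the single sentence ``This is a standard consequence of perturbation theory for analytic families of operators~[Kato],'' and what you have written is a faithful unpacking of that citation via the Riesz projection formula and the second resolvent identity. Your caveat about avoiding traces (using instead $\lambda(z)P(z)=\frac{1}{2\pi i}\oint_\gamma \zeta(\zeta-R(z))^{-1}d\zeta$ and then evaluating on a suitable vector) is well taken, since $\mathcal{B}$ is an abstract Banach space.
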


\begin{proof}
This is a standard consequence of perturbation theory for analytic families of
operators~\cite{Kato}.
\end{proof}

We have defined the bounded linear operators $T_n,R_n:\mathcal{B}\to\mathcal{B}$
given by
\[
T_nv=1_YL^n(1_Yv),\enspace n\ge0, \qquad
R_nv=1_YL^n(1_{\{\varphi=n\}}v)=R(1_{\{\varphi=n\}}v),\enspace n\ge1.
\]
The power series
\[
T(z)=\sum_{n=0}^\infty T_n z^n,\enspace z\in\D,\qquad
R(z)=\sum_{n=1}^\infty R_n z^n,\enspace z\in\bar\D,
\]
are analytic on the open unit disk $\D$, and $R(z)$ is continuous on $\bar\D$
by (H1).  We have the usual relation
$T_n=\sum_{j=1}^n T_{n-j}R_j$ for $n\ge1$,
and it follows that $T(z)=I+T(z)R(z)$ on $\D$.  Hence
$T(z)=(I-R(z))^{-1}$ on $\D$.   
By (H2)(ii), $T(z)$ extends continuously to $\bar\D-\{1\}$ via
the formula $T(z)=(I-R(z))^{-1}$.

\begin{prop} \label{prop-PQ}
There exists $\epsilon,C>0$ such that 
$\|T(z)-(1-\lambda(z))^{-1}P(z)\|\le C$ for $z\in \bar\D\cap B_\epsilon(1)$,
$z\neq1$, and $\|T(z)\|\le C$ for $z\in\bar\D\setminus B_\epsilon(1)$.
\end{prop}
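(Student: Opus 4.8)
The plan is to split $\bar\D$ into the local region $\bar\D\cap B_\epsilon(1)$, where the eigenvalue $\lambda(z)$ approaches $1$, and the far region $\bar\D\setminus B_\epsilon(1)$, where $1$ stays away from the spectrum of $R(z)$. For the far region, the argument is soft: by (H2)(ii), $1\notin\mathrm{spec}\,R(z)$ for every $z\in\bar\D\setminus\{1\}$, so $(I-R(z))^{-1}$ exists there; since $z\mapsto R(z)$ is continuous on the compact set $\bar\D\setminus B_\epsilon(1)$ by (H1), and inversion is continuous on the (open) set of invertible operators, $z\mapsto T(z)=(I-R(z))^{-1}$ is continuous on this compact set, hence bounded. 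That gives the second assertion, for any fixed $\epsilon$ for which (H2)(i) furnishes the local eigenvalue structure.

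For the local region, the key is the spectral decomposition set up just above: for $z\in\bar\D\cap B_\epsilon(1)$ we have $R(z)=\lambda(z)P(z)+R(z)Q(z)$, with $P(z)+Q(z)=I$ and $P(z)Q(z)=0$. Since $P(z)$ and $Q(z)$ commute with $R(z)$ and reduce it, one inverts $I-R(z)$ on each spectral subspace separately:
\[
T(z)=(I-R(z))^{-1}=(1-\lambda(z))^{-1}P(z)+\bigl(I-R(z)\bigr)^{-1}Q(z).
\]
So $\|T(z)-(1-\lambda(z))^{-1}P(z)\|=\|(I-R(z))^{-1}Q(z)\|$, and it remains to show this is bounded for $z$ near $1$. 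The point is that on the range of $Q(z)$, the spectrum of $R(z)$ is contained in a disk of radius bounded away from $1$ — this is exactly the ``isolated'' part of (H2)(i) at $z=1$, propagated to nearby $z$ by upper semicontinuity of the spectrum (or by the Kato perturbation theory invoked in Corollary~\ref{cor-H1}); shrinking $\epsilon$ if necessary, there is $r<1$ with $\mathrm{spec}(R(z)Q(z))\subset\{|w|\le r\}$ for all $z\in\bar\D\cap B_\epsilon(1)$. Then $(I-R(z))^{-1}Q(z)$ is given by a Neumann-type series (or by a Cauchy integral of $(w-R(z))^{-1}$ over $|w|=\tfrac{1+r}{2}$ times $Q(z)$) whose norm is bounded uniformly in $z$; combined with the uniform bound $\|Q(z)\|\le C$ from Corollary~\ref{cor-H1}, this yields the first assertion. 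Finally, one takes the minimum of the two values of $\epsilon$ and the maximum of the resulting constants.

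The only real subtlety — which I would be careful to state rather than grind through — is the uniformity in $z$ of the resolvent bound on $\mathrm{range}\,Q(z)$: one wants a single $r<1$ and a single contour working for all $z\in\bar\D\cap B_\epsilon(1)$. This follows because, by (H1), $z\mapsto R(z)$ is norm-continuous on the compact set $\bar\D\cap\overline{B_\epsilon(1)}$, $z\mapsto Q(z)$ is continuous by Corollary~\ref{cor-H1}, and hence $z\mapsto R(z)Q(z)$ is a continuous family of operators whose spectra (excluding the already-removed eigenvalue near $1$) depend upper-semicontinuously on $z$; compactness then gives the uniform gap. I do not expect any genuine obstacle here — this is the standard ``reduced resolvent is uniformly bounded'' fact from analytic perturbation theory — but it is the step that carries the weight of the proposition, so I would phrase it with a pointer to~\cite{Kato} rather than reprove it.
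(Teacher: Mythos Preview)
Your proof is correct and follows essentially the same route as the paper: split into the far region (compactness plus continuity of $z\mapsto(I-R(z))^{-1}$) and the local region, where the spectral decomposition $T(z)=(1-\lambda(z))^{-1}P(z)+(I-R(z))^{-1}Q(z)$ reduces the problem to bounding the reduced resolvent. The paper's proof is terser—it simply asserts ``By (H2), the second term is uniformly bounded''—whereas you spell out the upper-semicontinuity and compactness argument that justifies this; your version is more explicit but not genuinely different.
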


\begin{proof}
Choose $\epsilon>0$ so that the family of simple eigenvalues 
$\lambda(z)$ is defined on $\bar\D\cap B_\epsilon(1)$.
For $z\in \bar\D\cap B_\epsilon(1)$, we can write
$R(z)=\lambda(z)P(z)+R(z)Q(z)$.
If in addition $z\neq1$, then we have (in an obvious notation)
\[
T(z)=(1-\lambda(z))^{-1}P(z)+(I-R(z))^{-1}Q(z). 
\]
By (H2), the second term is uniformly bounded in the operator norm,
and $T(z)$ is uniformly bounded for $z\in\bar\D\setminus B_\epsilon(1)$.
\end{proof}

\subsection{Strategy of the proof of Theorem~\ref{thm-conv}}
\label{sec-strategy}

Our aim is to compute the operators $T_n$ defined above.
Most of the analysis is carried out on the unit circle $S^1$, so it
is convenient to abuse notation, writing $T(\theta)$ instead of 
$T(e^{i\theta})$ and so on.    For $\beta\in(\frac12,1)$,
our  treatment follows Garsia \& 
Lamperti~\cite{GarsiaLamperti62} but there are some significant differences
in two of the three steps.

The first step, Section~\ref{sec-T}, is to study the singularity for $T(\theta)$
at $\theta=0$.   The argument 
in~\cite{GarsiaLamperti62} is scalar and similar results can be found 
in~\cite{Feller49,Hardy31}.   In our situation, the key is to 
use the fact that the return time $\varphi$ lies in the domain of a 
stable law, and a Nagaev-type argument due to
Aaronson \& Denker~\cite{AaronsonDenker01} shows that 
$(1-\lambda(\theta))^{-1}\sim {\rm const}\, \ell(1/\theta)^{-1}\theta^{-\beta}$ 
when $\beta\in(0,1)$.  By Corollary~\ref{cor-H1} and Proposition~\ref{prop-PQ},
$T(\theta)\sim {\rm const}\, \ell(1/\theta)^{-1}\theta^{-\beta}P$.

In particular $T(\theta)\in L^1$ with Fourier coefficients $\widehat T_n$.
The second step is to relate $T_n$ to $\widehat T_n$.
In the scalar case,~\cite{GarsiaLamperti62} invokes ideas of
Herglotz~\cite{Herglotz11} on analytic functions with positive real part.
A different approach is required here since we are working with operators.  
In Section~\ref{sec-Fourier}, we verify that $T_n=\widehat T_n$. 

In the final step, Section~\ref{sec-half}, we 
investigate the behaviour of $\widehat T_n$ as $n\to\infty$, directly
following~\cite{GarsiaLamperti62}.

\subsection{Tower extensions}
\label{sec-tower}

The following tower construction will be required in Sections~\ref{sec-pde}
and~\ref{sec-L}. 

Starting from the first return map 
$F=f^\varphi:Y\to Y$, there is a standard way of constructing an
extension $f_\Delta:\Delta\to\Delta$ of the underlying map $f:X\to X$.
Define the {\em tower} $\Delta=\{(y,j)\in Y\times\Z:0\le j<\varphi(y)\}$
and the {\em tower map} $f_\Delta:\Delta\to\Delta$ given by
$f_\Delta(y,j)=(y,j+1)$ for $j\le \varphi(y)-2$
and $f_\Delta(y,j)=(Fy,0)$ for $j= \varphi(y)-1$.

The base of the tower $\{(y,0):y\in Y\}$ is naturally identified with
$Y$ and so we may regard $Y$ as a subset of both $X$ and $\Delta$.
Let $\mu_\Delta$ be the unique $f_\Delta$-invariant measure on $\Delta$ 
that agrees with the underlying measure $\mu$ on the common subset $Y$.

Define the projection $\pi:\Delta\to X$, $\pi(y,j)=f^jy$.
Then $\pi f_\Delta = f \pi$ and $\pi_*\mu_\Delta=\mu$.
Thus $f_\Delta$ is an extension of $f$ with the same first return map 
$F:Y\to Y$ and return time function $\varphi:Y\to\Z^+$ as the original map.

\section{Asymptotics of $T(\theta)$}
\label{sec-T}

In this section, we obtain an asymptotic expression for $T(\theta)$ as 
$\theta\to0^+$. Throughout, $\beta\in(0,1)$.
The main part of the analysis is to understand the asymptotics of the 
leading eigenvalue $\lambda(\theta)$.   In certain situations,
we obtain a higher order expansion.
Let $c_\beta=-i\int_0^\infty e^{i\sigma}\sigma^{-\beta}\,d\sigma$.   

\begin{lemma} \label{lem-AD}  Let $\beta\in(0,1)$.  
As $\theta\to0^+$,
\begin{itemize}
\item[(a)]
$\lambda(\theta)=1-c_\beta\ell(1/\theta)\theta^\beta(1+o(1))$.
\item[(b)] $T(\theta)-(1-\lambda(\theta))^{-1}P=O(1)$.
\item[(c)] 
$T(\theta)=c_\beta^{-1}\ell(1/\theta)^{-1}\theta^{-\beta}(1+o(1))P+O(1)$.
\end{itemize}
\end{lemma}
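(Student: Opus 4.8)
The plan is to prove (a) first, then deduce (b) and (c) as formal consequences. The heart of the matter is the asymptotics of the leading eigenvalue $\lambda(\theta)$ near $\theta=0$. Since $R(1)=R$ is the transfer operator of the first return map $F$ and $\lambda(1)=1$ with eigenfunction $v(1)\equiv1$, I would extract $\lambda(\theta)$ by pairing the eigenvalue relation against the invariant measure: applying $\int_Y(\cdot)\,d\mu$ to $R(\theta)v(\theta)=\lambda(\theta)v(\theta)$ and using $\int_Y R(\theta)w\,d\mu = \int_Y w\,e^{i\theta\varphi}\,d\mu$ together with the normalization $\int_Y v(\theta)\,d\mu=1$ gives $\lambda(\theta)=\int_Y v(\theta)\,e^{i\theta\varphi}\,d\mu$. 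Writing $v(\theta)=1+(v(\theta)-1)$ and using Corollary~\ref{cor-H1} (so that $\|v(\theta)-1\|=O(m(1/\theta)\theta^\beta)$, hence $o(1)$) one reduces to estimating $\int_Y(e^{i\theta\varphi}-1)\,d\mu = \int_Y(e^{i\theta\varphi}-1)\,d\mu$, i.e.\ the characteristic-function-type integral $\sum_{n\ge1}(e^{i\theta n}-1)\mu(\varphi=n)$. This is the classical computation showing that a $\Z^+$-valued random variable with tail $\mu(\varphi>n)=\ell(n)n^{-\beta}$, $\beta\in(0,1)$, lies in the domain of a stable law: Abel summation converts the sum into $(e^{i\theta}-1)\sum_{n\ge0}e^{i\theta n}\mu(\varphi>n)$, and then a Riemann-sum/regular-variation argument (splitting at $n\sim1/\theta$, using Proposition~\ref{prop-Karamata}) yields $\int_Y(e^{i\theta\varphi}-1)\,d\mu = -c_\beta\,\ell(1/\theta)\theta^\beta(1+o(1))$ with $c_\beta=-i\int_0^\infty e^{i\sigma}\sigma^{-\beta}\,d\sigma$. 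The cross term $\int_Y(v(\theta)-1)(e^{i\theta\varphi}-1)\,d\mu$ is $o(m(1/\theta)\theta^\beta)$ since it is bounded by $\|v(\theta)-1\|\,\|R(\theta)-R(1)\|$ in operator terms, again $o(1)$ relative to the main term. Combining, $\lambda(\theta)-1 = -c_\beta\ell(1/\theta)\theta^\beta(1+o(1))$, which is (a). This is essentially the Nagaev–Aaronson–Denker argument cited in Section~\ref{sec-strategy}; I expect this regular-variation estimate to be the main obstacle, mostly in handling the slowly varying factor carefully.

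For (b), Proposition~\ref{prop-PQ} already gives $\|T(\theta)-(1-\lambda(\theta))^{-1}P(\theta)\|\le C$ for $\theta$ small and nonzero, so it remains to replace $P(\theta)$ by $P=P(1)$ at an acceptable cost. Write
\[
(1-\lambda(\theta))^{-1}P(\theta) - (1-\lambda(\theta))^{-1}P = (1-\lambda(\theta))^{-1}(P(\theta)-P).
\]
By Corollary~\ref{cor-H1}, $\|P(\theta)-P\| = O(m(1/\theta)\theta^\beta)$, while by part (a), $|1-\lambda(\theta)|^{-1} = O(\ell(1/\theta)^{-1}\theta^{-\beta})$. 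Since $m(1/\theta)=\ell(1/\theta)$ for $\beta\in(0,1)$, the product is $O(1)$, giving $T(\theta)-(1-\lambda(\theta))^{-1}P = O(1)$, which is (b).

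Finally, (c) is immediate from (a) and (b): part (a) gives $(1-\lambda(\theta))^{-1} = c_\beta^{-1}\ell(1/\theta)^{-1}\theta^{-\beta}(1+o(1))$, and substituting this into (b) yields
\[
T(\theta) = c_\beta^{-1}\ell(1/\theta)^{-1}\theta^{-\beta}(1+o(1))P + O(1),
\]
as claimed. The only point to check here is that the scalar $(1+o(1))$ inversion is legitimate, which it is because $\lambda(\theta)\to1$ forces $1-\lambda(\theta)\neq0$ for $\theta$ small, so reciprocation and the expansion $ (1+x)^{-1}=1+o(1)$ apply.
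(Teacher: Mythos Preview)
Your proposal is correct and follows essentially the same route as the paper: part~(a) is the Nagaev--Aaronson--Denker eigenvalue expansion (the paper simply cites~\cite{AaronsonDenker01} and reproduces the details later in Lemmas~\ref{lem-ADerror} and~\ref{lem-zestimate}, while you sketch the same computation directly), and your arguments for (b) and (c) via Proposition~\ref{prop-PQ}, Corollary~\ref{cor-H1}, and part~(a) are identical to the paper's. The only minor imprecision is the phrasing of the cross-term bound in~(a) as ``$\|v(\theta)-1\|\,\|R(\theta)-R(1)\|$''---it is not literally that operator-norm product, but rather $|v(\theta)-1|_\infty\cdot\int_Y|e^{i\theta\varphi}-1|\,d\mu$, and both factors are $O(\ell(1/\theta)\theta^\beta)$ by Corollary~\ref{cor-H1} and the same Abel-summation/Karamata estimate you use for the main term, so the conclusion stands.
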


\begin{proof}
(a) This is part of~\cite[Theorem~5.1]{AaronsonDenker01}. 
(The main ideas of the proof are reproduced later in the proof of
Lemma~\ref{lem-ADerror} and Lemma~\ref{lem-zestimate}.)

\noindent (b) By Proposition~\ref{prop-PQ},
\[
T(\theta)= (1-\lambda(\theta))^{-1}P(\theta)+
O(1)
=(1-\lambda(\theta))^{-1}P+
(1-\lambda(\theta))^{-1}(P(\theta)-P)+O(1).
\]
By Corollary~\ref{cor-H1}, $P(\theta)-P\ll \ell(1/\theta)\theta^\beta$,
so the result follows from (a).

\noindent (c) is immediate by (a) and (b).
\end{proof}

The following expansion for $\lambda(\theta)$ will be used in 
proving results on second order asymptotics (Section~\ref{sec-second}).

\begin{lemma} \label{lem-ADerror}
Suppose that $\mu(\varphi>n)=c(n^{-\beta}+H(n))$, where $c>0$ 
and $H(n)=O(n^{-q})$, $q>1$.
Let $H_1(x)=[x]^{-\beta}-x^{-\beta}+H([x])$ and set
$c_H=\int_0^\infty H_1(x)\,dx$.    Then
\[
\lambda(\theta)=1-cc_\beta\theta^\beta+icc_H\theta+O(\theta^{2\beta},\theta^q),
\enspace\text{as $\theta\to0^+$}.
\]
\end{lemma}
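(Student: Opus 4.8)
The plan is to carry out the Nagaev--type perturbation reduction behind Lemma~\ref{lem-AD}(a), but keeping one more term, and then to sharpen the stable--law asymptotics of the resulting scalar quantity by an Euler--Maclaurin comparison of sums with integrals. Write $z=e^{i\theta}$ and take $\theta>0$ small enough that the perturbed eigendata $\lambda(z)$, $v(z)$ is defined. Applying $\int_Y\cdot\,d\mu$ to the eigenvalue relation $R(z^\varphi v(z))=\lambda(z)v(z)$, and using $\int_Y Rw\,d\mu=\int_Y w\,d\mu$ together with the normalisation $\int_Y v(z)\,d\mu=1$, gives $\lambda(z)=\int_Y z^\varphi v(z)\,d\mu$, hence
\[
1-\lambda(z)=\int_Y(1-z^\varphi)\,d\mu+\int_Y(1-z^\varphi)(v(z)-1)\,d\mu .
\]
So it suffices to expand the scalar quantity $1-\int_Y z^\varphi\,d\mu$ to order $\theta$ and to check that the eigenfunction correction is within the stated error.

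For the eigenfunction term, Corollary~\ref{cor-H1} gives $\|v(z)-1\|\ll m(1/\theta)\theta^\beta=O(\theta^\beta)$ --- here $\ell$, hence $m$, is asymptotically constant, since the hypothesis forces $\mu(\varphi>n)\sim cn^{-\beta}$ --- while regular variation and Karamata's theorem (Proposition~\ref{prop-Karamata}) give the standard bound $\int_Y|1-z^\varphi|\,d\mu\ll\int_Y\min(2,\theta\varphi)\,d\mu\ll\theta^\beta$ (split at $\varphi\approx1/\theta$). Hence $\int_Y(1-z^\varphi)(v(z)-1)\,d\mu=O(\theta^{2\beta})$, which is absorbed in the claimed error.

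For the scalar part I would use Abel summation, $1-\int_Y z^\varphi\,d\mu=(1-z)\sum_{n\ge0}\mu(\varphi>n)z^n$, and substitute $\mu(\varphi>n)=c(n^{-\beta}+H(n))$. For the $n^{-\beta}$ part, since $\int_1^\infty[x]^{-\beta}z^x\,dx=\frac{e^{i\theta}-1}{i\theta}\sum_{n\ge1}n^{-\beta}z^n$ one has $\sum_{n\ge1}n^{-\beta}z^n=\frac{i\theta}{e^{i\theta}-1}\bigl(\int_0^\infty x^{-\beta}z^x\,dx-\int_0^1 x^{-\beta}z^x\,dx+\int_1^\infty([x]^{-\beta}-x^{-\beta})z^x\,dx\bigr)$, where $\int_0^\infty x^{-\beta}z^x\,dx=\theta^{\beta-1}\int_0^\infty\sigma^{-\beta}e^{i\sigma}\,d\sigma=ic_\beta\theta^{\beta-1}$ (definition of $c_\beta$, substitution $\sigma=\theta x$). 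Multiplying by $c(1-z)\sim-ic\theta$, the $ic_\beta\theta^{\beta-1}$ part yields the main term $cc_\beta\theta^\beta$, while the remaining three bounded pieces --- the boundary term $\int_0^1x^{-\beta}\,dx$, the Euler--Maclaurin defect $\int_1^\infty([x]^{-\beta}-x^{-\beta})\,dx$, and the next term of $\frac{i\theta}{e^{i\theta}-1}=1-\frac{i\theta}{2}+O(\theta^2)$ --- contribute a term of order $\theta$. For the $H$ part, since $q>1$ the series $\sum_{n\ge1}H(n)z^n$ converges absolutely with $\sum_{n\ge1}H(n)(z^n-1)$ small, giving a contribution $-ic\theta\sum_{n\ge1}H(n)+O(\theta^q)$. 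Collecting the bounded contributions and recognising $c_H=\int_0^\infty([x]^{-\beta}-x^{-\beta}+H([x]))\,dx$ should yield $1-\lambda(\theta)=cc_\beta\theta^\beta-icc_H\theta+O(\theta^{2\beta},\theta^q)$, the leftover cross terms being $O(\theta^{1+\beta})\subseteq O(\theta^{2\beta})$.

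The perturbative reduction and the $O(\theta^{2\beta})$ bound on the eigenfunction correction are routine. The main obstacle is the last step: one must account for every constant contribution --- the Euler--Maclaurin defect of $x\mapsto x^{-\beta}$, the $\int_0^1$ boundary piece, the $O(\theta)$ correction in $\frac{i\theta}{e^{i\theta}-1}$, the term $\mu(\varphi>0)$, and $\sum_n H(n)$ --- verify that their combination is precisely $cc_H$ with $c_H$ as defined, and confirm that no cross term exceeds $O(\theta^{2\beta},\theta^q)$ (in particular that the mixed $\theta^{1+\beta}$-terms are dominated, using $\beta\le1$). This is delicate bookkeeping rather than a conceptual difficulty, but it is where essentially all the work lies.
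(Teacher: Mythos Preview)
Your approach is correct and follows the same outline as the paper's: reduce via the Nagaev identity $\lambda(\theta)=\int_Y e^{i\theta\varphi}v(\theta)\,d\mu$, separate the scalar part from the eigenfunction correction, and expand the scalar part using the tail probabilities. The paper passes to the Stieltjes integral $\int_0^\infty(e^{i\theta x}-1)\,dG(x)$ and integrates by parts to $i\theta\int_0^\infty e^{i\theta x}(1-G(x))\,dx$, which is the continuous analogue of your Abel summation $(1-z)\sum_{n\ge0}\mu(\varphi>n)z^n$; the two computations are equivalent. Your handling of the eigenfunction correction is actually simpler: the paper decomposes $\hat v_\theta-1$ into four non-negative parts, integrates by parts to obtain monotone factors $g_\theta^s(x)(1-G(x))$, and then uses an oscillatory-integral bound to reach $O(\theta^{2\beta})$, whereas your one-line H\"older estimate $|v(\theta)-1|_\infty\cdot\int_Y|1-e^{i\theta\varphi}|\,d\mu=O(\theta^\beta)\cdot O(\theta^\beta)$ gives the same result directly.

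One small clarification in your bookkeeping: the expansion of $\frac{i\theta}{e^{i\theta}-1}$ does not contribute at order~$\theta$, since $(1-z)\cdot\frac{i\theta}{e^{i\theta}-1}=-i\theta$ exactly. The order-$\theta$ coefficient in $1-\lambda(\theta)$ comes solely from the $\mu(\varphi>0)=1$ term, the boundary piece $-\int_0^1 x^{-\beta}\,dx$, the Euler--Maclaurin defect $\int_1^\infty([x]^{-\beta}-x^{-\beta})\,dx$, and $\sum_{n\ge1}H(n)$. To see that these combine to $cc_H$ you need to read $H_1$ on $(0,1)$ via the implicit definition $1-G(x)=c(x^{-\beta}+H_1(x))$, i.e.\ $H_1(x)=c^{-1}-x^{-\beta}$ there; then $c\int_0^\infty H_1(x)\,dx = 1 - \frac{c}{1-\beta} + c\int_1^\infty([x]^{-\beta}-x^{-\beta})\,dx + c\sum_{n\ge1}H(n)$, matching your four pieces. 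With this understood, the remaining cross terms are all $O(\theta^{1+\beta})\subset O(\theta^{2\beta})$ or $O(\theta^q)$, as you say.
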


\begin{proof}
We follow~\cite[Theorem~5.1]{AaronsonDenker01}.
Recall that $v(\theta)$ is the eigenfunction corresponding to $\lambda(\theta)$
normalised so that $\int v(\theta)\,d\mu=1$.
Since $v(0)\equiv1$,
it follows from Corollary~\ref{cor-H1} that $|v(\theta)-1|_\infty\ll \theta^\beta$.
Let $\mathcal{F}_0$ denote the $\sigma$-algebra generated by $\varphi$.
Define the step function 
$\hat v(\theta):[0,\infty)\to\C$ given by
$\hat v(\theta)\circ \varphi=E(v(\theta)|\mathcal{F}_0)$ and the distribution function
$G(x)=\mu(\varphi\le x)$.
Then
\begin{align*}
\lambda(\theta) & =
{\SMALL\int}_Y R(\theta)v(\theta)\,d\mu={\SMALL\int}_Y e^{i\theta\varphi}v(\theta) \,d\mu 
=1+{\SMALL\int}_Y (e^{i\theta\varphi}-1)v(\theta)\,d\mu \\[.75ex] & =
1+{\SMALL\int}_0^\infty (e^{i\theta x}-1)\hat v_\theta(x)\,dG(x),
\end{align*}
where $\sup_{x\ge0}|\hat v_\theta(x)-1|\ll \theta^\beta$ and
$1-G(x)=c(x^{-\beta}+H_1(x)).
$
Here $H_1(x)=O(x^{-q'})$ as $x\to\infty$, 
where $q'=\min\{q,\beta+1\}>1$.   
In particular, $H_1$ is integrable.

Write $\hat v_\theta=1+v_\theta^1-v_\theta^2+iv_\theta^3-iv_\theta^4$,
where $v_\theta^s\ge0$ and $\sup_x |v_\theta^s(x)|\ll \theta^\beta$ for
$s=1,2,3,4$.   Define the positive measure $dG_\theta^s=v_\theta^s\, dG$.
Then
\begin{align*}
\lambda(\theta)&=1+\int_0^\infty (e^{i\theta x}-1)\,dG(x)+
\sum_{s=1}^4 q_s\int_0^\infty (e^{i\theta x}-1)\,dG_\theta^s(x) \\
&=1+i\theta\int_0^\infty e^{i\theta x}(1-G(x))\,dx+
\sum_{s=1}^4 q_si\theta\int_0^\infty e^{i\theta x}g_\theta^s(x)(1-G(x))\,dx,
\end{align*}
where $q_1=1$, $q_2=-1$, $q_3=i$, $q_4=-i$, 
$g_\theta^s(x)=\int_x^\infty v_\theta^s(u)\,dG(u)/
\int_x^\infty \,dG(u)\ll\theta^\beta$.

Now, 
\begin{align*}
& i\theta \int_0^\infty e^{i\theta x}(1-G(x))\,dx 
 = 
ic\theta\int_0^\infty e^{i\theta x}x^{-\beta}\,dx +
ic\theta\int_0^\infty e^{i\theta x}H_1(x)\,dx 
\\ & \qquad = ic\theta^\beta \int_0^\infty e^{i\sigma}\sigma^{-\beta}\,d\sigma +
ic\theta\int_0^\infty H_1(x)\,dx +
ic\theta\int_0^\infty (e^{i\theta x}-1)H_1(x)\,dx \\
& \qquad = -cc_\beta\theta^\beta +icc_H \theta + ic\theta A
\end{align*}
where
\begin{align*}
A & = 
\int_0^\infty  (e^{i\theta x}-1)H_1(x)\,dx =
\int_0^{1/\theta} (e^{i\theta x}-1)H_1(x)\,dx +
\int_{1/\theta}^\infty (e^{i\theta x}-1)H_1(x)\,dx \\
& \ll \theta\int_0^{1/\theta} xH_1(x)\,dx +
\int_{1/\theta}^\infty H_1(x)\,dx 
\ll \theta\int_0^{1/\theta} x^{1-q'}\,dx +
\int_{1/\theta}^\infty x^{-q'}\,dx 
 \ll \theta^{q'-1}.
\end{align*}

It remains to estimate the terms 
$\int_0^\infty e^{i\theta x}g_\theta^s(x)(1-G(x))\,dx$.
We give the details for $\int_0^\infty \sin\theta x\,g_\theta^s(x)(1-G(x))\,dx$; the case with $\sin$ replaced by $\cos$ is identical.
Since $x\mapsto g_\theta^s(x)(1-G(x))$ is decreasing for each fixed $\theta$,
we can write
\begin{align*}
 \int_0^\infty \sin\theta x\,g_\theta^s(x)(1-G(x))\,dx  
& \le
\int_0^{\pi/\theta} \sin\theta x\,g_\theta^s(x)(1-G(x))\,dx 
\\ & \ll \theta^\beta\int_0^{\pi/\theta} x^{-\beta}\,dx \ll \theta^{2\beta-1},
\end{align*}
giving the required upper bound, and the lower bound is obtained in the
same way.~
\end{proof}

\section{Identification of the Fourier coefficients}
\label{sec-Fourier}

Let $\widehat R_n$ and $\widehat T_n$ denote the Fourier coefficients of $R(\theta)$ and $T(\theta)$.
By (H1), $R$ is uniformly absolutely summable on $S^1$.   Therefore 
$\widehat R_n=R_n$.   
In this section, we verify that $\widehat T_n=T_n$ for all $n\ge0$.
Throughout, $\beta\in(0,1)$.

\begin{lemma}   \label{lem-zestimate}
There exists $\epsilon,C>0$ such that
$|1-\lambda(z)|\ge C\ell(1/\theta)\theta^\beta$ for all 
$z=\rho e^{i\theta}\in\bar\D \cap B_\epsilon(1)$.
\end{lemma}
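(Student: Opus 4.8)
The plan is to extract the lower bound from $\Re(1-\lambda(z))$, after peeling off a negligible eigenfunction correction. Write $z=\rho e^{i\theta}$; since $z\in B_\epsilon(1)$, both $1-\rho$ and $|\theta|$ are small. The transfer operator $R$ satisfies $\int_Y Rw\,d\mu=\int_Y w\,d\mu$, and $R(z)v(z)=\lambda(z)v(z)$ with $\int_Y v(z)\,d\mu=1$, so $\lambda(z)=\int_Y R(z^\varphi v(z))\,d\mu=\int_Y z^\varphi v(z)\,d\mu$, giving
\[
1-\lambda(z)=I(z)-J(z),\qquad I(z)=\int_Y(1-z^\varphi)\,d\mu,\quad J(z)=\int_Y(z^\varphi-1)(v(z)-1)\,d\mu.
\]
Since $1-\rho^\varphi\ge0$ and $1-\cos\varphi\theta\ge0$,
\[
\Re I(z)=\int_Y(1-\rho^\varphi)\,d\mu+\int_Y\rho^\varphi(1-\cos\varphi\theta)\,d\mu\ge0,
\]
and truncating ($1-\rho^\varphi\ge\tfrac12$ once $\varphi\ge(\log 2)/|\log\rho|\asymp1/(1-\rho)$) together with regular variation of $\mu(\varphi>n)=\ell(n)n^{-\beta}$ (Proposition~\ref{prop-Karamata}) shows $\int_Y(1-\rho^\varphi)\,d\mu\gg\ell(\tfrac1{1-\rho})(1-\rho)^\beta$.

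I would then show $\Re I(z)\gg\ell(1/|\theta|)|\theta|^\beta$ as well, distinguishing two regimes. If $1-\rho\ge|\theta|$, this is immediate from the previous bound since $x\mapsto\ell(1/x)x^\beta$ is, up to constants, nondecreasing (Potter's bounds), so $\ell(\tfrac1{1-\rho})(1-\rho)^\beta\gg\ell(1/|\theta|)|\theta|^\beta$. If $1-\rho\le|\theta|$, I would restrict the oscillatory integral to a window $W$ of return times of size $\asymp1/|\theta|$ on which $\cos\varphi\theta\le-\tfrac12$; there $\varphi\le c'/|\theta|$ and $\rho\ge1-|\theta|$, so $\rho^\varphi\ge(1-|\theta|)^{c'/|\theta|}$ is bounded below by a positive constant for $|\theta|$ small, while $\mu(\varphi\in W)\gg\ell(1/|\theta|)|\theta|^\beta$ by regular variation, whence $\int_Y\rho^\varphi(1-\cos\varphi\theta)\,d\mu\gg\ell(1/|\theta|)|\theta|^\beta$. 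In all cases $\Re I(z)\gg\ell(\tfrac1{1-\rho})(1-\rho)^\beta+\ell(1/|\theta|)|\theta|^\beta$.

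Finally I would absorb $J(z)$. From $|1-\rho^\varphi|\le\min\{1,2\varphi(1-\rho)\}$ and $|1-e^{i\varphi\theta}|\le\min\{2,\varphi|\theta|\}$, the same Karamata estimates used above (now as upper bounds) give $\int_Y|z^\varphi-1|\,d\mu\ll\ell(\tfrac1{1-\rho})(1-\rho)^\beta+\ell(1/|\theta|)|\theta|^\beta\ll\Re I(z)$, while $|v(z)-1|_\infty\to0$ as $z\to1$ by continuity of the eigenfunction family (Corollary~\ref{cor-H1}). Hence $|J(z)|\le|v(z)-1|_\infty\int_Y|z^\varphi-1|\,d\mu\le\tfrac12|I(z)|$ once $z$ is close enough to $1$, so that $|1-\lambda(z)|\ge|I(z)|-|J(z)|\ge\tfrac12\Re I(z)\gg\ell(1/|\theta|)|\theta|^\beta$, which is the claim. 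The main obstacle is the lower bound on the oscillatory integral $\int_Y\rho^\varphi(1-\cos\varphi\theta)\,d\mu$: the cancellation in $1-\cos\varphi\theta$ over a full period forces one to isolate return times of the critical scale $\asymp1/\theta$, and one must check that the geometric damping $\rho^\varphi$ does not destroy the mass of that window — which works precisely because $z\in B_\epsilon(1)$ couples $1-\rho$ and $|\theta|$, keeping $\rho^{O(1/\theta)}$ bounded below in the only regime where the term is needed.
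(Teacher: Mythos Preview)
Your argument is correct and takes a genuinely different route from the paper's.  The paper integrates by parts to rewrite $1-\lambda(z)$ as $(u-i\theta)\int_0^\infty e^{(-u+i\theta)x}g_z(x)(1-G(x))\,dx$ (one main term with $g_z\equiv1$ and four error terms with $|g_z|_\infty=o(1)$), then does oscillatory-integral (Leibnitz alternating-series) estimates on the resulting integrals after substituting $\sigma=\theta x$ and setting $y=u/\theta$.  The two regimes are $y\ge1/\delta$ and $y\le1/\delta$: the former is handled by dominant-term analysis of $\int_0^{\pi/4}e^{-\sigma y}\sigma^{-\beta}h(\sigma/\theta)\ell(1/\theta)^{-1}\,d\sigma$, the latter by a compactness argument showing $\int_b^{3\pi/2}e^{-\sigma y}\cos\sigma\,\sigma^{-\beta}\,d\sigma$ is bounded away from zero.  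Your approach is more direct: you bypass the integration by parts entirely, working with the probabilistic identity $\lambda(z)=\int z^\varphi v(z)\,d\mu$ and the splitting $\Re\int(1-z^\varphi)=\int(1-\rho^\varphi)+\int\rho^\varphi(1-\cos\varphi\theta)$, extracting the lower bound from a simple tail truncation for the first piece and from restriction to the single window $\varphi\theta\in[2\pi/3,4\pi/3]$ for the second.  Your argument is more elementary and self-contained for this lemma; the paper's oscillatory-integral machinery is heavier here but is recycled in Section~\ref{sec-one} for the $\beta=1$ case, where the more delicate asymptotics of $I_C$ and $I_S$ are needed and your window argument would not yield them.

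One small inaccuracy of phrasing: in your closing sentence you attribute the bound $\rho^{O(1/\theta)}\ge c>0$ to ``$z\in B_\epsilon(1)$ coupling $1-\rho$ and $|\theta|$'', but this really comes from your regime hypothesis $1-\rho\le|\theta|$ (giving $\rho^{c'/|\theta|}\ge(1-|\theta|)^{c'/|\theta|}\to e^{-c'}$), not from membership in $B_\epsilon(1)$.  The mathematics is unaffected.
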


\begin{proof}
We start off by mimicking the proof of Lemma~\ref{lem-ADerror}.
Consider functions $v_z:[0,\infty)\to[0,\infty)$ satisfying either (i) $v_z\equiv1$ or
(ii) $|v_z|_\infty=o(1)$ as $z\to1$.
Write $z=e^{-u+i\theta}$, $0\le u,\theta<\epsilon$.
Then the expansion $\lambda(z)-1$ leads to a linear combination
of five integrals of the form
\begin{align} \label{eq-five}
I=\int_0^\infty (e^{(-u+i\theta)x}-1)v_z(x)\,dG(x)=
(-u+i\theta)\int_0^\infty e^{(-u+i\theta)x}g_z(x)(1-G(x))dx,
\end{align}
where $g_z\ge0$ and either (i) $g_z\equiv1$ or (ii) $|g_z|_\infty=o(1)$ as $z\to1$.
Moreover there is one integral of type (i) and we show that
this satisfies the desired lower bound, whilst the four integrals
of type (ii) are negligible.

Recall that $1-G(x)=\mu(\varphi>x)=\ell([x])[x]^{-\beta}
=x^{-\beta}h(x)$ where $h(x)=\ell(x)(1+o(1))$.   Substituting
$\sigma=\theta x$,
\[
I=(-u+i\theta)\ell(1/\theta)
\theta^{\beta-1}\int_0^\infty e^{-\sigma y}e^{i\sigma}g_z(\sigma/\theta)
\sigma^{-\beta}h(\sigma/\theta)\ell(1/\theta)^{-1}\,d\sigma,
\]
where $y=u/\theta$.
We estimate the oscillatory integrals in the same way that alternating series are
estimated in Leibnitz's theorem, making extensive use of the fact that
$\sigma\mapsto e^{-\sigma y}g_z(\sigma/\theta)
\sigma^{-\beta}h(\sigma/\theta)$ is decreasing for each fixed $\theta,y$.

Write
$|I|= (u^2+\theta^2)^{1/2}\ell(1/\theta)\theta^{\beta-1}A$,
where in cases (i) and (ii) respectively,
\begin{align*}
A_{(i)}& \ge \int_0^{3\pi/2} \cos\sigma\, e^{-\sigma y}\sigma^{-\beta}h(\sigma/\theta)\ell(1/\theta)^{-1}\,d\sigma, \\
A_{(ii)} & \le 
 \Bigl(\int_0^{\pi/2}\cos\sigma+\int_0^{\pi}\sin\sigma\Bigr)e^{-\sigma y}
\sigma^{-\beta}
g_z(\sigma/\theta)h(\sigma/\theta)\ell(1/\theta)^{-1}\,d\sigma.
\end{align*}

We divide the region $y>0$ into the regions $y\in(0,1/\delta]$
and $y\ge1/\delta$ where $\delta$ is chosen sufficiently small.
We have the Potter's bounds~\cite{Potter42},~\cite[Theorem 1.5.6]{BinghamGoldieTeugels}: $C^{-1}\sigma^\beta\le h(\sigma/\theta)\ell(1/\theta)^{-1}\le C\sigma^{-1}$ uniformly in $\theta>0$, $\sigma\in(0,2\pi]$, where $C$ is a constant.
Hence
\begin{align*}
A_{(i)} & \ge
  \frac{\sqrt 2}{2}\int_0^{\pi/4}e^{-\sigma y}\sigma^{-\beta}h(\sigma/\theta)\ell(1/\theta)^{-1} \,d\sigma+O\Bigl(
\int_{\pi/4}^{3\pi/2}e^{-\sigma y} \,d\sigma\Bigr) 
 \\ & = \frac{\sqrt 2}{2}\int_0^{\pi/4}e^{-\sigma y}\sigma^{-\beta}h(\sigma/\theta)\ell(1/\theta)^{-1} \,d\sigma+O(y^{-1}e^{-(\pi/4) y}),
\end{align*}
\begin{align*}
A_{(ii)} & \le 2|g_z|_\infty\int_0^{\pi/4}e^{-\sigma y}\sigma^{-\beta}h(\sigma/\theta)\ell(1/\theta)^{-1} \,d\sigma +O\Bigl(\int_{\pi/4}^{\pi}e^{-\sigma y} \,d\sigma\Bigr) 
\\ & \ll |g_z|_\infty A_{(i)} + O(y^{-1}e^{-(\pi/4) y}).
\end{align*}
Since $|g_z|_\infty=o(1)$, we can choose $\epsilon$ sufficiently small
that 
\[
|1-\lambda(z)|\gg u\ell(1/\theta)\theta^{\beta-1}
\Big\{\int_0^{\pi/4}e^{-\sigma y}\sigma^{-\beta}h(\sigma/\theta)\ell(1/\theta)^{-1} \,d\sigma +O(y^{-1}e^{-(\pi/4) y})\Bigr\}.
\]
Furthermore,
\[
\int_0^{\pi/4}e^{-\sigma y}\sigma^{-\beta}h(\sigma/\theta)\ell(1/\theta)^{-1} \,d\sigma \gg \int_0^{\pi/4}e^{-\sigma y}\,d\sigma=y^{-1}+O(y^{-1}e^{-(\pi/4) y}).
\]
For $y\ge 1/\delta$ with $\delta$ sufficiently small, the terms 
$O(y^{-1}e^{-(\pi/4) y})$ are negligible so that
$1-\lambda(z)\gg u\ell(1/\theta)\theta^{\beta-1}y^{-1}=\ell(1/\theta)\theta^\beta$.

It remains to consider the complementary region $y\in(0,1/\delta]$.
Note that $\int_0^{3\pi/2} e^{-\sigma y}\cos\sigma\, \sigma^{-\beta}\,d\sigma$
depends continuously on $y$ and is positive for all $y\ge0$. It 
follows by compactness that 
$\int_0^{3\pi/2} e^{-\sigma y} \cos\sigma\, \sigma^{-\beta}\,d\sigma$ is bounded
away from zero for $y\in[0,1/\delta]$.   Moreover there exists $b\in(0,3\pi/2)$
such that 
$\int_b^{3\pi/2} e^{-\sigma y} \cos\sigma\, \sigma^{-\beta}\,d\sigma$ is bounded
away from zero for $y\in[0,1/\delta]$.   
By uniform convergence of slowly varying functions~\cite[Theorem~1.2.1]{BinghamGoldieTeugels}, we can shrink $\epsilon$ if necessary so that
$|h(\sigma/\theta)/\ell(1/\theta)-1|$ is as small as desired, uniformly
in $\sigma\in[b,3\pi/2]$ and $\theta\in(0,\epsilon]$.
Hence $A_{(i)}\ge \int_0^{3\pi/2} \cos\sigma\, e^{-\sigma y}\sigma^{-\beta}h(\sigma/\theta)\ell(1/\theta)^{-1}\,d\sigma$ which is bounded away from zero for $y\in(0,1/\delta]$.

Also, by Potter's bounds for any $\beta'\in(\beta,1)$,
\begin{align*}
A_{(ii)} & 
 \le 2|g_z|_\infty \int_0^{\pi}e^{-\sigma y}
\sigma^{-\beta}h(\sigma/\theta)\ell(1/\theta)^{-1}\,d\sigma 
 \ll |g_z|_\infty \int_0^{\pi}\sigma^{-\beta'}\,d\sigma\ll |g_z|_\infty=o(1).
\end{align*}
Hence $A_{(ii)}$ is negligible relative to $A_{(i)}$ and we obtain
$1-\lambda(z)\gg 
\theta\ell(1/\theta)\theta^{\beta-1}=\ell(1/\theta)\theta^\beta$
completing the proof.
\end{proof}

It is convenient in the next result (and crucial in Section~\ref{sec-one}) to discuss
the real part of an operator.
We recall that the operators $T_n$ are defined on the real Banach space 
$\mathcal{B}$.   Passing to the complexification, there is a natural conjugation
$u+iv\mapsto u-iv$ on $\mathcal{B}$.   Given an operator $A:\mathcal{B}\to\mathcal{B}$, define the conjugate $\bar A:\mathcal{B}\to\mathcal{B}$ by setting
$\bar A v = \overline{A\bar v}$, and the real part $\Re A=\frac12(A+\bar A)$.
In the case of the operator $T(z)=\sum_{n=0}^\infty T_nz^n$, this coincides
with the definitions $\overline{T(z)}=T(\bar z)$ and $\Re T(z)=\sum_{n=0}^\infty T_n\Re(z^n)$.

\begin{cor}   \label{cor-Fourier}
$T_n=\widehat T_n=
\frac{1}{\pi}\Re\int_0^\pi T(e^{i\theta})e^{-in\theta}\,d\theta$ 
for all $n\ge0$.
\end{cor}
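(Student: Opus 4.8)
The plan is to compare the Taylor coefficients $T_n$ of $T(z)=\sum_{n\ge0}T_nz^n$ on $\D$ with the Fourier coefficients $\widehat T_n=\frac{1}{2\pi}\int_{-\pi}^\pi T(e^{i\theta})e^{-in\theta}\,d\theta$ of its boundary values, by approximating from inside the disk ($\rho\to1^-$), and then to rewrite the answer using the conjugation symmetry $\overline{T(z)}=T(\bar z)$.

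First I would record that $T(\theta)\in L^1(S^1)$, so $\widehat T_n$ makes sense, and more: combining Proposition~\ref{prop-PQ} with the lower bound $|1-\lambda(z)|\ge C\ell(1/\theta)\theta^\beta$ of Lemma~\ref{lem-zestimate} gives $\|T(\rho e^{i\theta})\|\ll \ell(1/\theta)^{-1}\theta^{-\beta}$ uniformly in $\rho\in(\frac12,1]$ for $|\theta|\le\epsilon$, which is integrable on $(0,\epsilon)$ since $\beta<1$ (Potter's bounds control the slowly varying factor). For $|\theta|\in[\epsilon,\pi]$, the set $\{\rho e^{i\theta}:\rho\in[\frac12,1],\ \epsilon\le|\theta|\le\pi\}$ is a compact subset of $\bar\D\setminus\{1\}$, on which $T(z)=(I-R(z))^{-1}$ is continuous, hence bounded. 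Together these produce a single integrable majorant $M(\theta)$ with $\|T(\rho e^{i\theta})\|\le M(\theta)$ for all $\rho$ near $1$.

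Next, for fixed $\rho<1$ the series $\sum_n T_n\rho^n e^{in\theta}$ converges absolutely (choosing $\rho<\rho'<1$, the norms $\|T_n\|(\rho')^n$ are bounded since $\sum_nT_n(\rho')^n=(I-R(\rho'))^{-1}$ converges), so term-by-term integration gives $\frac{1}{2\pi}\int_{-\pi}^\pi T(\rho e^{i\theta})e^{-in\theta}\,d\theta=\rho^nT_n$ for $n\ge0$. Since $T(\rho e^{i\theta})\to T(e^{i\theta})$ as $\rho\to1^-$ for every $\theta\ne0$ (continuity of $(I-R(z))^{-1}$ on $\bar\D\setminus\{1\}$) and the integrands are dominated by $M(\theta)$, dominated convergence yields $T_n=\lim_{\rho\to1}\rho^nT_n=\frac{1}{2\pi}\int_{-\pi}^\pi T(e^{i\theta})e^{-in\theta}\,d\theta=\widehat T_n$.

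Finally, since the $T_n$ are real operators on $\mathcal B$ we have $\overline{T(e^{i\theta})}=T(e^{-i\theta})$, whence $\int_{-\pi}^0 T(e^{i\theta})e^{-in\theta}\,d\theta=\overline{\int_0^\pi T(e^{i\theta})e^{-in\theta}\,d\theta}$ and therefore $\int_{-\pi}^\pi=2\,\Re\int_0^\pi$, giving $T_n=\widehat T_n=\frac{1}{\pi}\Re\int_0^\pi T(e^{i\theta})e^{-in\theta}\,d\theta$. The only point that genuinely requires care is the uniform-in-$\rho$ integrable bound near $\theta=0$, which is exactly what Lemma~\ref{lem-zestimate} was set up to provide; the rest is routine analysis.
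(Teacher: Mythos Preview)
Your proof is correct and follows essentially the same route as the paper's: the Cauchy formula $\rho^n T_n=\frac{1}{2\pi}\int_{-\pi}^\pi T(\rho e^{i\theta})e^{-in\theta}\,d\theta$, the uniform-in-$\rho$ integrable bound $\|T(\rho e^{i\theta})\|\ll \ell(1/\theta)^{-1}\theta^{-\beta}$ obtained from Proposition~\ref{prop-PQ} and Lemma~\ref{lem-zestimate}, passage to the limit by dominated convergence, and the conjugation symmetry $\overline{T(z)}=T(\bar z)$ to reduce to $\frac{1}{\pi}\Re\int_0^\pi$. The paper's version is terser (it invokes Proposition~\ref{prop-PQ} directly for the bound away from $\theta=0$ rather than your compactness argument, and appeals to analyticity rather than your explicit $\rho<\rho'$ trick for absolute convergence), but the substance is identical.
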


\begin{proof}
Since $T(z)=\sum_{j=0}^\infty T_jz^j$ is analytic on the open unit disk $\D$,
$T_n =\frac{1}{2\pi}\rho^{-n}\int_0^{2\pi}T(\rho e^{i\theta}) 
e^{-in\theta}\,d\theta$, for all $\rho\in(0,1)$.

By Proposition~\ref{prop-PQ}, $T(z)=O(1)$ on $\bar\D\setminus B_\epsilon(1)$.
Further, on $\bar\D\cap B_\epsilon(1)$,
$T(z)=(1-\lambda(z))^{-1}P(z)+O(1)\ll (1-\lambda(z))^{-1}+O(1)$.
By Lemma~\ref{lem-zestimate}, $\|T(\rho e^{i\theta})\|\ll 
\ell(1/\theta)^{-1}\theta^{-\beta}$ for $z=\rho e^{i\theta}\in
\bar\D$ uniformly in $\rho$.
Since $\ell(1/\theta)^{-1}\theta^{-\beta}$ is integrable, it follows from the 
dominated convergence theorem (as $\rho\to1$) that
$T_n= \frac{1}{2\pi}\int_0^{2\pi}T(e^{i\theta}) e^{-in\theta}\,d\theta=
\widehat T_n$.   Since $T(\bar z)=\overline{T(z)}$, we obtain the expression 
$\frac{1}{\pi}\Re\int_0^\pi T(e^{i\theta})e^{-in\theta}\,d\theta$.
\end{proof}

\section{Convergence for $\beta\in(\frac12,1)$}
\label{sec-half}

In this section, we prove Theorem~\ref{thm-conv} for $\beta\in(\frac12,1)$.

\begin{lemma} \label{lem-GL}
Let $\beta\in(\frac12,1)$.  Let $n\ge1$, $a\in[1,n]$.  
Then for any $\beta'\in(0,\beta)$, 
\[
\ell(n) n^{1-\beta}
\int_{a/n}^\pi T(\theta) e^{-in\theta}\,d\theta\ll a^{-(2\beta'-1)}.
\]
If $\ell$ is asymptotically constant, then the result holds with $\beta'=\beta$.
\end{lemma}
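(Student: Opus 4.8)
The plan is to estimate the oscillatory integral $\int_{a/n}^\pi T(\theta)e^{-in\theta}\,d\theta$ by combining the asymptotic description of $T(\theta)$ near $\theta=0$ from Lemma~\ref{lem-AD} with a standard integration-by-parts (or second-difference) argument for Fourier integrals of monotone-type functions, exactly as in Garsia \& Lamperti~\cite{GarsiaLamperti62}. First I would split the integral into the ``near'' range $\theta\in[a/n,\delta]$ and the ``far'' range $\theta\in[\delta,\pi]$ for a small fixed $\delta>0$. On the far range, Proposition~\ref{prop-PQ} gives $\|T(\theta)\|\le C$, so $\int_\delta^\pi T(\theta)e^{-in\theta}\,d\theta = O(1)$ (even $O(1/n)$ after one integration by parts, using Proposition~\ref{prop-H1} to control the derivative/variation of $T$); multiplying by $\ell(n)n^{1-\beta}$ this is negligible compared with $a^{-(2\beta'-1)}$ once one checks $n^{1-\beta}\ell(n)\to 0$ is false --- so in fact one must be slightly more careful and get $O(1/n)$ on the far part, which with the prefactor $\ell(n)n^{1-\beta}$ gives $O(\ell(n)n^{-\beta})\to 0$, dominated by $a^{-(2\beta'-1)}\ge (n)^{-(2\beta'-1)}$ when $2\beta'-1<\beta$; choosing $\beta'$ close enough to $\beta$ with $\beta>\tfrac12$ this works.

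The heart of the matter is the near range. Here I would use Lemma~\ref{lem-AD}(c): $T(\theta) = c_\beta^{-1}\ell(1/\theta)^{-1}\theta^{-\beta}(1+o(1))P + O(1)$. The $O(1)$ term contributes, after integration by parts, $O(1/n)$ times the prefactor, which is handled as above. For the main term, write $\phi(\theta) = \ell(1/\theta)^{-1}\theta^{-\beta}$ and estimate $\int_{a/n}^\delta \phi(\theta)e^{-in\theta}\,d\theta$. The function $\phi$ is (up to slowly varying corrections) decreasing and convex-like on $(0,\delta]$, so the classical estimate for Fourier integrals of monotone functions gives a bound of the form $\big|\int_{a/n}^\delta \phi(\theta)e^{-in\theta}\,d\theta\big| \ll \phi(a/n)\cdot\frac{n}{a}\cdot\frac{1}{n} = \frac1a\phi(a/n)$ roughly --- more precisely one bounds it by $C/n$ times the total variation of $\phi$ on $[a/n,\delta]$ plus boundary terms $\phi(a/n)/n$; since $\phi$ is monotone its variation is $\phi(a/n) - \phi(\delta) \le \phi(a/n)$, giving $\ll \phi(a/n)/n$. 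Then $\ell(n)n^{1-\beta}\cdot\phi(a/n)/n = \ell(n)n^{-\beta}\,\ell(n/a)^{-1}(a/n)^{-\beta} = \frac{\ell(n)}{\ell(n/a)}a^{\beta}$. This is $a^\beta$ up to slowly varying factors, which is the wrong direction --- so the crude variation bound is not enough and one must exploit cancellation in the oscillatory integral more carefully.

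The genuinely delicate step --- and the one I expect to be the main obstacle --- is squeezing the extra decay $a^{-(2\beta'-1)}$ (note $2\beta'-1 > 0$ since $\beta>\tfrac12$ and $\beta'$ close to $\beta$) out of the oscillation. The trick, following Garsia \& Lamperti, is to break $[a/n,\delta]$ into blocks where $n\theta$ ranges over an interval of length $\pi$ (so $e^{-in\theta}$ changes sign), pair consecutive blocks, and use the monotonicity of $\phi$ to bound the paired contribution by the variation of $\phi$ across a block, which is of size $\approx |\phi'(\theta)|\cdot(1/n) \approx \phi(\theta)/(n\theta)$. Summing $\sum_k \phi(k/n)/(k)$-type terms over $k$ from $a$ to $n\delta$ and using Karamata (Proposition~\ref{prop-Karamata}), one gets $\ll \int_{a/n}^\delta \phi(\theta)\theta^{-1}\,d\theta \cdot \frac1n$... this needs to be organized so the final bound, after multiplication by $\ell(n)n^{1-\beta}$, collapses to $a^{-(2\beta'-1)}$; the Potter bounds $\ell(1/\theta)^{-1}\ll \theta^{-(\beta'-\beta)}$-type estimates (as used in the proof of Lemma~\ref{lem-zestimate}) are what convert the slowly varying nuisance factors into the clean power $a^{-(2\beta'-1)}$, and set $\beta'=\beta$ exactly when $\ell$ is asymptotically constant so no Potter slack is needed. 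I would carry out this block-pairing estimate in detail, being careful that the $o(1)$ factor multiplying $\ell(1/\theta)^{-1}\theta^{-\beta}P$ in Lemma~\ref{lem-AD}(c) is handled by the same monotone-Fourier machinery (it only contributes a vanishing multiple of the main estimate), and that the boundary term at $\theta=a/n$ is exactly of size $\phi(a/n)/n$ which, with the prefactor, is $\ll a^{\beta}\cdot(\text{slowly varying})$ --- wait, this again looks too big, so in fact the boundary terms must cancel in the pairing or be absorbed, which is the subtle bookkeeping point I would need to get exactly right by following~\cite{GarsiaLamperti62} closely.
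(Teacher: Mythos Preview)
Your instinct to use the shift $\theta\mapsto\theta-\pi/n$ (what you call block-pairing) is exactly right, and the boundary contribution at $\theta=a/n$ is harmless: you slipped a sign, since $\ell(n)n^{1-\beta}\cdot\phi(a/n)/n=[\ell(n)/\ell(n/a)]\,a^{-\beta}$, which by Potter is $\ll a^{-\beta'}\le a^{-(2\beta'-1)}$.

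The genuine gap is in how you estimate the difference $T(\theta)-T(\theta-\pi/n)$. You propose to replace $T(\theta)$ by its scalar asymptotic $\phi(\theta)(1+o(1))P$ from Lemma~\ref{lem-AD}(c) and then use monotonicity of $\phi$. But the $o(1)$ factor carries no regularity whatsoever, and it is multiplied by the large function $\phi$; bounding $\int_{a/n}^\delta \phi(\theta)\,o(1)\,e^{-in\theta}\,d\theta$ without cancellation gives at best $o(1)\cdot\int_{a/n}^\delta\phi(\theta)\,d\theta$, which after the prefactor is of order $o(1)\cdot a^{1-\beta'}$ and blows up in $a$. So ``the same monotone-Fourier machinery'' does not apply to the remainder.

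The fix is to abandon the scalar asymptotic for the difference term and work with $T$ directly via the resolvent identity
\[
T(\theta)-T(\theta-\pi/n)=T(\theta)\bigl(R(\theta)-R(\theta-\pi/n)\bigr)T(\theta-\pi/n).
\]
Now you only need the norm bound $\|T(\theta)\|\ll\ell(1/\theta)^{-1}\theta^{-\beta}$ (from Lemma~\ref{lem-AD}(c)) and the H\"older estimate $\|R(\theta)-R(\theta-\pi/n)\|\ll\ell(n)n^{-\beta}$ from Proposition~\ref{prop-H1}. This yields
\[
\int_{(a+\pi)/n}^\pi\|T(\theta)-T(\theta-\pi/n)\|\,d\theta
\ll \ell(n)n^{-\beta}\int_{a/n}^\pi \ell(1/\theta)^{-2}\theta^{-2\beta}\,d\theta,
\]
and it is precisely the \emph{square} of $\|T\|$ here that produces the exponent $2\beta$ and, after substituting $\sigma=n\theta$ and applying Potter's bounds, the factor $a^{-(2\beta'-1)}$. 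When $\ell$ is asymptotically constant no Potter slack is needed and $\beta'=\beta$ works.
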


\begin{proof}
By Lemma~\ref{lem-AD}(c), we have the estimate $\|T(\theta)\|\ll 
\ell(1/\theta)^{-1}\theta^{-\beta}$.   The proof uses this fact together
with Proposition~\ref{prop-H1}, and follows
Garsia \& Lamperti~\cite[p.~231]{GarsiaLamperti62}.  We give the details partly
for completeness and partly because we want to make explicit certain 
estimates that will be used in Section~\ref{sec-second}.

First, write
\[
I=\int_{a/n}^\pi T(\theta) e^{-in\theta}\,d\theta
=-\int_{(a+\pi)/n}^{\pi+\pi/n}T(\theta-\pi/n) e^{-in\theta}\,d\theta,
\]
so 
\[
2I=\int_{a/n}^\pi T(\theta) e^{-in\theta}\,d\theta
-\int_{(a+\pi)/n}^{\pi+\pi/n}T(\theta-\pi/n) e^{-in\theta}\,d\theta=I_1+I_2+I_3,
\]
where
\begin{align*}
I_1  & = \int_\pi^{\pi+\pi/n}T(\theta-\pi/n)e^{-in\theta}\,d\theta, \qquad
I_2  = \int_{a/n}^{(a+\pi)/n}T(\theta-\pi/n)e^{-in\theta}\,d\theta, \\
I_3 & =\int_{(a+\pi)/n}^\pi \{ T(\theta)-T(\theta-\pi/n)\}e^{-in\theta}\,d\theta.
\end{align*}
Clearly, $I_1\ll 1/n$, while
\begin{align*}
I_2 & \ll \int_{a/n}^{(a+\pi)/n}\ell(1/\theta)^{-1}\theta^{-\beta}\,d\theta \ll
\ell(n)^{-1}n^{-(1-\beta)}\int_a^{a+\pi}[\ell(n)/\ell(n/\sigma)]\sigma^{-\beta}
\,d\sigma \\ & 
\ll \ell(n)^{-1}n^{-(1-\beta)}\int_a^{a+\pi}\sigma^{-\beta'} \,d\sigma 
=\ell(n)^{-1}n^{-(1-\beta)}a^{1-\beta'}\{(1+\pi/a)^{1-\beta'}-1\} \\ &
\ll \ell(n)^{-1}n^{-(1-\beta)}a^{-\beta'}.
\end{align*}
By the resolvent identity and Proposition~\ref{prop-H1} (with $m(x)=\ell(x)$),
\begin{align*}
I_3 & \ll \int_{(a+\pi)/n}^\pi \|T(\theta)\|\|T(\theta-\pi/n)\|\|R(\theta)-R(\theta-\pi/n)\|\,d\theta \\
& \ll \ell(n/\pi)n^{-\beta}\int_{(a+\pi)/n}^\pi \ell(1/\theta)^{-1}\ell(1/(\theta-\pi/n))^{-1}\theta^{-\beta} (\theta-\pi/n)^{-\beta}\,d\theta \\
& = \ell(n/\pi)n^{-\beta}\int_{a/n}^{\pi-\pi/n} \ell(1/(\theta+\pi/n))^{-1}\ell(1/\theta)^{-1} (\theta+\pi/n)^{-\beta}\theta^{-\beta}\,d\theta.
\end{align*}
By Potter's bounds, $\ell(1/(\theta+\pi/n))^{-1}\ll\ell(1/\theta)^{-1}$ for 
$n\theta\ge 1$.  Hence,
\begin{align*}
I_3 & \ll \ell(n)n^{-\beta}\int_{a/n}^\pi \ell(1/\theta)^{-2}\theta^{-2\beta} \,d\theta =
\ell(n)^{-1}n^{-(1-\beta)}\int_{a}^{n\pi} [\ell(n)/\ell(n/\sigma)]^2\sigma^{-2\beta} \,d\sigma
\\
& \ll \ell(n)^{-1}n^{-(1-\beta)}\int_{a}^{n\pi} \sigma^{-2\beta'} \,d\sigma
\ll \ell(n)^{-1}n^{-(1-\beta)}a^{-(2\beta'-1)}.
\end{align*}
Altogether, we obtain
$\ell(n)n^{1-\beta}I \ll n^{-\beta}+a^{-\beta'}+a^{-(2\beta'-1)}\ll
a^{-(2\beta'-1)}$ as required.
\end{proof}

\begin{lemma} \label{lem-GLeigen}
Let $\beta\in(0,1)$.  Let $n\ge1$, $a\in(0,\epsilon n)$.  Then
\[
\lim_{a\to\infty}\lim_{n\to\infty}\ell(n) 
n^{1-\beta}\int_0^{a/n}(1-\lambda(\theta))^{-1} e^{-in\theta}\,d\theta = 
d_\beta',
\]
where $d_\beta'=i{\SMALL\int}_0^\infty e^{-i\sigma}\sigma^{-\beta}\,d\sigma/{\SMALL\int}_0^\infty e^{i\sigma}\sigma^{-\beta}\,d\sigma$.
\end{lemma}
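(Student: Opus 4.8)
The plan is to rescale the integration variable by $\theta=\sigma/n$, identify the pointwise limit of the resulting integrand using the eigenvalue asymptotics of Lemma~\ref{lem-AD}(a), pass to the limit $n\to\infty$ by dominated convergence (using the lower bound of Lemma~\ref{lem-zestimate} together with Potter's bounds to produce the majorant), and finally recognise the $a\to\infty$ limit as a ratio of Fresnel-type integrals.

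First I would substitute $\theta=\sigma/n$, which turns the quantity of interest into
\[
\ell(n)n^{1-\beta}\int_0^{a/n}(1-\lambda(\theta))^{-1}e^{-in\theta}\,d\theta
=\ell(n)n^{-\beta}\int_0^{a}(1-\lambda(\sigma/n))^{-1}e^{-i\sigma}\,d\sigma .
\]
By Lemma~\ref{lem-AD}(a), for each fixed $\sigma>0$ we have $1-\lambda(\sigma/n)=c_\beta\,\ell(n/\sigma)\,\sigma^\beta n^{-\beta}(1+o(1))$ as $n\to\infty$, so that
\[
\ell(n)n^{-\beta}(1-\lambda(\sigma/n))^{-1}=c_\beta^{-1}\,\frac{\ell(n)}{\ell(n/\sigma)}\,\sigma^{-\beta}(1+o(1)).
\]
Since $\ell$ is slowly varying, $\ell(n)/\ell(n/\sigma)\to1$ as $n\to\infty$ for each fixed $\sigma>0$, so the integrand converges pointwise on $(0,a)$ to $c_\beta^{-1}\sigma^{-\beta}e^{-i\sigma}$.

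Next I would justify interchanging the limit and the integral, which is where the real work lies. Fixing $a$ and taking $n$ large enough that $\sigma/n<\epsilon$ for all $\sigma\in(0,a)$, Lemma~\ref{lem-zestimate} (applied with $\rho=1$) gives $|1-\lambda(\sigma/n)|^{-1}\ll\ell(n/\sigma)^{-1}(\sigma/n)^{-\beta}$, so the modulus of the integrand is $\ll(\ell(n)/\ell(n/\sigma))\,\sigma^{-\beta}$. Applying Potter's bounds with some $\delta\in(0,1-\beta)$ produces $X=X(\delta)$ with $\ell(n)/\ell(n/\sigma)\le C_\delta(1+\sigma^{-\delta})$ for all $\sigma\in(0,a)$, as soon as $n\ge X$ and $n/\sigma\ge X$, both of which hold once $n>aX$. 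Thus for all large $n$ the integrand is dominated on $(0,a)$ by $C(\sigma^{-\beta}+\sigma^{-\beta-\delta})$, which is in $L^1(0,a)$ because $\beta+\delta<1$, and dominated convergence gives
\[
\lim_{n\to\infty}\ell(n)n^{1-\beta}\int_0^{a/n}(1-\lambda(\theta))^{-1}e^{-in\theta}\,d\theta=c_\beta^{-1}\int_0^a\sigma^{-\beta}e^{-i\sigma}\,d\sigma .
\]

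Finally, letting $a\to\infty$, the right-hand side tends to $c_\beta^{-1}\int_0^\infty\sigma^{-\beta}e^{-i\sigma}\,d\sigma$, the improper integral converging absolutely near $0$ (as $\beta<1$) and conditionally near $\infty$ (e.g.\ by one integration by parts). Recalling that $c_\beta=-i\int_0^\infty e^{i\sigma}\sigma^{-\beta}\,d\sigma$ and that $1/(-i)=i$, this limit equals
\[
\frac{i\int_0^\infty e^{-i\sigma}\sigma^{-\beta}\,d\sigma}{\int_0^\infty e^{i\sigma}\sigma^{-\beta}\,d\sigma}=d_\beta',
\]
as required. The main obstacle is the dominated-convergence step: the crude bound $(\ell(n)/\ell(n/\sigma))\,\sigma^{-\beta}$ fails to be integrable near $\sigma=0$ on its own, and it is precisely Potter's bounds with an exponent $\delta<1-\beta$ that supply the integrable majorant; the remaining steps, including verifying the formula for $d_\beta'$, are routine.
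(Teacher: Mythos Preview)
Your proof is correct and follows essentially the same route as the paper: substitute $\theta=\sigma/n$, use the eigenvalue asymptotics from Lemma~\ref{lem-AD}(a) to identify the pointwise limit of the integrand, apply dominated convergence with a Potter-bound majorant, and then let $a\to\infty$. The only cosmetic difference is that the paper packages the asymptotic as $(1-\lambda(\theta))^{-1}=c_\beta^{-1}\ell(1/\theta)^{-1}\theta^{-\beta}h(\theta)$ with $h(\theta)\to1$, thereby reading off the bound $|1-\lambda(\sigma/n)|^{-1}\ll\ell(n/\sigma)^{-1}(\sigma/n)^{-\beta}$ directly from Lemma~\ref{lem-AD}(a), whereas you obtain the same bound via Lemma~\ref{lem-zestimate}; both lead to the same majorant and the same conclusion.
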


\begin{proof}
This is identical to~\cite[Lemma~3.4.1]{GarsiaLamperti62} and we give
the proof only for completeness.
By Lemma~\ref{lem-AD}, we can write 
$(1-\lambda(\theta))^{-1}=c_\beta^{-1}\ell(1/\theta)^{-1}\theta^{-\beta}h(\theta)$
where $c_\beta=-i{\SMALL\int}_0^\infty e^{i\sigma}\sigma^{-\beta}\,d\sigma$ and
$\lim_{\theta\to0} h(\theta)=1$.
Hence
\begin{align*}
\int_0^{a/n}(1-\lambda(\theta))^{-1} e^{-in\theta}\,d\theta
 & =n^{-1}\int_0^a (1-\lambda(\sigma/n))^{-1}e^{-i\sigma}\,d\sigma \\ &
=c_\beta^{-1}n^{-(1-\beta)}\int_0^a e^{-i\sigma}\sigma^{-\beta}\ell(n/\sigma)^{-1}h(\sigma/n)\,d\sigma, 
\end{align*}
so that
\[
\ell(n)n^{1-\beta}\int_0^{a/n}(1-\lambda(\theta))^{-1} e^{-in\theta}\,d\theta 
=
c_\beta^{-1}\int_0^a e^{-i\sigma}\sigma^{-\beta}[\ell(n/\ell(n/\sigma)]h(\sigma/n)\,d\sigma.
\]
For fixed $a$, it follows from the dominated convergence theorem that
\[
\lim_{n\to\infty}
\ell(n)n^{1-\beta}\int_0^{a/n}(1-\lambda(\theta))^{-1} e^{-in\theta}\,d\theta 
=c_\beta^{-1}\int_0^a e^{-i\sigma}\sigma^{-\beta}\,d\sigma,
\]
and the result follows.
\end{proof}

\begin{pfof}{Theorem~\ref{thm-conv}, $\beta\in(\frac12,1)$.}
By Section~\ref{sec-Fourier}, 
$T_n=\frac{1}{\pi}\Re \int_0^\pi T(\theta)e^{-in\theta}\,d\theta$.
Let
\begin{align*}
& D(a,n) =
\int_0^\pi T(\theta)e^{-in\theta}\,d\theta-
\int_0^{a/n}(1-\lambda(\theta))^{-1}Pe^{-in\theta}\,d\theta \\
& \qquad  = \int_0^{a/n}\bigl\{T(\theta)-(1-\lambda(\theta))^{-1}P\bigr\}
e^{-in\theta}\,d\theta+
\int_{a/n}^\pi T(\theta)Pe^{-in\theta}\,d\theta,
\end{align*}
so $D(a,n)\ll a/n+\ell(n)^{-1}n^{-(1-\beta)}a^{-(2\beta'-1)}$ by 
Lemma~\ref{lem-AD}(b) and Lemma~\ref{lem-GL}.
Hence $\lim_{a\to\infty}\lim_{n\to\infty}\ell(n)n^{1-\beta}D(a,n)=0$.
By Lemma~\ref{lem-GLeigen},
$\lim_{a\to\infty}\lim_{n\to\infty}\ell(n)n^{1-\beta}T_n=
\frac{1}{\pi}\Re d_\beta'=d_\beta$.
The result follows since $T_n$ is independent of $a$.  
\end{pfof}

\section{Convergence for $\beta=1$}
\label{sec-one}

In this section, we prove Theorem~\ref{thm-conv} in the case $\beta=1$.
There are several differences from the case $\beta\in(\frac12,1)$.
First, $T(e^{i\theta})\not\in L^1$; instead it is shown below that
$\Re T(e^{i\theta})$ is integrable.

Estimating $\Re\{(1-\lambda(z))^{-1}\}$ is slightly easier than in 
Section~\ref{sec-Fourier} but estimating $\Re T(z)$ is harder since
$\Re\{(1-\lambda(z))^{-1}(P(z)-P)\}$ is not dominated by 
$\Re\{(1-\lambda(z))^{-1}\}$.  
As a consequence, $\Re T(z)=\Re T(\rho e^{i\theta})$ is not dominated by a 
single integrable function of $\theta$, see Lemma~\ref{lem-R1} below.

We have $\mu(\varphi>n)=\ell(n)n^{-1}$, where $\ell$ is slowly varying
and \mbox{$\sum \ell(n)n^{-1}=\infty$}.
Let $\tilde\ell(x)=m(x)=\sum_{j=1}^{[x]} \ell(j)j^{-1}$.
Then $\tilde\ell$ is monotone increasing and 
$\lim_{n\to\infty}\tilde\ell(n)=\infty$.
By Proposition~\ref{prop-Karamata}(b), $\tilde\ell$ is slowly varying and
$\ell(n)/\tilde\ell(n)\to0$ as $n\to\infty$.
Up to asymptotic equivalence, we have the alternative definitions
$\tilde\ell(x)=\int_1^x  \ell(y)y^{-1}\,dy$ and
$\tilde\ell(x)=\int_0^x  (1-G(y))\,dy$ where $G(x)=\mu(\varphi\le x)$.

\begin{prop} \label{prop-elltilde}
$\BIG\int_0^{1/y} \frac{\ell(1/\theta)}{\theta(\tilde\ell(1/\theta))^2}\,d\theta=
\int_y^\infty \frac{\ell(x)}{x(\tilde\ell(x))^2}\,dx\sim
\frac{1}{\tilde\ell(y)}$.
\end{prop}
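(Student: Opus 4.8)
The statement has two parts: the exact equality of the two integrals, and the asymptotic. For the equality the plan is the obvious substitution $x=1/\theta$. Since then $d\theta=-x^{-2}\,dx$ and $\theta^{-1}=x$, while $\theta$ ranging over $(0,1/y]$ corresponds to $x$ ranging over $[y,\infty)$, the change of variables sends $\int_0^{1/y}\ell(1/\theta)\theta^{-1}\tilde\ell(1/\theta)^{-2}\,d\theta$ onto $\int_y^\infty \ell(x)x^{-1}\tilde\ell(x)^{-2}\,dx$, the reversal of the limits cancelling the sign.

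For the asymptotic the key idea is to replace the step function $\tilde\ell$ by an absolutely continuous primitive. Put $L(y)=\int_0^y (1-G(u))\,du$ with $G(x)=\mu(\varphi\le x)$; then $L$ is increasing and absolutely continuous with $L'(x)=1-G(x)=\mu(\varphi>x)$ a.e., $L(x)\to\infty$, and $L(x)\sim\tilde\ell(x)$ (this is the alternative expression for $\tilde\ell$ recorded just before the proposition). Since $\mu(\varphi>x)=\ell([x])[x]^{-1}\sim\ell(x)x^{-1}$ by slow variation of $\ell$, the integrands satisfy
\[
\frac{\ell(x)}{x\,\tilde\ell(x)^2}\ \sim\ \frac{L'(x)}{L(x)^2}\qquad(x\to\infty),
\]
and the right-hand side integrates explicitly: $-1/L$ is absolutely continuous on $[y,\infty)$ with a.e.\ derivative $L'(x)/L(x)^2$, so $\int_y^\infty L'(x)L(x)^{-2}\,dx=1/L(y)$, using $L(x)\to\infty$.

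It then remains to transfer the pointwise equivalence of integrands to the integrals. Given $\epsilon>0$, I would fix $X$ so that $(1-\epsilon)L'(x)L(x)^{-2}\le \ell(x)x^{-1}\tilde\ell(x)^{-2}\le(1+\epsilon)L'(x)L(x)^{-2}$ for $x\ge X$; integrating over $[y,\infty)$ for $y\ge X$ (the upper bound simultaneously furnishing convergence of the left-hand integral) gives $(1-\epsilon)/L(y)\le\int_y^\infty \ell(x)x^{-1}\tilde\ell(x)^{-2}\,dx\le(1+\epsilon)/L(y)$, and since $1/L(y)\sim 1/\tilde\ell(y)$ the result follows. The only point that needs genuine care is this passage from the non-differentiable $\tilde\ell$ to the absolutely continuous $L$; once that is set up, the rest is routine regular-variation bookkeeping. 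Alternatively one could quote Karamata's theorem directly, observing that $\ell(x)x^{-1}\tilde\ell(x)^{-2}$ is regularly varying of index $-1$ with convergent tail integral.
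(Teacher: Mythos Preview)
Your proof is correct and follows the same idea as the paper, which simply observes in one line that $-(\tilde\ell(x))^{-1}$ is an antiderivative of $\ell(x)x^{-1}(\tilde\ell(x))^{-2}$. You have in fact been more careful than the paper in distinguishing the step-function definition of $\tilde\ell$ from its absolutely continuous asymptotic equivalent $L$, which is exactly the right thing to do to make the antiderivative claim rigorous.
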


\begin{proof} 
Note that $-(\tilde\ell(x))^{-1}$ is an antiderivative of 
$\ell(x)x^{-1}(\tilde\ell(x))^{-2}$.
\end{proof}

\subsection{Identification of Fourier coefficients}

Write $z=e^{-u+i\theta}$, $u\in[0,1]$, $\theta\in[0,\pi]$.  
Given a function $g_\theta(x)\ge 0$ with $|g_\theta|_\infty\le C\theta^{1-\epsilon}$
for constants $C>0$, $\epsilon\in(0,1)$, such that
$x\to g_\theta(x)(1-G(x))$ is decreasing for each fixed $\theta$, define
\begin{align*}
J_C  =\int_0^\infty e^{-ux}\cos\theta x\, g_\theta(x) (1-G(x))\,dx, 
\quad
J_S  =\int_0^\infty e^{-ux}\sin\theta x\,  g_\theta(x)(1-G(x))\,dx.
\end{align*}
Let $I_C$ and $I_S$ be the corresponding integrals in the case 
$g_\theta\equiv1$.

\begin{prop} \label{prop-IJ}
As $u,\theta\to0^+$,
\begin{alignat*}{2}
|I_S| & \ll \theta u^{-1}\ell(1/u), &
\quad & I_C=\tilde\ell(1/u)(1+o(1))+O(\theta u^{-1}\ell(1/u)), \\
|I_S| & \ll \ell(1/\theta), &  \quad &
I_C=\tilde\ell(1/\theta)(1+o(1))+O(u\theta^{-1}\ell(1/\theta)), \\
|J_S| & \ll \theta^{2-\epsilon} u^{-1}\ell(1/u),
& \quad &  |J_C| \ll 
\theta^{2-\epsilon} u^{-1}\ell(1/u)+\theta^{1-\epsilon}\tilde\ell(1/u), \\
|J_S| & \ll \theta^{1-2\epsilon}, &  \quad &
|J_C| \ll \theta^{1-2\epsilon}+u\theta^{-2\epsilon}.
\end{alignat*}
\end{prop}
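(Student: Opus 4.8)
The plan is to estimate all eight quantities by the same device already used in Section~\ref{sec-Fourier}: since $x\mapsto g_\theta(x)(1-G(x))$ (respectively $x\mapsto(1-G(x))$) is decreasing, the oscillatory integrals $\int_0^\infty e^{-ux}\cos\theta x\,(\cdots)\,dx$ and $\int_0^\infty e^{-ux}\sin\theta x\,(\cdots)\,dx$ behave like alternating series, so they can be controlled by grouping consecutive half-periods of length $\pi/\theta$ and using monotonicity (Leibniz's theorem). After the substitution $\sigma=\theta x$ one rewrites each integral as $\theta^{-1}\int_0^\infty e^{-\sigma y}\,(\text{trig }\sigma)\,g_\theta(\sigma/\theta)(1-G(\sigma/\theta))\,d\sigma$ with $y=u/\theta$, and $1-G(\sigma/\theta)=(\sigma/\theta)^{-\beta}h(\sigma/\theta)$ with $\beta=1$, i.e.\ $1-G(\sigma/\theta)=\theta\sigma^{-1}h(\sigma/\theta)$ and $h(x)=\ell(x)(1+o(1))$; Potter's bounds $C^{-1}\sigma^{\beta'}\le h(\sigma/\theta)/\ell(1/\theta)\le C\sigma^{-\beta'}$ (for any small $\beta'>0$, or the analogue with $\ell(1/u)$) then convert the $h$ factor into a harmless power of $\sigma$. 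This reduces everything to elementary integrals of $e^{-\sigma y}\sigma^{-\beta'}$ or $e^{-\sigma y}\sigma^{-1\pm\beta'}$ type.

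Concretely, I would split into the two regimes $y\le 1$ (equivalently $u\le\theta$) and $y\ge1$ ($u\ge\theta$), which is exactly why each of the four rows of the table is stated twice, once with $\ell(1/u),\tilde\ell(1/u)$ and once with $\ell(1/\theta),\tilde\ell(1/\theta)$. For $I_S$: the sine integral starts its first half-period at $\sigma=0$ where $\sin$ vanishes, so the leading contribution is one half-period $\int_0^{\pi}$ worth, giving $|I_S|\ll\int_0^{\pi/\theta}e^{-ux}(1-G(x))\,dx$; bounding $1-G$ by $\ell(1/\theta)\theta^{-1}\cdot(\text{power})$ over $x\in(0,\pi/\theta)$ yields $|I_S|\ll\ell(1/\theta)$, while integrating against $e^{-ux}$ all the way (using $\int_0^\infty e^{-ux}(1-G(x))\,dx\ll u^{-1}\sup_{x\le 1/u}(1-G(x))\ll \theta\cdot u^{-1}\cdot u^{-1}\ell(1/u)\cdot$, more carefully grouping so that the sine kills a full period) produces the bound $\theta u^{-1}\ell(1/u)$. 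For $I_C$: here the cosine integral does \emph{not} alternate starting from a zero, so one gets a genuine main term; by the alternative formula $\tilde\ell(x)=\int_0^x(1-G(y))\,dy$ stated just before the proposition, $\int_0^\infty e^{-ux}(1-G(x))\,dx\sim\tilde\ell(1/u)$ (a standard Abelian/Karamata computation since $e^{-ux}\to1$ on the scale $x\le 1/u$ that carries the mass), and cutting the cosine into full periods shows the oscillation only costs an error of the size of one more half-period, namely $O(\theta u^{-1}\ell(1/u))$ in the regime $y\le1$ and $O(u\theta^{-1}\ell(1/\theta))$ in the regime $y\ge1$; in the latter regime one instead uses $\int_0^{1/\theta}(1-G)\sim\tilde\ell(1/\theta)$ since then $e^{-ux}\approx1$ on $x\le1/\theta$. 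For $J_C,J_S$ the same estimates apply verbatim, but now there is the extra factor $g_\theta$ with $|g_\theta|_\infty\ll\theta^{1-\epsilon}$; pulling this factor out in sup norm and, for $J_S$, gaining one further power of $\theta$ from the alternation (as in the $I_S$ bound $|I_S|\ll\theta u^{-1}\ell(1/u)$), gives the stated $\theta^{2-\epsilon}u^{-1}\ell(1/u)$ and $\theta^{1-2\epsilon}$; for $J_C$ one simply multiplies the $I_C$ bounds by $\theta^{1-\epsilon}$, so the main term $\tilde\ell$ also picks up that factor, yielding $\theta^{1-\epsilon}\tilde\ell(1/u)$ (resp.\ $u\theta^{-2\epsilon}$ from $\theta^{1-\epsilon}\cdot u\theta^{-1}\theta^{-\epsilon}$-type terms).

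The main obstacle is bookkeeping rather than any conceptual difficulty: one must be careful that the Leibniz-type grouping really does produce the advertised power savings uniformly as $u,\theta\to0^+$ in \emph{both} regimes simultaneously, and in particular that the slowly varying factors are correctly tracked through Potter's bounds (so that, e.g., $\ell(1/u)$ and not $\ell(1/\theta)$ appears when $y\le1$, and vice versa). A secondary subtlety is the monotonicity hypothesis: I would note that $g_\theta(x)(1-G(x))$ is assumed decreasing, so the alternating-series estimate is legitimate for the $J$ integrals exactly as it is for the $I$ integrals where $1-G$ is manifestly decreasing; this is the only place the structural assumption on $g_\theta$ is used, beyond the sup bound. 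Once these two points are handled, each of the eight estimates is a two-line computation of an incomplete Gamma-type integral.
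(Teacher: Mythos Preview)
Your overall plan is close to the paper's, and the Leibniz/alternating-series device together with Potter's bounds is exactly how the paper obtains the second-row estimates (the $\ell(1/\theta)$ and $\tilde\ell(1/\theta)$ bounds after the substitution $\sigma=\theta x$).  The identification of the $\tilde\ell$ main term in $I_C$ via $\int_0^x(1-G)\sim\tilde\ell(x)$ is also correct.

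There is, however, a genuine gap in your sketch of the \emph{first}-row bound $|I_S|\ll\theta u^{-1}\ell(1/u)$.  Your parenthetical computation
\[
\int_0^\infty e^{-ux}(1-G(x))\,dx\ll u^{-1}\sup_{x\le1/u}(1-G(x))\ll\theta\cdot u^{-1}\cdot u^{-1}\ell(1/u)
\]
is wrong on both counts: $\sup_{x\le1/u}(1-G(x))=1-G(0)$ is of order $1$, not $u\ell(1/u)$, and there is no $\theta$ in that integral at all.  Nor does ``the sine kills a full period'' deliver a factor of $\theta$ here; alternating-series cancellation bounds $I_S$ by the contribution of the first half-period $[0,\pi/\theta]$, and that contribution is of order $\tilde\ell(1/\theta)$, not $\theta u^{-1}\ell(1/u)$.

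The paper's device for this row is different and much simpler, and it bypasses the oscillatory machinery entirely.  The point is that in the present $\beta=1$ setting one has $1-G(x)\sim\ell(x)x^{-1}$, so
\[
|\sin\theta x|\,(1-G(x))=\theta\,\Bigl|\frac{\sin\theta x}{\theta x}\Bigr|\,x(1-G(x))\ll\theta\,\ell(x),
\]
and then
\[
|I_S|\ll\theta\int_0^\infty e^{-ux}\ell(x)\,dx=\theta u^{-1}\ell(1/u)\int_0^\infty e^{-\sigma}\frac{\ell(\sigma/u)}{\ell(1/u)}\,d\sigma\ll\theta u^{-1}\ell(1/u)
\]
by Karamata (Potter's bounds make the last integral $O(1)$).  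The same trick with the extra factor $|g_\theta|_\infty$ gives $|J_S|\ll\theta^{2-\epsilon}u^{-1}\ell(1/u)$.  For $I_C$ the paper uses the explicit decomposition $e^{-ux}\cos\theta x=e^{-ux}(\cos\theta x-1)+(e^{-ux}-1)+1$ on $[0,1/u]$ (with the tail $[1/u,\infty)$ handled separately); the first piece is then estimated exactly like $I_S$ above, the second is $O(\ell(1/u))$, and the third gives the main term $\tilde\ell(1/u)$.  This is cleaner than grouping cosine half-periods and makes the bookkeeping of slowly varying factors automatic.
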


\begin{proof}
First,
\begin{align*}
|J_S| & \ll \theta |g_\theta|_\infty \int_0^\infty e^{-ux}\Bigl|\frac{\sin\theta x}{\theta x}\Bigr|\ell(x)\,dx \ll 
\theta |g_\theta|_\infty \int_0^\infty e^{-ux} \ell(x)\,dx \\
& = \theta |g_\theta|_\infty u^{-1}\ell(1/u)\int_0^\infty e^{-\sigma} 
\frac{\ell(\sigma/u)}{\ell(1/u)}\,d\sigma  \ll 
\theta |g_\theta|_\infty u^{-1}\ell(1/u).
\end{align*}
This gives the first estimate for $J_S$ and taking $g_\theta=1$ we obtain the
first estimate for $I_S$.
Alternatively, we make the substitution $\sigma=\theta x$.  Using the oscillation of $\sin\sigma$ and the fact that
$\sigma\mapsto e^{-u\sigma/\theta}g_\theta(\sigma/\theta)(1-G(\sigma/\theta))$
is decreasing,
\begin{align*}
0  \le J_S & =\theta^{-1}\int_0^\infty e^{-u\sigma/\theta} \sin \sigma\, g_\theta(\sigma/\theta)
(1-G(\sigma/\theta))\,d\sigma \\ & \le
\theta^{-1} \int_0^\pi e^{-u\sigma/\theta}\sin \sigma\,  
g_\theta(\sigma/\theta) (1-G(\sigma/\theta))\,d\sigma,
\end{align*}
and so
$|J_S|\le |g_\theta|_\infty\int_0^\pi \sin\sigma\, \ell(\sigma/\theta)\sigma^{-1}\,d\sigma \ll |g_\theta|_\infty \ell(1/\theta)$,
yielding the remaining estimates for $I_S$ and $J_S$.

In the estimates for $J_C$ and $I_C$, we use the fact that 
$\int_0^x (1-G(x))\,dx=\tilde\ell(x)(1+o(1))$.
Note that
\[
\Bigl|\int_{1/u}^\infty e^{-ux}\cos\theta x\, g_\theta(x) (1-G(x))\,dx\Bigr|
\le  |g_\theta|_\infty \int_1^\infty e^{-\sigma} \ell(\sigma/u)\sigma^{-1}\,d\sigma
\ll |g_\theta|_\infty\ell(1/u).
\]
For the integral over $[0,1/u]$, write 
\mbox{$e^{-ux}\cos\theta x = \{e^{-ux}(\cos\theta x -1)\}+\{e^{-ux}-1\} +1$}.
This yields three integrals, the first of which
is estimated by $|g_\theta|_\infty \theta u^{-1}\ell(1/u)$
(like $J_S)$ and the second by $|g_\theta|_\infty \ell(1/u)$.
This leaves $\int_0^{1/u}g_\theta(x)(1-G(x))\,dx\ll |g_\theta|_\infty \int_0^{1/u}(1-G(x))\,dx= |g_\theta|_\infty \tilde\ell(1/u)(1+o(1))$
completing the first estimate for $J_C$.  Setting \mbox{$g_\theta=1$} yields
the first asymptotic expression for $I_C$.
The remaining estimate for $J_C$ is obtained by splitting the range of
integration into $[0,1/\theta]$ and $[1/\theta,\infty)$ and combining the 
above arguments for $J_C$ (first estimate) and $J_S$ (second estimate).
Again the final expression for $I_C$ follows by setting $g_\theta=1$.
\end{proof}

 \begin{cor} \label{cor-IJ}
 Let $z=e^{-u+i\theta}\in B_\epsilon(1)$, $\epsilon$ sufficiently small, $u>0$,
$\theta\ge0$.  Then
\begin{align*}
|1-\lambda(e^{-u+i\theta})|^{-1} & \ll \frac{1}{u\tilde\ell(1/u)},\enspace\text{for
$\theta\in[0,u]$},
\\ 
 |1-\lambda(e^{-u+i\theta})|^{-1} &
\ll \frac{1}{(u+\theta)\tilde\ell(1/\theta)},
\enspace\text{for $\theta\ge u$}.
 \\ |\Re\{(1-\lambda(e^{-u+i\theta}))^{-1}\}| &
\ll \frac{u}{(u^2+\theta^2)\tilde\ell(1/\theta)}+
\frac{\theta\ell(1/\theta)}{(u^2+\theta^2)\tilde\ell(1/\theta)^2},
\enspace\text{for $\theta\ge u$}.
\end{align*}
 \end{cor}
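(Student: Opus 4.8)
Corollary~\ref{cor-IJ} is essentially an exercise in combining Lemma~\ref{lem-AD}(a) with the sharp estimates on the oscillatory integrals $I_C, I_S, J_C, J_S$ collected in Proposition~\ref{prop-IJ}. The first step is to set up the expansion of $\lambda(z)-1$ along the lines of the proof of Lemma~\ref{lem-ADerror} (and Lemma~\ref{lem-zestimate}): writing $z=e^{-u+i\theta}$ and using $\lambda(z)=\int_Y R(z)v(z)\,d\mu=\int_Y e^{(-u+i\theta)\varphi}v(z)\,d\mu$, one obtains
\[
1-\lambda(z) = -\bigl(\text{terms built from }I_C,I_S\bigr) - \bigl(\text{terms built from }J_C,J_S\bigr),
\]
where the $J$-terms carry a factor $|g_z|_\infty=o(1)$ from the perturbation $v(z)-1$, hence are negligible relative to the $I$-terms. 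So the real and imaginary parts of $1-\lambda(z)$ are, up to $o(1)$ relative error, $\operatorname{Re}(1-\lambda(z))\asymp (\text{integral of }I_C\text{-type})$ and $\operatorname{Im}(1-\lambda(z))\asymp \theta\cdot(\text{integral of }I_S\text{-type})$, with the precise shapes read off from Proposition~\ref{prop-IJ}.

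**Deriving the three bounds.** For $\theta\in[0,u]$: use the first line of Proposition~\ref{prop-IJ}, so $|1-\lambda(z)|\gg u\bigl(I_C - |I_S|\bigr)\gg u\,\tilde\ell(1/u)$ once $\theta\le u$ makes the error $O(\theta u^{-1}\ell(1/u))$ negligible compared to $\tilde\ell(1/u)$ (here one uses $\ell/\tilde\ell\to0$ from Proposition~\ref{prop-Karamata}(b)); inverting gives the first estimate. For $\theta\ge u$: use the second line of Proposition~\ref{prop-IJ}, giving $\operatorname{Re}(1-\lambda(z))\gg (u^2+\theta^2)^{1/2}\cdot\tilde\ell(1/\theta)$ modulo the error $O(u\theta^{-1}\ell(1/\theta))$, which is absorbed since $u\le\theta$ and $\ell\ll\tilde\ell$; since $(u^2+\theta^2)^{1/2}\asymp u+\theta$, inverting yields the second estimate. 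The third estimate is the delicate one: here one must compute $\operatorname{Re}\{(1-\lambda(z))^{-1}\}=\operatorname{Re}(1-\lambda(z))/|1-\lambda(z)|^2$. Writing $1-\lambda(z)=a+ib$ with, from Proposition~\ref{prop-IJ}, $a\asymp (u+\ ?)\tilde\ell(1/\theta)$ — more precisely $a\gg u\,\tilde\ell(1/\theta)$ with a correction of size $\theta\,\ell(1/\theta)$ coming from the $\theta\to0$ tail of $I_C$ — and $|b|\ll\theta\,\ell(1/\theta)$, one gets $|\operatorname{Re}\{(1-\lambda(z))^{-1}\}|\ll |a|/(a^2+b^2)$. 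Splitting $|a|\le u\,\tilde\ell(1/\theta)+\theta\,\ell(1/\theta)$ and using $a^2+b^2\gg (u^2+\theta^2)\tilde\ell(1/\theta)^2$ (from the second estimate) gives the two displayed terms $\dfrac{u}{(u^2+\theta^2)\tilde\ell(1/\theta)}$ and $\dfrac{\theta\ell(1/\theta)}{(u^2+\theta^2)\tilde\ell(1/\theta)^2}$.

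**Main obstacle.** The routine parts are the first two bounds, which are just Proposition~\ref{prop-IJ} plus inversion. The real work is the third bound, and the subtlety is bookkeeping the real part of $1-\lambda(z)$ accurately enough: one needs not just $|1-\lambda(z)|$ from below but the genuine size of $\operatorname{Re}(1-\lambda(z))$, keeping track that its leading contribution is $u\,\tilde\ell(1/\theta)$ (from the $e^{-ux}-1$ piece of $I_C$ times the $\tilde\ell$ mass) while the oscillatory remainder is only $O(\theta\,\ell(1/\theta))$. Care is needed because $\tilde\ell$ is much larger than $\ell$, so one cannot be cavalier about which slowly varying function multiplies which power; Potter's bounds and uniform convergence of slowly varying functions (as used in Lemma~\ref{lem-zestimate}) are what make the replacements $\ell(\sigma/u)\ell(1/u)^{-1}\to1$ etc.\ legitimate on the relevant ranges. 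Once $\operatorname{Re}(1-\lambda(z))$ and $\operatorname{Im}(1-\lambda(z))$ are pinned down, the three estimates drop out by elementary algebra.
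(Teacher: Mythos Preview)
Your overall plan --- expand $1-\lambda(z)$ via integration by parts as $(u-i\theta)(I_C+iI_S)$ plus $J$-corrections, then read off real and imaginary parts using Proposition~\ref{prop-IJ} --- is exactly the paper's approach. But you have swapped the roles of the real and imaginary parts, and this makes your argument for the second bound fail as written.

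Expanding $(u-i\theta)(I_C+iI_S)=uI_C+\theta I_S+i(uI_S-\theta I_C)$, the term $\theta I_C\sim\theta\,\tilde\ell(1/\theta)$ sits in the \emph{imaginary} part of $1-\lambda(z)$, not the real part. Using the second line of Proposition~\ref{prop-IJ} for $\theta\ge u$, one gets
\[
\Re(1-\lambda(z))=uI_C+\theta I_S+E_1=u\,\tilde\ell(1/\theta)(1+o(1))+O(\theta\,\ell(1/\theta)),
\]
\[
\Im(1-\lambda(z))=uI_S-\theta I_C+E_2=-\theta\,\tilde\ell(1/\theta)(1+o(1)).
\]
So your claim that $\Re(1-\lambda(z))\gg (u^2+\theta^2)^{1/2}\tilde\ell(1/\theta)$ is false: the real part need only be of size $u\,\tilde\ell(1/\theta)+\theta\,\ell(1/\theta)$, which for small $u$ is far smaller than $\theta\,\tilde\ell(1/\theta)$. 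The second bound $|1-\lambda(z)|\gg(u+\theta)\tilde\ell(1/\theta)$ instead comes from $|1-\lambda(z)|\ge|\Im(1-\lambda(z))|\sim\theta\,\tilde\ell(1/\theta)$. Likewise your assertion $|b|\ll\theta\,\ell(1/\theta)$ is wrong; in fact $|b|\sim\theta\,\tilde\ell(1/\theta)$.

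Once this is corrected, your argument for the third bound goes through essentially as you wrote it: with $a=\Re(1-\lambda(z))$ satisfying $|a|\ll u\,\tilde\ell(1/\theta)+\theta\,\ell(1/\theta)$ and $a^2+b^2=|1-\lambda(z)|^2\gg(u^2+\theta^2)\tilde\ell(1/\theta)^2$ from the (correctly proved) second estimate, the bound on $\Re\{(1-\lambda(z))^{-1}\}=a/(a^2+b^2)$ follows immediately. The first bound (region $\theta\le u$) is fine as you described it.
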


\begin{proof}
Recall from the proof of Lemma~\ref{lem-zestimate} (in particular~\eqref{eq-five}) that
$1-\lambda(z)$ is the sum of five integrals of the form
\[
K=(u-i\theta)\int_0^\infty e^{(-u+i\theta)x}g_z(x)(1-G(x))dx,
\]
where $g_z\ge0$ and either (i) $g_z\equiv1$ or by Corollary~\ref{cor-H1} (ii) $|g_z|_\infty=O(m(1/\theta)\theta)$ as $\theta\to0$.
In case (i), $K=uI_C+\theta I_S+i(uI_S-\theta I_C)$.
In case (ii), $K$ consists of terms of the form $uJ_C$, $uJ_S$, $\theta J_C$, $\theta J_S$.
Hence
\begin{align*}
\Re(1-\lambda(e^{-u+i\theta})) & =uI_C+\theta I_S+E_1, \qquad
\Im\lambda(e^{-u+i\theta})  =uI_S-\theta I_C+E_2, 
\end{align*}
where $E_j=O((|u|+|\theta|)(|J_C|+|J_S|))$, $j=1,2$.

For $\theta\in[0,u]$, 
we use the first estimates in Proposition~\ref{prop-IJ} to obtain
$\Re(1-\lambda(e^{-u+i\theta}))  \sim u\tilde\ell(1/u)$.
Hence $|1-\lambda(e^{-u+i\theta})|\ge |\Re(1-\lambda(e^{-u+i\theta}))|\gg u\tilde\ell(1/u)$.

For $\theta\ge u$ 
we use the second estimates in Proposition~\ref{prop-IJ} to obtain
\begin{align*}
\Re(1-\lambda(e^{-u+i\theta})) & \sim u\tilde\ell(1/\theta)+
O(\theta\ell(1/\theta)),\quad
\Im \lambda(e^{-u+i\theta})   \sim -\theta\tilde\ell(1/\theta).
\end{align*}
Hence $|1-\lambda(e^{-u+i\theta})|\gg (u+\theta)\tilde\ell(1/\theta)$.
Finally, 
\[
 \Re\{(1-\lambda(e^{-u+i\theta}))^{-1}\} =
\frac{\Re\{1-\lambda(e^{-u+i\theta})\}}{|1-\lambda(e^{-u+i\theta})|^2}
\ll \frac{u\tilde\ell(1/\theta)+\theta\ell(1/\theta)}{(u^2+\theta^2)\tilde\ell(1/\theta)^2},
\]
completing the proof.
\end{proof}

\begin{lemma}   \label{lem-R1}
For $u\in[0,1]$, $\theta\in[0,\pi]$, we have 
$\Re T(e^{-u+i\theta}) \ll  h_u(\theta)+g(\theta)$ where 
\[
h_u(\theta)=
\frac{u\tilde\ell(1/u)}{u+\theta}+
\frac{1}{u\tilde\ell(1/u)}1_{[0,u]}(\theta)+
\frac{1}{\tilde\ell(1/\theta)}\frac{u}{u^2+\theta^2},
\quad g(\theta)=
\frac{\ell(1/\theta)}{\tilde\ell(1/\theta)^2}\frac{1}{\theta}.
\]
\end{lemma}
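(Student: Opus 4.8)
The plan is to localize near $z=1$ and then decompose $T(z)$ using the spectral splitting of Proposition~\ref{prop-PQ}. On $\bar\D\setminus B_\epsilon(1)$ we have $\Re T(z)=O(1)$ by Proposition~\ref{prop-PQ}, and this is harmless: on that region either $\theta$ is bounded below, so that $g(\theta)=\theta^{-1}\ell(1/\theta)\tilde\ell(1/\theta)^{-2}$ is bounded below (using that $\ell,\tilde\ell$ are positive and locally bounded), or $u$ is bounded below, so that $h_u(\theta)\ge u\tilde\ell(1/u)(u+\theta)^{-1}$ is bounded below on $\theta\in[0,\pi]$ (note $\tilde\ell(1/u)\ge\ell(1)>0$ for $u\le1$). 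Hence $O(1)\ll h_u(\theta)+g(\theta)$ there. It remains to treat $z=e^{-u+i\theta}\in B_\epsilon(1)$, where Proposition~\ref{prop-PQ} gives $\Re T(z)=\Re\{(1-\lambda(z))^{-1}P(z)\}+O(1)$; since $g(\theta)\to\infty$ as $\theta\to0^+$ (by Potter's bounds, $g(\theta)=\theta^{-1}\ell(1/\theta)\tilde\ell(1/\theta)^{-2}\gg\theta^{-1/4}$), this $O(1)$ is absorbed into $g(\theta)$, and we are left to bound $\Re\{(1-\lambda(z))^{-1}P(z)\}$ for $u,\theta$ small.

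Next I would write $P(z)=P+(P(z)-P)$ and handle the pieces separately. For the first, $P$ is a real operator ($\bar P=P$, since $Pv\equiv\int_Y v\,d\mu$), so the scalar real part passes through: $\Re\{(1-\lambda(z))^{-1}P\}=\Re\{(1-\lambda(z))^{-1}\}\,P$, whose norm is $\ll|\Re\{(1-\lambda(z))^{-1}\}|$. Corollary~\ref{cor-IJ} then delivers exactly the surviving terms of $h_u+g$: for $\theta\in[0,u]$, $|\Re\{(1-\lambda(z))^{-1}\}|\le|1-\lambda(z)|^{-1}\ll\frac{1}{u\tilde\ell(1/u)}$, the second term of $h_u$; for $\theta\ge u$, $|\Re\{(1-\lambda(z))^{-1}\}|\ll\frac{u}{(u^2+\theta^2)\tilde\ell(1/\theta)}+\frac{\theta\ell(1/\theta)}{(u^2+\theta^2)\tilde\ell(1/\theta)^2}$, the first summand being the third term of $h_u$ and the second, using $u^2+\theta^2\ge\theta^2$, being $\ll\frac{\ell(1/\theta)}{\theta\tilde\ell(1/\theta)^2}=g(\theta)$.

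The delicate piece is $\Re\{(1-\lambda(z))^{-1}(P(z)-P)\}$, the one the section introduction flags: since $P(z)$ is not real for $z\neq1$, there is no cancellation in the scalar $\Re$ to exploit, and one must fall back on $\|\Re\{(1-\lambda(z))^{-1}(P(z)-P)\}\|\le|1-\lambda(z)|^{-1}\|P(z)-P\|$. By Corollary~\ref{cor-H1} and Proposition~\ref{prop-H1} (with $m=\tilde\ell$ since $\beta=1$, combining the radial and angular estimates and using $1-e^{-u}\asymp u$), $\|P(z)-P\|\ll\tilde\ell(1/\theta)\theta+\tilde\ell(1/u)u$, and Potter's bounds give $\tilde\ell(1/\theta)\theta\ll\tilde\ell(1/u)u$ when $\theta\le u$ and $\tilde\ell(1/u)u\ll\tilde\ell(1/\theta)\theta$ when $u\le\theta$. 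Feeding in the bounds for $|1-\lambda(z)|^{-1}$ from Corollary~\ref{cor-IJ} ($\ll\frac{1}{u\tilde\ell(1/u)}$ for $\theta\le u$, $\ll\frac{1}{(u+\theta)\tilde\ell(1/\theta)}$ for $\theta\ge u$), both regimes collapse to $\|\Re\{(1-\lambda(z))^{-1}(P(z)-P)\}\|\ll1$ on $B_\epsilon(1)$. Finally $1\ll\frac{1}{u\tilde\ell(1/u)}$ on $\theta\le u$ (as $u\tilde\ell(1/u)\to0$) and $1\ll g(\theta)$ on $\theta\ge u$, so this contribution too is $\ll h_u(\theta)+g(\theta)$, completing the proof; the boundary case $u=0$ is covered since all the estimates invoked persist as $u\to0^+$.

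I expect the only real obstacle to be bookkeeping: performing the slowly varying comparisons via Potter's bounds uniformly across the regimes $\theta\le u$ and $\theta\ge u$, and checking that every $O(1)$ error genuinely lies under one of the pieces of $h_u+g$. The conceptual content is exactly the observation that the $P(z)-P$ contribution cannot be folded into $\Re\{(1-\lambda(z))^{-1}\}$, which is why the extra term $g(\theta)$ and the approximate-identity term $\frac{u}{u^2+\theta^2}\tilde\ell(1/\theta)^{-1}$ must appear and why $\Re T(e^{-u+i\theta})$ fails to be dominated by any single $u$-independent integrable function of $\theta$.
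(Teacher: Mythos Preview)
Your proposal is correct and follows essentially the same route as the paper: localize via Proposition~\ref{prop-PQ}, split $P(z)=P+(P(z)-P)$, use Corollary~\ref{cor-IJ} for the scalar real part against $P$, and bound $(1-\lambda(z))^{-1}(P(z)-P)$ via $|1-\lambda(z)|^{-1}\|P(z)-P\|$ and Corollary~\ref{cor-H1}. Your treatment of the cross term is in fact slightly sharper than the paper's---you use Potter's bounds to compare $\theta\tilde\ell(1/\theta)$ with $u\tilde\ell(1/u)$ and obtain $O(1)$ directly, whereas the paper keeps the cruder estimate $1+\frac{1}{u\tilde\ell(1/u)}1_{[0,u]}+\frac{u\tilde\ell(1/u)}{u+\theta}$; since these extra pieces are already present in $h_u$, this is only a bookkeeping difference.
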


\begin{proof}
By Proposition~\ref{prop-PQ}, for $z=e^{-u+i\theta}\in B_\epsilon(1)$, $\epsilon$ sufficiently small,
\begin{align*}
T(z) & = (1-\lambda(z))^{-1}P+(1-\lambda(z))^{-1}(P(z)-P)+O(1).   
\end{align*}
By Corollary~\ref{cor-IJ}, 
\begin{align*}
\Re\{(1-\lambda(z))^{-1}\} & 
\ll \frac{1}{u\tilde\ell(1/u)}1_{[0,u]}+
\Bigl\{\frac{u}{(u^2+\theta^2)\tilde\ell(1/\theta)}+
\frac{\theta\ell(1/\theta)}{(u^2+\theta^2)\tilde\ell(1/\theta)^2}\Bigr\} 1_{[u,\epsilon]}
 \\ &
 \le \frac{1}{u\tilde\ell(1/u)}1_{[0,u]}+
 \frac{u}{(u^2+\theta^2)\tilde\ell(1/\theta)}+
 \frac{\ell(1/\theta)}{\theta\tilde\ell(1/\theta)^2}.
\end{align*}

By Corollary~\ref{cor-H1},
$P(e^{-u+i\theta})-P(e^{-u}) \ll \tilde\ell(1/\theta)\theta$ uniformly in $u$,
and $P(e^{-u})-P(1)\ll \tilde\ell(1/u)u$.   
Combining this with the estimates for $(1-\lambda(e^{-u+i\theta}))^{-1}$, 
\begin{align*}
(1-\lambda(z))^{-1}(P(z)-P(1))
& \ll \Bigl(\frac{1}{u\tilde\ell(1/u)}1_{[0,u]}+\frac{1}{(u+\theta)\tilde\ell(1/\theta)}1_{[u,\epsilon]}\Bigr)(\theta\tilde\ell(1/\theta)+u\tilde\ell(1/u)) 
\\ & \ll 1+\frac{1}{u\tilde\ell(1/u)}1_{[0,u]}+\frac{u\tilde\ell(1/u)}{u+\theta}.
\end{align*}
This proves the result.
\end{proof}

\begin{rmk} \label{rmk-u=0}    By similar but much simpler calculations, we obtain 
the estimates $|\Re\{(1-\lambda(e^{i\theta}))^{-1}\}|\ll g(\theta)$ for $\theta\in(0,\epsilon)$ and
$\Re T(e^{i\theta})\ll g(\theta)$ for $\theta\in(0,\pi]$.
\end{rmk}

\begin{cor} \label{cor-R1}  
For $n\ge1$,
$\lim_{u\to 0}\int_0^\pi \cos n\theta\Re T(e^{-u+i\theta})\,d\theta =
\int_0^\pi \cos n\theta \Re T(e^{i\theta})\,d\theta$.
 \end{cor}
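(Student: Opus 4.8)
The plan is to establish the limit by dominated convergence, applied to the integral $\int_0^\pi \cos n\theta\, \Re T(e^{-u+i\theta})\,d\theta$ as $u\to0^+$. The integrand converges pointwise in $\theta\in(0,\pi]$ to $\cos n\theta\, \Re T(e^{i\theta})$: indeed $T(z)=(I-R(z))^{-1}$ is continuous on $\bar\D\setminus\{1\}$ by (H2)(ii) and the remarks following Proposition~\ref{prop-PQ}, and the conjugation-compatible formula $\overline{T(z)}=T(\bar z)$ makes $z\mapsto \Re T(z)$ continuous there as well, so continuity up to the boundary at each fixed $\theta\neq0$ gives the pointwise limit. The issue is therefore to produce an integrable dominating function on $(0,\pi]$ that is uniform in $u\in[0,1]$.

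First I would invoke Lemma~\ref{lem-R1}, which gives $\Re T(e^{-u+i\theta})\ll h_u(\theta)+g(\theta)$ for $u\in[0,1]$, $\theta\in[0,\pi]$. The function $g(\theta)=\ell(1/\theta)\theta^{-1}\tilde\ell(1/\theta)^{-2}$ is $u$-independent and integrable on $(0,\pi]$ by Proposition~\ref{prop-elltilde} (with $y=1/\pi$, the integral $\int_0^\pi g(\theta)\,d\theta\sim \tilde\ell(1/\pi)^{-1}<\infty$), so that piece is already a legitimate dominator. The obstacle is the $h_u(\theta)$ piece, which genuinely depends on $u$ and is \emph{not} bounded by a single integrable function: as $u\to0$ the term $u^{-1}\tilde\ell(1/u)^{-1}1_{[0,u]}(\theta)$ has integral $\sim\tilde\ell(1/u)^{-1}\to0$ but is unbounded near $\theta=0$, and the terms $u\tilde\ell(1/u)/(u+\theta)$ and $u(u^2+\theta^2)^{-1}\tilde\ell(1/\theta)^{-1}$ also blow up as $u\to0$ for small $\theta$. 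So a naive application of dominated convergence fails, exactly as flagged in the discussion opening Section~\ref{sec-one}.

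The main obstacle is thus handling the non-dominated part, and the remedy is to split the range of integration at $\theta=\delta$ for a small fixed $\delta$, and to show that $\int_0^\delta |h_u(\theta)|\,d\theta\to0$ as $u\to0$. Concretely, $\int_0^\delta u\tilde\ell(1/u)(u+\theta)^{-1}\,d\theta = u\tilde\ell(1/u)\log\frac{u+\delta}{u}\ll u\tilde\ell(1/u)\log(1/u)\to0$ since $\tilde\ell$ is slowly varying; $\int_0^\delta u(u^2+\theta^2)^{-1}\tilde\ell(1/\theta)^{-1}\,d\theta \ll \tilde\ell(1/\delta)^{-1}\int_0^\infty u(u^2+\theta^2)^{-1}\,d\theta = \frac{\pi}{2}\tilde\ell(1/\delta)^{-1}$, which is small once $\delta$ is small (using monotonicity of $\tilde\ell$ on the relevant range, or a Potter bound); and $\int_0^\delta u^{-1}\tilde\ell(1/u)^{-1}1_{[0,u]}\,d\theta\le \tilde\ell(1/u)^{-1}\to0$. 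Meanwhile on $[\delta,\pi]$ the bound $h_u(\theta)+g(\theta)\ll C_\delta + g(\theta)$ holds uniformly in $u\in[0,1]$ with $C_\delta$ finite, so dominated convergence applies on $[\delta,\pi]$ directly. Combining: for any $\varepsilon>0$ choose $\delta$ so the $[0,\delta]$ contribution is $<\varepsilon$ uniformly in $u$ small and in the limit; apply dominated convergence on $[\delta,\pi]$; let $\varepsilon\to0$. This yields the stated equality. (The $\theta=0$ endpoint is a single point and contributes nothing.)
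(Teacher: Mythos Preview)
Your proof is correct and follows essentially the same approach as the paper: invoke Lemma~\ref{lem-R1}, use Proposition~\ref{prop-elltilde} to see $g\in L^1$, and handle the non-dominated piece $h_u$ by showing its contribution near $\theta=0$ vanishes. The paper packages the last step slightly differently---it shows $\int_0^\pi h_u(\theta)\,d\theta\to0$ as $u\to0$ (using a choice $b=b(u)$ for the third term of $h_u$) and then cites the generalized dominated convergence theorem from Royden, whereas you unpack that theorem into an explicit $\varepsilon$--$\delta$ splitting; your monotonicity bound $\int_0^\delta k_u\le\frac{\pi}{2}\tilde\ell(1/\delta)^{-1}$ for the third term is in fact a little cleaner than the paper's argument.
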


\begin{proof}
The function $g(\theta)=\frac{\ell(1/\theta)}{\tilde\ell(1/\theta)^2}\frac{1}{\theta}$ lies in $L^1$ by Proposition~\ref{prop-elltilde}.
Note that $\Re T(e^{-u+i\theta})\to\Re T(e^{i\theta})$ 
and $h_u(\theta)\to 0$ pointwise (for all $\theta\neq0$).
We claim that $\int_0^\pi h_u(\theta)\,d\theta\to0$.
The result then follows from the dominated convergence theorem
(more precisely the extended version stated 
in~\cite[p.~92]{Royden}).

The claim is easy to check for the first two terms in $h_u$.
For the third term $k_u(\theta)
=\frac{1}{\tilde\ell(1/\theta)}\frac{u}{u^2+\theta^2}$, we
compute for $b\in(0,\pi)$ that
\begin{align*}
\int_0^\pi k_u(\theta)\,d\theta & = \int_0^b k_u(\theta)\,d\theta
+ \int_b^\pi k_u(\theta)\,d\theta
\ll \frac{1}{\tilde\ell(1/b)}u^{-1}b+ u\int_b^\pi \theta^{-2}\,d\theta
\\ & \ll \frac{1}{\tilde\ell(1/b)}u^{-1}b + ub^{-1},
\end{align*}
where the implied constant is independent of $b$ and $u$.
Define $b=b(u)$ such that $u=b(\tilde\ell (1/b))^{-1/2}$.
In particular, $b\to0$ as $u\to0$ and 
so $\int_0^\pi k_u(\theta)\,d\theta\ll (\tilde\ell(1/b))^{-1/2}\to0$ as required.
\end{proof}

\begin{cor}   \label{cor-Fourier_beta=1}
$\Re T\in L^1$ and
 $T_n=\frac{2}{\pi}\int_0^\pi \cos n\theta\,\Re T(e^{i\theta})\,d\theta$
for all $n\ge1$.
\end{cor}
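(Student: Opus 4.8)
The plan is to combine the $u\to 0$ limit identity of Corollary~\ref{cor-R1} with the power-series representation of $T_n$ on the interior of the disk, exactly as in the $\beta\in(\tfrac12,1)$ case but now working only with the real part. First I would recall that $T(z)=\sum_{j\ge0}T_jz^j$ is analytic on $\D$, so for every $\rho\in(0,1)$ we have $T_n=\frac{1}{2\pi}\rho^{-n}\int_0^{2\pi}T(\rho e^{i\theta})e^{-in\theta}\,d\theta$. Writing $\rho=e^{-u}$ and using that $T_n$ is a real operator (so $T_n=\Re T_n$) together with the symmetry $T(\bar z)=\overline{T(z)}$, this collapses to $T_n=\frac{2}{\pi}e^{un}\int_0^\pi \cos n\theta\,\Re T(e^{-u+i\theta})\,d\theta$ for each $u>0$. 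This is the identity we want to pass to the limit $u\to0^+$ in.

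Next I would take $u\to 0^+$. The scalar prefactor $e^{un}\to1$, so it suffices to show $\int_0^\pi\cos n\theta\,\Re T(e^{-u+i\theta})\,d\theta\to \int_0^\pi \cos n\theta\,\Re T(e^{i\theta})\,d\theta$, which is precisely Corollary~\ref{cor-R1}. For this to even make sense we need $\Re T(e^{i\theta})\in L^1(0,\pi)$: this follows from Remark~\ref{rmk-u=0}, which gives $\Re T(e^{i\theta})\ll g(\theta)=\ell(1/\theta)\tilde\ell(1/\theta)^{-2}\theta^{-1}$, together with Proposition~\ref{prop-elltilde}, which shows $g\in L^1$. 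Thus the right-hand side integral converges absolutely, $\Re T\in L^1$, and the formula $T_n=\frac{2}{\pi}\int_0^\pi\cos n\theta\,\Re T(e^{i\theta})\,d\theta$ holds for all $n\ge1$.

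The only subtlety — and the one genuine obstacle — is justifying the interchange of limit and integral on the left-hand side, i.e. the content that Corollary~\ref{cor-R1} already extracts from Lemma~\ref{lem-R1}: $\Re T(e^{-u+i\theta})$ is \emph{not} dominated by a single integrable function uniformly in $u$, because of the term $h_u(\theta)$ coming from $(1-\lambda(z))^{-1}(P(z)-P)$. One has to use the generalized dominated convergence theorem (dominating function $h_u(\theta)+g(\theta)$ with $\int h_u\to 0$ and the pointwise convergences $\Re T(e^{-u+i\theta})\to\Re T(e^{i\theta})$, $h_u(\theta)\to0$), exactly as carried out in Corollary~\ref{cor-R1}. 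Since that work is done there, the proof of this corollary is just: quote the power-series formula, reduce to the real part via $T_n=\Re T_n$ and $T(\bar z)=\overline{T(z)}$, send $u\to0^+$ invoking Corollary~\ref{cor-R1}, and note $\Re T\in L^1$ via Remark~\ref{rmk-u=0} and Proposition~\ref{prop-elltilde}.
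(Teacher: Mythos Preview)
Your proposal is correct and follows essentially the same route as the paper: establish the identity $T_n=\frac{2}{\pi}\rho^{-n}\int_0^\pi\cos n\theta\,\Re T(\rho e^{i\theta})\,d\theta$ for $\rho<1$, then invoke Corollary~\ref{cor-R1} to pass to the boundary, with $\Re T\in L^1$ coming from Remark~\ref{rmk-u=0} and Proposition~\ref{prop-elltilde}. The only cosmetic difference is that the paper obtains the $\rho<1$ identity by directly expanding $\Re T(\rho e^{i\theta})=\sum_j T_j\rho^j\cos j\theta$ and using orthogonality of cosines, whereas your ``collapse'' from the Cauchy formula implicitly also uses that the negative Fourier coefficients of $T(\rho e^{i\theta})$ vanish (symmetry and realness of $T_n$ alone give $\frac{1}{\pi}\rho^{-n}\Re\int_0^\pi T(\rho e^{i\theta})e^{-in\theta}\,d\theta$, not yet the pure $\cos n\theta\,\Re T$ form); this is of course immediate, but worth stating.
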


\begin{proof}
The function $\theta\mapsto T(\rho e^{i\theta})$ is integrable for each fixed
$\rho<1$.   Moreover, the power series for $T(z)$ is uniformly 
convergent on compact subsets of $\D$, so we obtain
\[
\SMALL \int_0^\pi \cos n\theta\Re T(\rho e^{i\theta})\,d\theta
=\sum_{j=0}^\infty T_j\rho^j\int_0^\pi \cos n\theta
\cos j\theta\,d\theta = \frac{\pi}{2} T_n\rho^n.
\]
By Corollary~\ref{cor-R1},
$T_n= \frac{2}{\pi}\rho^{-n}\int_0^\pi \cos n\theta\Re T(\rho e^{i\theta})\,d\theta\to 
 \frac{2}{\pi}\int_0^\pi \cos n\theta\Re T(e^{i\theta})\,d\theta$,
as $\rho=e^{-u}\to1$.
\end{proof}

\subsection{Asymptotics of $T_n$}

The calculations in this subsection are restricted to the unit circle,
so we revert to writing $T(\theta)$ instead of $T(e^{i\theta})$ and so on.
First we determine the asymptotics of $\Re\{(1-\lambda(\theta))^{-1}\}$
(see also~\cite{AaronsonDenker01}).

\begin{lemma} \label{lem-one}
$\Re \{(1-\lambda(\theta))^{-1}\}= \frac{\pi}{2}g(\theta)(1+o(1))$
as $\theta\to0^+$,
where $\BIG g(\theta)=\frac{\ell(1/\theta)}{\theta(\tilde\ell(1/\theta))^2}$.
\end{lemma}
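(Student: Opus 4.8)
The plan is to extract the real part of $1-\lambda(\theta)$ from the eigenvalue expansion, expressed through the integrals $I_C$ and $I_S$ of Proposition~\ref{prop-IJ} (specialized to $u=0$), and then combine with the size estimates to see that the modulus $|1-\lambda(\theta)|$ is governed by the imaginary part. First I would recall from Lemma~\ref{lem-zestimate} (equation~\eqref{eq-five}) that $1-\lambda(\theta)$ is a finite linear combination of integrals $K=(-i\theta)\int_0^\infty e^{i\theta x}g_\theta(x)(1-G(x))\,dx$, with one ``main'' term where $g_\theta\equiv1$ and four ``error'' terms where $|g_\theta|_\infty=O(m(1/\theta)\theta)=o(1)$. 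Setting $u=0$ in the bookkeeping from the proof of Corollary~\ref{cor-IJ}, the main term contributes $\theta I_S - i\theta I_C$ to $1-\lambda(\theta)$ (real part $\theta I_S$, imaginary part $-\theta I_C$), and the error terms contribute quantities of size $O(\theta(|J_C|+|J_S|))$.

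Next I would plug in the $u=0$ estimates of Proposition~\ref{prop-IJ}: $|I_S|\ll\ell(1/\theta)$, $I_C=\tilde\ell(1/\theta)(1+o(1))$, $|J_S|\ll\theta^{1-2\epsilon}$, $|J_C|\ll\theta^{1-2\epsilon}$. Hence $\Re\{1-\lambda(\theta)\}=\theta I_S+O(\theta^{2-2\epsilon})\ll\theta\ell(1/\theta)$, while $\Im\{1-\lambda(\theta)\}=-\theta I_C+O(\theta^{2-2\epsilon})=-\theta\tilde\ell(1/\theta)(1+o(1))$. Since $\ell(1/\theta)/\tilde\ell(1/\theta)\to0$ by Proposition~\ref{prop-Karamata}(b), the real part is negligible compared with the imaginary part, so $|1-\lambda(\theta)|=\theta\tilde\ell(1/\theta)(1+o(1))$, and therefore
\[
\Re\{(1-\lambda(\theta))^{-1}\}
=\frac{\Re\{1-\lambda(\theta)\}}{|1-\lambda(\theta)|^2}
=\frac{\theta I_S+o(\theta\ell(1/\theta))}{\theta^2\tilde\ell(1/\theta)^2(1+o(1))}.
\]
It remains to identify $I_S$ to leading order. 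Substituting $\sigma=\theta x$ in $I_S=\int_0^\infty\sin\theta x\,(1-G(x))\,dx$ and using $1-G(x)=\ell(x)x^{-1}(1+o(1))$ together with uniform convergence of slowly varying functions (Potter's bounds to dominate), one gets $I_S=\ell(1/\theta)(1+o(1))\int_0^\infty\sigma^{-1}\sin\sigma\,d\sigma=\frac{\pi}{2}\ell(1/\theta)(1+o(1))$, since $\int_0^\infty\sigma^{-1}\sin\sigma\,d\sigma=\pi/2$. Combining,
\[
\Re\{(1-\lambda(\theta))^{-1}\}=\frac{\frac{\pi}{2}\ell(1/\theta)(1+o(1))}{\theta\tilde\ell(1/\theta)^2(1+o(1))}=\frac{\pi}{2}g(\theta)(1+o(1)),
\]
as claimed.

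The main obstacle I anticipate is the rigorous justification of the asymptotic $I_S\sim\frac{\pi}{2}\ell(1/\theta)$: one must control $\int_0^\infty\sin\sigma\,\bigl[\ell(\sigma/\theta)/\ell(1/\theta)\bigr]\sigma^{-1}\,d\sigma$ uniformly in $\theta$, splitting at, say, $\sigma=1$, using uniform convergence of $\ell(\sigma/\theta)/\ell(1/\theta)\to1$ on compact $\sigma$-intervals for the near part, and Potter's bounds plus the oscillation of $\sin$ (as in the Leibnitz-type estimates already used in Proposition~\ref{prop-IJ}) to control the tail. Everything else is bookkeeping with the estimates already established in Proposition~\ref{prop-IJ} and Corollary~\ref{cor-IJ}.
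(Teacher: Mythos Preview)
Your approach is correct and is essentially the same as the paper's. The paper is terser: it cites Remark~\ref{rmk-u=0} for the bound $\Re\{(1-\lambda(\theta))^{-1}\}\ll g(\theta)$ and then says the claim $I_S\sim\frac{\pi}{2}\ell(1/\theta)$ suffices, leaving the bookkeeping you spell out (that $\Re\{1-\lambda(\theta)\}=\theta I_S+o(\theta\ell(1/\theta))$ and $|1-\lambda(\theta)|\sim\theta\tilde\ell(1/\theta)$) as ``follows easily.'' One small refinement to your sketch of the $I_S$ asymptotic: splitting at a fixed point such as $\sigma=1$ is not quite what you want, since the tail $\int_1^\infty(\sin\sigma/\sigma)[\ell(\sigma/\theta)/\ell(1/\theta)]\,d\sigma$ must be shown to converge to $\int_1^\infty\sigma^{-1}\sin\sigma\,d\sigma$, not merely to be bounded; the clean way (and the paper's way) is to split at a large parameter $M$, use uniform convergence of $\ell(\sigma/\theta)/\ell(1/\theta)\to1$ on $[0,M]$ to get $\int_0^M\sigma^{-1}\sin\sigma\,d\sigma$, bound the tail by the Leibnitz-type oscillatory estimate (using that $\sigma\mapsto(1-G(\sigma/\theta))$ is decreasing) and Potter's bounds to get $O(M^{-(1-\delta)})$, and then let $M\to\infty$.
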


\begin{proof}
By Remark~\ref{rmk-u=0},
$\Re \{(1-\lambda(\theta))^{-1}\}\ll g(\theta)$.
We claim that $I_S\sim \frac{\pi}{2}\ell(1/\theta)$ from which the result
follows easily.

Let $M\ge 3\pi$.
Since $\sigma\mapsto \ell(\sigma)/\sigma$ is decreasing, 
we have the oscillatory integral estimate
\[
\frac{1}{\ell(1/\theta)}I_S=\int_0^\infty \frac{\sin \sigma}{\sigma}
\frac{\ell(\sigma/\theta)}{\ell(1/\theta)}\,d\sigma
=\int_0^M \frac{\sin \sigma}{\sigma}
\frac{\ell(\sigma/\theta)}{\ell(1/\theta)}\,d\sigma+F_M,
\]
where
\[
|F_M|\le 2\sup_{\sigma\in[M-2\pi,M+2\pi]}\frac{\ell(\sigma/\theta)}{\sigma\ell(1/\theta)}.
\]
By Potter's bounds, for any $\delta>0$, $F_M=O(1/M^{1-\delta})$.
Hence, $\lim_{\theta\to0}\ell(1/\theta)^{-1}I_S=\int_0^M \frac{\sin \sigma}{\sigma} \,d\sigma+O(1/M^{1-\delta})$.   Let $M\to\infty$ to verify the claim.
\end{proof}

\begin{cor} \label{cor-one}
Let $a>0$.   Then
$\lim_{n\to\infty} \tilde\ell(n)\int_0^{a/n}\Re\{(1-\lambda(\theta))^{-1}\}\,
d\theta =\frac{\pi}{2}$.
\end{cor}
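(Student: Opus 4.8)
The plan is to read off the pointwise asymptotic for $\Re\{(1-\lambda(\theta))^{-1}\}$ from Lemma~\ref{lem-one}, integrate it over the shrinking interval $[0,a/n]$ using the integrated estimate for $g$ provided by Proposition~\ref{prop-elltilde}, and then convert $\tilde\ell(n/a)$ into $\tilde\ell(n)$ by slow variation. The $(1+o(1))$ factor in Lemma~\ref{lem-one} is handled by the crude observation that the interval $[0,a/n]$ forces $\theta$ to be small and that $g\ge0$.

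First I would record that, by Proposition~\ref{prop-elltilde} applied with $1/y=a/n$, i.e.\ $y=n/a$,
\[
\int_0^{a/n} g(\theta)\,d\theta = \int_{n/a}^\infty \frac{\ell(x)}{x(\tilde\ell(x))^2}\,dx \sim \frac{1}{\tilde\ell(n/a)} \qquad \text{as } n\to\infty.
\]
Since $\tilde\ell$ is slowly varying by Proposition~\ref{prop-Karamata}(b), $\tilde\ell(n/a)/\tilde\ell(n)\to1$, and therefore
\[
\tilde\ell(n)\int_0^{a/n} g(\theta)\,d\theta \longrightarrow 1.
\]

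Next, by Lemma~\ref{lem-one} write $\Re\{(1-\lambda(\theta))^{-1}\} = \frac{\pi}{2}g(\theta)(1+\varepsilon(\theta))$ with $\varepsilon(\theta)\to0$ as $\theta\to0^+$. Given $\delta>0$, choose $\theta_0>0$ with $|\varepsilon(\theta)|\le\delta$ for $0<\theta\le\theta_0$; then for all $n$ large enough that $a/n\le\theta_0$, using $g\ge0$,
\[
\Bigl| \tilde\ell(n)\int_0^{a/n}\Re\{(1-\lambda(\theta))^{-1}\}\,d\theta - \frac{\pi}{2}\,\tilde\ell(n)\int_0^{a/n} g(\theta)\,d\theta\Bigr| \le \frac{\pi}{2}\,\delta\,\tilde\ell(n)\int_0^{a/n} g(\theta)\,d\theta.
\]
Letting $n\to\infty$ (using the limit above) and then $\delta\to0$ yields $\tilde\ell(n)\int_0^{a/n}\Re\{(1-\lambda(\theta))^{-1}\}\,d\theta\to\frac{\pi}{2}$, as required.

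There is no serious obstacle here: the only point needing a little care is the passage from the pointwise asymptotic of Lemma~\ref{lem-one} to the integral over $[0,a/n]$, but because the interval shrinks to a neighbourhood of $0$ and $g$ is nonnegative, the displayed crude estimate does the job, so the argument is routine once Lemma~\ref{lem-one} and Proposition~\ref{prop-elltilde} are in hand.
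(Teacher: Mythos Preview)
Your proof is correct and follows essentially the same route as the paper: write $\Re\{(1-\lambda(\theta))^{-1}\}=\frac{\pi}{2}g(\theta)(1+o(1))$ via Lemma~\ref{lem-one}, bound the $o(1)$ uniformly on $[0,a/n]$, and use Proposition~\ref{prop-elltilde} together with slow variation of $\tilde\ell$ to evaluate $\tilde\ell(n)\int_0^{a/n}g(\theta)\,d\theta\to1$. The only cosmetic difference is that the paper packages the uniform smallness of the error as $H(n)=\sup_{\theta\in[0,a/n]}|h(\theta)|$, whereas you use an explicit $\varepsilon$--$\delta$ formulation.
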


\begin{proof}   By Lemma~\ref{lem-one}, we can write 
$\Re\{(1-\lambda(\theta))^{-1}\}=\frac{\pi}{2}g(\theta)(1+h(\theta))$
where $h(\theta)=o(1)$ as $\theta\to0^+$.   
Let $H(n)=\sup_{\theta\in[0,a/n]}|h(\theta)|$, so
$H(n)=o(1)$ as $n\to\infty$.
Then
\[
\int_0^{a/n}\Re\{(1-\lambda(\theta))^{-1}\}\,d\theta
=\frac{\pi}{2}\int_0^{a/n}g(\theta)\,d\theta
+O\Bigl(H(n)\int_0^{a/n}g(\theta)\,d\theta\Bigr).
\]
By Proposition~\ref{prop-elltilde}, 
$\int_0^{a/n}g(\theta)\,d\theta=\tilde\ell(n/a)^{-1}$.   Hence
\[
\tilde\ell(n)\int_0^{a/n}\Re\{(1-\lambda(\theta))^{-1}\}\,d\theta
=\frac{\pi}{2}\frac{\tilde\ell(n)}{\tilde\ell(n/a)}(1+o(1))\to\frac{\pi}{2},
\]
as $n\to\infty$.
\end{proof}

\begin{pfof}{Theorem~\ref{thm-conv}, $\beta=1$.}
By Remark~\ref{rmk-u=0} and Proposition~\ref{prop-Karamata}(b),
$T(\theta) \ll (\theta\tilde\ell(1/\theta))^{-1}$.
Let $\delta>0$.  By the argument in the proof of Lemma~\ref{lem-GL}, we obtain
\begin{align} \label{eq-est1}
\SMALL \tilde\ell(n)\int_{a/n}^{\pi} \cos n\theta\, T(\theta)\,d\theta
\ll a^{-(1-\delta)},
\end{align}
for $a\in[1,n]$, $n\ge1$.  Also, we have
\begin{align} \label{eq-est2}
\SMALL
\tilde\ell(n)\int_0^{a/n} \cos n\theta\,T(\theta)\,d\theta=
\tilde\ell(n)\int_0^{a/n} \cos n\theta\, (1-\lambda(\theta))^{-1}\,d\theta\, P + 
O(an^{-(1-\delta)}).
\end{align}
By Lemma~\ref{lem-one},
\begin{align} \label{eq-est3} \nonumber
\SMALL &\SMALL  \tilde\ell(n)\int_0^{a/n} (\cos n\theta-1)\Re\{(1-\lambda(\theta))^{-1}\} 
\,d\theta
\\[.75ex] &\SMALL  \qquad \qquad  \ll 
\tilde\ell(n)\int_0^a (\cos \sigma-1)\ell(n/\sigma)(\tilde\ell(n/\sigma))^{-2}\sigma^{-1}\,d\sigma 
\ll \ell(n)(\tilde\ell(n))^{-1}a.
\end{align}

Combining estimates~\eqref{eq-est1},~\eqref{eq-est2}
and~\eqref{eq-est3} with Corollary~\ref{cor-one}, we obtain
\[
\lim_{a\to\infty}\lim_{n\to\infty} \tilde\ell(n)\int_0^\pi \cos n\theta\,
\Re T(\theta)\,d\theta = \frac{\pi}{2}P,
\]
and hence $\tilde\ell(n)T_n\to P$ by Corollary~\ref{cor-Fourier_beta=1}.
\end{pfof}

\section{Pointwise dual ergodicity}
\label{sec-pde}

In this section, we give an elementary proof of pointwise dual ergodicity
for the class of systems under consideration for all $\beta\in(0,1]$.
We assume our general framework from Section~\ref{sec-ORT}, except that
we do not require H2(ii).

\begin{prop} \label{prop-realAD}
$T(s)\sim \begin{cases}
\tilde\ell(\frac{1}{1-s})^{-1}(1-s)^{-1}P, & \beta=1, \\
 \Gamma(1-\beta)^{-1}\ell(\frac{1}{1-s})^{-1}(1-s)^{-\beta}P, & \beta\in(0,1),
\end{cases}$
as $s\to 1^-$.
\end{prop}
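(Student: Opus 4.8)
The plan is to repeat the computation of Lemma~\ref{lem-AD} along the real segment $s\in(1-\epsilon,1)$ instead of on the circle; this is in fact simpler, since every integral involved is real and monotone rather than oscillatory.

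First I would reduce to the scalar asymptotics of the leading eigenvalue. As H2(ii) is not assumed in this section, I do not quote Proposition~\ref{prop-PQ} directly: instead H2(i) together with analytic perturbation theory~\cite{Kato} supplies, for $z$ near $1$, the simple eigenvalue $\lambda(z)$, eigenfunction $v(z)$ and projection $P(z)$, with $\sigma(R(z)Q(z))$ confined to a fixed disc of radius $<1$. Arguing as in the part of the proof of Proposition~\ref{prop-PQ} dealing with $z$ near $1$ then yields
\[
T(s)=(1-\lambda(s))^{-1}P+(1-\lambda(s))^{-1}(P(s)-P)+O(1)\qquad(s\in(1-\epsilon,1)),
\]
once $\lambda(s)\neq1$ is known there. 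By Corollary~\ref{cor-H1}, $\|P(s)-P\|\ll m(\tfrac1{1-s})(1-s)^\beta$, with $m=\ell$ for $\beta<1$ and $m=\tilde\ell$ for $\beta=1$; granting the scalar estimate below, this forces the middle term to be $O(1)$, while the leading coefficient tends to $\infty$, so $T(s)\sim(1-\lambda(s))^{-1}P$ and it remains to prove
\[
1-\lambda(s)\sim\begin{cases}\Gamma(1-\beta)\,\ell(\tfrac1{1-s})(1-s)^\beta,&\beta\in(0,1),\\[.5ex]\tilde\ell(\tfrac1{1-s})(1-s),&\beta=1.\end{cases}
\]

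For this I follow the proof of Lemma~\ref{lem-ADerror}. Writing $s=e^{-u}$ (so $u\sim1-s$) and $\hat v_s\circ\varphi=E(v(s)|\mathcal F_0)$, Corollary~\ref{cor-H1} gives $\sup_{x\ge0}|\hat v_s(x)-1|\ll u^\beta$, and
\[
1-\lambda(s)=\int_0^\infty(1-e^{-ux})\hat v_s(x)\,dG(x)=\int_0^\infty(1-e^{-ux})\,dG(x)+O\Bigl(u^\beta\!\int_0^\infty(1-e^{-ux})\,dG(x)\Bigr),
\]
because $1-e^{-ux}\ge0$; so the perturbation term is negligible and, after integration by parts, $1-\lambda(s)=(1+o(1))\,u\int_0^\infty e^{-ux}(1-G(x))\,dx$. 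Applying Karamata's Tauberian theorem~\cite{Feller66} to $U(x)=\int_0^x(1-G(y))\,dy$ --- which satisfies $U(x)\sim\frac1{1-\beta}\ell(x)x^{1-\beta}$ for $\beta\in(0,1)$ and $U(x)\sim\tilde\ell(x)$ for $\beta=1$ (cf.\ Section~\ref{sec-one}) --- gives $\int_0^\infty e^{-ux}(1-G(x))\,dx\sim\Gamma(1-\beta)\ell(1/u)u^{\beta-1}$ and $\sim\tilde\ell(1/u)$ respectively. In either case the right-hand side is eventually positive, justifying $\lambda(s)\neq1$; replacing $u$ by $1-s$ via slow variation and substituting into the decomposition of $T(s)$ finishes the proof.

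The integration by parts and the Karamata estimates are routine. The point requiring care is the case $\beta=1$: tracking the slowly varying factor $\tilde\ell$ through the Tauberian step, verifying that there $(1-\lambda(s))^{-1}(P(s)-P)=O(1)$ is genuinely negligible against the $(1-s)^{-1}\tilde\ell(\tfrac1{1-s})^{-1}$ divergence, and confirming that the local spectral decomposition used above really does go through on $(1-\epsilon,1)$ without hypothesis H2(ii).
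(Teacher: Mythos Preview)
Your argument is correct and structurally parallel to the paper's: reduce $T(s)$ to $(1-\lambda(s))^{-1}P$ via the spectral decomposition near $s=1$, then compute the asymptotics of $1-\lambda(s)$ through the Laplace transform $u\int_0^\infty e^{-ux}(1-G(x))\,dx$. Your care in deriving the local decomposition from H2(i) alone, without invoking Proposition~\ref{prop-PQ} (whose second clause uses H2(ii)), is appropriate for this section.

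The one genuine difference is in evaluating the Laplace integral. The paper does this by hand: for $\beta\in(0,1)$ it substitutes $\sigma=ux$ and passes to the limit by dominated convergence, obtaining $\Gamma(1-\beta)\ell(1/u)u^{\beta-1}$ directly; for $\beta=1$ it splits the range into $[0,1/u]$ and $[1/u,\infty)$ and shows the secondary pieces are $O(\ell(1/u))$ while the main one is $\sim\tilde\ell(1/u)$. You instead package both cases into a single application of Karamata's Tauberian theorem to $U(x)=\int_0^x(1-G(y))\,dy$, which is slicker and treats $\beta=1$ on the same footing as $\beta<1$. Either route works; yours is shorter, while the paper's hands-on computation makes the $\beta=1$ error terms explicit in a way that foreshadows the finer estimates in Section~\ref{sec-one}. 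A minor point: the bound you quote as $|\hat v_s-1|\ll u^\beta$ should carry the slowly varying factor $m(1/u)$ from Corollary~\ref{cor-H1}, but since you only need $o(1)$ this is harmless.
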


\begin{proof}
This is similar to the proof of Lemma~\ref{lem-AD}, but much
simpler since the integrals are absolutely convergent.
By Proposition~\ref{prop-PQ}, for $s\in(1-\epsilon,1]$,
\[
T(s)=(1-\lambda(s))^{-1}P+
(1-\lambda(s))^{-1}(P(s)-P)+O(1).
\]
By Corollary~\ref{cor-H1}, $P(s)-P\ll m(\frac{1}{1-s})(1-s)^\beta$,
so it suffices to establish the desired asymptotic expression for 
$(1-\lambda(s))^{-1}$.

Setting $s=e^{-u}$ we have
$\lambda(s)=1+\int_0^\infty (e^{-ux}-1)\hat v_u(x)dG(x)$,
where $G(x)=\mu(\varphi\le x)$ and $|\hat v_u-1|_\infty = o(1)$ as $u\to0$.
Writing $d\hat G_u=v_u\,dG$ and integrating by parts, 
\begin{align} \label{eq-real_lambda}
\lambda(s)=1+\int_0^\infty (e^{-ux}-1)d\hat G_u(x)
=1-u\int_0^\infty e^{-ux}g_u(x)(1-G(x))\,dx,
\end{align}
where $|g_u(x)-1|_\infty = o(1)$ as $u\to0$.

If $\beta\in(0,1)$, then
\[
\lambda(s)=1-\ell(1/u) u^\beta
\int_0^\infty e^{-\sigma} g_u(\sigma/u)\{\ell(\sigma/u)\ell(1/u)^{-1}\}
\sigma^{-\beta}\,du.
\]
By the dominated convergence theorem,
$\lambda(s)=1-\Gamma(1-\beta) \ell(1/u) u^\beta(1+o(1))$.
The result follows since $u=-\log s = 1-s+O((1-s)^2)$.

If $\beta=1$, then picking up from~\eqref{eq-real_lambda}, 
\begin{align*}
&  \SMALL \int_0^\infty e^{-ux}g_u(x)(1-G(x))\,dx
=\int_0^{1/u} g_u(x)(1-G(x))\,dx  \\ & \SMALL \qquad
+\int_0^{1/u} (e^{-ux}-1)g_u(x)(1-G(x))\,dx
+\int_{1/u}^\infty e^{-ux}g_u(x)(1-G(x))\,dx.
\end{align*}
The last two integrals are $O(\ell(1/u))$, and
$\int_0^{1/u} g_u(x)(1-G(x))\,dx=(1+o(1))\int_0^{1/u}(1-G(x))\,dx\sim 
\tilde\ell(1/u)$ by definition of $\tilde\ell$.
\end{proof}

\begin{thm} \label{thm-pde}  
Let $v\in L^1(X)$ and $\beta\in(0,1]$.   Then
\[
\lim_{n\to\infty}m(n) n^{-\beta}\sum_{j=1}^n L^jv = \beta^{-1}d_\beta 
\int_X v\,d\mu,\enspace\text{almost everywhere on $X$}.
\]
\end{thm}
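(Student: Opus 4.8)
The plan is to combine the real-variable asymptotics of $T(s)$ from Proposition~\ref{prop-realAD} with Karamata's Tauberian theorem, the tower construction of Section~\ref{sec-tower}, and Hopf's ratio ergodic theorem. First I would pass from $T(s)$ to the partial sums $\sum_{j=0}^n T_j$. Since each $T_n$ is a positive operator and $T_0=I$, for $v\in\mathcal{B}$ with $v\ge0$ the sequence $\sum_{j=0}^n T_j v$ is nondecreasing in $n$, and its generating function is $(1-s)^{-1}T(s)v$. Feeding Proposition~\ref{prop-realAD} into Karamata's Tauberian theorem (the regularly varying prefactors being handled by Proposition~\ref{prop-Karamata}) then yields $m(n)n^{-\beta}\sum_{j=0}^n T_j\to \Gamma(\beta+1)^{-1}\Gamma(1-\beta)^{-1}P$ in $\mathcal{B}$ for $\beta\in(0,1)$, and $m(n)n^{-1}\sum_{j=0}^n T_j\to\Gamma(2)^{-1}P=P$ for $\beta=1$; since $\Gamma(\beta+1)^{-1}\Gamma(1-\beta)^{-1}=\beta^{-1}\pi^{-1}\sin\beta\pi=\beta^{-1}d_\beta$, this is precisely $m(n)n^{-\beta}\sum_{j=0}^n T_j\to\beta^{-1}d_\beta P$, uniformly on $Y$. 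In particular, taking $v\equiv1$, the set $Y$ is a Darling--Kac set for every $\beta\in(0,1]$ (note that only (H1) and (H2)(i) have been used).

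Next I would promote this to the tower $f_\Delta\colon\Delta\to\Delta$. Writing $\Delta_\ell=\{(y,\ell):\varphi(y)>\ell\}$ for the $\ell$-th level, the elementary point is that for $\hat v$ supported on the base $\Delta_0\cong Y$ with density $v$, an induction on $n$ using the renewal relation $T_n=\sum_{j=1}^n R_j T_{n-j}$ gives $(L_\Delta^n\hat v)|_{\Delta_\ell}=T_{n-\ell}v$ for $0\le\ell\le n$ (and $0$ otherwise): indeed $L_\Delta$ merely shifts each level up by one and feeds the tops of the columns back to the base through the $R_j$. Summing over $j$ and using slow variation of $m$ to replace $m(n-\ell)(n-\ell)^{-\beta}$ by $m(n)n^{-\beta}$, the previous step gives $m(n)n^{-\beta}\sum_{j=1}^n L_\Delta^j\hat v\to\beta^{-1}d_\beta\int_\Delta\hat v\,d\mu_\Delta$ pointwise on $\Delta$ for such $\hat v$ with $v\in\mathcal{B}$, together with a uniform bound $m(n)n^{-\beta}\sum_{j\le n}L_\Delta^j\hat v=O(\|v\|)$ valid on all of $\Delta$ (Potter's bounds, treating the levels $\ell$ close to $n$ separately). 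For the statement on $\Delta$ for general observables in $L^1(\Delta)$ one would add an $L^1(Y)$ approximation argument for base-supported observables, a delayed-renewal variant for observables supported on higher levels, and Hopf's ratio ergodic theorem on $\Delta$.

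Finally I would transfer to $X$. Since $\pi f_\Delta=f\pi$ and $\pi_*\mu_\Delta=\mu$, the induced operator $\pi_*\colon L^1(\Delta)\to L^1(X)$ (defined by $\int(\pi_* u)\,w\,d\mu=\int u\,(w\circ\pi)\,d\mu_\Delta$) satisfies $\pi_* L_\Delta=L\pi_*$, and the base indicator $\hat 1_Y$ satisfies $\pi_*\hat 1_Y=1_Y$, so $\sum_{j=1}^n L^j 1_Y=\pi_*\bigl(\sum_{j=1}^n L_\Delta^j\hat 1_Y\bigr)$. Because $\pi_*$ acts on each fibre $\pi^{-1}(x)$ as a convex combination (the fibrewise weights sum to $1$ since $\pi_* 1_\Delta=1_X$), the pointwise convergence and uniform boundedness on $\Delta$ from the previous paragraph allow dominated convergence on each fibre, giving $m(n)n^{-\beta}\sum_{j=1}^n L^j 1_Y\to\beta^{-1}d_\beta$ everywhere on $X$. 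Hopf's ratio ergodic theorem for $(X,f,\mu)$ (with a reference built from $1_Y$) then upgrades this to all $v\in L^1(X)$: $\bigl(\sum_{j=1}^n L^j v\bigr)/\bigl(\sum_{j=1}^n L^j 1_Y\bigr)\to\int_X v\,d\mu$ almost everywhere, whence $m(n)n^{-\beta}\sum_{j=1}^n L^j v\to\beta^{-1}d_\beta\int_X v\,d\mu$. Equivalently, this last step is the standard consequence of $Y$ being a Darling--Kac set.

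I expect the main obstacle to be this last gluing: the clean statements live on $Y$ (in operator norm) or on finitely many levels of $\Delta$, whereas the theorem asserts an almost-everywhere statement on all of $X$ for every $v\in L^1(X)$, and in infinite measure the normalised sums $m(n)n^{-\beta}\sum_{j\le n}L^j$ are unbounded in $L^1(X)$, so the passage is genuinely pointwise and must be routed through the tower, the uniform-in-level bound, and Hopf's theorem. The regularly varying tail is exactly what makes both the Tauberian step and the slow-variation comparisons $m(n-\ell)(n-\ell)^{-\beta}\sim m(n)n^{-\beta}$ go through.
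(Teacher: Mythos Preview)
Your approach is essentially correct and uses the same ingredients as the paper (Proposition~\ref{prop-realAD} plus Karamata's Tauberian theorem on $Y$, the tower structure of $\Delta$, the projection to $X$, and the Hurewicz/Hopf ratio ergodic theorem), but you have made life harder than necessary by ordering the reductions differently.

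The paper reduces to a single observable $v=1_Y$ \emph{first}: by~\cite[Proposition~3.7.6]{Aaronson} (equivalently your $\pi_*$-argument, which is the content of Proposition~\ref{prop-pi}) it suffices to prove pointwise dual ergodicity on $\Delta$, and by the Hurewicz ergodic theorem it suffices to treat the single function $v=1_Y$. After this reduction, one only needs to show that $m(n)n^{-\beta}\sum_{j\le n}(T_j1)(y)\to\beta^{-1}d_\beta$ for each fixed $y\in Y$ (Karamata applied to the scalar monotone sequence $n\mapsto\sum_{j\le n}(T_j1)(y)$), and then for $p=(y,\ell)\in\Delta$ the formula $(L_\Delta^n1_Y)(p)=(T_{n-\ell}1)(y)$ immediately gives the same limit at every point of $\Delta$. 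This completely avoids the $L^1(Y)$-approximation step, the ``delayed-renewal variant'' for higher levels, and the uniform-in-level bound that you flag as the main obstacle: none of them are needed once you are down to a single observable.

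Two small remarks on your write-up. First, your claim that Karamata yields convergence of $m(n)n^{-\beta}\sum_{j\le n}T_j$ ``in $\mathcal{B}$'' is a slight overstatement: Karamata's Tauberian theorem is a scalar result and, applied to each monotone sequence $n\mapsto\sum_{j\le n}(T_jv)(y)$, gives pointwise convergence on $Y$; the operator-norm asymptotic of $T(s)$ from Proposition~\ref{prop-realAD} does not by itself upgrade this to operator-norm convergence of the partial sums. Pointwise convergence is all that is needed here. Second, your uniform bound $m(n)n^{-\beta}\sum_{j\le n}L_\Delta^j\hat v=O(\|v\|)$ on all of $\Delta$ via Potter's bounds is plausible but requires a uniform-in-$y$ control of $\sum_{k\le N}(T_k1)(y)$, which again takes a little extra care. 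Reducing to $v=1_Y$ first makes these issues disappear.
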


\begin{proof}
By~\cite[Proposition~3.7.6]{Aaronson} (see also the proof of
Proposition~\ref{prop-pi}), it suffices to prove pointwise
dual ergodicity on the tower $\Delta$ defined in Section~\ref{sec-tower}.    
Let $L_\Delta:L^1(\Delta)\to L^1(\Delta)$ denote the transfer operator on 
$\Delta$.   Note that $T_nv=1_YL_\Delta^n(1_Yv)$ coincides with our usual $T_n$.
By the Hurewicz ergodic theorem~\cite{Hurewicz44}, it is enough to prove 
that $m(n)n^{-\beta}\sum_{j=0}^{n-1}L_\Delta^jv \to 
\beta^{-1}d_\beta\int_\Delta v\,d\mu$ almost everywhere on
$\Delta$ for the particular choice $v=1_Y$.     

Let $y\in Y$.  For $\beta\in(0,1)$, Proposition~\ref{prop-realAD} gives
 $(T(s)v)(y)\sim \Gamma(1-\beta)
 \ell(\frac{1}{1-s})^{-1}(1-s)^{-\beta}\int_Y v\,d\mu$ as $s\to 1^-$.
By (the discrete version of) the Karamata Tauberian 
Theorem~\cite{Karamata33}~\cite[p.~445]{Feller66},~\cite[Proposition~4.2]{ThalerZweimuller06}, it follows that 
$\sum_{j=1}^n (T_jv)(y)\sim \beta^{-1} d_\beta\ell(n)^{-1}n^\beta\int_Yv\,d\mu$ 
as $n\to\infty$.   Similarly for $\beta=1$.

Finally, let $p=(y,j)$ be a general point in $\Delta$.
Then $(L_\Delta^nv)(p)=(L_\Delta^{n-j}v)(y,0)=(T_{n-j}v)(y)$ for all $n>j$.
\end{proof}

An immediate consequence is the Darling-Kac law~\cite{DarlingKac57}.
Recall that a random variable $\mathcal{M}_\beta$ on $(0,\infty)$ has the 
{\em normalised Mittag-Leffler distribution of order $\beta$} if  $E(e^{z\mathcal{M}_\beta)}=\sum_{p=0}^\infty \Gamma(1+\beta)^pz^p/\Gamma(1+p\beta)$ 
for all $z\in\C$.

\begin{cor}
\label{cor-pde}
Let $v\in L^1(X)$, $v\ge0$, $\int_X v\,d\mu=1$, and let $\beta\in(0,1]$.  Then
\[
m(n)n^{-\beta}\sum_{j=1}^n v\circ f^j 
\to_d \mathcal{M}_\beta\enspace\text{as $n\to\infty$}.
\]
The convergence is in the sense of {\em strong
distributional convergence}: convergence in distribution under
any probability measure absolutely continuous w.r.t.\ $\mu$.
\end{cor}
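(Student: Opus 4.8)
The plan is to obtain Corollary~\ref{cor-pde} as an essentially immediate consequence of the pointwise dual ergodicity proved in Theorem~\ref{thm-pde}, by invoking the Darling--Kac distributional limit theorem. Theorem~\ref{thm-pde} shows that $f$ is pointwise dual ergodic: setting $a_n=\beta^{-1}d_\beta\,n^\beta/m(n)$ we have $\frac1{a_n}\sum_{j=1}^n L^jv\to\int_X v\,d\mu$ almost everywhere for every $v\in L^1(X)$. The first observation is that $(a_n)$ is regularly varying of index $\beta$; this holds because $m$ is slowly varying --- equal to $\ell$ when $\beta\in(0,1)$, and slowly varying when $\beta=1$ by Proposition~\ref{prop-Karamata}(b). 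Thus the hypotheses of the Darling--Kac theorem in its pointwise dual ergodic form --- pointwise dual ergodicity together with a regularly varying return sequence --- hold throughout the range $\beta\in(0,1]$.

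I would then quote the Darling--Kac theorem for pointwise dual ergodic transformations; see Aaronson~\cite[Chapter~3]{Aaronson}, together with Thaler--Zweim\"uller~\cite{ThalerZweimuller06} for the fact that pointwise dual ergodicity with regularly varying $(a_n)$ already suffices (rather than the stronger uniform-set / Darling--Kac set property). This gives, for every $v\in L^1(X)$ with $v\ge0$ and $\int_X v\,d\mu=1$, that $\frac1{a_n}\sum_{j=1}^n v\circ f^j$ converges in distribution, under any probability measure absolutely continuous with respect to $\mu$, to the normalised Mittag--Leffler law of order $\beta$; and this law is exactly $\mathcal{M}_\beta$, since its moment generating function is $\sum_{p\ge0}\Gamma(1+\beta)^p z^p/\Gamma(1+p\beta)$, the defining property in the statement. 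Rewriting $1/a_n$ in terms of $m(n)n^{-\beta}$ and tracking the constant then yields the normalisation as stated. (For $\beta>\frac12$ one may bypass~\cite{ThalerZweimuller06} altogether: by Theorem~\ref{thm-conv} and Remark~\ref{rmk-DK}, $Y$ is then a genuine Darling--Kac set, so the classical form of the theorem in~\cite[Chapter~3]{Aaronson} applies directly.)

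All the analytic content --- the asymptotics of $T(s)$ as $s\to1^-$ and the Karamata Tauberian passage to the partial sums $\sum_j T_j$ --- has already been carried out in the proof of Theorem~\ref{thm-pde}, so I do not anticipate a genuinely hard step here. The only points requiring care are bookkeeping: one must verify that the constant emerging from the Darling--Kac/Tauberian machinery is consistent with the factor $\beta^{-1}d_\beta$ in Theorem~\ref{thm-pde} and with the mean-one normalisation built into $\mathcal{M}_\beta$, and one must be sure to cite the version of the Darling--Kac theorem whose hypothesis is pointwise dual ergodicity plus regular variation rather than the uniform-set property --- this is precisely why the self-contained Theorem~\ref{thm-pde}, valid for all $\beta\in(0,1]$, is the convenient input.
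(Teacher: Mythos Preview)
Your approach is correct and essentially the same as the paper's: deduce pointwise dual ergodicity from Theorem~\ref{thm-pde} and then invoke the Darling--Kac distributional limit theorem from the literature. The paper's proof is a one-liner citing Aaronson~\cite{Aaronson81} and~\cite[Corollary~3.7.3]{Aaronson}; the latter is precisely the version whose hypothesis is pointwise dual ergodicity with a regularly varying return sequence, so your detour through~\cite{ThalerZweimuller06} to justify that pointwise dual ergodicity (rather than a Darling--Kac set) suffices is unnecessary---Aaronson's Corollary~3.7.3 already covers this directly.
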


\begin{proof}   This follows from Theorem~\ref{thm-pde} by 
Aaronson~\cite{Aaronson81},~\cite[Corollary~3.7.3]{Aaronson}.
\end{proof}
 
To conclude the section, we mention a simple 
consequence of Theorem~\ref{thm-conv} which gives uniform convergence on $Y$ in
the pointwise dual ergodic theorem for $\beta>\frac12$ for sufficiently
regular observables.

\begin{prop} \label{prop-pde}  
If $\beta\in(\frac12,1]$, then
$\lim_{n\to\infty}m(n) n^{-\beta}\sum_{j=1}^n T_j = \beta^{-1}d_\beta P$.
\end{prop}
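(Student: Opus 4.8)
The plan is to deduce the averaged statement directly from Theorem~\ref{thm-conv} by a summation-by-parts / Karamata-type argument, carried out at the level of operators on $\mathcal{B}$. By Theorem~\ref{thm-conv} we know $m(n)n^{1-\beta}T_n \to d_\beta P$ in operator norm. Write $T_n = d_\beta P\,\frac{1}{m(n)n^{1-\beta}} + E_n$ where $\|E_n\| = o(m(n)^{-1}n^{-(1-\beta)})$. Summing, $\sum_{j=1}^n T_j = d_\beta P \sum_{j=1}^n \frac{1}{m(j)j^{1-\beta}} + \sum_{j=1}^n E_j$, and the point is to show (i) the main term is asymptotic to $\beta^{-1}d_\beta P\, m(n)^{-1}n^\beta$, and (ii) the error sum is $o(m(n)^{-1}n^\beta)$.

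For step (i), when $\beta\in(\tfrac12,1)$ we have $m(x)=\ell(x)$, and $\sum_{j=1}^n \ell(j)^{-1}j^{\beta-1}$ is a sum of a regularly varying sequence with exponent $\beta-1 > -1$; since $1/\ell$ is slowly varying, Proposition~\ref{prop-Karamata}(a) gives $\sum_{j=1}^n \ell(j)^{-1}j^{\beta-1} \sim \beta^{-1}\ell(n)^{-1}n^\beta = \beta^{-1}m(n)^{-1}n^\beta$. When $\beta=1$, $m(x)=\tilde\ell(x)$ and we need $\sum_{j=1}^n \tilde\ell(j)^{-1} \sim \tilde\ell(n)^{-1}n$; this is again Proposition~\ref{prop-Karamata}(a) with $p=0$ applied to the slowly varying sequence $1/\tilde\ell$ (note $\beta^{-1}=1$ here), so the two cases unify as $\sum_{j=1}^n m(j)^{-1}j^{\beta-1} \sim \beta^{-1}m(n)^{-1}n^\beta$. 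For step (ii), the standard Toeplitz/Cesàro lemma for regularly varying weights applies: since $\|E_j\| = \varepsilon_j\, m(j)^{-1}j^{\beta-1}$ with $\varepsilon_j\to 0$, and the weights $m(j)^{-1}j^{\beta-1}$ have divergent partial sums that are regularly varying, we get $\sum_{j=1}^n \|E_j\| = o\big(\sum_{j=1}^n m(j)^{-1}j^{\beta-1}\big) = o(m(n)^{-1}n^\beta)$.

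Multiplying through by $m(n)n^{-\beta}$ then yields $m(n)n^{-\beta}\sum_{j=1}^n T_j \to \beta^{-1}d_\beta P$ in operator norm, as required. Alternatively, and perhaps more cleanly given what is already in the paper, one can bypass the discrete summation entirely and instead invoke Proposition~\ref{prop-realAD}: it gives the asymptotics of $T(s) = \sum_{n=0}^\infty T_n s^n$ as $s\to 1^-$, namely $T(s)\sim \Gamma(1-\beta)^{-1}\ell(\tfrac{1}{1-s})^{-1}(1-s)^{-\beta}P$ for $\beta\in(0,1)$ and the analogous statement for $\beta=1$; applying the Karamata Tauberian theorem (exactly as in the proof of Theorem~\ref{thm-pde}) to the operator-valued power series converts this into $\sum_{j=1}^n T_j \sim \beta^{-1}d_\beta\, m(n)^{-1}n^\beta\, P$, using $\Gamma(1-\beta)^{-1}/\Gamma(1+\beta) = d_\beta\beta^{-1}$ which follows from the reflection formula. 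Either route is short.

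The main (minor) obstacle is bookkeeping the slowly varying factors and confirming the constant identities — in particular that $\Gamma(1-\beta)^{-1}\Gamma(\beta)^{-1}\cdot\beta^{-1}$, or equivalently $\beta^{-1}d_\beta$, is the right limit; this is just the reflection formula $\Gamma(\beta)\Gamma(1-\beta) = \pi/\sin\beta\pi$ together with $d_\beta = \tfrac{1}{\pi}\sin\beta\pi$ — and checking that the $\beta=1$ case (where $m=\tilde\ell$, $d_1=1$, and the Tauberian theorem degenerates slightly) comes out consistently. No genuinely hard analysis is needed since all the spectral work was done in proving Theorem~\ref{thm-conv} and Proposition~\ref{prop-realAD}.
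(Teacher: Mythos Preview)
Your primary approach is correct and essentially identical to the paper's: the paper writes $T_n = m(n)^{-1}n^{-(1-\beta)}d_\beta P + S_n$ with $\|S_n\| = o(m(n)^{-1}n^{-(1-\beta)})$, applies Proposition~\ref{prop-Karamata}(a) to the main sum, and handles the error sum by exactly the $\varepsilon$--$n_0$ splitting you describe as the Toeplitz/Ces\`aro lemma. Your alternative Tauberian route via Proposition~\ref{prop-realAD} is also valid (and is how the paper proves the pointwise Theorem~\ref{thm-pde}), but the paper does not use it here.
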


\begin{proof}
By Theorem~\ref{thm-conv},
$T_n=m(n)^{-1}n^{-(1-\beta)}d_\beta P+S_n$
where $\|S_n\|=o(m(n)^{-1}n^{-(1-\beta)})$.  Hence
\begin{align} \label{eq-pde}
m(n)n ^{-\beta}\sum_{j=1}^n T_j = 
m(n)n ^{-\beta}\sum_{j=1}^n m(j)^{-1}j^{-(1-\beta)}\,d_\beta P +
m(n)n ^{-\beta}\sum_{j=1}^n S_j.
\end{align}
By Proposition~\ref{prop-Karamata}(a), 
$\sum_{j=1}^n m(j)^{-1}j^{-(1-\beta)}\sim \beta^{-1}m(n)^{-1}n^\beta$, 
so the first term on the RHS of~\eqref{eq-pde} converges to the desired limit
$\beta^{-1}d_\beta P$.

Let $\delta>0$, and choose $n_0$ such that
$\|S_n\|\le \delta m(n)^{-1}n^{-(1-\beta)}$ for $n>n_0$.
Then $\sum_{j=1}^n \|S_j\| \le \sum_{j=1}^{n_0}\|S_j\|+\sum_{j=n_0+1}^n 
\delta m(j)^{-1}j^{-(1-\beta)}$.   Applying Proposition~\ref{prop-Karamata}(a) 
once more, we obtain $\limsup_{n\to\infty}m(n)n^{-\beta}\sum_{j=1}^n \|S_j\|\le
\beta^{-1}\delta$. Since $\delta>0$ is arbitrary, 
the second term on the RHS of~\eqref{eq-pde} converges to zero.   
\end{proof}

\section{Results for $\beta\in(0,\frac12]$}
\label{sec-zerohalf}

In this section, we prove Theorems~\ref{thm-bound} and~\ref{thm-liminf}.

\begin{pfof}{Theorem~\ref{thm-bound}}
If $\beta\in(0,\frac12]$, then the proof of Theorem~\ref{thm-conv}
breaks down only in the estimation of $I_3$ in Lemma~\ref{lem-GL}.

\noindent(a) When $\beta=\frac12$, 
it is evident from the proof of Lemma~\ref{lem-GL} that
$I_3\ll \ell(n)n^{-\frac12} 
\int_{1/n}^\pi \ell(1/\theta)^{-2}\theta^{-1}\,d\theta$.
The remaining estimates are $O(\ell(n)^{-1}n^{-\frac12})$ as before.
By Proposition~\ref{prop-Karamata}(b), 
$\ell(n)^2\int_{1/n}^\pi \ell(1/\theta)^{-2}\theta^{-1}\,d\theta\to\infty$ 
as $n\to\infty$.  Hence the estimate for $I_3$ is the dominant one.

\noindent (b) For $\beta\in(0,\frac12)$, 
$I_3\ll \ell(n) n^{-\beta}\int_0^\pi \ell(1/\theta)^{-2}\theta^{-2\beta}\,d\theta\ll \ell(n) n^{-\beta}$.   The remaining estimates are $O(\ell(n)^{-1}n^{-(1-\beta)})$ as before.

\noindent (c)  If $Pv=0$, then $\|T(\theta)v\|\ll \|v\|$.  Hence
the resolvent identity
\[
\{T(\theta)-T(\theta-\pi/n)\}v=T(\theta)(R(\theta)-R(\theta-\pi/n))T(\theta-\pi/n)v
\]
yields $\|\{T(\theta)-T(\theta-\pi/n)\}v\|\ll \ell(1/\theta)^{-1}\theta^{-\beta}\ell(n)n^{-\beta}$.
It follows that
\[
2T_nv = \int_0^{2\pi}\{T(\theta)-T(\theta-\pi/n)\}e^{-in\theta}\,d\theta
\ll \ell(n)n^{-\beta}\|v\|,
\]
as required.
\end{pfof}

Next, we establish the lower bound in Theorem~\ref{thm-liminf}(b).

\begin{prop}   \label{prop-liminf}
If $\beta\in(0,1)$, then 
$\liminf_{n\to\infty} \ell(n)n^{1-\beta}T_nv \ge  d_\beta \int_Y v\,d\mu$ pointwise on $Y$
for all $v\ge0$.
\end{prop}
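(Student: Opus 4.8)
The plan is to follow Garsia \& Lamperti's proof of the corresponding statement for scalar renewal sequences with infinite mean, after reducing the operator problem to that scalar situation. For $\beta\in(\tfrac12,1)$ there is nothing to prove, since Theorem~\ref{thm-conv} already gives $\ell(n)n^{1-\beta}T_nv\to d_\beta\int_Yv\,d\mu$. So I would fix $\beta\in(0,\tfrac12]$, fix $v\in\mathcal B$ with $v\ge0$ and a point $y\in Y$, and set $u_n=(T_nv)(y)$. Since $v\ge0$ and each $T_n$ is a positive operator, $u_n\ge0$ for \emph{all} $n$, and this positivity is what the argument ultimately exploits.

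First I would assemble the facts about $u_n$ that the scalar argument requires. From Proposition~\ref{prop-realAD} together with the Karamata Tauberian theorem one obtains $\sum_{j=1}^n u_j\sim\beta^{-1}d_\beta\,\ell(n)^{-1}n^\beta\int_Yv\,d\mu$, and this holds uniformly in $y$ because the convergence in Proposition~\ref{prop-realAD} is in the norm of $\mathcal B\hookrightarrow L^\infty(Y)$; this is the uniform form of the pointwise dual ergodic theorem (Theorem~\ref{thm-pde}). Next, since the first return map $F$ preserves $\mu|_Y$ one has $R1=1$, whence $R_j1\ge0$, $\sum_{j\ge1}R_j1=1$, and by (H1) also $R_j1\le C\mu(\varphi=j)\cdot 1$ and $\sum_{j>k}(R_j1)(y)\le C\mu(\varphi>k)$, all uniformly in $y$. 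Finally, iterating the renewal relation $T_n=\sum_{j=1}^n T_{n-j}R_j$ gives $T_n=\sum_{r=1}^n(R^{*r})_n$ for $n\ge1$, where $(R^{*r})_n=\sum_{j_1+\dots+j_r=n}R_{j_1}\cdots R_{j_r}$; combining this with the spectral decomposition $R(z)=\lambda(z)P(z)+R(z)Q(z)$ valid near $z=1$ (Proposition~\ref{prop-PQ}, Corollary~\ref{cor-H1}), the identity $Pw=(\int_Yw\,d\mu)1$, and the fact that $\lambda(z)$ agrees with $\sum_j\mu(\varphi=j)z^j$ to leading order, one can compare $(R^{*r})_nv$ with $\sigma_{r,n}\int_Yv\,d\mu$, where $\sigma_{r,n}$ is the $n$th Taylor coefficient of $\lambda(z)^r$. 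Summing over $r$ then exhibits $u_n$ as $\int_Yv\,d\mu$ times a genuine scalar renewal sequence with jump tails $\ell(n)n^{-\beta}$, up to error terms.

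The scalar input is then: for a renewal sequence $\tilde u_n\ge0$ whose jump tails are regularly varying of index $-\beta$, one has $\liminf_n\ell(n)n^{1-\beta}\tilde u_n=d_\beta$; this is the result of~\cite{GarsiaLamperti62}, and its proof uses the renewal equation and the nonnegativity of $\tilde u_n$ in an essential way. Transporting this through the comparison above, and recalling that every estimate is uniform in $y$, gives $\liminf_n\ell(n)n^{1-\beta}(T_nv)(y)\ge d_\beta\int_Yv\,d\mu$ pointwise on $Y$, which is the assertion.

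The main obstacle lies in the second half of the first step: when $\beta\le\tfrac12$ the operator errors $(R^{*r})_n-\sigma_{r,n}P$, summed over $r$, are \emph{not} $o(\ell(n)^{-1}n^{\beta-1})$ --- this is precisely the phenomenon behind the failure of Theorem~\ref{thm-conv} and the weaker bounds of Theorem~\ref{thm-bound} in this range --- so one cannot simply write $u_n=\tilde u_n\int_Yv\,d\mu+o(\ell(n)^{-1}n^{\beta-1})$. The resolution, exactly as in~\cite{GarsiaLamperti62}, is that in the renewal expansion each term $(R^{*r})_nv$ is a nonnegative function (a positive operator applied to $v\ge0$), so the pieces one wants to ignore can be discarded \emph{from below} rather than estimated; the hypothesis $v\ge0$ is what legitimises this. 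What remains is bookkeeping: running Garsia \& Lamperti's real-variable argument with the operator-valued pieces in place of scalars while keeping all bounds uniform in $y\in Y$, using the $\mathcal B$-norm estimates recorded above.
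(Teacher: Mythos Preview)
Your final resolution --- discard nonnegative terms in the renewal expansion using positivity of $R$ and $v\ge0$, then run Garsia--Lamperti's argument on what remains --- is exactly the paper's approach, but your presentation buries the one concrete step that makes it work. The paper does not attempt the term-by-term comparison $(R^{*r})_nv\approx\sigma_{r,n}\int_Yv\,d\mu$ that you spend a paragraph setting up and then abandon; it goes straight to the decomposition
\[
T(z)=(I-R(z))^{-1}=I+R(z)+\cdots+R(z)^{m-1}+T^{(m)}(z),\qquad T^{(m)}(z)=R(z)^m(I-R(z))^{-1},
\]
so that by positivity $(T_nv)(y)\ge (T^{(m)}_nv)(y)$ for every $m\ge1$. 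The substantive content is then the choice $m=b_n\sim b\,\ell(n)^{-1}n^\beta$ for fixed $b>0$ and the assertion, following~\cite[Theorem~3.6.1]{GarsiaLamperti62}, that $\ell(n)n^{1-\beta}T^{(b_n)}_n\to d_bP$ with $d_b\to d_\beta$ as $b\to0$. That is not bookkeeping: it is the operator analogue of the GL limit theorem, and it uses the Fourier machinery of Sections~\ref{sec-T}--\ref{sec-half} applied to $T^{(m)}(\theta)=\lambda(\theta)^m(1-\lambda(\theta))^{-1}P+O(1)$ rather than to $T(\theta)$ itself. The extra factor $\lambda(\theta)^{b_n}$ is what tames the singularity at $\theta=0$ enough for the oscillatory-integral estimates to go through for all $\beta\in(0,1)$, not just $\beta>\tfrac12$.

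Your detour through a putative scalar reduction $u_n\approx\tilde u_n\int_Yv\,d\mu$ is not needed and, as you note, does not close. Drop it, state the truncation $T_nv\ge T^{(m)}_nv$ at the outset, specify $m=b_n$, and make explicit that the asymptotic $\ell(n)n^{1-\beta}T^{(b_n)}_n\sim d_bP$ is the operator transcription of the GL computation --- that is where the work is.
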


\begin{proof}
For any $m\ge1$, we can write
\[
T=(I-R)^{-1}=I+R+\cdots+R^{m-1}+T^{(m)}, \quad T^{(m)}=R^m(I-R)^{-1}.
\]
Since $R$ is a positive operator, we deduce that
$(T_nv)(y) \ge (T^{(m)}_nv)(y)$ for all $v\ge0$, $y\in Y$, $m\ge1$.
Choosing $m=b_n\sim b\ell(n)^{-1}n^\beta$, $b>0$,  as 
in~\cite[Theorem~3.6.1]{GarsiaLamperti62}, we obtain
$\ell(n) n^{1-\beta}T^{(b_n)}_n \sim d_bP$, where $d_b\to d_\beta$ as $b\to 0$,
and the result follows.
\end{proof}

The following result is well-known (see~\cite[Theorem~2.9.1]{BinghamGoldieTeugels},~\cite{GarsiaLamperti62})
but stated in a slightly different form, so we provide the proof for completeness.

\begin{prop}  \label{prop-zerodensity}
Let $f_n$ be a sequence in $\R$
and let $A\in\R$, Suppose that $\beta\in(0,1)$, that $\ell(n)$ is slowly
varying, and that
\begin{itemize}
\item[(a)] $\liminf_{n\to\infty}\ell(n)n^{1-\beta}f_n\ge A$,
\item[(b)] $\lim_{n\to\infty}\ell(n)n^{-\beta}\sum_{j=1}^n f_j=\beta^{-1}A$.
\end{itemize}
Then there exists a set $E$ of density zero such that
$\lim_{n\to\infty,\;n\not\in E}\ell(n)n^{1-\beta}f_n=A$.

In particular, 
$\liminf_{n\to\infty}\ell(n)n^{1-\beta}f_n=A$.
\end{prop}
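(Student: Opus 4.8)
The plan is to reduce everything to showing that for each fixed $\epsilon>0$ the ``bad set'' $B_\epsilon:=\{n\in\N:\ell(n)n^{1-\beta}f_n>A+\epsilon\}$ has density zero. Granting this, I would run the usual diagonalisation: the sets $B_{1/k}$ are nested increasing in $k$, each of density zero, so one can choose an increasing sequence $m_k$ with $\#(B_{1/k}\cap[1,n])<n/k$ and (by~(a)) $\ell(n)n^{1-\beta}f_n>A-1/k$ for all $n\ge m_k$, and set $E=\bigcup_k\bigl(B_{1/k}\cap[m_k,m_{k+1})\bigr)$. Using the nestedness one checks that $\#(E\cap[1,n])<\tfrac{m_j}{j-1}+\tfrac{n}{j}$ for $n\in[m_j,m_{j+1})$, $j\ge2$, so $E$ has density zero; and for $n\notin E$ lying in $[m_j,m_{j+1})$ one has $n\notin B_{1/j}$, hence $|\ell(n)n^{1-\beta}f_n-A|\le 1/j$. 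This gives $\lim_{n\to\infty,\,n\notin E}\ell(n)n^{1-\beta}f_n=A$. The final assertion then follows at once: $\N\setminus E$ is infinite, so $\liminf_{n\to\infty}\ell(n)n^{1-\beta}f_n\le A$, while $\liminf_{n\to\infty}\ell(n)n^{1-\beta}f_n\ge A$ by~(a).

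To bound the density of $B_\epsilon$, put $b_j=\ell(j)^{-1}j^{\beta-1}$ and $a_n=\sum_{j=1}^n b_j$. Since $1/\ell$ is slowly varying and $\beta-1>-1$, Proposition~\ref{prop-Karamata}(a) gives $a_n\sim\beta^{-1}\ell(n)^{-1}n^\beta$; in particular $a_n\to\infty$ and $nb_n\sim\beta a_n$. Hypothesis~(b), together with this asymptotic, gives $\sum_{j\le n}f_j=A\,a_n+o(a_n)$. Fix $\eta>0$; by~(a) there is $N(\eta)$ such that $q_n:=f_n-(A-\eta)b_n\ge0$ for all $n\ge N(\eta)$. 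Summing over $N(\eta)<j\le n$ and using the preceding formula,
\[
\sum_{N(\eta)<j\le n}q_j=\Bigl(\sum_{j\le n}f_j-A\,a_n\Bigr)+\eta\,a_n+c_\eta=\eta\,a_n+o(a_n),
\]
where $c_\eta$ is a constant; since $a_n\to\infty$ this is $\le 2\eta\,a_n$ for all $n$ large enough. Moreover $q_n>\epsilon\,b_n$ whenever $n\in B_\epsilon$.

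Now I would argue by contradiction: suppose $B_\epsilon$ has upper density $\bar\delta>0$ and fix $\theta<\delta<\bar\delta$. There is a sequence $N_k\to\infty$ with $\#(B_\epsilon\cap[1,N_k])\ge\delta N_k$, hence $\#(B_\epsilon\cap(\theta N_k,N_k])\ge(\delta-\theta)N_k$. For $k$ large, $\theta N_k>N(\eta)$, and by uniform convergence of slowly varying functions on $[\theta,1]$ (together with $\beta<1$) one has $b_j\ge\tfrac12 b_{N_k}$ for all $j\in(\theta N_k,N_k]$. Therefore
\[
2\eta\,a_{N_k}\ \ge\ \sum_{N(\eta)<j\le N_k}q_j\ \ge\ \epsilon\!\!\sum_{\substack{j\in B_\epsilon\\ \theta N_k<j\le N_k}}\!\!b_j\ \ge\ \tfrac{\epsilon}{2}\,b_{N_k}\,\#\bigl(B_\epsilon\cap(\theta N_k,N_k]\bigr)\ \ge\ \tfrac{\epsilon}{2}(\delta-\theta)\,N_k\,b_{N_k}.
\]
Since $N_kb_{N_k}\sim\beta a_{N_k}$, dividing by $a_{N_k}$ and letting $k\to\infty$ gives $\tfrac{\epsilon\beta}{2}(\delta-\theta)\le 2\eta$; as $\eta>0$ is arbitrary, $\delta\le\theta$, contradicting $\theta<\delta$. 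Hence $B_\epsilon$ has density zero, completing the proof.

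I expect the only delicate points to be bookkeeping ones: that $b_j$ need not be monotone for a general slowly varying $\ell$ — which is why the counting argument must be confined to the window $(\theta N_k,N_k]$, where uniform convergence of $\ell$ supplies the lower bound $b_j\ge\tfrac12 b_{N_k}$ — and that $N(\eta)$, unlike the $o(a_n)$ terms (which are uniform in $\eta$), depends on $\eta$, so one must take $k$ large depending on $\eta$ before letting $\eta\to0$.
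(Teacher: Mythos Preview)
Your proof is correct, and the diagonal construction of $E$ from the sets $B_{1/k}$ is essentially the same as the paper's. The difference lies in how you establish that each $B_\epsilon$ has density zero.

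The paper first reduces to $A=0$, then invokes the fact that any regularly varying function of positive index is asymptotically equivalent to a monotone increasing one (so one may assume $\ell(n)n^{1-\beta}$ is increasing). With monotonicity in hand, the counting function is bounded \emph{directly}: for $j\le n$ one has $[\ell(j)j^{1-\beta}]^{-1}\ge[\ell(n)n^{1-\beta}]^{-1}$, so
\[
\frac1n\sum_{j\le n}1_{E_q}(j)\le \ell(n)n^{-\beta}\sum_{j\le n}[\ell(j)j^{1-\beta}]^{-1}1_{E_q}(j)\le q\,\ell(n)n^{-\beta}\sum_{j\le n}f_j1_{E_q}(j),
\]
and the right-hand side is then controlled by splitting over $E_q$ and $E_q^c$ and using (a), (b).

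You avoid the monotone-replacement step entirely and instead argue by contradiction on a window $(\theta N_k,N_k]$, using only the uniform convergence theorem for slowly varying functions to obtain $b_j\ge\tfrac12 b_{N_k}$ there. This is a genuinely different route: it trades the global monotonicity trick for a localised comparability estimate plus a density--contradiction argument. The paper's approach is shorter once the monotone replacement is granted; yours is more self-contained (it needs only uniform convergence and Karamata, both already used elsewhere in the paper) and makes explicit why $\beta<1$ matters in the window estimate. Both are standard devices in regular variation and either is perfectly acceptable here.
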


\begin{proof}
Our proof is modelled on \cite[p.~65, Lemma~6.2]{Petersen}.

By Proposition~\ref{prop-Karamata}(a), 
$\sum_{j=1}^n\ell(j)^{-1}j^{-(1-\beta)}\sim \beta^{-1}\ell(n)^{-1} n^\beta$.
Let $\hat f_n=f_n-\ell(n)^{-1}n^{(\beta-1)}A$.  
Then (b) is equivalent to $\lim_{n\to\infty}\ell(n)n^{-\beta}\sum_{j=1}^n 
\hat f_j=0$.
Hence we may suppose without loss that $A=0$.
In addition, there is a monotone increasing function $g(n)$ such that
$\ell(n)n^{1-\beta}\sim g(n)$ (see for 
example~\cite[Theorem 1.5.3]{BinghamGoldieTeugels}).
Hence we may suppose that $\ell(n)n^{1-\beta}$ is increasing.

Define the nested sequence of sets 
$E_q=\{n\ge1:\ell(n)n^{1-\beta}f_n>1/q\}$.
We claim that each $E_q$ has density zero.  Let $\delta>0$.  By (a),
there exists $n_0\ge1$ such that $\ell(n)n^{1-\beta}f_n>-\delta$ for all
$n\ge n_0$.    Hence
\begin{align*}
\frac1n \sum_{j=1}^n 1_{E_q}(j) & \le
\ell(n) n^{-\beta}\sum_{j=1}^n \{\ell(j)j^{(1-\beta)}\}^{-1}1_{E_q}(j)
\le q\ell(n)n^{-\beta}\sum_{j=1}^n f_j1_{E_q}(j) \\
& = q\ell(n)n^{-\beta}\Bigl(\sum_{j=1}^n f_j 
- \sum_{j=1}^{n_0} f_j1_{E_q^c}(j)
- \sum_{n_0+1}^n f_j1_{E_q^c}(j)\Bigr) \\
& \le  q\ell(n)n^{-\beta}\sum_{j=1}^n f_j 
+ q\ell(n)n^{-\beta} \sum_{j=1}^{n_0} |f_j| 
+ q\ell(n)n^{-\beta}\sum_{n_0+1}^n \ell(j)^{-1}j^{\beta-1}\delta \\ 
& = 
q\ell(n)n^{-\beta}\sum_{j=1}^n f_j +O(\ell(n)n^{-\beta})+O(\delta).
\end{align*}
By (b), $\limsup_{n\to\infty}
\frac1n \sum_{j=1}^n 1_{E_q}(j)=O(\delta)$ and the claim follows since
$\delta$ is arbitrary.

By the claim, there exist $1=i_0<i_1<i_2<\cdots$ such that
$\frac1n\sum_{j=1}^n 1_{E_q}(j)<1/q$ for $n\ge i_{q-1}$, $q\ge2$.  Let 
$E=\bigcup_{q=1}^\infty E_q\cap(i_{q-1},i_q)$.
If $n\in E$ and $n\le i_q$, then $n\in E_q$.
Hence for $i_{q-1}\le n\le i_q$, we have
$\frac1n\sum_{j=1}^n 1_E(j)  \le \frac1n\sum_{j=1}^n 1_{E_q}(j)  \le 1/q$,
verifying that $E$ has density zero.

On the other hand, 
if $n\not\in E$ and $i_{q-1}<n<i_q$, then $n\notin E_q$ and so
$\ell(n)n^{1-\beta}f_n\le 1/q$.
Hence $\limsup_{n\to\infty,\;n\not\in E} \ell(n)n^{1-\beta}f_n\le 0$.
Combined with assumption~(a), we deduce that 
$\lim_{n\to\infty,\;n\not\in E} \ell(n)n^{1-\beta}f_n=0$ and the last statement follows
immediately.~
\end{proof}

\begin{pfof}{Theorem~\ref{thm-liminf}}
Part (b) is stated for $v\ge0$ and
in part (a) we can break $v$ into positive and negative parts.  Hence
without loss we may suppose that $v\ge0$.

By Proposition~\ref{prop-liminf} and Theorem~\ref{thm-pde},
we have verified the hypotheses of Proposition~\ref{prop-zerodensity}
with $f_n=(T_nv)(y)$ and $A=d_\beta\int_Yv\,d\mu$.
The result is immediate.
\end{pfof}

\section{Second order asymptotics}
\label{sec-second}

In this section we prove results on second order asymptotics and higher 
order asymptotic expansions under assumptions on the 
asymptotics of $\mu(\varphi>n)$.   Throughout, we suppose that $\ell(n)$
is asymptotically constant (and that $\beta>\frac12$).
In Subsection~\ref{sec-second_half}, we consider the case when 
$\beta\in(\frac12,1)$.   The case $\beta=1$ is covered in
Subsection~\ref{sec-second_one}.    
Error terms in the Dynkin-Lamperti arcsine law are obtained in 
Subsection~\ref{sec-arcsine}.

\subsection{Second order asymptotics for $\beta\in(\frac12,1)$}
\label{sec-second_half}

We assume that $\mu(\varphi>n)= c(n^{-\beta}+H(n))$, where 
$H(n)=O(n^{-2\beta})$ and $c>0$.    
(It is easy to relax this to the more general hypothesis
that $H(n)=O(n^{-q})$, $q>1$.   However the formulas
become more complicated and our assumption is satisfied by~\eqref{eq-LSV}.)

Recall that $c_H=\int_0^\infty H_1(x)\,dx$ where
$H_1(x)=[x]^{-\beta}-x^{-\beta}+H([x])$.
Define $\xi^\pm_p=\int_0^\infty e^{\pm i\sigma}\sigma^{-p}\,d\sigma$,
$0<p<1$, so $c_\beta=-i\xi^+_\beta$, and recall that
$e_0=ic_H/c_\beta$.

Set $d_{\beta,j}'=e_0^j\xi^-_{(j+1)\beta-j}/c_\beta=
ie_0^j\xi^-_{(j+1)\beta-j}/\xi^+_\beta$ and
$d_{\beta,j}=\frac{1}{\pi}\Re d_{\beta,j}'$.

We note that $d_{\beta,0}=d_\beta=\frac{1}{\pi}\sin\beta\pi>0$, and that
either $d_{\beta,j}=0$ for all $j\ge1$ or $d_{\beta,j}\neq0$ for all $j\ge1$.  
Moreover, the latter situation is typical.

\begin{thm} \label{thm-second}   
Suppose that $\beta\in(\frac12,1)$ and that
$\mu(\varphi>n)= c(n^{-\beta}+H(n))$, where $H(n)=O(n^{-2\beta})$ and $c>0$.   
Let $\gamma=\min\{1-\beta,\beta-\frac12\}$.  Then 
\[
n^{1-\beta}T_n=c^{-1}d_\beta P + O(n^{-\gamma}).
\]

Moreover, if $\beta\in(\frac34,1)$, then
$\lim_{n\to\infty}n^{1-\beta}\{n^{1-\beta}T_n-c^{-1}d_\beta P \}=d_{\beta,1}P$.
\end{thm}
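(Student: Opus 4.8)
The plan is to re-run the proof of Theorem~\ref{thm-conv} for $\beta\in(\tfrac12,1)$, now feeding in the sharper eigenvalue expansion of Lemma~\ref{lem-ADerror}. Since $\ell$ is asymptotically constant, the hypothesis $\mu(\varphi>n)=c(n^{-\beta}+H(n))$ with $H(n)=O(n^{-2\beta})$ is exactly the case $q=2\beta$ of Lemma~\ref{lem-ADerror}, giving $\lambda(\theta)=1-cc_\beta\theta^\beta+icc_H\theta+O(\theta^{2\beta})$ as $\theta\to0^+$. Dividing, and writing $e_0=ic_H/c_\beta$, one obtains for any fixed $K\ge1$
\[
(1-\lambda(\theta))^{-1}=\frac{1}{cc_\beta}\sum_{j=0}^{K}e_0^j\,\theta^{\,j-(j+1)\beta}
+O\bigl(\theta^{\,(K+1)-(K+2)\beta}+1\bigr),\qquad\theta\to0^+,
\]
the residual power term being present (and unbounded) precisely when $\beta>\tfrac{K+1}{K+2}$. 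Combined with Lemma~\ref{lem-AD}(b), i.e.\ $T(\theta)=(1-\lambda(\theta))^{-1}P+O(1)$, this is the only new input.

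From here I would follow the template of the proof of Theorem~\ref{thm-conv}. Starting from $T_n=\tfrac1\pi\Re\int_0^\pi T(\theta)e^{-in\theta}\,d\theta$ (Corollary~\ref{cor-Fourier}), split the integral at $\theta=a/n$. On $[a/n,\pi]$, Lemma~\ref{lem-GL} with $\beta'=\beta$ (legitimate because $\ell$ is asymptotically constant) gives $n^{1-\beta}\int_{a/n}^\pi T(\theta)e^{-in\theta}\,d\theta\ll a^{-(2\beta-1)}$. On $[0,a/n]$, insert the expansion above: the $O(1)$ remainder contributes $O(a/n)$, the residual power is bounded crudely, and each term $\theta^{\,j-(j+1)\beta}$ is evaluated via the substitution $\sigma=n\theta$ together with the convergent integrals $\xi^\pm_p=\int_0^\infty e^{\pm i\sigma}\sigma^{-p}\,d\sigma$ $(0<p<1)$ and the oscillatory tail bound $\int_M^\infty e^{-i\sigma}\sigma^{-p}\,d\sigma=O(M^{-p})$, yielding
\[
\int_0^{a/n}\theta^{\,j-(j+1)\beta}e^{-in\theta}\,d\theta=n^{(j+1)(\beta-1)}\bigl(\xi^-_{(j+1)\beta-j}+O(a^{\,j-(j+1)\beta})\bigr).
\]
Recalling $c_\beta=-i\xi^+_\beta$ and the definitions $d'_{\beta,j}=e_0^j\xi^-_{(j+1)\beta-j}/c_\beta$, $d_{\beta,j}=\tfrac1\pi\Re d'_{\beta,j}$, collecting terms gives $T_n=c^{-1}d_\beta\,n^{\beta-1}P+c^{-1}d_{\beta,1}\,n^{2\beta-2}P+(\text{error})$.

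For the first assertion, take $K=2$ and $a=\sqrt n$; then the tail $a^{-(2\beta-1)}=n^{-(\beta-1/2)}$, the remainder $n^{1-\beta}\!\cdot\!O(a/n)=n^{1/2-\beta}$, the $j\ge1$ contributions, the crudely bounded residual, and the $O(a^{\cdot})$ tail corrections are all $O(n^{-\gamma})$ with $\gamma=\min\{1-\beta,\beta-\tfrac12\}$, which is the claim. For the second assertion suppose $\beta>\tfrac34$; take $K=2$ and $a=n^s$ with $\frac{1-\beta}{2\beta-1}<s<\frac12$. This range is non-empty exactly when $\beta>\tfrac34$, and for such $s$ one checks that each error contribution above becomes $o(n^{2\beta-2})$; subtracting $c^{-1}d_\beta P$ and multiplying by $n^{1-\beta}$ then leaves $c^{-1}d_{\beta,1}P+o(1)$.

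The main difficulty is this error bookkeeping, and the threshold $\beta>\tfrac34$ arises from the competition it involves: the lower constraint $a\gg n^{(1-\beta)/(2\beta-1)}$ is forced by the Lemma~\ref{lem-GL} tail estimate, while the upper constraint $a\ll n^{1/2}$ comes from bounding crudely the residual power term left after extracting $K=2$ terms of the resolvent expansion (and, more weakly, from the $O(a/n)$ remainder); these two windows overlap iff $\beta>\tfrac34$. A related point of care: for each sub-leading power $\theta^{\,j-(j+1)\beta}$ with $j\ge1$ one must exploit the oscillation of $e^{-in\theta}$ through $\sigma=n\theta$ and the convergence of $\xi^-_{(j+1)\beta-j}$ --- merely bounding $|e^{-in\theta}|\le1$ on $[0,a/n]$ already destroys the second-order term.
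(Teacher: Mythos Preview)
Your proposal is correct and follows essentially the same route as the paper: Fourier representation via Corollary~\ref{cor-Fourier}, split at $a/n$, tail via Lemma~\ref{lem-GL} with $\beta'=\beta$, and near zero the refined eigenvalue expansion of Lemma~\ref{lem-ADerror} inverted term by term and integrated via $\sigma=n\theta$. The only organisational difference is that the paper (Proposition~\ref{prop-lambda}) keeps \emph{all} terms $\theta^{-((j+1)\beta-j)}$ with positive exponent and absorbs the rest into an $O(1)$ remainder, so the single choice $a=n^{1/2}$ handles both assertions; your fixed-$K$ truncation leaves an unbounded residual power that you must estimate separately, which is what drives your more delicate choice $a=n^s$, $s<\tfrac12$, for the second assertion --- but the conclusion is the same.
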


\begin{rmk}
For $\beta$ close to $1$, we obtain higher order asymptotic expansions.
There exist constants $d_{\beta,j}\in\R$, $j\ge0$ with $d_{\beta,0}=d_\beta$,
such that for each $q=0,1,2,\ldots$, 
\[
n^{1-\beta}T_n=c^{-1}\Bigl\{\sum_{j=0}^q d_{\beta,j} n^{-j(1-\beta)} + 
O(n^{-(q+1)(1-\beta)})\Bigr\}P+ O(n^{-(\beta-\frac12)}).
\]
Thus, 
$n^{1-\beta}T_n = \begin{cases} c^{-1}d_\beta P+O(n^{-(\beta-\frac12)}), & \beta\in(\frac12,\frac34] \\[.75ex]
c^{-1}d_\beta P+c^{-1}d_{\beta,1} n^{-(1-\beta)}P+O(n^{-(\beta-\frac12)}), & 
\beta\in(\frac34,\frac56] \end{cases}\enspace$
and so on.
\end{rmk}

\begin{cor} \label{cor-pde_second}   
$n^{-\beta}\sum_{j=1}^n T_j=c^{-1}\beta^{-1}d_\beta P + O(n^{-\gamma})$
uniformly on $Y$.
\end{cor}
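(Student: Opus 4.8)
The plan is to deduce Corollary~\ref{cor-pde_second} from Theorem~\ref{thm-second} by the same summation argument used to prove Proposition~\ref{prop-pde}, with the error term now tracked explicitly. Since we are in the setting $\beta\in(\tfrac12,1)$ with $\ell$ asymptotically constant, Theorem~\ref{thm-second} gives $T_j = c^{-1}d_\beta j^{-(1-\beta)}P + E_j$ where $\|E_j\|\ll j^{-(1-\beta)}j^{-\gamma} = j^{-(1-\beta)-\gamma}$. Summing,
\[
\sum_{j=1}^n T_j = c^{-1}d_\beta P\sum_{j=1}^n j^{-(1-\beta)} + \sum_{j=1}^n E_j.
\]
By Proposition~\ref{prop-Karamata}(a) (with $p=\beta-1>-1$ and $\ell\equiv 1$), the first sum satisfies $\sum_{j=1}^n j^{-(1-\beta)} = \beta^{-1}n^\beta + O(n^{\beta-1})$; here I would want to record the explicit $O(n^{\beta-1})$ remainder, which follows by comparing the sum with $\int_0^n x^{\beta-1}\,dx$ via the Euler--Maclaurin / monotonicity estimate, rather than just the asymptotic equivalence stated in the proposition. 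Multiplying through by $c^{-1}d_\beta n^{-\beta}P$ contributes $c^{-1}\beta^{-1}d_\beta P + O(n^{-1})$, and since $1 \ge \gamma$ the $O(n^{-1})$ is absorbed into $O(n^{-\gamma})$.

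For the error sum, I would split at a fixed threshold: $\sum_{j=1}^n \|E_j\| \ll \sum_{j=1}^n j^{-(1-\beta)-\gamma}$. The exponent $1-\beta+\gamma$ where $\gamma = \min\{1-\beta,\beta-\tfrac12\}$ is at most $2(1-\beta) < 1$ and at most $(1-\beta)+(\beta-\tfrac12) = \tfrac12 < 1$, so in all cases $1-\beta+\gamma < 1$ and the sum is genuinely growing; by Karamata again, $\sum_{j=1}^n j^{-(1-\beta)-\gamma} \ll n^{1-(1-\beta)-\gamma} = n^{\beta-\gamma}$. Hence $n^{-\beta}\sum_{j=1}^n\|E_j\| \ll n^{-\gamma}$, which is exactly the claimed error. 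Combining the two contributions gives $n^{-\beta}\sum_{j=1}^n T_j = c^{-1}\beta^{-1}d_\beta P + O(n^{-\gamma})$ uniformly on $Y$, the uniformity being inherited from the operator-norm statement of Theorem~\ref{thm-second} together with the embedding $\mathcal{B}\hookrightarrow L^\infty(Y)$ as discussed at the start of Section~\ref{sec-conv}.

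The argument is almost entirely routine; the only point requiring a little care is matching the exponents so that every leftover term is dominated by $n^{-\gamma}$, and making sure the remainder in Karamata's theorem for the main sum is controlled at the level of $O(n^{-1})$ (not merely $o(1)$), since an $o(n^\beta)$ remainder would only yield a $o(1)$ correction rather than the quantitative $O(n^{-\gamma})$ asserted. Because $\ell$ is asymptotically constant this remainder is elementary — it is just the difference between a Riemann sum and its integral for a monotone power function — so I do not anticipate any real obstacle. If one wanted the more general hypothesis $H(n) = O(n^{-q})$, $q>1$, the same proof goes through verbatim with $\gamma = \min\{1-\beta, q-1\}$ wherever $\gamma$ appears, provided $1-\beta+\gamma<1$, which again holds automatically.
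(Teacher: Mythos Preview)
Your argument is essentially identical to the paper's: write $T_j = c^{-1}d_\beta j^{-(1-\beta)}P + S_j$ with $S_j = O(j^{-(1-\beta+\gamma)})$, sum, and control each piece. One small slip: the remainder in $\sum_{j=1}^n j^{-(1-\beta)} = \beta^{-1}n^\beta + \cdots$ is $O(1)$, not $O(n^{\beta-1})$ (the Euler--Maclaurin comparison leaves a nonzero constant), but after multiplying by $n^{-\beta}$ this still yields $O(n^{-\beta})\subset O(n^{-\gamma})$, so the conclusion is unaffected.
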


\begin{proof}
Specialising the proof of Proposition~\ref{prop-pde}, we have
\begin{align*} 
n ^{-\beta}\sum_{j=1}^n T_j = 
n ^{-\beta}\sum_{j=1}^n j^{-(1-\beta)}\,c^{-1}d_\beta P +
n ^{-\beta}\sum_{j=1}^n S_j,
\end{align*}
where $S_j=O(j^{-(1-\beta+\gamma)})$.
Now $\sum_{j=1}^n j^{-(1-\beta)}=\int_1^n x^{-(1-\beta)}\,dx+O(1)=
\beta^{-1}n^\beta+O(1)$, and $\sum_{j=1}^n S_j=O(n^{\beta-\gamma})$.
\end{proof}

In the remainder of this subsection, we prove Theorem~\ref{thm-second}.

\begin{prop}    \label{prop-lambda}
$(1-\lambda(\theta))^{-1}= c^{-1}c_\beta^{-1}\sum_j e_0^j\theta^{-((j+1)\beta-j)}+O(1)$,
where the sum is over those $j\ge0$ with $(j+1)\beta-j>0$.
\end{prop}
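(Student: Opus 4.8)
The plan is to read the expansion off directly from the refined eigenvalue asymptotics of Lemma~\ref{lem-ADerror}. Under the standing hypothesis $\mu(\varphi>n)=c(n^{-\beta}+H(n))$ with $H(n)=O(n^{-2\beta})$, that lemma applies with $q=2\beta>1$ and gives $\lambda(\theta)=1-cc_\beta\theta^\beta+icc_H\theta+O(\theta^{2\beta})$ as $\theta\to0^+$. First I would factor out the leading singularity, writing $1-\lambda(\theta)=cc_\beta\theta^\beta\bigl(1-w(\theta)\bigr)$, where, since $cc_\beta e_0=icc_H$ by the definition $e_0=ic_H/c_\beta$, one has $w(\theta)=e_0\theta^{1-\beta}+O(\theta^\beta)$. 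Because $\beta>\frac12$ forces $1-\beta<\beta$, the remainder $O(\theta^\beta)$ is dominated by the leading term, so $w(\theta)=O(\theta^{1-\beta})\to0$; in particular $|1-w(\theta)|\ge\frac12$ for $\theta$ small, and $(1-\lambda(\theta))^{-1}=c^{-1}c_\beta^{-1}\theta^{-\beta}(1-w(\theta))^{-1}$ is justified.

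Next I would expand the geometric series, but only to finite order: fix $k_0$ with $(k_0+1)(1-\beta)\ge\beta$ and write $(1-w)^{-1}=\sum_{k=0}^{k_0}w^k+w^{k_0+1}(1-w)^{-1}$. The tail obeys $|w^{k_0+1}(1-w)^{-1}|\ll\theta^{(k_0+1)(1-\beta)}$, so after multiplication by $\theta^{-\beta}$ it is $O(1)$. For each $k\le k_0$ I would expand $w^k=\bigl(e_0\theta^{1-\beta}+O(\theta^\beta)\bigr)^k$ by the binomial theorem: the pure power $e_0^k\theta^{k(1-\beta)}$ contributes $c^{-1}c_\beta^{-1}e_0^k\theta^{-((k+1)\beta-k)}$ after multiplying by $c^{-1}c_\beta^{-1}\theta^{-\beta}$, which is $O(1)$ unless $(k+1)\beta-k>0$, whereas every mixed term $\theta^{(k-i)(1-\beta)}O(\theta^{i\beta})$ with $1\le i\le k$ becomes, after the same multiplication, $O(\theta^{(k-i)(1-\beta)+(i-1)\beta})$, an exponent that is non-negative, hence $O(1)$. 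Collecting the finitely many surviving principal terms produces exactly the claimed identity $(1-\lambda(\theta))^{-1}=c^{-1}c_\beta^{-1}\sum_j e_0^j\theta^{-((j+1)\beta-j)}+O(1)$, with the sum over $j\ge0$ satisfying $(j+1)\beta-j>0$.

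No analytic input beyond Lemma~\ref{lem-ADerror} is required; the argument is essentially the algebra of inverting a truncated power series. The only point needing care is the bookkeeping of the error: one has to confirm that the truncation tail, the pure powers with non-positive resulting exponent, and each mixed binomial term are all genuinely $O(1)$ uniformly as $\theta\to0^+$. This is precisely the step where $\beta>\frac12$ enters, via $1-\beta<\beta$, so it is the part of the write-up I would be most careful to set out explicitly.
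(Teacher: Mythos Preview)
Your proposal is correct and follows essentially the same route as the paper: invoke Lemma~\ref{lem-ADerror}, factor $1-\lambda(\theta)=cc_\beta\theta^\beta(1-e_0\theta^{1-\beta}+O(\theta^\beta))$, and invert the bracket via a geometric series. The paper compresses the inversion into the single line $(1-e_0\theta^{1-\beta}+O(\theta^{\beta}))^{-1}=\sum_{j\ge0} e_0^j\theta^{j(1-\beta)}+O(\theta^{\beta})$, whereas you supply the explicit truncation and binomial bookkeeping that justifies this; your version is simply a more careful write-up of the same argument.
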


\begin{proof}
By Lemma~\ref{lem-ADerror}, 
$1-\lambda(\theta)=cc_\beta\theta^\beta(1-e_0\theta^{1-\beta}+O(\theta^{\beta}))$.
Now invert and note that $(1-e_0\theta^{1-\beta}+O(\theta^{\beta}))^{-1}
=\sum_{j\ge0} e_0^j\theta^{j(1-\beta)}+O(\theta^{\beta})$.
\end{proof}

\begin{prop} \label{prop-second}
Let $n\ge1$, $0<a<\epsilon n$.  Then
\begin{align*}
& n^{1-\beta}\int_0^{a/n}(1-\lambda(\theta))^{-1} e^{-in\theta}\,d\theta \\
& \qquad\qquad =
c^{-1}\sum_j d_{\beta,j}'n^{-j(1-\beta)}+O(\sum_jn^{-j(1-\beta)}a^{-((j+1)\beta-j)}) + 
O(an^{-\beta}),
\end{align*}
and the sums are over those $j\ge0$ with $(j+1)\beta-j>0$.
\end{prop}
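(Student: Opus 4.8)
The plan is to substitute the expansion of Proposition~\ref{prop-lambda} into the integral and evaluate it term by term; this is a sharpening of Lemma~\ref{lem-GLeigen} made available by having precise control of $\lambda(\theta)$. Write $p_j=(j+1)\beta-j=\beta-j(1-\beta)$. Since $\beta\in(\frac12,1)$, for every $j\ge0$ we have $p_j<1$, and only finitely many indices $j$ satisfy $p_j>0$. By Proposition~\ref{prop-lambda} there are $\epsilon,C>0$ with
\[
\Bigl\|(1-\lambda(\theta))^{-1}-c^{-1}c_\beta^{-1}\sum_j e_0^j\theta^{-p_j}\Bigr\|\le C,\qquad \theta\in(0,\epsilon),
\]
the sum being over the finitely many $j\ge0$ with $p_j>0$. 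Because $a<\epsilon n$, the interval of integration $(0,a/n)$ lies inside $(0,\epsilon)$, so this estimate is valid throughout the range.

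For each relevant $j$ I would evaluate the main term by the change of variables $\sigma=n\theta$:
\[
n^{1-\beta}\int_0^{a/n}\theta^{-p_j}e^{-in\theta}\,d\theta
=n^{1-\beta}n^{p_j-1}\int_0^{a}\sigma^{-p_j}e^{-i\sigma}\,d\sigma
=n^{-j(1-\beta)}\int_0^{a}\sigma^{-p_j}e^{-i\sigma}\,d\sigma,
\]
using $1-\beta+p_j-1=p_j-\beta=-j(1-\beta)$. Since $p_j\in(0,1)$, the integral $\int_0^\infty\sigma^{-p_j}e^{-i\sigma}\,d\sigma=\xi^-_{p_j}$ converges, and a single integration by parts gives $\bigl|\int_a^\infty\sigma^{-p_j}e^{-i\sigma}\,d\sigma\bigr|\ll a^{-p_j}$, hence $\int_0^a\sigma^{-p_j}e^{-i\sigma}\,d\sigma=\xi^-_{p_j}+O(a^{-p_j})$. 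Multiplying by $c^{-1}c_\beta^{-1}e_0^j$ and recalling $d_{\beta,j}'=e_0^j\xi^-_{(j+1)\beta-j}/c_\beta$, the $j$-th term contributes $c^{-1}d_{\beta,j}'n^{-j(1-\beta)}+O(n^{-j(1-\beta)}a^{-p_j})$.

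It then remains to bound the contribution of the $O(1)$ remainder from Proposition~\ref{prop-lambda}: its integral over $(0,a/n)$ is at most $Ca/n$ in norm, so after the factor $n^{1-\beta}$ it is $O(an^{-\beta})$. Summing the $j$-th terms over the finitely many $j$ with $p_j>0$ and adding this remainder yields the claimed identity. I do not anticipate a genuine obstacle: the argument is essentially bookkeeping once Proposition~\ref{prop-lambda} is in hand. The two points needing attention are that the expansion of $\lambda$ be used uniformly on the whole range $(0,a/n)$ — which is precisely what the hypothesis $a<\epsilon n$ provides — and that the error $O(n^{-j(1-\beta)}a^{-p_j})$ be retained for each $j$ separately rather than collapsed to its $j=0$ value $O(a^{-\beta})$, since the proof of Theorem~\ref{thm-second} will send $a\to\infty$ together with $n$ and needs the finer form.
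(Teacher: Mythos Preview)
Your proof is correct and follows essentially the same route as the paper: substitute the expansion from Proposition~\ref{prop-lambda}, change variables $\sigma=n\theta$ in each power term, write $\int_0^a=\xi^-_{p_j}-\int_a^\infty$ with the tail bounded by $O(a^{-p_j})$, and estimate the $O(1)$ remainder as $O(an^{-\beta})$.
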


\begin{proof}
By Proposition~\ref{prop-lambda},
\begin{align*}
\int_0^{a/n}(1-\lambda(\theta))^{-1} e^{-in\theta}\,d\theta 
& = c^{-1}c_\beta^{-1}\sum_j e_0^j\int_0^{a/n}\theta^{-((j+1)\beta-j)}e^{-in\theta}\,d\theta
+O(a/n) \\
& = c^{-1}c_\beta^{-1}\sum_j e_0^jn^{-(j+1)(1-\beta)}\int_0^a\sigma^{-((j+1)\beta-j)}e^{-i\sigma}\,d\sigma
+O(a/n).
\end{align*}
Hence
\begin{align*}
& n^{1-\beta} \int_0^{a/n}(1-\lambda(\theta))^{-1} e^{-in\theta}\,d\theta  \\
& \qquad   = c^{-1}\sum_jd_{\beta,j}'n^{-j(1-\beta)}-
c^{-1}c_\beta^{-1}\sum_je_0^jn^{-j(1-\beta)}\int_a^\infty \sigma^{-((j+1)\beta-j)}e^{-i\sigma}\,d\sigma +O(an^{-\beta}),
\end{align*}
yielding the required result.
\end{proof}

\begin{pfof}{Theorem~\ref{thm-second}}
By Lemma~\ref{lem-GL}, $n^{1-\beta}\int_{a/n}^\pi T(\theta)e^{-in\theta}\,d\theta \ll
a^{-(2\beta-1)}$.  For $\theta\in[0,a/n]\subset [0,\epsilon]$, 
$T(\theta)  =(1-\lambda(\theta))^{-1}P+O(1)$ by Lemma~\ref{lem-AD}(b).
Hence,
\[
n^{1-\beta}\int_0^\pi T(\theta)e^{-in\theta}\,d\theta = 
n^{1-\beta}\int_0^{a/n}(1-\lambda(\theta))^{-1}e^{-in\theta}\,d\theta \,P
+ O(an^{-\beta})+O(a^{-(2\beta-1)}). 
\]
Taking $a=n^{1/2}$ we obtain the error term $O(n^{-(\beta-\frac12)})$ and 
the result follows from Proposition~\ref{prop-second} and 
Corollary~\ref{cor-Fourier}.
\end{pfof}

\subsection{Second order asymptotics for $\beta=1$}
\label{sec-second_one}

\begin{thm} \label{thm-second_one}   
Suppose that $\mu(\varphi>n)= c(n^{-1}+H(n))$ where 
$c>0$ and $H(n)=O(n^{-q})$, $q>1$.  
Let $H_1(x)=[x]^{-1}-x^{-1}+H([x])$, $x\ge1$ and $H_1(x)=\frac{1}{c}$,
$x\in[0,1)$.  Then 
\[
\SMALL (\log n)T_n = c^{-1}\bigl\{1 -\int_0^\infty 
H_1(x)\,dx\,(\log n)^{-1}  + O((\log n)^{-2})\bigr\}P+O((\log n)^\frac12 n^{-\frac12}).
\]
\end{thm}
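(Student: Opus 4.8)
The plan is to run the strategy of Theorem~\ref{thm-second} inside the $\beta=1$ machinery of Section~\ref{sec-one}, where one must work with $\Re T(\theta)$ rather than $T(\theta)$. By Corollary~\ref{cor-Fourier_beta=1}, $T_n=\frac{2}{\pi}\int_0^\pi\cos n\theta\,\Re T(\theta)\,d\theta$. Split this integral at $\theta=a/n$, with $a=a(n)$ to be chosen at the end. On $[0,a/n]\subset[0,\epsilon]$, Proposition~\ref{prop-PQ} and Corollary~\ref{cor-H1} give $\Re T(\theta)=\Re\{(1-\lambda(\theta))^{-1}\}P+O(1)$ (the bound on the second term uses $\|(1-\lambda(\theta))^{-1}\|\ll(\theta\tilde\ell(1/\theta))^{-1}$ against $\|P(\theta)-P\|\ll\tilde\ell(1/\theta)\theta$), and the $O(1)$ part contributes $O(a/n)$ to the integral. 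On $[a/n,\pi]$, the tail is estimated by the resolvent-identity argument of Lemma~\ref{lem-GL} using the bound $\|T(\theta)\|\ll(\theta\tilde\ell(1/\theta))^{-1}$ from Remark~\ref{rmk-u=0}; since $\ell$ is asymptotically constant this is essentially sharp, and after multiplying by $\log n$ the tail contributes $O\bigl((\log n)^2a^{-1}(\log(n/a))^{-2}\bigr)$. So everything reduces to evaluating $\int_0^{a/n}\cos n\theta\,\Re\{(1-\lambda(\theta))^{-1}\}\,d\theta$ to order $(\log n)^{-2}$.

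The first key step is a second-order version of the eigenvalue estimate --- the $\beta=1$ analogue of Lemma~\ref{lem-ADerror}, refining Proposition~\ref{prop-realAD} and Lemma~\ref{lem-one}. Write $1-G(x)=cx^{-1}1_{\{x\ge1\}}+cH_1(x)$ with $H_1$ as in the statement; this is exactly why $H_1$ is set equal to $1/c$ on $[0,1)$, so that $cH_1$ carries the part of $1-G$ near the origin and $\int_0^\infty H_1$ converges. Repeating the computation of Lemma~\ref{lem-ADerror} --- expand $\lambda(\theta)=1+i\theta\int_0^\infty e^{i\theta x}(1-G(x))\,dx+(\text{corrections from }\hat v_\theta\not\equiv1)$, use $\int_\theta^\infty e^{i\sigma}\sigma^{-1}\,d\sigma=\log\tfrac1\theta-\gamma_E+i\tfrac{\pi}{2}+O(\theta)$ and $\int_0^\infty e^{i\theta x}H_1(x)\,dx=\int_0^\infty H_1\,dx+O(\theta^{q'-1})$ with $q'=\min\{q,2\}$, and bound the $\hat v_\theta$-corrections by $O(\theta^2(\log\tfrac1\theta)^2)$ --- one obtains
\[
1-\lambda(\theta)=c\theta\Bigl(\tfrac{\pi}{2}-i\bigl(\log\tfrac1\theta+\kappa\bigr)\Bigr)+O\bigl(\theta^{q'}+\theta^2(\log\tfrac1\theta)^2\bigr)
\]
for an explicit constant $\kappa$ assembled from $\int_0^\infty H_1$ and $\gamma_E$. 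Inverting,
\[
\Re\{(1-\lambda(\theta))^{-1}\}=\frac{\pi/2}{c\,\theta\bigl((\log\tfrac1\theta+\kappa)^2+\pi^2/4\bigr)}+O\Bigl(\frac{\theta^{q'-2}}{(\log\tfrac1\theta)^2}\Bigr).
\]

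Now substitute $\sigma=n\theta$ in the near-origin integral and insert this expansion. The remainder term yields an error $O\bigl(n^{-(q'-1)}(\log n)^{-1}\bigr)$ after multiplying by $\log n$, which is $O((\log n)^{-2})$ for every $q>1$ and so is harmless. For the main term one has to expand $(\log\tfrac1\theta+\kappa)^2=(\log n-\log\sigma+\kappa)^2$ in powers of $(\log n)^{-1}$: since the $\cos\sigma$-oscillation together with the integrable singularity at $\sigma=0$ concentrates the mass on a bounded $\sigma$-range, the expansion is legitimate with controlled remainder, the truncation at $\sigma=a$ introduces only lower-order $a$-dependent errors handled together with the tail, and one is left with $c^{-1}\{1-(\log n)^{-1}\int_0^\infty H_1(x)\,dx+O((\log n)^{-2})\}P$. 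Finally, $a\asymp(n/\log n)^{1/2}$ balances the $O\bigl((\log n)^2a^{-1}(\log(n/a))^{-2}\bigr)$ tail error against the $O(an^{-1}\log n)$ near-origin error, both becoming $O((\log n)^{1/2}n^{-1/2})$; Corollary~\ref{cor-Fourier_beta=1} then gives the result.

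The main obstacle is the \emph{logarithmic bookkeeping}. First, one must pin down the additive constant $\kappa$ precisely enough to see that the coefficient of $(\log n)^{-1}$ in the final answer is exactly $\int_0^\infty H_1(x)\,dx$: this requires tracking, in a mutually consistent normalisation, the $O(1)$ constants in $\int_\theta^\infty e^{i\sigma}\sigma^{-1}\,d\sigma$, in the asymptotic identifications of $\tilde\ell(x)$ with $\int_1^x\ell(y)y^{-1}\,dy$ and with $\int_0^x(1-G(y))\,dy$, and in the $\sigma$-integral itself. Second, the substituted integrand mixes the two logarithms $\log n$ and $\log(n/\sigma)$, so the expansion in $(\log n)^{-1}$ is not uniform on the full $\sigma$-range, and one must use oscillatory cancellation to discard the region where $\log\sigma$ is comparable to $\log n$. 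The remaining ingredients --- the reductions from Section~\ref{sec-one}, the Lemma~\ref{lem-GL}-type tail estimate, and the $O(1/n)$ error control --- are routine adaptations of material already in the paper.
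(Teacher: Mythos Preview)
Your proposal is correct and follows essentially the same route as the paper: both split at $a/n$, use a Lemma~\ref{lem-GL}-type resolvent estimate on the tail, compute $\Re\{(1-\lambda(\theta))^{-1}\}$ to second order (the paper's Proposition~\ref{prop-second_one} expands your $\frac{\pi/2}{c\theta((\log\frac{1}{\theta}+\kappa)^2+\pi^2/4)}$ into $c^{-1}\frac{\pi}{2}\theta^{-1}(\log\frac{1}{\theta})^{-2}-c^{-1}c_H\pi\theta^{-1}(\log\frac{1}{\theta})^{-3}+\ldots$ with $c_H=\kappa$), and then integrate term-by-term via $\sigma=n\theta$ and the identity $\log\frac{1}{\theta}=\log n-\log\sigma$, with the paper taking $a=n^{1/2}$ where you take $a\asymp(n/\log n)^{1/2}$. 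The cancellation you anticipate---that the $-\gamma_E$ inside $\kappa$ is killed by $\int_0^1(\cos\sigma-1)\sigma^{-1}\,d\sigma+\int_1^\infty\cos\sigma\,\sigma^{-1}\,d\sigma$ arising from the integration, leaving exactly $\int_0^\infty H_1$---is precisely what happens in the paper's bookkeeping.
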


\begin{cor} \label{cor-pde_second_one}
$(\log n)n^{-1}\sum_{j=1}^n T_j=c^{-1}P + O((\log n)^{-1})$ uniformly on $Y$.
\end{cor}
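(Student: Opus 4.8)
The plan is to follow the proof of Corollary~\ref{cor-pde_second} (which itself specialises the proof of Proposition~\ref{prop-pde}); the only genuinely new point is a sharp estimate for the partial sums of the slowly varying sequence $(\log j)^{-1}$.

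First I would use Theorem~\ref{thm-second_one} to write, for $n$ large,
\[
T_n = c^{-1}(\log n)^{-1}P + S_n, \qquad \|S_n\| = O\bigl((\log n)^{-2}\bigr),
\]
which is immediate on dividing the conclusion of Theorem~\ref{thm-second_one} by $\log n$ (the $O((\log n)^{1/2}n^{-1/2})$ remainder becomes $O((\log n)^{-1/2}n^{-1/2})$, which is negligible); for the finitely many small $n$ the operators $T_n$ are uniformly bounded. Summing and dividing by $n$,
\[
n^{-1}\sum_{j=1}^n T_j = c^{-1}\Bigl(n^{-1}\sum_{j=2}^n (\log j)^{-1}\Bigr)P + n^{-1}\sum_{j=2}^n S_j + O(n^{-1}).
\]

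For the first term I would compare $\sum_{j=2}^n(\log j)^{-1}$ with $\int_2^n(\log x)^{-1}\,dx$ (the difference is $O(1)$ since $x\mapsto(\log x)^{-1}$ is decreasing) and integrate by parts:
\[
\int_2^n \frac{dx}{\log x} = \frac{n}{\log n} + \int_2^n \frac{dx}{(\log x)^2} + O(1) = \frac{n}{\log n} + O\bigl(n(\log n)^{-2}\bigr),
\]
the last bound obtained by splitting the integral at $\sqrt n$. Hence $(\log n)n^{-1}\sum_{j=2}^n(\log j)^{-1} = 1 + O((\log n)^{-1})$. For the second term, $\|S_j\|\ll(\log j)^{-2}$ together with Proposition~\ref{prop-Karamata}(a) (with $p=0$ and the slowly varying function $(\log x)^{-2}$) gives $\sum_{j=2}^n\|S_j\|\ll n(\log n)^{-2}$, so $(\log n)n^{-1}\sum_{j=2}^n S_j = O((\log n)^{-1})$. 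Multiplying the displayed identity by $\log n$ and combining the two estimates yields $(\log n)n^{-1}\sum_{j=1}^n T_j = c^{-1}P + O((\log n)^{-1})$ in operator norm on $\mathcal{B}$, hence uniformly on $Y$.

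The only obstacle worth flagging is the sharpness required in the first term. Unlike the case $\beta<1$, where $\sum_{j=1}^n j^{-(1-\beta)} = \beta^{-1}n^\beta + O(1)$ is immediate, here Karamata's theorem alone gives only $\sum(\log j)^{-1}\sim n/\log n$, whereas the claimed error $O((\log n)^{-1})$ — not merely $o(1)$ — forces one to extract the $O(n(\log n)^{-2})$ correction; the integration by parts above does exactly that. The resulting estimate is deliberately crude: retaining the $\int_0^\infty H_1$ term from Theorem~\ref{thm-second_one} (and the next term $n/(\log n)^2$ in the logarithmic integral) would furnish a genuine second-order expansion of $(\log n)n^{-1}\sum_{j=1}^n T_j$, but this is not needed here.
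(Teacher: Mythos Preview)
Your proposal is correct and follows essentially the same approach as the paper: write $T_n=c^{-1}(\log n)^{-1}P+S_n$ with $\|S_n\|=O((\log n)^{-2})$ from Theorem~\ref{thm-second_one}, estimate $\sum_{j\le n}(\log j)^{-1}=n(\log n)^{-1}+O(n(\log n)^{-2})$ via integration by parts, and bound $\sum_{j\le n}\|S_j\|\ll n(\log n)^{-2}$. Your remark that Karamata alone is insufficient for the first sum and that integration by parts is needed to get the sharp $O((\log n)^{-1})$ error is exactly the point.
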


\begin{proof}
As in the proof of Corollary~\ref{cor-pde_second}, we have
\begin{align*} 
(\log n)n ^{-1}\sum_{j=1}^n T_j = 
(\log n)n ^{-1}\sum_{j=1}^n (\log j)^{-1}c^{-1}P +
(\log n)n ^{-1}\sum_{j=1}^n S_j,
\end{align*}
where $\|S_j\|=O((\log j)^{-2})$.  Integration by parts yields
 $\sum_{j=1}^n (\log j)^{-1}=n(\log n)^{-1}+\int_2^n (\log x)^{-2}\,dx+O(1)=
n(\log n)^{-1}+O(n(\log n)^{-2})$ while $\sum_{j=1}^n S_j\ll n(\log n)^{-2}$.
\end{proof}

In the remainder of this subsection, we prove Theorem~\ref{thm-second_one}.

\begin{prop} \label{prop-second_one}
Let $c_H=\int_0^1(\cos\sigma-1)\sigma^{-1}\,d\sigma
+ \int_1^\infty \cos\sigma\,\sigma^{-1}\,d\sigma+ \int_0^\infty H_1(x)\,dx$.
Then
\[
\SMALL \Re\{(1-\lambda(\theta))^{-1}\}=
c^{-1}\frac{\pi}{2}\theta^{-1} (\log\frac{1}{\theta})^{-2}-
c^{-1}c_H\pi\theta^{-1} (\log\frac{1}{\theta})^{-3}+
O(\theta^{-1}(\log\frac{1}{\theta})^{-4}).
\]
\end{prop}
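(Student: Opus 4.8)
The plan is to upgrade the first-order estimate of Lemma~\ref{lem-one} to a genuine asymptotic expansion of $1-\lambda(\theta)$ carrying two further orders in $(\log\tfrac1\theta)^{-1}$, and then invert and extract the real part. First I would start, exactly as in the proofs of Lemma~\ref{lem-AD} and Lemma~\ref{lem-ADerror}, from the Aaronson--Denker representation
\[
\lambda(\theta)=1+\int_0^\infty (e^{i\theta x}-1)\hat v_\theta(x)\,dG(x),\qquad G(x)=\mu(\varphi\le x),
\]
where $\hat v_\theta\circ\varphi=E(v(\theta)\mid\mathcal F_0)$ and, by Corollary~\ref{cor-H1}, $|\hat v_\theta-1|_\infty\ll m(1/\theta)\theta=\tilde\ell(1/\theta)\theta\ll\theta\log\tfrac1\theta$. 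An integration by parts gives $\lambda(\theta)-1=i\theta\int_0^\infty e^{i\theta x}(1-G(x))\,dx$ plus a correction term produced by $\hat v_\theta-1$.

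The first claim is that this correction is negligible at every order of the log-expansion. Decomposing $\hat v_\theta-1$ into nonnegative real and imaginary parts and applying the oscillatory-integral (alternating-series) estimates used in the proof of Lemma~\ref{lem-ADerror} (using that $x\mapsto g_\theta^s(x)(1-G(x))$ is decreasing and that $\int_0^{1/\theta}(1-G(x))\,dx\sim\tilde\ell(1/\theta)$), one gets a correction of size $O(\theta^2(\log\tfrac1\theta)^2)$; since $1-\lambda(\theta)$ is of order $\theta\log\tfrac1\theta$, this is $o(\theta(\log\tfrac1\theta)^{-k})$ for every $k$. So it suffices to evaluate $\int_0^\infty e^{i\theta x}(1-G(x))\,dx$ precisely, and here one uses the exact identity $1-G(x)=cx^{-1}1_{\{x\ge1\}}+cH_1(x)$. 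The substitution $\sigma=\theta x$ turns the $cx^{-1}$-part into $c\int_\theta^\infty e^{i\sigma}\sigma^{-1}\,d\sigma$, which after splitting over $(\theta,1)$ and $(1,\infty)$ and isolating the logarithmic divergence equals $\log\tfrac1\theta+\bigl(\int_0^1(\cos\sigma-1)\sigma^{-1}\,d\sigma+\int_1^\infty\cos\sigma\,\sigma^{-1}\,d\sigma\bigr)+i\tfrac\pi2+O(\theta)$; the $cH_1$-part is $c\int_0^\infty H_1(x)\,dx+O(\theta^\eta)$ for some $\eta>0$, since $H_1(x)=O(x^{-\min\{q,2\}})$ is integrable. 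Combining the two real constants gives precisely $cc_H$, so
\[
1-\lambda(\theta)=-ic\theta\Bigl(\log\tfrac1\theta+c_H+i\tfrac\pi2\Bigr)+O(\theta^{1+\eta})
=-ic\theta\log\tfrac1\theta\Bigl(1+\tfrac{c_H+i\pi/2}{\log(1/\theta)}\Bigr)+O(\theta^{1+\eta}).
\]

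Finally I would invert. The additive error $O(\theta^{1+\eta})$ is far smaller than the main term $\theta\log\tfrac1\theta$ (in particular $o(\theta(\log\tfrac1\theta)^{-2})$, which is all that is required), so expanding the geometric series,
\[
(1-\lambda(\theta))^{-1}=\frac{i}{c\theta\log\tfrac1\theta}\Bigl(1-\frac{c_H+i\pi/2}{\log\tfrac1\theta}+\frac{(c_H+i\pi/2)^2}{(\log\tfrac1\theta)^2}+O\bigl((\log\tfrac1\theta)^{-3}\bigr)\Bigr),
\]
and taking real parts term by term gives the statement: the $(\log\tfrac1\theta)^{-1}$-term is purely imaginary and drops out, the $(\log\tfrac1\theta)^{-2}$-term contributes $\tfrac{\pi}{2}c^{-1}\theta^{-1}(\log\tfrac1\theta)^{-2}$, and the $(\log\tfrac1\theta)^{-3}$-term contributes $-c^{-1}c_H\pi\,\theta^{-1}(\log\tfrac1\theta)^{-3}$ since $\Re\,i(c_H+i\tfrac\pi2)^2=-c_H\pi$, with remainder $O(\theta^{-1}(\log\tfrac1\theta)^{-4})$.

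I expect the main obstacle to be the error bookkeeping, sharpened by a phase subtlety in the last step: the dominant $\theta^{-1}(\log\tfrac1\theta)^{-1}$ part of $(1-\lambda(\theta))^{-1}$ is purely imaginary, so any spurious real contribution of that size would be fatal, and one must control the inversion down to order $\theta^{-1}(\log\tfrac1\theta)^{-4}$. This is precisely why the expansion of $1-\lambda(\theta)$ must carry an error that beats every power of $(\log\tfrac1\theta)^{-1}$ --- the polynomial gain $O(\theta^{1+\eta})$ above --- rather than merely the $o(1)$-type accuracy used in Lemma~\ref{lem-one}; verifying that both the $\hat v_\theta$-correction and the near-origin/tail corrections to $\int_0^\infty e^{i\theta x}(1-G(x))\,dx$ genuinely have this strength is the technical heart of the argument.
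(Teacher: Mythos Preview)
Your proposal is correct and follows essentially the same route as the paper: both compute $\int_0^\infty e^{i\theta x}(1-G(x))\,dx$ by splitting $1-G(x)=cx^{-1}1_{\{x\ge1\}}+cH_1(x)$, obtain $1-\lambda(\theta)=-ic\theta\bigl(\log\tfrac1\theta+c_H+i\tfrac\pi2\bigr)+O(\theta^{1+\eta})$, and then invert. The only cosmetic difference is that the paper separates real and imaginary parts first (computing $I_C$ and $I_S$ individually and then forming $\Re(1-\lambda)/|1-\lambda|^2$), whereas you keep the expression complex and expand $(1-\lambda)^{-1}$ as a geometric series before taking the real part; the paper also leaves the eigenfunction correction implicit (it is absorbed by the $J_C,J_S$ estimates of Proposition~\ref{prop-IJ}), while you spell it out.
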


\begin{proof}
Without loss of generality, we may suppose that $q\in(1,2)$.
Recall that $G(x)\equiv0$ for $x\in[0,1)$ and
$1-G(x)=c(x^{-1}+H_1(x))$ for $x\ge1$ where $H_1(x)=O(x^{-q})$.
In particular, $H_1\in L^1$.  Write
\[
\SMALL I_C  = \int_0^\infty \cos\theta x (1-G(x))\,dx
= c\int_1^\infty \cos\theta x \,x^{-1}\,dx
+c\int_0^\infty \cos\theta x \,H_1(x)\,dx.
\]
Now,
\begin{align*}
\SMALL \int_1^\infty \cos\theta x \,x^{-1}\,dx & \SMALL = \int_1^{1/\theta}x^{-1}\,dx
+\int_1^{1/\theta}(\cos\theta x-1)x^{-1}\,dx
+\int_{1/\theta}^\infty\cos\theta x\,x^{-1}\,dx
\\[.75ex] & = \SMALL\log\frac{1}{\theta}+\int_0^1(\cos\sigma-1)\sigma^{-1}\,d\sigma
+ \int_1^\infty \cos\sigma\,\sigma^{-1}\,d\sigma + O(\theta),
\end{align*}
and
\begin{align*}
\SMALL\int_0^\infty \cos\theta x \,H_1(x)\,dx & \SMALL= \int_0^\infty H_1(x)\,dx
+\int_0^{1/\theta}(\cos\theta x-1)H_1(x)\,dx \\[.75ex] & \SMALL \qquad
+\int_{1/\theta}^\infty(\cos\theta x-1)H_1(x) \,dx
 = \int_0^\infty H_1(x)\,dx+ O(\theta^{q-1}).
\end{align*}
Hence
$I_C = c\log\frac{1}{\theta}+cc_H+O(\theta^{q-1})$.   Also,
\begin{align*}
I_S & = \SMALL\int_0^\infty \sin\theta x (1-G(x))\,dx
= c\int_1^\infty \sin\theta x\, x^{-1}\,dx
+ c\int_0^\infty \sin\theta x H_1(x)\,dx \\[.75ex] &\SMALL
= c\int_\theta^\infty \sin\sigma\,\sigma^{-1}\,d\sigma + O(\theta^{q-1})
= \frac{c\pi}{2} + O(\theta^{q-1}).
\end{align*}
Hence
\[
\SMALL
\Re(1-\lambda(\theta))=\frac{c\pi}{2}\theta(1+O(\theta^{q-1})), \quad
\Im\lambda(\theta)=c\theta(\log{\SMALL \frac{1}{\theta}})(1+c_H(\log\frac{1}{\theta})^{-1}+O((\log\frac{1}{\theta})^{-2})),
\]
and the result follows.
\end{proof}

\begin{lemma} \label{lem-second_one}
If $a=O(n^{1-\delta})$ for some $\delta>0$, then
\begin{itemize}
\item[(i)]
$(\log n)\int_{a/n}^{\pi} \cos n\theta\, T(\theta)\,d\theta
\ll a^{-1}$.
\item[(ii)] 
$(\log n)\int_0^{a/n} \cos n\theta\{T(\theta)-
(1-\lambda(\theta))^{-1}P\}\,d\theta \ll 
an^{-1}\log n$.
\end{itemize}
\end{lemma}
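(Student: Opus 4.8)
The plan is to treat (i) and (ii) separately. Part (i) is a sharpening of the estimate~\eqref{eq-est1} used in the proof of Theorem~\ref{thm-conv} for $\beta=1$, the improvement being a consequence of the standing hypothesis in this section that $\ell$ is asymptotically constant. Part (ii) is a short deduction from the perturbation estimates of Section~\ref{sec-ORT}.

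For part (i), I would rerun the argument in the proof of Lemma~\ref{lem-GL}. Substituting $\theta\mapsto\theta-\pi/n$ gives
\[
2\int_{a/n}^{\pi}\cos n\theta\,T(\theta)\,d\theta=I_1+I_2+I_3,
\]
with $I_1,I_2,I_3$ exactly as in that proof. The two inputs are: $T(\theta)\ll(\theta\tilde\ell(1/\theta))^{-1}$ on $(0,\epsilon]$ and $T(\theta)=O(1)$ for $\theta$ bounded away from $0$ (from Remark~\ref{rmk-u=0}, Proposition~\ref{prop-Karamata}(b) and Proposition~\ref{prop-PQ}); and $\|R(\theta)-R(\theta-\pi/n)\|\ll\tilde\ell(n)n^{-1}$ for $n\theta\gtrsim1$ (Proposition~\ref{prop-H1} with $m=\tilde\ell$, $\beta=1$). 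One then reads off $I_1\ll n^{-1}$ and $I_2\ll\bigl(a\,\tilde\ell(n/a)\bigr)^{-1}$, while the resolvent identity gives
\[
I_3\ll\tilde\ell(n)n^{-1}\int_{a/n}^{\pi}\theta^{-2}\tilde\ell(1/\theta)^{-2}\,d\theta\ll\tilde\ell(n)n^{-1}\cdot\frac{n/a}{\tilde\ell(n/a)^{2}}=\frac{\tilde\ell(n)}{a\,\tilde\ell(n/a)^{2}},
\]
the middle step being Karamata's theorem (Proposition~\ref{prop-Karamata}(a) applied to the slowly varying function $\tilde\ell^{-2}$). This is the one place the hypothesis enters: asymptotic constancy of $\ell$ forces $\tilde\ell(x)\asymp\log x$, so that $\tilde\ell(n)/\tilde\ell(n/a)=O(1)$ uniformly over $a=O(n^{1-\delta})$; this removes the Potter-bound loss responsible for the exponent $1-\delta$ in~\eqref{eq-est1}, leaving $(\log n)(I_1+I_2+I_3)\ll(\log n)n^{-1}+a^{-1}\ll a^{-1}$, where $a=O(n^{1-\delta})$ absorbs the first term.

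For part (ii), note that $a/n\le\epsilon$ for $n$ large, so $[0,a/n]\subset[0,\epsilon]$ and the spectral decomposition of Section~\ref{sec-ORT} applies throughout. By Proposition~\ref{prop-PQ},
\[
T(\theta)-(1-\lambda(\theta))^{-1}P=(1-\lambda(\theta))^{-1}(P(\theta)-P)+O(1).
\]
By Corollary~\ref{cor-H1} (with $m=\tilde\ell$, $\beta=1$), $\|P(\theta)-P\|\ll\tilde\ell(1/\theta)\,\theta$, while Corollary~\ref{cor-IJ} evaluated at $u=0$ gives $|1-\lambda(\theta)|^{-1}\ll\bigl(\theta\tilde\ell(1/\theta)\bigr)^{-1}$. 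Multiplying, the middle term is $O(1)$, hence $T(\theta)-(1-\lambda(\theta))^{-1}P=O(1)$ uniformly on $[0,a/n]$, and therefore
\[
(\log n)\int_0^{a/n}\cos n\theta\,\bigl\{T(\theta)-(1-\lambda(\theta))^{-1}P\bigr\}\,d\theta\ll(\log n)\int_0^{a/n}d\theta=a\,n^{-1}\log n.
\]

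The main obstacle is the bookkeeping in (i): one must verify that the pieces $I_1,I_2,I_3$ obey the sharp bounds above, and in particular that the use of Karamata's theorem (rather than Potter's bounds) in the $I_3$ estimate is legitimate — this is exactly where asymptotic constancy of $\ell$ is used. Part (ii) requires nothing beyond the perturbation estimates already in hand.
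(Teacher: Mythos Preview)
Your proof is correct and follows the same overall $I_1+I_2+I_3$ decomposition as the paper, but the execution of the $I_2$ and $I_3$ estimates differs. For $I_2$ the paper exploits the sharper bound $\Re T(\theta)\ll\theta^{-1}(\log\theta)^{-2}$ (available from Proposition~\ref{prop-second_one} or Remark~\ref{rmk-u=0}), giving $I_2\ll a^{-1}(\log n)^{-2}$; you instead use only $\|T(\theta)\|\ll(\theta\log(1/\theta))^{-1}$ and obtain $I_2\ll(a\log n)^{-1}$, which still suffices after multiplication by $\log n$. For $I_3$ the paper does not invoke Karamata but introduces an auxiliary splitting point $b=n^{-\delta/2}$, bounding $\int_{a/n}^{b}$ by pulling out $(\log b)^{-2}$ and $\int_{b}^{\epsilon}$ crudely, to reach $I_3\ll(\log n)^{-1}a^{-1}$. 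Your route---applying Karamata directly to $\int\theta^{-2}\tilde\ell(1/\theta)^{-2}\,d\theta$ and then using $\tilde\ell(n)/\tilde\ell(n/a)=O(1)$---is cleaner and gives the same conclusion. Part~(ii) is handled identically in both; the paper simply quotes $T(\theta)=(1-\lambda(\theta))^{-1}P+O(1)$, which is exactly what your perturbation argument establishes.
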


\begin{proof}
(i) Since $T(\theta)=(1-\lambda(\theta))^{-1}P +O(1)$, we have
 $\Re T(\theta)\ll \theta^{-1}(\log\theta)^{-2}$ for 
$\theta\in[0,\epsilon]$ and $T(\theta)\ll 1$ for $\theta\in[\epsilon,\pi]$.
The integral splits up into three parts as in Lemma~\ref{lem-GL}.   
As usual $I_1\ll n^{-1}$.  Next,
\begin{align*}
I_2 & \ll \int_{a/n}^{(a+\pi)/n} \Re T(\theta)\,d\theta
\ll \int_{a/n}^{(a+\pi)/n} \theta^{-1}(\log\theta)^{-2}\,d\theta \\
& \ll (\log(a/n))^{-1}- (\log((a+\pi)/n))^{-1} 
 = \frac{\log((a+\pi)/n)-\log(a/n)}{\log(a/n)\log((a+\pi)/n)}.
\end{align*}
Since $a=O(n^{1-\delta})$, we deduce that 
$I_2\ll \log(1+\pi/a)\,(\log n)^{-2} \ll a^{-1}(\log n)^{-2}$.

Finally, $R(\theta+h)-R(\theta)\ll h^{-1}\log h$ by Proposition~\ref{prop-H1}, 
so
\begin{align*}
\SMALL I_3 &\SMALL  \ll 
\int_{(a+\pi)/n}^\pi  
\|T(\theta)\|\,\|T(\theta-\pi/n)\|
\, \|R(\theta)-R(\theta-\pi/n)\|\,d\theta 
 \ll (\log n)n^{-1}(1+ A)
\end{align*}
where
\begin{align*}
\SMALL A &\SMALL  = \int_{(a+\pi)/n}^\epsilon
\|T(\theta)\|\,\|T(\theta-\pi/n)\| \,d\theta 
 \ll \int_{a/n}^\epsilon  (\theta\log\theta)^{-2}\,d\theta  \\[.75ex] &
\SMALL  \ll \int_{a/n}^b  (\theta\log\theta)^{-2}\,d\theta 
+  \int_b^\epsilon  (\theta\log\theta)^{-2}\,d\theta 
 \ll (\log b)^{-2}\int_{a/n}^b  \theta^{-2}\,d\theta 
+  (b\log b)^{-2}\int_b^\epsilon  1\,d\theta \\[.75ex] &
\SMALL  \ll (\log b)^{-2}n/a+ (b\log b)^{-2}.
\end{align*}
Taking $b=n^{-\frac12\delta}$ say, we obtain 
$I_3\ll (\log n)^{-1}(a^{-1}+n^{-(1-\delta)})\ll (\log n)^{-1}a^{-1}$.

\noindent(ii)  This is immediate since $T(\theta)=(1-\lambda(\theta))^{-1}P
+O(1)$.
\end{proof}

\begin{pfof}{Theorem~\ref{thm-second_one}}
We use Proposition~\ref{prop-second_one} to
estimate $\int_0^{a/n}\cos n\theta\,
\Re\{(1-\lambda(\theta))^{-1}\}\,d\theta$, discarding all terms that
are $O((\log n)^{-3})$, bearing in mind our eventual choice $a=n^\frac12$.

First we note that for $j\ge2$,
\begin{align} \label{eq-1}
\SMALL \int_0^{a/n}\theta^{-1}(\log\frac{1}{\theta})^{-j}\,d\theta 
= \frac{1}{j-1}(\log\frac{n}{a})^{-(j-1)}\ll (\log n)^{-(j-1)}.
\end{align}
In particular, taking $j=4$ disposes of the 
$O(\theta^{-1}(\log\frac{1}{\theta})^{-4})$ term.

Next we consider the $\theta^{-1}(\log\frac{1}{\theta})^{-3}$ term.
Using properties of oscillatory integrals,
\[
\SMALL \int_{1/n}^{a/n}\cos n\theta\, \theta^{-1}(\log\frac{1}{\theta})^{-3}\,d\theta 
= \int_1^a \cos\sigma\, \sigma^{-1}(\log{\SMALL \frac{n}{\sigma}})^{-3}\,d\sigma
\ll (\log n)^{-3},
\]
and
\[
\SMALL \int_0^{1/n} (\cos n\theta-1) \theta^{-1}(\log\frac{1}{\theta})^{-3}\,d\theta
=\int_0^1 (\cos \sigma-1) \sigma^{-1}(\log\frac{n}{\sigma})^{-3}\,d\sigma
\ll (\log n)^{-3}.
\]
Taking $j=3$ and $a=1$ in equation~\eqref{eq-1}, we deduce that
\[
\SMALL \int_0^{a/n} \cos n\theta\, \theta^{-1}(\log\frac{1}{\theta})^{-3}\,d\theta
=\frac12(\log n)^{-2}+O((\log n)^{-3}).
\]

To deal with the $\theta^{-1}(\log\frac{1}{\theta})^{-2}$ term,
we use the identity 
$\BIG \frac{\log n}{\log\frac{n}{\sigma}}=1+
\frac{\log \sigma}{\log\frac{n}{\sigma}}$.  So
\begin{align*}
\SMALL
& \SMALL \int_{1/n}^{a/n}\cos n\theta\, \theta^{-1}(\log\frac{1}{\theta})^{-2}\,d\theta 
 = (\log n)^{-2}\int_1^a \cos\sigma\, \sigma^{-1}\{\log n/\log{\SMALL \frac{n}{\sigma}}\}^2\,d\sigma
\\ 
& \SMALL
\quad  = (\log n)^{-2}\int_1^a \cos\sigma\, \sigma^{-1}\,d\sigma+O((\log n)^{-3})
= (\log n)^{-2}\int_1^\infty \cos\sigma\, \sigma^{-1}\,d\sigma+O((\log n)^{-3}),
\end{align*}
and
\begin{align*}
& \SMALL
\int_0^{1/n} (\cos n\theta-1) \theta^{-1}(\log\frac{1}{\theta})^{-2}\,d\theta
=(\log n)^{-2}\int_0^1 (\cos \sigma-1) \sigma^{-1}\{\log n/\log\frac{n}{\sigma}\}^2\,d\sigma \\ & \SMALL \quad
=
(\log n)^{-2}\int_0^1 (\cos \sigma-1) \sigma^{-1}\,d\sigma +O((\log n)^{-3}).
\end{align*}
Taking $j=2$ and $a=1$ in equation~\eqref{eq-1}, we deduce that
\[
\SMALL \int_0^{a/n} \cos n\theta\, \theta^{-1}(\log\frac{1}{\theta})^{-2}\,d\theta
=(\log n)^{-1}+A(\log n)^{-2}+O((\log n)^{-3}),
\]
where $A=\int_0^1 (\cos \sigma-1) \sigma^{-1}\,d\sigma +\int_1^\infty \cos\sigma\, \sigma^{-1}\,d\sigma$.

Combining these results, we obtain
\[
\SMALL
\frac{2}{\pi}\int_0^{a/n}\cos n\theta\,
\Re\{(1-\lambda(\theta))^{-1}\}\,d\theta= c^{-1} - c^{-1}\int_1^\infty H_1(x)\,dx
\,(\log n)^{-1}+O((\log n)^{-2}),
\]
which combined with Lemma~\ref{lem-second_one} (taking $a=n^{\frac12}$)
gives the required result.
\end{pfof}

\subsection{Convergence rates in the arcsine law}
\label{sec-arcsine}

As mentioned in Remark~\ref{rmk-DK}, a consequence of Theorem~\ref{thm-conv}
is that the Dynkin-Lamperti arcsine law for waiting times holds when
$\beta>\frac12$.    In fact, the arcsine law holds
for AFN maps for all $\beta$~\cite{Zweimuller00}.  
See also~\cite{Thaler98,ThalerZweimuller06} for more general transformations.
Here we show that our results on second order asymptotics 
yield a convergence rate.

For $x\in \bigcup_{j=0}^n f^{-j}Y$, $n\ge1$, let 
\[
Z_n(x)=\max\{0\leq j\leq n:f^jx\in Y\},
\]
denote the time of the last visit of the orbit of $x$ to 
$Y$ during the time interval $[0,n]$. 

Let $\zeta_{\beta}$ denote a random variable distributed according to
the $\mathcal{B}(1-\beta,\beta)$ distribution:
\[
\P(\zeta_\beta\leq t)=
d_\beta \int_0^t\frac{1}{u^{1-\beta}}\frac{1}{(1-u)^{\beta}}\,du,
\quad t\in [0,1],
\]
where $d_\beta=\frac{1}{\pi}\sin\beta\pi$.

\begin{cor} \label{cor-arcsine}   
Suppose that $\beta\in(\frac12,1)$ and that
$\mu(\varphi>n)= cn^{-\beta}+O(n^{-2\beta})$, where
$c>0$.   Let $\gamma=\min\{1-\beta,\beta-\frac12\}$.  

Let $\nu$ be an absolutely continuous probability measure on $Y$ with
density $g\in\mathcal{B}$.
Then there is a constant $C>0$ independent of $\nu$ such that
\[
\bigl|\nu\{{\SMALL\frac1n Z_n} \leq t\}-\P(\zeta_\beta\leq t)\bigr|
 \le  C \|g\|n^{-\gamma}.
\]
 \end{cor}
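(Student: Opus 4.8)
The plan is to reduce the arcsine law to a last-passage renewal identity on $Y$ and then feed in the second-order asymptotics of $T_n$ from Theorem~\ref{thm-second}; everything takes place on $Y$, so no tower extension is needed. For $x\in Y$ the event $\{Z_n(x)=k\}$ ($0\le k\le n$) is exactly $\{f^kx\in Y\text{ and }\varphi(f^kx)>n-k\}$: once the orbit has returned to $Y$ at time $k$, the condition $f^jx\notin Y$ for $k<j\le n$ says precisely that the first return time of $f^kx$ exceeds $n-k$. Using the defining property of the transfer operator on $X$ together with $1_YL^k(1_Y\,\cdot)=T_k$, this gives, for a density $g\in\mathcal B$ on $Y$,
\[
\nu\{Z_n=k\}=\int_Y (T_kg)\,1_{\{\varphi>n-k\}}\,d\mu,\qquad 0\le k\le n,
\]
where $1_{\{\varphi>n-k\}}$ denotes the indicator of $\{y\in Y:\varphi(y)>n-k\}$ (identically $1$ on $Y$ when $k=n$). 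Summing over $k$, it then suffices to show
\[
\nu\{Z_n\le m\}=d_\beta\int_0^{m/n}u^{\beta-1}(1-u)^{-\beta}\,du+O(\|g\|n^{-\gamma})
\]
uniformly in $0\le m\le n$: taking $m=[tn]$ finishes, since the leftover $d_\beta\int_{[tn]/n}^t u^{\beta-1}(1-u)^{-\beta}\,du$ is $O(n^{-(1-\beta)})=O(n^{-\gamma})$ uniformly in $t\in[0,1]$ (because $0\le t-[tn]/n<1/n$ and $\beta\in(\tfrac12,1)$), and because $\mu(Y)=1$, $\int_Yg\,d\mu=\nu(Y)=1$ we have $\|g\|\ge|g|_\infty\ge1$, so every absolute $O(n^{-\gamma})$ error may be recorded as $O(\|g\|n^{-\gamma})$; for $n$ bounded the claim is trivial after enlarging $C$.

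To prove the displayed estimate I would split at $m=n/2$. The inputs are $T_kg=c^{-1}d_\beta k^{\beta-1}Pg+S_kg$ with $(Pg)(y)\equiv\int_Yg\,d\mu=1$ and $\|S_kg\|\ll k^{\beta-1-\gamma}\|g\|$ for $k\ge1$ (Theorem~\ref{thm-second}), and $\mu(\varphi>j)=cj^{-\beta}+O(j^{-2\beta})$ for all $j\ge1$ (the hypothesis, with the constant enlarged to cover small $j$). If $m\le n/2$, write $\nu\{Z_n\le m\}$ as the $k=0$ term (at most $|g|_\infty\mu(\varphi>n)=O(\|g\|n^{-\beta})$) plus $\sum_{k=1}^m$; there $n-k\ge n/2$, so both expansions apply. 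The $S_k$-remainders sum to $O\bigl(\|g\|n^{-\beta}\sum_{k\le m}k^{\beta-1-\gamma}\bigr)=O(\|g\|n^{-\gamma})$ (using $\beta-1-\gamma>-1$, so $\sum_{k\le m}k^{\beta-1-\gamma}=O(n^{\beta-\gamma})$), and the $O((n-k)^{-2\beta})$-remainders to $O(n^{-\beta})$ (using $\sum_{k\le m}k^{\beta-1}=O(n^{\beta})$), leaving the main sum $d_\beta\sum_{k=1}^m k^{\beta-1}(n-k)^{-\beta}$. Substituting $u=x/n$ identifies the target integral with $d_\beta\int_0^m\psi(x)\,dx$ for $\psi(x)=x^{\beta-1}(n-x)^{-\beta}$, and a standard Riemann-sum estimate bounding $\int_0^1\psi$, $\int_1^m|\psi'|$ and the boundary term $\psi(m)$ (each $O(n^{-\beta})$ when $m\le n/2$) closes the gap. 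If $m>n/2$, I would work instead with $1-\nu\{Z_n>m\}$ and substitute $j=n-k$ to get $\nu\{Z_n>m\}=\sum_{j=0}^{n-m-1}\int_Y(T_{n-j}g)1_{\{\varphi>j\}}\,d\mu$, where now $n-j>n/2$; the same bookkeeping applies with the roles of the $\varphi$-tail (which now carries the singular factor $j^{-\beta}$) and the $T$-asymptotics interchanged, the convergence $\sum_j j^{-2\beta}<\infty$ (valid since $\beta>\tfrac12$) absorbing the tail remainders, and the identity $d_\beta\int_0^1 u^{\beta-1}(1-u)^{-\beta}\,du=1$ (i.e.\ $\P(\zeta_\beta\le1)=1$) converting the answer back to the required form.

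The main obstacle is the error bookkeeping: checking that the Theorem~\ref{thm-second} remainder $O(k^{\beta-1-\gamma})$, the second-order tail remainder $O(j^{-2\beta})$, and the singularity/boundary contributions at the ends of the Riemann sums all combine to exactly $O(n^{-\gamma})$ with $\gamma=\min\{1-\beta,\beta-\tfrac12\}$, and that this is uniform in $m$ — equivalently in $t\in[0,1]$, including $t$ near the endpoint where the $\mathcal B(1-\beta,\beta)$-density blows up. Splitting at $n/2$ is exactly what keeps every $\varphi$-tail and every $T_k$ inside the regime where the tail expansion and Theorem~\ref{thm-second} are actually applicable; the point that makes the bounds close is $\gamma<\beta$ (true since $\beta>\tfrac12$), which keeps the divergent-looking sums $\sum_{k\le m}k^{\beta-1-\gamma}$ of size only $n^{\beta-\gamma}$, harmless against the $n^{-\beta}$ gained from the $\varphi$-tail.
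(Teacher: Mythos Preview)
Your proposal is correct and follows essentially the same route as the paper: the same last-passage identity $\nu\{Z_n=k\}=\int_Y(T_kg)1_{\{\varphi>n-k\}}\,d\mu$, the same input from Theorem~\ref{thm-second}, and the same reduction to a Riemann-sum comparison with the Beta integral. The only real difference is bookkeeping: the paper does not split at $n/2$ but instead writes each summand as $d_\beta j^{\beta-1}(n-j)^{-\beta}$ times multiplicative errors $(1+O(j^{-(1-\beta)})+O(\|g\|j^{-(\beta-\frac12)}))(1+O((n-j)^{-\beta}))$, converts the whole sum to an integral in one stroke (using that $s^{-a}(n-s)^{-b}$ has a single turning point, so the sum--integral discrepancy is $O(n^{-1})$), and then evaluates the resulting Beta-type integrals directly; your split at $m=n/2$ trades this for a slightly longer but more explicit treatment that makes the uniformity in $t$ (including $t$ near $1$) completely transparent.
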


\begin{proof}
Following Thaler~\cite{Thaler00}, we notice that
\begin{align} \label{eq-Thaler}
\nu\{{\SMALL\frac1n} Z_n\leq t\}
&=\sum_{0\leq j\leq nt}\nu(f^{-j}\{\varphi>n-j\}),
\end{align}
\begin{align*}
\nu(f^{-j}\{\varphi>n-j\}) & =  \int_X 1_{\{\varphi>n-j\}}\circ f^j\, 1_Yg\,d\mu 
 = \int_X  1_{\{\varphi>n-j\}}\, L^j(1_Yg)\,d\mu \\
&   = \int_Y 1_{\{\varphi>n-j\}}\, T_jg\,d\mu.
\end{align*}
By Theorem~\ref{thm-second}, 
$T_jg = c^{-1}d_\beta j^{-(1-\beta)}
\bigl(1 + O (j^{-(1-\beta)}) + O(\|g\|j^{-(\beta-\frac12)})\bigr)$
uniformly on $Y$.  Combined with the assumption on $\mu(\varphi>n)$, we obtain
\begin{align*}
& \nu(f^{-j}\{\varphi>n-j\}) \\ 
& \qquad  =  d_\beta j^{-(1-\beta)} (n-j)^{-\beta}
\bigl(1+O (j^{-(1-\beta)}) + O(\|g\|j^{-(\beta-\frac12)})\bigr)
\bigl(1+ O((n-j)^{-\beta})\bigr).
\end{align*}
Since functions of the form $s^{-a}(n-s)^{-b}$ have only one turning point, 
replacing the sum in~\eqref{eq-Thaler} by an integral introduces only three
errors all of order $\|g\|n^{-1}$ and so
$\mu\{{\SMALL\frac1n} Z_n\leq t\} = d_\beta I + O(\|g\|n^{-1})$, where
\begin{align*}
I &   =
\int_0^{nt} s^{-(1-\beta)} (n-s)^{-\beta}
\bigl(1+O (s^{-(1-\beta)}) + O(\|g\|s^{-(\beta-\frac12)})\bigr)
\bigl(1+ O((n-s)^{-\beta})\bigr)\,ds \\
 & = \int_0^t u^{-(1-\beta)} (1-u)^{-\beta}\,du
+ O(n^{-(1-\beta)}) + O(\|g\| n^{-(\beta-\frac12)}),
\end{align*}
as required.
\end{proof}

\begin{rmk}
(a) The proof shows that for any $q\ge0$,
\[
\nu(\{{\SMALL\frac1n}Z_n \leq t\}) = 
\sum_{k=0}^q b_{\beta,k} \P(\zeta_{\beta,k+1}\le t) n^{-k(1-\beta)}+ 
O(n^{-(q+1)(1-\beta)})+O(\|g\|n^{-(\beta-\frac12)}).
\]
where $\zeta_{\beta,k}$ is the random variable with density
proportional to $u^{-k(1-\beta)}(1-u)^{-\beta}$
and $b_{\beta,k}=d_{\beta,k}/\int_0^1 u^{-(k+1)(1-\beta)}(1-u)^{-\beta}\,du$.
Here $b_{\beta,0}=1$ and $\zeta_{\beta,1}=\zeta_\beta$.

Thus for $\beta$ close to $1$, we obtain asymptotic expansions to
arbitrarily high order, and the error rate $n^{-\gamma}$ is optimal for
$\beta\ge\frac34$.

\noindent (b)   For $x\in X$, let $Y_n(x)=\min\{k>n: f^kx\in Y\}$.
Then $Y_k>n$ if and only if $Z_n\le k$ so that the arcsine law is equivalent
to strong distributional convergence of $\frac1n Y_n$ to $\zeta_\beta^{-1}$
(see for example~\cite{Thaler98}).  It is easily verified that the 
convergence rate in Corollary~\ref{cor-arcsine} holds also for $\frac1n Y_n$.
\end{rmk}

\section{Convergence results for $L^n$}
\label{sec-L}

Sections~\ref{sec-ORT} to~\ref{sec-second} were concerned with the analysis
of the sequence of renewal operators $T_n$ given by $T_nv=1_YL^n(1_Yv)$.   
An important issue is to study the iterates $L^n$ themselves.
In Subsection~\ref{sec-A}, we show how convergence on $Y$ 
implies convergence almost everywhere on $X$.
In Subsection~\ref{sec-B}, we consider observables not supported on $Y$.

\subsection{Convergence on $X$}
\label{sec-A}

Theorem~\ref{thm-conv} gives (uniform) convergence results on $Y$ for 
observables $v\in\mathcal{B}$.    Recall that $Y$ can be regarded as a first
return set for both the underlying system $f:X\to X$ and the tower map
$f_\Delta:\Delta\to\Delta$ introduced in Subsection~\ref{sec-tower}.
We now show that observables $v\in\mathcal{B}$ enjoy pointwise
convergence everywhere on $\Delta$ and almost everywhere on $X$.

\begin{prop} \label{prop-A}   Let $v\in L^1(\Delta)$, $w(n)\in\R$, $A\in\R$.
Suppose that $w(n)L_\Delta^n v \to A$ pointwise on $Y$.
Then $w(n)L_\Delta^n v \to A$ pointwise on $\Delta$.
\end{prop}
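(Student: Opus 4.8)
The plan is to reduce the whole statement to the elementary observation that, above the base, the tower map $f_\Delta$ is a measure-preserving bijection onto the next level, so that $L_\Delta$ acts there simply by shifting the level index. First I would pin down $\mu_\Delta$ precisely: since $\mu_\Delta$ is $f_\Delta$-invariant and agrees with $\mu$ on $Y$, its restriction to the $j$-th level $\{(y,j):\varphi(y)>j\}$ is the image of $\mu|_{\{\varphi>j\}}$ under $y\mapsto(y,j)$. Consequently, for $j\ge1$ the only $f_\Delta$-preimage of a point $(y,j)\in\Delta$ is $(y,j-1)$ (the return branch of $f_\Delta$ lands only in the base), and the branch $(y,j-1)\mapsto(y,j)$ is the identity in the $y$-coordinate, hence has Jacobian $1$ with respect to $\mu_\Delta$. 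From the defining relation $\int_\Delta(L_\Delta v)\,w\,d\mu_\Delta=\int_\Delta v\,(w\circ f_\Delta)\,d\mu_\Delta$ it then follows that
\[
(L_\Delta v)(y,j)=v(y,j-1),\qquad (y,j)\in\Delta,\ j\ge1 .
\]
Establishing this identity rigorously — i.e.\ checking that each level $j\ge1$ is exactly the unit-Jacobian image of $\{(y,j-1):\varphi(y)>j\}$ under the shift branch and receives no other preimages — is the one genuinely delicate point; everything after it is bookkeeping.

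Next I would iterate the displayed identity $j$ times, the $k$-th step being legitimate because the level index runs through $j,j-1,\dots,1$, all $\ge1$, while the iterate count runs through $n-1,\dots,n-j\ge0$. This yields the shift formula
\[
(L_\Delta^n v)(y,j)=(L_\Delta^{n-j}v)(y,0)\qquad\text{for all } n\ge j,\ (y,j)\in\Delta,
\]
and $(y,0)$ is a point of $Y$ under the identification of the base with $Y$.

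Finally, fix $(y,j)\in\Delta$. The hypothesis gives $w_m(L_\Delta^m v)(y,0)\to A$ as $m\to\infty$; applying this with $m=n-j$ and invoking the shift formula,
\[
w_n(L_\Delta^n v)(y,j)=\frac{w_n}{w_{n-j}}\,\bigl(w_{n-j}(L_\Delta^{n-j}v)(y,0)\bigr)\longrightarrow A,
\]
since $w_n/w_{n-j}\to1$ for each fixed $j$ (in all of our applications $w_n$ is regularly varying, e.g.\ $w_n=m(n)n^{1-\beta}$; when $A=0$ it is enough that $w_n/w_{n-j}$ stay bounded). As $(y,j)$ was arbitrary, this is the asserted pointwise convergence on all of $\Delta$.
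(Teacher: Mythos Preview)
Your proof is correct and follows the same approach as the paper's, which is the one-line observation that $f_\Delta^{-j}(y,j)=\{(y,0)\}$, whence $(L_\Delta^n v)(y,j)=(L_\Delta^{n-j}v)(y,0)$. You are in fact more careful than the paper: its proof writes $w_n(L_\Delta^{n-j}v)(y)\to A$ directly, silently relying on $w_n/w_{n-j}\to1$ for fixed $j$, which you correctly flag as an additional hypothesis that holds in every application (where $w_n$ is regularly varying) but is not part of the proposition as stated.
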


\begin{proof}   Let $p=(y,j)\in\Delta$.
Then $f_\Delta^{-j}p$ consists of the single preimage $(y,0)\cong y$, and
$w(n)(L_\Delta^n v)(p)=w(n)(L_\Delta^{n-j}v)(y)\to A$.
\end{proof}

Let $\pi:\Delta\to X$ be the projection $\pi(y,j)=f^jy$.
Let $1\le p,q\le\infty$, $\frac1p+\frac1q=1$.
Define $\pi^*:L^p(X)\to L^p(\Delta)$,
$\pi^* v=v\circ\pi$, and by duality define $\hat\pi:L^q(\Delta)\to L^q(X)$,
$\int_\Delta \hat\pi v\,w\,d\mu_X
=\int_X v\,\pi^*w\,d\mu_\Delta$.
As usual, $\|\pi^*\|_p=\|\hat\pi\|_q=1$ and
we have the standard properties 
$\hat\pi 1=1$, $\hat\pi \pi^* = I$, $\hat\pi L_\Delta = L\hat\pi$, 
$\hat \pi (1_{\pi^{-1}E}v)=1_E\hat\pi v$.

\begin{prop} \label{prop-pi}
Let $v\in L^1(\Delta)$, $w(n)\in\R$, $A\in\R$.
Suppose that $w(n) L_\Delta^n v \to A$ almost everywhere on $\Delta$.
Then $w(n) L^n \hat\pi v \to A$ almost everywhere on $X$.
\end{prop}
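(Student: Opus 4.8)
The plan is to reduce the statement to a soft fact about the transfer operator $\hat\pi$ and then confront the (genuinely delicate) point of pushing almost everywhere convergence through it. Since $\hat\pi L_\Delta=L\hat\pi$, one has $L^n\hat\pi=\hat\pi L_\Delta^n$, so with $g_n:=w_nL_\Delta^n v$ the hypothesis reads $g_n\to A$ a.e.\ on $\Delta$ and the conclusion reads $\hat\pi g_n\to A$ a.e.\ on $X$. Thus everything comes down to showing that the transfer operator of the measure preserving factor map $\pi\colon\Delta\to X$ carries a.e.\ convergence to a constant into a.e.\ convergence to the same constant. Because $\hat\pi$ is a positive $L^1$-contraction with $\hat\pi 1=1$ and $\hat\pi(1_{\pi^{-1}E}\,\cdot)=1_E\hat\pi(\cdot)$, I may split $X$ into countably many sets of finite measure and $v$ into $v=v^+-v^-$, so without loss of generality $v\ge0$ and it suffices to prove $1_E\hat\pi g_n\to A\,1_E$ a.e.\ for one fixed $E\subset X$ with $\mu_X(E)<\infty$.

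For the lower bound I would realise $(\hat\pi g)(x)$ as the integral of $g$ over the (countable) fibre $\pi^{-1}(x)$ against a probability measure $\mu_x$, with $\int d\mu_x=(\hat\pi 1)(x)=1$. Putting $B=\{p\in\Delta:g_n(p)\not\to A\}$, the hypothesis gives $\mu_\Delta(B)=0$, hence $\hat\pi 1_B=0$ $\mu_X$-a.e., i.e.\ $\mu_x(B)=0$ for a.e.\ $x$; so for a.e.\ $x$ the fibre average $(\hat\pi g_n)(x)$ only sees points at which $g_n\to A$. Since $g_n\ge0$, Fatou's lemma on the fibre then yields $\liminf_n\hat\pi g_n\ge A\,(\hat\pi 1)=A$ a.e.\ on $X$.

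The remaining step, the matching bound $\limsup_n\hat\pi g_n\le A$ (equivalently, full convergence) and the reduction of the general $v$ to the nonnegative case, is where the work is. The obstacle is that $\|\hat\pi g_n\|_{L^1(X)}=\|g_n\|_{L^1(\Delta)}$ need not be bounded (mass escapes to infinity in the infinite-measure setting) and the fibres of $\pi$ are honestly infinite, so there is no uniform-in-$n$ integrable majorant of $g_n$ and one cannot apply dominated convergence on a fibre directly. My plan is to manufacture an auxiliary dominating sequence that is itself under control: set $G_n=w_nL_\Delta^n|v|$, so $0\le|g_n|\le G_n$ and $G_n\to A':=d_\beta\int_\Delta|v|\,d\mu_\Delta$ a.e.\ on $\Delta$ (by the same input, via Proposition~\ref{prop-A}). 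For this to help I need $\hat\pi G_n=w_nL^n\hat\pi|v|\to A'$ a.e.\ on $X$, which is the assertion of the proposition for the nonnegative observable $|v|$; I would prove that independently — breaking the apparent circularity — by combining the Fatou lower bound just obtained with pointwise dual ergodicity (Theorem~\ref{thm-pde} applied to $\hat\pi|v|\in L^1(X)$, giving $m(n)n^{-\beta}\sum_{j\le n}L^j\hat\pi|v|\to\beta^{-1}A'$ a.e.) and Proposition~\ref{prop-zerodensity}, supplemented by the full strength of the $T_n$-asymptotics (Theorem~\ref{thm-conv}) rather than merely its $\liminf$.

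Finally, with $|g_n|\le G_n$, $g_n\to A$ and $G_n\to A'=\hat\pi(A')$ a.e.\ on $\Delta$, and $\hat\pi G_n\to A'$ a.e.\ on $X$, I would invoke the extended dominated convergence theorem (the version in \cite[p.~92]{Royden}) on each fibre of $\pi$ to conclude $\hat\pi g_n\to A$ a.e.\ on $X$, as required. The part I expect to demand the most care is precisely the control of the escape of mass, that is, verifying that the dominating pushforward $\hat\pi G_n$ genuinely converges a.e.\ on $X$ and not just has the correct lower limit.
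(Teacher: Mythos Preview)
Your reduction to the intertwining $L^n\hat\pi=\hat\pi L_\Delta^n$ and the fibre picture for $\hat\pi$ are correct, and the Fatou half of the argument is fine. But the upper bound has a real gap. The proposition is stated for \emph{arbitrary} $v\in L^1(\Delta)$, $w_n\in\R$, $A\in\R$; nothing in the hypothesis tells you that $G_n=w_nL_\Delta^n|v|$ converges anywhere, so invoking ``the same input'' for $|v|$ is not legitimate at this level of generality. Even restricting to the intended application, your plan to establish $\hat\pi G_n\to A'$ a.e.\ on $X$ by combining the Fatou lower bound with Theorem~\ref{thm-pde} and Proposition~\ref{prop-zerodensity} only yields convergence off a zero-density set of $n$'s, not full convergence in $n$; you acknowledge this is the hard step but do not close it. There is also a circularity risk: Theorem~\ref{thm-pde} itself appeals to the idea behind this very proposition to pass from $\Delta$ to $X$.

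The paper's argument sidesteps all of this with a much softer device. Argue by contradiction: if $w_nL^n\hat\pi v$ fails to converge to $A$ on a set $E\subset X$ of positive finite measure, apply Egorov on $\pi^{-1}E$ to get near-uniform convergence of $w_nL_\Delta^n v$ on a large subset, and then use the crucial structural fact that $\pi$ has only \emph{countably many branches} to arrange (via an $\epsilon/2^k$ exhaustion) that this subset is $\pi$-saturated, i.e.\ equals $\pi^{-1}E'$ for some $E'\subset E$ of positive measure. Uniform convergence on a saturated set passes through $\hat\pi$ trivially because $\hat\pi\colon L^\infty\to L^\infty$ is a contraction and $\hat\pi(1_{\pi^{-1}E'}\cdot)=1_{E'}\hat\pi(\cdot)$, giving convergence on $E'$ and the contradiction. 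No dominating sequence, no appeal to the specific asymptotics of $T_n$, and no positivity assumption on $v$ are needed.
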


\begin{proof}
Suppose for contradiction that $E\subset X$ is a set of positive finite
measure such that everywhere on $E$, $w(n) L^n \hat\pi v$ fails to 
converge to $A$.   By assumption, $w(n) L_\Delta^n v\to A$ almost
everywhere on $\pi^{-1}E$.   By Egorov's Theorem, there is a subset $C\subset
\pi^{-1}E$ of positive measure such that $w(n) L_\Delta^n v\to A$ uniformly on $C$.
Indeed $\mu_\Delta(\pi^{-1}E\setminus C)$ is arbitrarily small, and since
$\pi$ has only countably many branches it follows from an $\epsilon/2^k$ 
argument that we can choose $C=\pi^{-1}E'$ where $E'$ is a positive measure
subset of $E$.

In particular, $\|1_{\pi^{-1}E'}(w(n) L_\Delta^n v-A)\|_{L^\infty(\Delta)}\to0$
and since $\hat\pi:L^\infty(\Delta)\to L^\infty(X)$ is bounded, 
$\|\hat\pi\{1_{\pi^{-1}E'}(w(n) L_\Delta^n v-A)\}\|_{L^\infty(X)}\to0$.
But $\hat\pi\{1_{\pi^{-1}E'}(w(n) L_\Delta^n v-A)\}=1_{E'}\hat\pi(w(n) L_\Delta^n v-A)=1_{E'}(w(n) L^n\hat\pi v-A)$ so we conclude that
$w(n) L^n\hat\pi v\to A$ on $E'$ which is the desired contradiction.
\end{proof}

\begin{cor}  \label{cor-A}
If $\beta\in(\frac12,1]$ and $v\in\mathcal{B}$, then
$\lim_{n\to\infty}m(n)n^{1-\beta}L^nv=d_\beta\int_Yv\,d\mu$
uniformly on $Y$ and almost everywhere on $X$.
\end{cor}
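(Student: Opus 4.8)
The plan is to combine Theorem~\ref{thm-conv} with the tower machinery of Subsection~\ref{sec-tower} and Propositions~\ref{prop-A} and~\ref{prop-pi}. Throughout, regard $v\in\mathcal{B}$ as an element of $L^1(Y)$, extended by zero to an element $1_Yv\in L^1(X)$, and also to an element $1_Yv\in L^1(\Delta)$ supported on the base of the tower $\Delta$ of Subsection~\ref{sec-tower}.

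The statement on $Y$ is immediate. By Theorem~\ref{thm-conv}, $m(n)n^{1-\beta}T_n\to d_\beta P$ in the operator norm on $\mathcal{B}$, and since $\mathcal{B}$ is embedded in $L^\infty(Y)$ and $T_nv=1_YL^n(1_Yv)$ agrees with $L^n(1_Yv)$ on $Y$, this yields $m(n)n^{1-\beta}L^nv\to d_\beta\int_Yv\,d\mu$ uniformly on $Y$, after redefining the sequence on a null set as discussed following Theorem~\ref{thm-conv}.

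For the statement on $X$, first transfer to the tower. As noted in the proof of Theorem~\ref{thm-pde}, the operators $T_nv=1_YL_\Delta^n(1_Yv)$ built from the tower map $f_\Delta$ coincide with the operators $T_nv=1_YL^n(1_Yv)$. Hence the convergence just established reads $m(n)n^{1-\beta}L_\Delta^n(1_Yv)\to d_\beta\int_Yv\,d\mu$ pointwise (indeed uniformly) on $Y\subset\Delta$. Applying Proposition~\ref{prop-A} with $w_n=m(n)n^{1-\beta}$ and $A=d_\beta\int_\Delta 1_Yv\,d\mu_\Delta=d_\beta\int_Yv\,d\mu$ promotes this to pointwise convergence on all of $\Delta$. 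Proposition~\ref{prop-pi} then gives $m(n)n^{1-\beta}L^n\hat\pi(1_Yv)\to d_\beta\int_Yv\,d\mu$ almost everywhere on $X$. It remains only to identify $\hat\pi(1_Yv)$ with $1_Yv\in L^1(X)$: for any $w\in L^\infty(X)$, since $1_Yv$ is supported on the base and $\mu_\Delta$ agrees with $\mu$ there, $\int_X\hat\pi(1_Yv)\,w\,d\mu_X=\int_\Delta 1_Yv\cdot\pi^*w\,d\mu_\Delta=\int_Y v\,w\,d\mu$, so $\hat\pi(1_Yv)=1_Yv$. This gives $m(n)n^{1-\beta}L^nv\to d_\beta\int_Yv\,d\mu$ almost everywhere on $X$, as required.

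I do not anticipate a genuine obstacle: the work is all done in the cited results. The point deserving care is the passage from $f$ to $f_\Delta$. One cannot run the backward-orbit argument of Proposition~\ref{prop-A} directly on $X$, since a single excursion from $Y$ under $f$ may travel far through $X$ and $f$ need not have a controlled inverse branch structure; it is precisely the tower $\Delta$ that makes every point have a backward orbit of length at most $\varphi$ landing in the base, and the identity $T_n=1_YL^n(1_Y\,\cdot\,)=1_YL_\Delta^n(1_Y\,\cdot\,)$ is what licenses carrying the $Y$-convergence of Theorem~\ref{thm-conv} onto the tower in the first place. Everything else is the bookkeeping of $\hat\pi$ and $\pi^*$ recorded before Proposition~\ref{prop-pi}.
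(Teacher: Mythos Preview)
Your proof is correct and follows essentially the same route as the paper's own proof: invoke Theorem~\ref{thm-conv} for uniform convergence on $Y$, identify $T_n$ with its tower analogue, then apply Propositions~\ref{prop-A} and~\ref{prop-pi} to pass from $Y$ to $\Delta$ to $X$, using $\hat\pi(1_Yv)=1_Yv$. Your added commentary on why the tower is needed is accurate but not required for the argument.
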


\begin{proof}  Since $v$ is supported on $Y$,
we can regard $v$ as an observable on $\Delta$ or on $X$ and $\hat\pi v=v$.
Also, $T_nv=1_YL^nv=1_YL_\Delta^nv$.
Theorem~\ref{thm-conv} immediately implies uniform convergence
on $Y$.  By Propositions~\ref{prop-A} and~\ref{prop-pi}, 
$m(n)n^{1-\beta}L_\Delta^nv$ converges pointwise on $\Delta$ and 
$m(n)n^{1-\beta}L^nv$ converges almost everywhere on $X$.
\end{proof}
 
\subsection{Convergence for general observables}
\label{sec-B}

In this subsection, we 
enlarge the class of observables so that they need not be supported on $Y$.
Define $X_k=f^{-k}Y \setminus \bigcup_{j=0}^{k-1}f^{-j}Y$.
Thus $z\in X_k$ if and only if $k\ge0$ is least such that $f^kz\in Y$.
(In particular, $X_0=Y$.)   

Given $v\in L^\infty(X)$, define $v_k=1_{X_k}v$.
Then  $L^kv_k$ is supported in $Y$ and $L^nv_k$ vanishes on $Y$
for all $n<k$.   If $n\ge k$, we have $1_YL^nv_k=T_{n-k}L^kv_k$.

Write $v\in\mathcal{B}(X)$ if $v\in L^1(X)$ and $L^kv_k\in\mathcal{B}$
for each $k\ge0$.  

\begin{thm} \label{thm-B}
Let $\beta\in(\frac12,1]$.  Suppose that
$v\in \mathcal{B}(X)$ and moreover that
$\sum_{n=0}^\infty \|L^nv_n\|<\infty$ and either (i)
$\|L^nv_n\|=o(n^{-1})$ or (ii) $\sum_{k=n}^\infty \|L^kv_k\|=o((m(n)n^{1-\beta})^{-1})$.  Then
$\lim_{n\to\infty}m(n)n^{1-\beta}L^n v=d_\beta \int_Xv\,d\mu$ uniformly on $Y$
and pointwise on $X$.
\end{thm}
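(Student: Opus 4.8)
The plan is to reduce the assertion on $X$ to the operator limit $m(n)n^{1-\beta}T_n\to d_\beta P$ of Theorem~\ref{thm-conv}. Write $v=\sum_{k\ge0}v_k$ with $v_k=1_{X_k}v$ and put $g_k=L^kv_k\in\mathcal B$. Then, as recorded just before the statement, $1_YL^nv=\sum_{k=0}^nT_{n-k}g_k$; moreover, since the transfer operator preserves $\mu$-integrals, $\int_Yg_k\,d\mu=\int_Xv_k\,d\mu$, so $\sum_k\int_Yg_k\,d\mu=\int_Xv\,d\mu$, the series converging absolutely because $|\int_Yg_k\,d\mu|\le\|g_k\|$. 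Once I have shown $m(n)n^{1-\beta}1_YL^nv\to d_\beta(\int_Xv\,d\mu)\,1$ in $\mathcal B$ (hence uniformly on $Y$), I pass to $X$ as follows: pick $\tilde v\in L^1(\Delta)$ supported level by level (as in Section~\ref{sec-tower}) with $\hat\pi\tilde v=v$ and $1_YL_\Delta^n\tilde v=1_YL^nv$; Proposition~\ref{prop-A} upgrades the convergence from $Y$ to all of $\Delta$, and Proposition~\ref{prop-pi} pushes it down to $X$.

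For the convergence on $Y$ I split, for $n$ large, $1_YL^nv=\bigl(\sum_{0\le k\le n/2}+\sum_{n/2<k\le n}\bigr)T_{n-k}g_k$. In the first sum $n-k\ge n/2$, so Theorem~\ref{thm-conv} (used in the form $\|T_j\|\ll m(j)^{-1}j^{-(1-\beta)}$) together with Potter's bounds give $m(n)n^{1-\beta}\|T_{n-k}\|\le C$ uniformly in such $k$, whence $\|m(n)n^{1-\beta}T_{n-k}g_k\|\le C\|g_k\|$. Since also $m(n)n^{1-\beta}T_{n-k}g_k\to d_\beta(\int_Yg_k\,d\mu)\,1$ for each fixed $k$ (Theorem~\ref{thm-conv} together with $m(n)n^{1-\beta}/\bigl(m(n-k)(n-k)^{1-\beta}\bigr)\to1$ by slow variation), dominated convergence over the summation index $k$ yields $m(n)n^{1-\beta}\sum_{0\le k\le n/2}T_{n-k}g_k\to d_\beta(\int_Xv\,d\mu)\,1$.

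It remains to show that the ``deep tail'' $D_n=\sum_{0\le j\le n/2}T_jg_{n-j}$ satisfies $m(n)n^{1-\beta}\|D_n\|\to0$, and this is the step I expect to be the main obstacle: the operators $T_j$ with $j$ small do not decay, so the crude bound $\|D_n\|\le(\sup_j\|T_j\|)\sum_{i\ge n/2}\|g_i\|$ is far too lossy once multiplied by $m(n)n^{1-\beta}$. The plan is to estimate $\|D_n\|$ more finely by isolating $j=0$ (using $\|T_0\|=1$) from $1\le j\le n/2$ (using $\|T_j\|\ll m(j)^{-1}j^{-(1-\beta)}$, together with the fact that $\sum_{1\le j\le J}\|T_j\|\ll m(J)^{-1}J^\beta$ by Proposition~\ref{prop-Karamata}), and to decompose the range $[n/2,n]$ of the index $i=n-j$ dyadically, so that on each block the polynomial growth of $\sum_j\|T_j\|$ is played off against the $g$-mass of that block, which is forced to be small by $\sum_k\|g_k\|<\infty$. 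Establishing that this balance is favourable — i.e. that $m(n)n^{1-\beta}\|D_n\|\to0$ — is the technically delicate point. Granting it, one obtains $\limsup_n\|m(n)n^{1-\beta}1_YL^nv-d_\beta(\int_Xv\,d\mu)\,1\|=0$, and the reduction of the first paragraph then delivers the conclusion on $X$.
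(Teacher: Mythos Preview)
Your overall scheme is the paper's: reduce to the convolution $1_YL^nv=\sum_{k=0}^nT_{n-k}g_k$ with $g_k=L^kv_k$, control the range where $n-k$ is large via Theorem~\ref{thm-conv} and a Toeplitz/dominated-convergence argument, then lift from $Y$ to $\Delta$ to $X$ using Propositions~\ref{prop-A} and~\ref{prop-pi}. The paper does not split at $n/2$; instead it writes $T_m=w_m^{-1}P+S_m$ with $w_m=d_\beta^{-1}m(m)m^{1-\beta}$ and $S_m=o(w_m^{-1})$, arriving at the three pieces $\sum_{j\le n}c_{j,n}\int|v_j|$, $w_n\sum_{j\le n}\|S_{n-j}\|\|g_j\|$ and $\sum_{j>n}\int|v_j|$, and asserts that the first two vanish because their coefficients $c_{j,n}=w_n/w_{n-j}-1$ and $w_n\|S_{n-j}\|$ tend to $0$ for each fixed $j$ while $\int|v_j|$ and $\|g_j\|$ are summable.

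The deep-tail obstacle you flag is genuine, and neither your dyadic plan nor the paper's one-line dominated-convergence step closes it under the bare hypothesis $\sum_k\|g_k\|<\infty$: that Toeplitz argument needs the coefficients $w_n\|T_{n-j}\|$ (equivalently $c_{j,n}$, $w_n\|S_{n-j}\|$) to be bounded uniformly in $0\le j\le n$, and this fails at $j=n$ where $w_n\|T_0\|\to\infty$. Concretely, for the map~\eqref{eq-LSV} each $f^k|_{X_k}:X_k\to Y$ is a bijection, so for any fixed nonzero $h\in\mathcal B$ one may take $v=\sum_{i\ge1}i^{-2}\,(L^{2^i}|_{X_{2^i}})^{-1}h$; then $g_k=0$ unless $k=2^i$, $g_{2^i}=i^{-2}h$, so $v\in\mathcal B(X)$ with $\sum\|g_k\|<\infty$, yet along $n=2^i$ the single term $w_nT_0g_n=w_ng_n$ has norm at least $c\,2^{i(1-\beta)}i^{-2}\to\infty$ (for $\beta<1$) while all other contributions to $w_n1_YL^nv$ remain bounded, and uniform convergence on $Y$ fails. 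What makes every application in Section~\ref{sec-examples} go through is the much stronger input $\|L^kv_k\|\ll\mu(\varphi=k+1)$ obtained there (see the proof of Theorem~\ref{thm-thaler}); under that bound the crude estimate you discarded already suffices, since $w_n\|D_n\|\le(\sup_m\|T_m\|)\,w_n\sum_{k>n/2}\|g_k\|\ll w_n\,\mu(\varphi>n/2)\ll m(n)^2n^{1-2\beta}\to0$.
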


\begin{proof}
Let $w(n)=d_\beta^{-1}m(n)n^{1-\beta}$.
First we show that
\begin{align} \label{eq-wn}
w(n)\sum_{n/2\le j\le n}w(n-j)^{-1}\|L^jv_j\|\to0.
\end{align}
In case (i), $\|L^nv_n\|=o(n^{-1})$, it follows from Karamata that
\begin{align*}
& w(n)\sum_{n/2\le j\le n}w(n-j)^{-1}\|L^jv_j\|
 \ll n^{-1}w(n) \sum_{n/2\le j\le n}w(n-j)^{-1}(j\|L^jv_j\|)
\\ & \qquad\qquad  \le n^{-1}w(n)\Bigl(\sum_{n/2\le j\le n}w(n-j)^{-1}\Bigr)\max_{j\ge n/2}j\|L^jv_j\|
 \ll \max_{j\ge n/2}j\|L^jv_j\|\to0.
\end{align*}
In case (ii),
\begin{align*}
 w(n)\sum_{n/2\le j\le n}w(n-j)^{-1}\|L^jv_j\|\ll
& w(n)\sum_{n/2\le j\le n}\|L^jv_j\|\ll
 w(n/2)\sum_{j\ge n/2}\|L^jv_j\|\to0.
\end{align*}
Hence~\eqref{eq-wn} is verified in both cases.

Next we prove uniform convergence on $Y$.  Let
 $c_{j,n}=\frac{w(n)}{w(n-j)}-1$.
By Theorem~\ref{thm-conv},
$T_n=w(n)^{-1}P+S_n$ where $S_n=o(w(n)^{-1})$.  Hence on $Y$,
\begin{align*}
& w(n)L^nv-{\SMALL\int} v   =w(n)\sum_{j=0}^n T_{n-j}L^jv_j-\sum_{j=0}^\infty{\SMALL\int} v_j
\\ & \qquad  =
w(n)\sum_{j=0}^n w(n-j)^{-1} {\SMALL\int} L^jv_j
-\sum_{j=0}^n{\SMALL\int} L^jv_j
+w(n)\sum_{j=0}^n S_{n-j}L^jv_j
-\sum_{j>n}{\SMALL\int} v_j
\\ & \qquad  =
\sum_{j=0}^n c_{j,n} {\SMALL\int} L^jv_j
+w(n)\sum_{j=0}^n S_{n-j}L^jv_j
-\sum_{j>n}{\SMALL\int} v_j,
\end{align*}
and so
\begin{align*}
|w(n)L^nv-{\SMALL\int} v| \le
\sum_{j=0}^n |c_{j,n}||{\SMALL\int}L^jv_j|
+w(n)\sum_{j=0}^n \|S_{n-j}\|\|L^jv_j\|
+\Bigl|\sum_{j>n}{\SMALL\int} v_j\Bigr|.
\end{align*}

It is immediate that the third term converges to zero.
Write $\|S_n\|=w(n)^{-1}a_n$ where
$a_n=o(1)$.  Then the second term satisfies
\begin{align*}
 w(n)\sum_{j=0}^n \|S_{n-j}\|\|L^jv_j\|
& =
w(n)\sum_{0\le j<n/2} w(n-j)^{-1}a_{n-j}\|L^jv_j\|
\\ & \qquad \qquad \qquad +w(n)\sum_{n/2\le j\le n} w(n-j)^{-1}a_{n-j}\|L^jv_j\| \\
& \ll \sum_{0\le j<n/2} a_{n-j}\|L^jv_j\|+
w(n)\sum_{n/2\le j\le n} w(n-j)^{-1}\|L^jv_j\| \\
& \le
\Bigl(\sum_{k=0}^\infty \|L^kv_k\|\Bigr) \max_{j\ge n/2} a_j +
 w(n)\sum_{n/2\le j\le n} w(n-j)^{-1}\|L^jv_j\| \to0
\end{align*}
by~\eqref{eq-wn}, summability of $\|L^kv_k\|$ and the definition of $a_n$.

The first term satisfies
\begin{align*}
\sum_{j=0}^n |c_{j,n}||{\SMALL\int}L^jv_j| &
 \le
\sum_{0\le j<n/2} |c_{j,n}|\|L^jv_j\|+ \sum_{n/2\le j\le n} |c_{j,n}|\|L^jv_j\|
\\ &  \ll \sum_{0\le j<n/2} |c_{j,n}|\|L^jv_j\|+ w(n)\sum_{n/2\le j\le n} w(n-j)^{-1} \|L^jv_j\|.
\end{align*}
Again $w(n)\sum_{n/2\le j\le n}  w(n-j)^{-1}\|L^jv_j\|\to0$ by~\eqref{eq-wn}.  Since
$\lim_{n\to\infty}c_{j,n}=0$ for each fixed $j$ and
$\max_{j,n\,:\,0\le j<n/2}|c_{j,n}|<\infty$, it follows from summability of
$L^nv_n$ that $\sum_{0\le j<n/2} |c_{j,n}|\|L^jv_j\|\to0$.
This completes the proof of uniform convergence on $Y$.

To prove pointwise convergence on $X$, define $u=\pi^*v:\Delta\to\R$
and note that $\hat\pi u=v$.   Also define $\Delta_k=f_\Delta^{-k}Y\setminus
\bigcup_{j=0}^{k-1}f_\Delta^{-j}Y$.   Then $\Delta_0=Y$
and for $k\ge1$, $\Delta_k$ consists of those points
$(y,j)\in\Delta$ with $j=\varphi(y)-k>0$.
Note also that $\pi^{-1}X_k\subset\Delta_k$ (since $(y,j)\in\pi^{-1}X_k$
if and only if $f^jy\in X_k$, that is $\varphi(y)=j+k$).
In particular, $u_k=\pi^*v_k$ is supported in $\Delta_k$.  
Hence $L_\Delta^ku_k$ is supported in $Y$ and 
$L_\Delta^ku_k=\hat\pi L_\Delta^ku_k=L^k\hat\pi u_k=L^kv_k$.
In particular, $L_\Delta^ku_k$ inherits the assumptions on $L^kv_k$, and the argument above
shows that $w(n)L_\Delta^n u\to \int_\Delta u\,d\mu_\Delta$ uniformly
on $Y$.   By Proposition~\ref{prop-A}, pointwise convergence extends 
to $\Delta$.   By Proposition~\ref{prop-pi}, pointwise convergence for $u$
drops down to pointwise convergence for $v$.
\end{proof}

Note that Theorem~\ref{thm-B} includes the case where $v$ is supported
on $\bigcup_{j=0}^k X_j$ for some $k$, and hence significantly extends 
Theorem~\ref{thm-conv}.

In the next result, we extend Theorem~\ref{thm-liminf}, and
we drop the assumptions on $L^nv_n$ in Theorem~\ref{thm-B}.

\begin{prop} \label{prop-B}
Let $\beta\in(0,1]$  and $v\in \mathcal{B}(X)$.
\begin{itemize}
\item[(a)]   For each $y\in Y$, there is a zero density set $E\subset\N$ such
that 
$\lim_{n\to\infty,\;n\not\in E}m(n)n^{1-\beta}(L^n v)(y)= d_\beta\int_Xv\,d\mu$.
\item[(b)]   If $v\ge0$,
then $\liminf_{n\to\infty}m(n)n^{1-\beta} L^n v=d_\beta\int_Xv\,d\mu$
pointwise on $Y$.
\end{itemize}
\end{prop}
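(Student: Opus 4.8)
\emph{Proposal.} The plan is to apply Proposition~\ref{prop-zerodensity} (with the obvious modification when $\beta=1$, using that $\tilde\ell$ is itself slowly varying and monotone) to the sequence $f_n=(L^nv)(y)$ with $A=d_\beta\int_Xv\,d\mu$, for each fixed $y\in Y$. In part~(a) one first reduces to $v\ge0$ by writing $v=v^+-v^-$: both parts lie in $\mathcal{B}(X)$, and the union of the two zero-density exceptional sets is again zero-density, so convergence off that union gives the claim. Part~(b) is already stated for $v\ge0$. Thus assume $v\ge0$; the two hypotheses to verify are
\[
\textrm{(i)}\ \liminf_{n\to\infty}m(n)n^{1-\beta}(L^nv)(y)\ge A,
\qquad
\textrm{(ii)}\ \lim_{n\to\infty}m(n)n^{-\beta}{\SMALL\sum_{j=1}^n}(L^jv)(y)=\beta^{-1}A,
\]
and then Proposition~\ref{prop-zerodensity} yields precisely parts~(a) and~(b).

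For~(i) I would use the identity $1_YL^nv=\sum_{k=0}^nT_{n-k}(L^kv_k)$ on $Y$ (valid since $L^nv_k$ vanishes on $Y$ for $n<k$ while $1_YL^nv_k=T_{n-k}L^kv_k$ for $n\ge k$). Every summand is a nonnegative function on $Y$, because $L^kv_k\ge0$ and $T_{n-k}$ is a positive operator. Hence for fixed $N$, $m(n)n^{1-\beta}(L^nv)(y)\ge\sum_{k=0}^N m(n)n^{1-\beta}(T_{n-k}L^kv_k)(y)$. For each fixed $k$, slow variation of $m$ gives $m(n)/m(n-k)\to1$ and $n^{1-\beta}/(n-k)^{1-\beta}\to1$, so by Theorem~\ref{thm-conv} (if $\beta\in(\tfrac12,1]$) or Theorem~\ref{thm-liminf}(b) (if $\beta\in(0,\tfrac12]$), applied to the nonnegative observable $L^kv_k\in\mathcal{B}$, the $k$-th term has $\liminf$ equal to $d_\beta\int_YL^kv_k\,d\mu=d_\beta\int_Xv_k\,d\mu$. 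Therefore $\liminf_n m(n)n^{1-\beta}(L^nv)(y)\ge d_\beta\sum_{k=0}^N\int_Xv_k\,d\mu$, and letting $N\to\infty$ (monotone convergence, as $v_k\ge0$ and $\sum_kv_k=v$) gives~(i).

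For~(ii) I would pass to generating functions. Since $(L^jv)(y)\ge0$, it suffices by the discrete Karamata Tauberian theorem to show that $\sum_{j\ge0}s^j(L^jv)(y)$ is asymptotic to $\Gamma(1-\beta)^{-1}\ell(\tfrac1{1-s})^{-1}(1-s)^{-\beta}\int_Xv\,d\mu$ as $s\to1^-$ (and similarly for $\beta=1$ with $\tilde\ell$); this then delivers (ii) for every $y\in Y$, exactly as in the proof of Theorem~\ref{thm-pde} but with the nonnegative $w=L^kv_k$ in place of $1_Y$. The relevant identity is $\sum_{j\ge0}s^j(L^jv)(y)=\sum_{k\ge0}s^k\bigl(T(s)L^kv_k\bigr)(y)$, obtained by summing $1_YL^jv=\sum_{k\le j}T_{j-k}L^kv_k$ against $s^j$ and using $T(s)=\sum_iT_is^i$. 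By Proposition~\ref{prop-realAD} one has $T(s)=\Gamma(1-\beta)^{-1}\ell(\tfrac1{1-s})^{-1}(1-s)^{-\beta}\bigl(P+\tilde E(s)\bigr)$ with $\|\tilde E(s)\|\to0$, and $(Pw)(y)=\int_Yw\,d\mu$, so the main term of $\sum_ks^k(T(s)L^kv_k)(y)$ is $\Gamma(1-\beta)^{-1}\ell(\tfrac1{1-s})^{-1}(1-s)^{-\beta}\sum_ks^k\int_Xv_k\,d\mu\to\Gamma(1-\beta)^{-1}\ell(\tfrac1{1-s})^{-1}(1-s)^{-\beta}\int_Xv\,d\mu$, using that the masses $\int_Xv_k\,d\mu$ sum to $\int_Xv\,d\mu<\infty$. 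Combined with the lower bound on the full Cesàro sum obtained by discarding the nonnegative tail (and the finite-piece case, which is covered by Theorem~\ref{thm-B} when $\beta>\tfrac12$ and by the same termwise argument otherwise), this gives $\liminf$; the reverse inequality needs the tail $\sum_{k>N}s^k(T(s)L^kv_k)(y)$, bounded via $0\le T(s)L^kv_k\le|L^kv_k|_\infty\,T(s)1_Y$ and $T(s)1_Y\ll\ell(\tfrac1{1-s})^{-1}(1-s)^{-\beta}1_Y$, to be negligible uniformly in $s$ once $N$ is large.

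The main obstacle is exactly this last step: making the tail of the $k$-sum small uniformly in $s$ (equivalently, uniformly in $n$ after the Tauberian step) without a summability hypothesis on the norms $\|L^kv_k\|$. One has to exploit that the $L^1$-masses $\int_XL^kv_k\,d\mu=\int_Xv_k\,d\mu$ are summable while $\|L^kv_k\|$ need not be, and that the error estimate in Proposition~\ref{prop-realAD} is uniform over $\mathcal{B}$; the crude bound in terms of $|L^kv_k|_\infty$ must be arranged so that the surviving factor is the summable one. Everything else — the decomposition $1_YL^nv=\sum_kT_{n-k}L^kv_k$, positivity of the $T_n$, slow variation of $m$, the pointwise Karamata Tauberian argument, and the reduction via Proposition~\ref{prop-zerodensity} — is routine given Theorems~\ref{thm-conv}, \ref{thm-liminf} and~\ref{thm-pde}.
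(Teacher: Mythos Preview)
Your framework is right and matches the paper: reduce to $v\ge0$, then apply Proposition~\ref{prop-zerodensity} to $f_n=(L^nv)(y)$, verifying the two hypotheses. Your argument for hypothesis~(i), the $\liminf$ lower bound, is exactly the paper's: truncate to $v(k)=\sum_{j\le k}v_j$, use positivity to get $\liminf_n w_n(1_YL^nv)(y)\ge\sum_{j\le k}\liminf_n w_n(T_{n-j}L^jv_j)(y)=\sum_{j\le k}\int v_j$ via Theorem~\ref{thm-liminf}(b), and let $k\to\infty$.

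Where you diverge from the paper, and create an unnecessary obstacle for yourself, is hypothesis~(ii). You try to redo the Tauberian argument by decomposing the generating function over $k$ and then worry about controlling the tail $\sum_{k>N}s^k(T(s)L^kv_k)(y)$ uniformly in $s$ without summability of $\|L^kv_k\|$. This is a real difficulty in your route: the error operator $T(s)-\Gamma(1-\beta)^{-1}\ell(\tfrac{1}{1-s})^{-1}(1-s)^{-\beta}P$ is only $o$ of the main term in \emph{operator norm}, so the tail is bounded by $o(\cdot)\sum_k\|L^kv_k\|$, which may diverge.

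The paper simply does not go there. Hypothesis~(ii) is exactly the Ces\`aro statement $m(n)n^{-\beta}\sum_{j\le n}(L^jv)(y)\to\beta^{-1}d_\beta\int_Xv\,d\mu$, and this is already the content of Theorem~\ref{thm-pde} (pointwise dual ergodicity), which holds for \emph{every} $v\in L^1(X)$ with no regularity on the pieces $L^kv_k$ at all. Since $v\in\mathcal{B}(X)$ implies $v\in L^1(X)$ by definition, you can just cite Theorem~\ref{thm-pde} for~(ii) and the whole tail issue evaporates. The Hurewicz ergodic theorem inside Theorem~\ref{thm-pde} is doing the work of transferring the Tauberian estimate from the single observable $1_Y$ to an arbitrary $L^1$ observable, so you never need termwise control.
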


\begin{proof}
Let $w(n)=d_\beta^{-1}m(n)n^{1-\beta}$.
By the argument in the proof of 
Theorem~\ref{thm-liminf}, it suffices to prove the $\ge$ inequality in part (b).
Let $v\ge0$ and define $v(k)=\sum_{j=0}^k v_j$.
By Theorem~\ref{thm-liminf}(b),
$\liminf_{n\to\infty} w(n)1_YL^nv\ge \liminf_{n\to\infty} w(n)1_YL^nv(k)
=\sum_{j=0}^k \liminf_{n\to\infty} w(n)1_YL^nv_j 
=\sum_{j=0}^k \int v_j= \int v(k)$.   Since $k$ is arbitrary, 
$\liminf_{n\to\infty}w(n)1_YL^nv\ge \int v$ as required.~
\end{proof}

\subsection{Second order asymptotics on $X$}
Under the assumptions of Theorem~\ref{thm-second}, we can
investigate second order asymptotics in Theorem~\ref{thm-B}.
For example, we have the following:

\begin{thm} \label{thm-secondB}
Suppose that $\beta\in(\frac12,1)$ and that
$\mu(\varphi>n)= cn^{-\beta}+O(n^{-2\beta})$ where $c>0$.
Suppose further that $v\in \mathcal{B}(X)$ and that
(i) $\|L^kv_k\|=O(k^{-p})$, and (ii) $\int L^kv_k\,d\mu=O(k^{-q})$,
where $p>\max\{\frac32-\beta,\beta\}$ and $q>1$.   Then
\[
n^{1-\beta}L^nv=c^{-1}d_\beta {\SMALL\int}_X v\,d\mu + O(n^{-\gamma}\|v\|)
\enspace\text{uniformly on $Y$},
\]
where $\gamma=\min\{1-\beta,\beta-\frac12,q-1\}$ if $p>1$, and
$\gamma=\min\{p-\beta,p+\beta-\frac32,q-1\}$ if $p>\max\{\frac32-\beta,\beta\}$.
\end{thm}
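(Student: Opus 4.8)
The plan is to follow the structure of the proof of Theorem~\ref{thm-B}, but to feed in the quantitative expansion of Theorem~\ref{thm-second} in place of the bare statement $T_n=w_n^{-1}P+o(w_n^{-1})$. Throughout, write $\langle k\rangle=\max(1,k)$. Start from the identity $1_YL^nv=\sum_{j=0}^nT_{n-j}(L^jv_j)$ of Section~\ref{sec-B}, valid because each $L^jv_j$ is supported on $Y$ and $P(L^jv_j)=\bigl(\int_Xv_j\,d\mu\bigr)1$. The term $j=n$ equals $L^nv_n\in\mathcal{B}$ and contributes $n^{1-\beta}\|L^nv_n\|=O(n^{1-\beta-p})$; since $\beta>\tfrac12$ and $p>\tfrac32-\beta$ one has $p+\beta-1>\tfrac12>1-\beta\ge\gamma$, so this term is absorbed into $O(n^{-\gamma})$. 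For $0\le j\le n-1$ I would invoke the higher-order expansion of $T_m$ from the Remark following Theorem~\ref{thm-second}: fixing $q'$ with $(q'+1)(1-\beta)\ge\beta-\tfrac12$, write
$$T_m=\tau_mP+E_m,\qquad \tau_m=c^{-1}\sum_{k=0}^{q'}d_{\beta,k}m^{-(k+1)(1-\beta)}=c^{-1}d_\beta m^{-(1-\beta)}+\sigma_m,$$
where $\|E_m\|\ll m^{-1/2}$ and $\sigma_m\ll m^{-2(1-\beta)}$ for $m\ge1$. The reason to push the expansion this far is that the operator-valued remainder $E_m$ then decays like $m^{-1/2}$ rather than merely like $m^{-(1-\beta)}$; the intermediate corrections $d_{\beta,k}m^{-(k+1)(1-\beta)}$ ($k\ge1$) are scalar multiples of $P$, so they can be folded into the scalar part where they do no harm.

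Substituting, $n^{1-\beta}(1_YL^nv)-c^{-1}d_\beta\int_Xv\,d\mu$ decomposes (on $Y$) into: (i) the $j=n$ term already handled; (ii) $c^{-1}d_\beta\bigl(n^{1-\beta}\sum_{j=0}^{n-1}(n-j)^{-(1-\beta)}\int_Xv_j-\int_Xv\bigr)$; (iii) $n^{1-\beta}\sum_{j=0}^{n-1}\sigma_{n-j}\int_Xv_j$; and (iv) $n^{1-\beta}\sum_{j=0}^{n-1}E_{n-j}(L^jv_j)$. For (ii) I would write $\int_Xv=\sum_{j\ge0}\int_Xv_j$ and bound the difference by $\sum_{j<n}|c_{j,n}|\,|\int_Xv_j|+\sum_{j\ge n}|\int_Xv_j|$, with $c_{j,n}=(1-j/n)^{-(1-\beta)}-1$; the tail is $O(n^{-(q-1)})$ by hypothesis~(ii), and splitting the remaining sum at $j=n/2$, using $|c_{j,n}|\ll j/n$ together with $\int|v_j|\ll j^{-q}$ on $[0,n/2]$ and $|\int v_j|\ll n^{-q}$ together with $\sum_{n/2<j<n}(1-j/n)^{-(1-\beta)}\ll n$ on the rest, bounds (ii) by $O(n^{-\gamma})$. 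Term (iii) is treated the same way, splitting at $n/2$ and using $2(1-\beta)<1$; it is $O(n^{-(1-\beta)}+n^{\beta-q})=O(n^{-\gamma})$.

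The decisive estimate is (iv), and this is where the exponent $\beta-\tfrac12$ (for $p>1$) or $\beta+p-\tfrac32$ (for $\tfrac32-\beta<p<1$) arises. Here (iv) is $\ll n^{1-\beta}\sum_{m=1}^n m^{-1/2}\|L^{n-m}v_{n-m}\|\ll n^{1-\beta}\sum_{m=1}^n m^{-1/2}\langle n-m\rangle^{-p}$; splitting the sum at $m=n/2$ and using $\sum_{k\le n/2}k^{-1/2}\ll n^{1/2}$ and $\sum_{k\le n/2}\langle k\rangle^{-p}\ll 1+n^{1-p}$ gives $n^{1-\beta}\bigl(n^{-p}n^{1/2}+n^{-1/2}(1+n^{1-p})\bigr)=O(n^{3/2-\beta-p})+O(n^{1/2-\beta})$, which is $O(n^{-(\beta-1/2)})$ when $p>1$ and $O(n^{-(\beta+p-3/2)})$ when $\tfrac32-\beta<p<1$ — the hypothesis $p>\tfrac32-\beta$ being exactly what makes this quantity decay. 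Collecting (i)--(iv) yields $n^{1-\beta}L^nv=c^{-1}d_\beta\int_Xv\,d\mu+O(n^{-\gamma})$ in $\|\cdot\|$, with implied constant linear in the data of $v$, and since $\|\cdot\|\ge|\cdot|_\infty$ this is uniform on $Y$, with $\gamma$ as stated. I expect (iv) to be the main obstacle: the naive bound on $T_m-c^{-1}d_\beta m^{-(1-\beta)}P$ has operator norm only $O(m^{-(1-\beta+\min\{1-\beta,\beta-1/2\})})$, which is not summable enough against $\|L^jv_j\|=O(j^{-p})$, so one must peel off the entire scalar expansion from Theorem~\ref{thm-second} and relegate the residual genuinely-operator part, which then decays like $m^{-1/2}$, to this last sum.
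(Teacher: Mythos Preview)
Your proposal is correct and follows exactly the route the paper has in mind: its entire proof is the single sentence ``The estimates follow from Theorem~\ref{thm-second} and the proof of Theorem~\ref{thm-B},'' and you have carried out precisely that computation. Your observation that one must use the full asymptotic expansion from the Remark after Theorem~\ref{thm-second} (so that the genuinely operator-valued remainder $E_m$ decays like $m^{-1/2}$ rather than $m^{-2(1-\beta)}$) is correct and important---without it term~(iv) can fail for $\beta>\tfrac34$ with small $p$---and is a detail the paper's one-line proof leaves to the reader.
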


\begin{proof}
We assume without loss that $q\in(1,2)$.
In the notation of the proof of Theorem~\ref{thm-B},
\begin{align*}
cd_\beta^{-1}n^{1-\beta}L^nv-{\SMALL\int} v &
 \ll \sum_{j=0}^n |c_{j,n}||{\SMALL\int} L^jv_j|+n^{1-\beta}\sum_{j=0}^n \|S_{n-j}\|\|L^jv_j\|+\Bigl|\sum_{j>n}{\SMALL\int}L^jv_j\Bigr|.
\end{align*}
It is immediate that the third term is $O(n^{-(q-1)})$.
Since $c_{j,n}=(n/(n-j))^{1-\beta}-1=(1+(j/(n-j)))^{1-\beta}-1\ll j/(n-j)$,
the first term satisfies
\begin{align*}
 \sum_{j=0}^n |c_{j,n}||{\SMALL\int}L^j v_j| & \le
 \sum_{0\le j<n/2} |c_{j,n}||{\SMALL\int} L^jv_j|+
 \sum_{n/2\le j\le n} |c_{j,n}||{\SMALL\int} L^jv_j|
\\ & \ll \sum_{0\le j<n/2} (j/(n-j)) j^{-q}  + n^{-q}n^{1-\beta}\sum_{n/2\le j\le n} (n-j)^{-(1-\beta)}
\\ & \le  n^{-1}\sum_{1\le j<n/2} j^{1-q} + n^{1-\beta-q}\sum_{j=1}^n j^{-(1-\beta)}
 \ll n^{-(q-1)}.
\end{align*}

By Theorem~\ref{thm-second}, $\|S_n\|=O(n^{-(1-\beta+\gamma_1)})$ where
$\gamma_1=\min\{1-\beta,\beta-\frac12\}$.  Hence the second term satisfies
\begin{align*}
n^{1-\beta}\sum_{j=0}^n \|S_{n-j}\|\|L^jv_j\|\ll n^{1-\beta}\,\{(n^{-(1-\beta+\gamma_1)})\star (n^{-p})\}\ll\begin{cases} n^{-\gamma_1}, & p>1 \\ n^{-(\gamma_1+p-1)}, & p<1 \end{cases}
\end{align*}
as required.
\end{proof}

\section{Examples}
\label{sec-examples}

In this section, we apply our results to specific examples.
In Subsection~\ref{sec-H}, we describe a method for verifying
our functional-analytic hypotheses (H1), (H2), that suffices for our purposes.
In Subsection~\ref{sec-GM}, we consider situations where the first 
return map $F:Y\to Y$ is Gibbs-Markov.  This includes the nonuniformly
expanding maps studied by Thaler~\cite{Thaler95} and parabolic rational
maps of the complex plane~\cite{ADU93}.    In Subsection~\ref{sec-AFN}, we 
consider the full class of AFN maps~\cite{Zweimuller98}.
In Subsection~\ref{sec-PM}, we specialise to the case 
of Pomeau-Manneville intermittency maps~\eqref{eq-LSV}.

\subsection{Verification of hypotheses (H1) and (H2)}
\label{sec-H}

In our examples, (H1) can be verified in the process of 
verifying (H2), so we focus on the latter.
The standard approach (cf.~\cite[Lemma~6.7]{Gouezel04} and~\cite[Section~5]{Sarig02}) to (H2) proceeds via the following result.

\begin{prop} \label{prop-H2}
Suppose that $F:Y\to Y$ is ergodic.
Assume that 
(1)   $R(z):\mathcal{B}\to\mathcal{B}$
has essential spectral radius strictly less
than $1$ for every $z\in\bar\D$.
(2) For each $\theta\in(0,2\pi)$, there are no nontrivial $L^2$ solutions
to the equation $v\circ F=e^{i\theta\varphi}v$ $a.e.$
Then (H2) is satisfied.
\end{prop}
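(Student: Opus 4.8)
The plan is to exploit that $R(1)=R$ is the transfer operator of the $\mu$-preserving first return map $F$, and that each $R(z)$, $z\in\bar\D$, is obtained from $R$ by inserting the phase $z^\varphi$, which never increases $L^1(Y)$-norms. Two elementary pointwise estimates underlie everything: since $R$ is a positive operator and $\varphi\ge1$,
\[
|R(z)v|=|R(z^\varphi v)|\le R(|z|^\varphi|v|)\le |z|\,R|v|\quad\text{a.e.},\qquad z\in\bar\D,
\]
so $\|R(z)\|_{L^1}\le|z|$; in particular $R(1)$ is a positive $L^1$-contraction with $R(1)1=1$ and $\int_Y R(1)v\,d\mu=\int_Y v\,d\mu$. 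Hypothesis (1) makes each $R(z)$ quasi-compact on $\mathcal B$: the part of the spectrum of modulus exceeding the essential spectral radius (which is $<1$) consists of finitely many eigenvalues of finite algebraic multiplicity, and the associated generalized eigenfunctions all lie in $\mathcal B\subset L^\infty(Y)\subset L^2(Y)\cap L^1(Y)$. I would use this repeatedly to move between the norms on $\mathcal B$, $L^2$ and $L^1$.

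For (H2)(i) I would apply this at $z=1$. The constant $1\in\mathcal B$ is an eigenfunction with eigenvalue $1$, which therefore (exceeding the essential spectral radius) is isolated with finite algebraic multiplicity. The only non-routine point is simplicity. Semisimplicity follows from power-boundedness: a Jordan chain $Rv=v+w$, $Rw=w$ with $w\ne0$ would give $R^nv=v+nw$, contradicting $\|R^n\|_{L^1}\le1$. Hence the Riesz projection at $1$ has range $\ker(R-I)\cap\mathcal B\subset\{v\in L^1(Y):Rv=v\}$, and the latter consists only of constants, because $v\,d\mu$ is then an $F$-invariant measure absolutely continuous with respect to $\mu$ and $F$ is ergodic. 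So $1$ is a simple, isolated eigenvalue of $R(1)$.

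For (H2)(ii) I would fix $z\in\bar\D\setminus\{1\}$ and suppose, for contradiction, that $1$ lies in the spectrum of $R(z)$. By quasi-compactness it is then an eigenvalue, $R(z)v=v$ for some $v\in\mathcal B\setminus\{0\}$. If $|z|<1$ this is impossible since $\|v\|_{L^1}=\|R(z)v\|_{L^1}\le|z|\,\|v\|_{L^1}$. Otherwise $z=e^{i\theta}$ with $\theta\in(0,2\pi)$ and the equation reads $R(z^\varphi v)=v$; then $|v|=|R(z^\varphi v)|\le R|v|$ with $\int_Y|v|\,d\mu=\int_Y R|v|\,d\mu$, so $R|v|=|v|$, whence $|v|$ is a nonzero constant by ergodicity and may be normalized to $|v|\equiv1$. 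Pairing $R(z^\varphi v)=v$ with $\bar v\in L^\infty(Y)$ and using $\mu(Y)=1$,
\[
1=\int_Y|v|^2\,d\mu=\int_Y R(z^\varphi v)\,\bar v\,d\mu=\int_Y z^\varphi\, v\,\overline{v\circ F}\,d\mu .
\]
Since the integrand has modulus $|v|\,|v\circ F|=1$ a.e., an integral equal to $\mu(Y)$ forces it to be $\equiv1$, i.e.\ $v\circ F=z^\varphi v=e^{i\theta\varphi}v$ a.e.; as $v\in L^2(Y)$ is nontrivial, this contradicts hypothesis (2).

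The bookkeeping with quasi-compactness and the $L^1$-contraction estimates is routine. The step I expect to require the most care, and which is the real content, is the rigidity argument in the last paragraph: extracting the functional equation $v\circ F=e^{i\theta\varphi}v$ from the equality case of $|Rg|\le R|g|$ for the positive operator $R$, and checking that the eigenfunction produced by quasi-compactness on $\mathcal B$ really is available as an $L^2$ (and $L^1$) object, so that ergodicity of $F$ and hypothesis (2) genuinely apply to it.
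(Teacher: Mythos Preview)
Your proof is correct and follows the same overall strategy as the paper: quasi-compactness reduces (H2) to ruling out eigenvalue $1$ for $R(z)$, a contraction estimate forces $|z|=1$, and a rigidity argument converts $R(e^{i\theta\varphi}v)=v$ into the cocycle equation $v\circ F=e^{i\theta\varphi}v$. The only real difference is in the execution of that last step. The paper works directly in $L^2$: writing $U=R(e^{i\theta})$, its Hilbert-space adjoint is $U^*v=e^{-i\theta\varphi}v\circ F$, which is an isometry, and one checks $|U^*v-v|_2^2=0$ in two lines. You instead use an $L^1$ equality argument to show first that $|v|$ is constant, then pair with $\bar v$ and invoke the equality case of $|\int g|\le\int|g|$ to extract the functional equation. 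Both are standard; the paper's $L^2$-adjoint trick is marginally slicker (it avoids the intermediate step $|v|=\mathrm{const}$), while your version is perhaps more transparent about where positivity of $R$ and ergodicity enter. Your treatment of (H2)(i) is also more explicit than the paper's (you spell out the Jordan-block obstruction to semisimplicity, whereas the paper just says ``simple by ergodicity''), but the content is the same.
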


\begin{proof}
By (1), it suffices to consider generalized eigenfunctions $v\in L^2$
for the operator $R(z)$.
Suppose that $R(z)v=v$ where $v\in L^2$ is nonzero.  
Write $z=\rho e^{i\theta}$, $\rho\in[0,1]$, $\theta\in[0,2\pi)$.  Then
$|v|_2=|R(z)v|_2=|R(\rho^\varphi e^{i\theta\varphi}v)|_2=|\rho^\varphi v|_2\le 
|\rho^\varphi|_\infty|v|_2
\le \rho|v|_2$,
so $\rho=1$ and $R(e^{i\theta})v=v$.
The $L^2$ adjoint of $U=R(e^{i\theta})$ is the operator 
$U^*v= e^{-i\theta\varphi}v\circ F$ and an elementary calculation shows that
$|U^*v-v|_2^2=|Uv|_2^2-|v|_2^2=0$.    
Hence $v\circ F=e^{i\theta\varphi}v$.
By (2), $\theta=0$.  Hence we have established (H2)(ii).

When $z=1$, the eigenvalue $1$ is isolated in the spectrum
by (1) and the eigenvalue is simple by ergodicity of $F$,
so (H2)(i) is valid.
\end{proof}

\begin{defn} Suppose that $Y$ is a topological space, that
$(Y,\mu)$ is a probability space,
and that $F:Y\to Y$ is a measure preserving transformation.
Let $\varphi:Y\to\R$ be a measurable map.   We say that $(F,\varphi)$ satisfies
property (*) if for every $\theta\in[0,2\pi]$ and every measurable
solution $v:Y\to S^1$ to the equation $v\circ F=e^{i\theta\varphi}v$ $a.e.$,
there exists an open set $U\subset Y$ such that $v$ is constant almost 
everywhere on $U$.
\end{defn}

\begin{lemma} \label{lem-H2}
Suppose that $f$ is topologically mixing with first return
map $F=f^\varphi:Y\to Y$.  Assume that $(F,\varphi)$ satisfies property (*).
Then there are no nontrivial measurable solutions $v:Y\to\C$ to the equation
$v\circ F=e^{i\theta\varphi}v$ for all $\theta\in(0,2\pi)$.
\end{lemma}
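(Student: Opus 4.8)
The plan is to turn the twisted equation $v\circ F=e^{i\theta\varphi}v$ on $Y$ into a genuine eigenfunction equation for $f$ on $X$, use property~(*) to force the eigenfunction to be constant on a nonempty open set, and then use topological mixing of $f$ to conclude that the eigenvalue $e^{i\theta}$ is trivial. Fix $\theta\in(0,2\pi)$ and suppose, for contradiction, that $v:Y\to\C$ is measurable, not a.e.\ zero, with $v\circ F=e^{i\theta\varphi}v$ a.e.\ on $Y$. First I would normalise: $|v|\circ F=|v|$, so $|v|$ is $F$-invariant and, by ergodicity (which holds in our framework), a.e.\ equal to a positive constant; rescaling, we may assume $v:Y\to S^1$. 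Property~(*) now yields a nonempty open set $U\subseteq Y$ and a constant $c\in S^1$ with $v=c$ a.e.\ on $U$.

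Next I would lift $v$ to $X$. For a.e.\ $x\in X$ the forward orbit meets $Y$ (by conservativity and ergodicity, $\bigcup_{n\ge0}f^{-n}Y$ has full measure), so $r(x):=\min\{i\ge0:f^ix\in Y\}$ is a.e.\ finite; set $V(x)=e^{-ir(x)\theta}\,v(f^{r(x)}x)$. A short case analysis — treating $x\notin Y$ with $r(x)\ge1$, and $x\in Y$, where one uses $r(fx)=\varphi(x)-1$, the definition $F=f^\varphi$, and the relation $v\circ F=e^{i\theta\varphi}v$ — gives $V\circ f=e^{i\theta}V$ a.e.\ on $X$. Since $r\equiv0$ on $Y$ we have $V|_Y=v$, so $V=c$ a.e.\ on $U$. (Alternatively one can run this through the tower $f_\Delta:\Delta\to\Delta$ of Section~\ref{sec-tower}: the function $(y,j)\mapsto e^{ij\theta}v(y)$ satisfies the eigenequation for $f_\Delta$ and is constant on the fibres of $\pi$, hence descends to $X$.)

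Then comes the mixing argument, which I expect to be the crux. For $k\in\Z$ let $W_k=\{x\in X:V(x)=e^{-ik\theta}c\}$, defined up to null sets. From $V\circ f=e^{i\theta}V$ we get $f^{-1}W_k=W_{k+1}$ mod null, and since $W_0\supseteq U$ mod null and $\mu$ is $f$-invariant, $f^{-k}U\subseteq W_k$ mod null for all $k\ge0$. Now $e^{i\theta}\ne1$ since $\theta\in(0,2\pi)$. Applying topological mixing of $f$ to the nonempty open set $U$, there is $N$ with $f^{-n}U\cap U\ne\emptyset$ for all $n\ge N$; this set is open and nonempty, hence has positive $\mu$-measure, while $f^{-n}U\cap U\subseteq W_n\cap W_0$ mod null. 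But $W_n$ and $W_0$ are disjoint unless $e^{-in\theta}c=c$, so $e^{in\theta}=1$ for every $n\ge N$, whence $e^{i\theta}=1$, a contradiction. Hence no nontrivial solution exists for any $\theta\in(0,2\pi)$.

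The lift and the normalisation are routine; the delicate point is the last step, where a purely topological hypothesis (that $f^{-n}U$ eventually meets $U$) is parlayed into a measure-theoretic conclusion. This rests on two compatibility facts that should be recorded explicitly: preimages of open sets under $f$ are open, and nonempty open sets carry positive $\mu$-measure. Both hold for the maps to which the lemma is applied (AFN maps and parabolic rational maps), so I would state them as standing properties of the systems under consideration rather than re-derive them here.
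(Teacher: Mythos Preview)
Your argument is correct and reaches the same contradiction $e^{in\theta}=1$ for all $n\ge N$, but via a different route from the paper. The paper never lifts $v$ to $X$: instead, having found the open $U\subset Y$ on which $v$ is a.e.\ constant, it simply picks (for each $n\ge N$) a point $y\in U\cap f^{-n}U$ at which all the a.e.\ relations hold, observes that $f^ny\in Y$ so $n$ is a sum of successive return times $n=\sum_{j=0}^{p-1}\varphi(F^jy)$, and telescopes $e^{i\theta n}=\prod_j v(F^{j+1}y)/v(F^jy)=v(f^ny)/v(y)=1$. Your construction of the global eigenfunction $V$ on $X$ and the level-set partition $W_k$ achieves the same telescoping implicitly and is conceptually clean, but is more machinery than is needed; the paper's orbit argument is two lines once $U$ is in hand. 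One point where your write-up is actually more careful than the paper's: you make explicit the normalisation $|v|=1$ (needed because property~(*) is stated only for $S^1$-valued $v$) and the passage from ``$f^{-n}U\cap U$ is nonempty and open'' to ``it has positive measure'', which the paper leaves tacit. Both proofs ultimately rest on that same topology-measure compatibility, so your closing remark about recording it as a standing hypothesis is well taken.
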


\begin{proof}
If $v$ is a nontrivial solution, then by (*) there is an open set $U\subset Y$
on which $v$ is almost everywhere constant.
Now we follow the second half of the proof of~\cite[Lemma~6.7]{Gouezel04}.
Since $f$ is topologically mixing,
there exists $N\ge1$ such that $f^nU \cap U\neq\emptyset$
for all $n\ge N$.  In particular, for each $n\ge N$ we can choose 
$y\in U$ such that $f^ny\in U$ and $v(y)=v(f^ny)\neq0$.

Let $0=k_0<k_1<\dots<k_p=n$ be the successive return times of $y$ to $Y$.
Then $T^ny=F^py$ and $n=\varphi_p(y)=\sum_{j=0}^{p-1}\varphi(F^jy)$.
Hence
\[
e^{i\theta n}=\prod_{j=0}^{p-1}e^{i\theta \varphi(F^jy)}=
\prod_{j=0}^{p-1}\frac{v(F^{j+1}y)}{v(f^jy)}=\frac{v(f^ny)}{v(y)}=1.
\]
Taking $n=N$ and $n=N+1$, we deduce that $e^{i\theta}=1$ which is
a contradiction.
\end{proof}

\subsection{Maps with Gibbs-Markov first return maps}
\label{sec-GM}

A large class of examples covered by our methods are those with first return 
maps that are Gibbs-Markov.    This includes parabolic rational
maps of the complex plane (Aaronson {\em et al}~\cite{ADU93}) and Thaler's class of interval
maps with indifferent fixed points~\cite{Thaler95} (in particular the 
family~\eqref{eq-LSV}).    

We recall the key definitions~\cite{Aaronson}.
Let $(X,\mu)$ be a Lebesgue space with 
countable measurable partition $\alpha_X$.
Let $f:X\to X$ be an ergodic, conservative, measure-preserving, Markov map transforming
each partition element bijectively onto a union of partition elements.
Recall that $f$ is {\em topologically mixing} if for
all $a,b\in\alpha_X$ there exists $N\ge1$ such that $b\subset f^na$ for all
$n\ge N$.

Let $Y$ be a union of partition elements with $\mu(Y)\in(0,\infty)$.
Define the first return time $\varphi:Y\to\R$ and first return map
$F=f^\varphi:Y\to Y$.  Let $\alpha$ be the partition of $Y$ consisting
of nonempty cylinders of the form $a\cap(\bigcap_{j=1}^{n-1} T^{-j}\xi_j)\cap T^{-n}\alpha$ where $a,\xi_j\in\alpha_X$, and $a\subset Y$, $\xi_j\subset X\setminus Y$.
Fix $\tau\in(0,1)$ and define $d_\tau(x,y)=\tau^{s(x,y)}$
where the {\em separation time} $s(x,y)$ is the greatest integer $n\ge0$
such that $F^nx$ and $F^ny$ lie in the same partition element in $\alpha$.
It is assumed that the partition $\alpha$ separates orbits of $F$, so
$s(x,y)$ is finite for all $x\neq y$.   Then $d_\tau$ is a metric.
Let $\Lip(Y)$ be the Banach space of $d_\tau$-Lipschitz
functions $v:Y\to\R$ with norm $\|v\|=|v|_\infty+\Lip(v)$. 

Define the potential function $p=\log\frac{d\mu}{d\mu\circ F}:Y\to\R$.
We require that $p$ is uniformly piecewise Lipschitz: that is,
$p|_a$ is $d_\tau$-Lipschitz for each $a\in\alpha$ and
the Lipschitz constants can be chosen independent of $a$.   
We also require the big images condition $\inf_a \mu(Fa)>0$.
A {\em Gibbs-Markov map} is a Markov map with uniformly piecewise Lipschitz potential
and satisfying the big images property.

\begin{prop} \label{prop-GM}   Suppose that $(X,\mu)$ is a Lebesgue space,
that $f:X\to X$ is an ergodic, conservative, measure preserving, topologically
mixing, Markov map, and that $Y\subset X$ is a union of partition elements
with $\mu(Y)\in(0,\infty)$.   Suppose further that the first return map
$F=f^\varphi:Y\to Y$ is Gibbs-Markov.  Then
the Banach space $\mathcal{B}=\Lip(Y)$ satisfies hypotheses (H1) and (H2).
\end{prop}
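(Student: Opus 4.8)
The plan is to check (H1) and (H2) separately for $\mathcal{B}=\Lip(Y)$, using throughout the standard transfer‑operator estimates for Gibbs--Markov maps (cf.~\cite{AaronsonDenker01, ADU93, Gouezel04, Sarig02}). Write $g=e^p=\frac{d\mu}{d\mu\circ F}$, so that $(Rv)(x)=\sum_{Fy=x}g(y)v(y)$, and for $a\in\alpha$ let $\psi_a=(F|_a)^{-1}:Fa\to a$ be the inverse branch, so $R(1_av)=1_{Fa}(g\circ\psi_a)(v\circ\psi_a)$. The uniformly piecewise Lipschitz potential together with the big images condition give the two facts I would use repeatedly: bounded distortion of the inverse branches of all orders, yielding $|g\circ\psi_a|_\infty\le C\mu(a)$ and $\Lip(g\circ\psi_a)\le C\mu(a)$ with $C$ independent of $a$; and the contraction $\Lip(v\circ\psi_a)\le\tau\Lip(v)$ coming from $s(\psi_ax,\psi_ay)=s(x,y)+1$. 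Note also that $F$ is ergodic, being the first return map of the ergodic conservative map $f$.

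\textbf{Verification of (H1).} Since the return time $\varphi$ is constant on each element of $\alpha$, the set $\{\varphi=n\}$ is a union of partition elements and $R_nv=\sum_{a\in\alpha:\,\varphi|_a=n}R(1_av)$. The facts above give the one‑branch bound $\|R(1_av)\|\le C\mu(a)\|v\|$ uniformly in $a$, and summing over $a$ yields $\|R_n\|\le C\sum_{\varphi|_a=n}\mu(a)=C\mu(\varphi=n)$, which is (H1). As anticipated in Subsection~\ref{sec-H}, these are exactly the estimates that also drive the Lasota--Yorke inequality below, so (H1) comes along for free.

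\textbf{Verification of (H2).} I would verify the hypotheses of Proposition~\ref{prop-H2}. Its condition (1), that $R(z)$ has essential spectral radius $<1$ for every $z\in\bar\D$, follows as usual for Gibbs--Markov maps from a Lasota--Yorke inequality for the family $R(z)$ uniform in $z$ — namely $\|R(z)^nv\|\le C\sigma^n\|v\|+C_n|v|_{L^1}$ for a fixed $\sigma<1$ — together with the compactness of the embedding $\Lip(Y)\hookrightarrow L^1(Y)$ and Hennion's theorem. The uniformity in $z$ uses only that $z^\varphi$ is $\alpha$‑measurable with $|z^\varphi|_\infty\le1$, so that $R(z)v=R(z^\varphi v)$ has the same branch structure as $R$ with constants that do not degenerate as $z$ ranges over $\bar\D$. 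For condition (2) — no nontrivial $L^2$ solution of $v\circ F=e^{i\theta\varphi}v$ when $\theta\in(0,2\pi)$ — I would invoke Lemma~\ref{lem-H2}, which, given topological mixing of $f$, reduces matters to property~(*).

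\textbf{Property (*), the heart of the matter.} Let $v:Y\to S^1$ be measurable with $v\circ F=e^{i\theta\varphi}v$; iterating gives $v=e^{-i\theta\varphi_k}(v\circ F^k)$ with $\varphi_k=\sum_{j<k}\varphi\circ F^j$, and since $\varphi$ is $\alpha$‑measurable, $\varphi_k$ is constant on each $k$‑cylinder. Hence on the $k$‑cylinder $C$ through a point $x$ one has $v|_C=c_C(v\circ F^{k-1})|_C$, where $c_C\in S^1$ is constant and $F^{k-1}$ maps $C$ bijectively onto the element $a\in\alpha$ given by the last symbol of $C$. Take $x$ to be a Lebesgue density point of $v$ (cylinders form a differentiation basis, by bounded distortion) and restrict to those $k$ for which $F^{k-1}x$ lies in the same element $a$ of $\alpha$ as $x$ — infinitely many, by conservativity of $F$. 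Averaging over $C$ and using bounded distortion of $F^{k-1}$ then gives $\mu(a)^{-1}\int_a|c_Cv-v(x)|\,d\mu\to0$ along this subsequence; as an $L^1$‑limit of constants is constant, $v$ agrees a.e.\ with a constant on $a$, which is open in the cylinder topology. This is property~(*), so Lemma~\ref{lem-H2} forces $\theta=0$; thus condition (2) holds and Proposition~\ref{prop-H2} yields (H2). The main obstacle is precisely this verification of (*) — the bounded‑distortion/density argument — together with the bookkeeping needed to keep the Lasota--Yorke constants uniform in $z\in\bar\D$; the remaining steps are routine consequences of the Gibbs--Markov estimates and of Proposition~\ref{prop-H2} and Lemma~\ref{lem-H2}.
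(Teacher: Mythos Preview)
Your proposal is correct and follows the same route as the paper: verify (H1) via the one-branch estimate $\|R(1_av)\|\ll\mu(a)\|v\|$, and verify (H2) through Proposition~\ref{prop-H2} and Lemma~\ref{lem-H2}, reducing to property~(*). The paper outsources property~(*) to \cite[Theorem~3.1]{AaronsonDenker01} and the Lasota--Yorke inequality and (H1) to \cite{Gouezel04}, whereas you spell these out --- in particular your bounded-distortion/density argument for~(*) is essentially the proof of the cited Aaronson--Denker result.
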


\begin{proof}
Since the details can be found in~\cite{Gouezel04,Sarig02}, we only sketch the
argument.
By equation~(8) in the proof of~\cite[Lemma~6.7]{Gouezel04}, 
there is a constant $C>0$ such that
$\|R(z)^n v\|\le C(|v|_\infty + \tau^n\|v\|)$ for all $v\in\mathcal{B}$,
$z\in\bar\D$,
$n\ge1$.     Since the unit ball in $\mathcal{B}$ is compact in $L^\infty$,
it follows from~\cite{Hennion93} that the essential spectral radius of
$R(z)$ is at most $\tau$ establishing property (1) of 
Proposition~\ref{prop-H2}.
Property (*) follows from~\cite[Theorem~3.1]{AaronsonDenker01}: 
measurable solutions
$v$ are constant almost everywhere on each partition element of $\alpha$
(even $F\alpha$).   Hence property (2) of Proposition~\ref{prop-H2}
follows from Lemma~\ref{lem-H2}.    This completes the verification of (H2).
(H1) is established during the proof of~\cite[Lemma~6.7]{Gouezel04},
see~\cite[Lemma~6.4]{Gouezel04}.   
\end{proof}

\begin{cor} \label{cor-GM}
In the setting of Proposition~\ref{prop-GM}, if in addition $\mu(X)=\infty$
and $\mu(y\in Y:\varphi(y)>n)$ is regularly varying with index $\beta\in(0,1]$,
then our main results
(including Theorems~\ref{thm-conv},~\ref{thm-bound},~\ref{thm-liminf})
apply.~\qed
\end{cor}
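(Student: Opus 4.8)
The plan is to observe that the corollary is essentially a bookkeeping exercise: one checks that all the standing hypotheses of the general framework of Section~\ref{sec-ORT} are in force, after which Theorems~\ref{thm-conv},~\ref{thm-bound},~\ref{thm-liminf}, and the downstream results of Sections~\ref{sec-pde} and~\ref{sec-L}, apply without change. First I would verify the structural assumptions of Section~\ref{sec-ORT}. By hypothesis $(X,\mu)$ is an infinite measure space and $f$ is an ergodic conservative measure preserving map; since $\mu(Y)\in(0,\infty)$ we may rescale $\mu$ so that $\mu(Y)=1$, and the first return time $\varphi$ and first return map $F=f^\varphi$ are defined as in Section~\ref{sec-ORT}. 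The tail condition $\mu(\varphi>n)=\ell(n)n^{-\beta}$ with $\ell$ slowly varying and $\beta\in(0,1]$ is assumed directly, and it forces $\int_Y\varphi\,d\mu=\sum_{n\ge0}\mu(\varphi>n)=\sum_n\ell(n)n^{-\beta}=\infty$ (by Proposition~\ref{prop-Karamata}(a) when $\beta<1$, and by the standing divergence $\sum\ell(n)n^{-1}=\infty$ when $\beta=1$), as required there.

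Second I would take $\mathcal{B}=\Lip(Y)$. This space is contained in $L^\infty(Y)$, contains the constant functions, and satisfies $|v|_\infty\le\|v\|=|v|_\infty+\Lip(v)$, so it meets the abstract requirements on the function space. Proposition~\ref{prop-GM} then supplies exactly hypotheses (H1) and (H2) for this $\mathcal{B}$ under the present assumptions (Gibbs-Markov first return map, topological mixing, $\mu(Y)\in(0,\infty)$), and this is the only substantive input --- it has already been proved. With the general framework together with (H1) and (H2) established, Theorems~\ref{thm-conv},~\ref{thm-bound} and~\ref{thm-liminf} apply verbatim to the renewal operators $T_nv=1_YL^n(1_Yv)$. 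The tower construction of Section~\ref{sec-tower} requires no further hypotheses, so the passage from $T_n$ to $L^n$ on $X$ carried out in Section~\ref{sec-L} (Corollary~\ref{cor-A}, Theorem~\ref{thm-B}, Proposition~\ref{prop-B}) goes through, as does the pointwise dual ergodicity of Section~\ref{sec-pde}, which uses only the framework with H2(ii) dropped; ergodicity of $F$, needed for simplicity of the eigenvalue $1$ and hence in Section~\ref{sec-pde}, is inherited from ergodicity of the Markov map $f$ since the first return map of an ergodic conservative transformation is ergodic.

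Since nothing in the argument is more than routine verification once Proposition~\ref{prop-GM} is granted, I do not anticipate any genuine obstacle. The only points deserving a word of justification are the divergence $\int_Y\varphi\,d\mu=\infty$ (handled by the regular variation of the tail with $\beta\le1$, as above) and the ergodicity of $F$; both are standard, so the corollary follows at once.
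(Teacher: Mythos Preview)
Your proposal is correct and matches the paper's approach: the paper treats this corollary as immediate (the \qed\ follows the statement with no proof given), since Proposition~\ref{prop-GM} supplies (H1) and (H2) and the remaining hypotheses of Section~\ref{sec-ORT} are assumed directly. Your explicit verification of the structural assumptions, including the divergence of $\int_Y\varphi\,d\mu$ and the ergodicity of $F$, is a welcome elaboration of what the paper leaves implicit.
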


\begin{ex}[Parabolic rational maps of the complex plane]
Let $f:\bar\C\to\bar \C$ be a rational map of the Riemann sphere with spherical metric $d$.   
A period $k$ point $z\in\C$ is {\em rationally indifferent} if $(f^k)'(z)$ is a root of unity.
The map $f$ is {\em parabolic}
if $J$ contains no critical points and contains at least one rationally indifferent periodic point~\cite{DenkerUrbanski91}

Aaronson {\em et al.}~\cite[Section~9]{ADU93} establish a number of properties of parabolic rational maps.  Such maps are topologically mixing, conservative and exact with
respect to Lebesgue measure and possess a $\sigma$-finite invariant measure
$\mu$ equivalent to Lebesgue.  Moreover, there is a Gibbs-Markov first return map $F=f^\varphi:Y\to Y$ where $\mu(Y)\in(0,\infty)$.   Criteria are given for $\mu(X)$ to be
finite or infinite, and in the infinite case it is shown that
$\mu(\varphi>n)\sim Cn^{-\beta}$ where $C>0$ and $\beta\in(0,1]$.
For any $\eta>0$, it is possible to choose $\tau\in(0,1)$ so that 
$C^\eta(Y)\subset \Lip(Y)$.
By Corollary~\ref{cor-GM}, our main results apply to H\"older observables supported on $Y$.
\end{ex}

\begin{ex}[Thaler maps] \label{ex-Thaler}
Thaler~\cite{Thaler95} considers a class of topologically mixing
one-dimensional maps $f:X\to X$, $X=[0,1]$ for which there is a countable measurable partition $\{B(k):k\in I\}$ consisting of intervals,
and a nonempty finite set $J\subset I$ such that each $B(j)$, $j\in J$,
contains an indifferent fixed point $x_j$ with $f'(x_j)=1$.  It is required that
\begin{itemize}
\item[(1)] $f|_{B(k)}$ is twice differentiable and $\overline{fB(k)}=[0,1]$
for all $k$.
\item[(2)] $|f'|\ge \rho(\epsilon)>1$ on $\bigcup_{k\in I}B(k)\setminus
\bigcup_{j\in J}(x_j-\epsilon,x_j+\epsilon)$ for each $\epsilon>0$.
\item[(3)] For each $j\in J$ there exists $\eta>0$ such that $f'$ is
decreasing on $(x_j-\eta,x_j)\cap B(j)$ and increasing on
$(x_j,x_j+\eta)\cap B(j)$.
\item[(4)] $f''/(f')^2$ is bounded on $\bigcup_{k\in I} B(k)$.
\end{itemize}
Such a map $f$ is conservative and exact with respect to Lebesgue measure,
and admits an infinite $\sigma$-finite invariant measure $\mu$ equivalent
to Lebesgue measure $m$.
The density $h=d\mu/dm$ is H\"older, bounded below, and bounded above on compact subsets of $X'$.
As a special case of the construction in Zweim\"uller~\cite{Zweimuller00}, $f$ has a Gibbs-Markov first return map $F=f^\varphi:Y\to Y$, where $\mu(Y)\in(0,\infty)$.
Furthermore, for every compact set $C\subset X'$
(the complement of the indifferent fixed points), the first return set $Y$ 
can be chosen to contain $C$.
As shown in~\cite{Thaler95}, a sufficient condition for regularly varying return tail probabilities
is that $f$ has a ``good'' asymptotic
expansion near each indifferent fixed point.   For example, it suffices
that $f(x)=x+a_j|x-x_j|^{p_j+1}+o(|x-x_j|^{p_j+1})$ as $x\to x_j$ for each $j\in J$,
where $a_j\neq0$, $p_j\ge1$ and $p=\max_jp_j>1$.   In this case $\mu(\varphi>n)\sim Cn^{-1/p}$ where $C>0$.

To summarise, suppose that $f$ is a Thaler map and
$\mu(\varphi>n)=\ell(n)n^{-\beta}$ is regularly varying with 
$\beta\in(0,1]$.   Set $w(n)=d_\beta^{-1}m(n)n^{1-\beta}$.
By Corollary~\ref{cor-GM}, our main results apply to H\"older continuous
observables $v:X\to\R$ supported on a compact subset of $X'$.  In particular,
if $\beta\in(\frac12,1]$, we 
obtain uniform convergence on compact subsets of $X'$.

\begin{thm} \label{thm-thaler}
Suppose that $f:X\to X$ is a Thaler map with regularly varying tails, 
$\beta\in(\frac12,1]$.   
Then $\lim_{n\to\infty}w(n)L^nv=\int_Xv\,d\mu$ uniformly on compact
subsets of $X'$ for all $v$ of the form $v=u/h$ where $u$ is 
Riemann integrable on $X$.
\end{thm}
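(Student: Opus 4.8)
The plan is to deduce Theorem~\ref{thm-thaler} from the general convergence result for observables not supported on $Y$, Theorem~\ref{thm-B}, after two reductions: a Riemann–integrability approximation that replaces $u$ by a step function, and a verification that the resulting observables lie in the class $\mathcal{B}(X)$ with summable tail. To set up, fix a compact set $K\subset X'$. As recalled in Example~\ref{ex-Thaler}, the Zweim\"uller construction produces a first return set $Y\supseteq K$ whose first return map $F=f^\varphi:Y\to Y$ is Gibbs–Markov, so by Proposition~\ref{prop-GM} hypotheses (H1),(H2) hold for $\mathcal{B}=\Lip(Y)$; I choose the metric parameter $\tau$ small enough that H\"older functions on $X$ (and, more generally, pushforwards of piecewise-H\"older functions under inverse branches of iterates of $f$ landing in $Y$) give elements of $\mathcal{B}$. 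Since $\mu(\varphi>n)$ is regularly varying with $\beta\in(\tfrac12,1]$, Theorem~\ref{thm-B} will be applicable once its hypotheses are checked, and it then suffices to establish $w(n)L^nv\to\int_Xv\,d\mu$ uniformly on $Y$: pointwise convergence on $X$ follows exactly as in the proof of Theorem~\ref{thm-B}, lifting to the tower and applying Propositions~\ref{prop-A} and~\ref{prop-pi}, and uniformity on the arbitrary compact $K$ follows since $Y\supseteq K$ (the normalisations associated to different admissible choices of $Y$ are asymptotically equivalent, since they are all pinned by the nonzero limit $\int_Xv\,d\mu$).

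For the Riemann reduction: by linearity of $L^n$ and of $v\mapsto\int_Xv\,d\mu$, and since $\xi^{\pm}=\max(\pm\xi,0)$ are again $\mu$-integrable and H\"older, we may assume $\xi\ge0$. Using that $u$ is bounded and $\mu$-a.e.\ continuous and that $\xi\in L^1(\mu)$ provides a dominating function, dominated convergence lets us pick step functions $u^-\le u\le u^+$, constant on the cells of a finite refinement of the Markov partition, with $\int_X\xi(u^+-u^-)\,d\mu<\epsilon$. Then $\xi u^-\le v\le\xi u^+$, and since $L^n$ is a positive operator we get $w(n)L^n(\xi u^-)\le w(n)L^nv\le w(n)L^n(\xi u^+)$ pointwise, with $\int_X\xi u^{\pm}\,d\mu$ within $\epsilon$ of $\int_Xv\,d\mu$. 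Hence it is enough to prove the theorem for observables of the form $\eta\,1_C$ with $\eta\ge0$ H\"older and $\mu$-integrable and $C$ a cell of the refined partition (finitely many such combine to $\xi u^{\pm}$); letting $\epsilon\to0$ squeezes $w(n)L^nv$ to $\int_Xv\,d\mu$, uniformly on $Y$.

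The core step is to verify the hypotheses of Theorem~\ref{thm-B} for $v=\eta\,1_C$ as above, i.e.\ $v\in\mathcal{B}(X)$ and $\sum_k\|L^kv_k\|<\infty$, where $v_k=1_{X_k}v$ and $X_k=f^{-k}Y\setminus\bigcup_{j<k}f^{-j}Y$. The point is that for large $k$ the sets $X_k$ are single cylinders nested around the indifferent fixed points, and $f^k$ carries each piece of $X_k$ diffeomorphically onto a Markov image in $Y$, the relevant inverse branch being an initial segment of a Gibbs–Markov inverse branch of $F$; bounded distortion for $F$ then gives $L^kv_k\in\Lip(Y)$ together with a bound of the form $\|L^kv_k\|\ll \operatorname{osc}_{X_k}(v)\,\mu(X_k) + \sup_{X_k}|v|\cdot\mu(X_k)$ (plus genuinely Lipschitz contributions that contract geometrically in $k$). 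The oscillation terms are summable because the diameters of the cylinders composing $X_k$ decay polynomially at a rate controlled by the asymptotic expansion of $f$ near each $x_j$ (as in~\cite{Zweimuller00}), while the main term is summable since $\sum_k\sup_{X_k}|v|\,\mu(X_k)\ll\int_X|v|\,d\mu+(\text{that convergent oscillation series})<\infty$, using $\mu(X_k)\asymp\mu(\varphi>k)$ and, crucially, $v\in L^1(\mu)$. With these hypotheses in hand, Theorem~\ref{thm-B} yields $w(n)L^nv\to\int_Xv\,d\mu$ uniformly on $Y$, completing the argument. The step I expect to be the main obstacle is precisely this last estimate: extracting from Gibbs–Markov distortion and the geometry of the return-time partition a bound on the $\mathcal{B}$-norm of a pushforward concentrated on $X_k$ that is summable exactly under the hypothesis that $\xi$ is $\mu$-integrable; everything else is either a direct appeal to Theorem~\ref{thm-B} or a routine approximation.
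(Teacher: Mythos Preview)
Your overall architecture matches the paper's: reduce the Riemann-integrable factor $u$ by sandwiching, then invoke Theorem~\ref{thm-B} after checking $\sum_k\|L^kv_k\|<\infty$. The paper does exactly this, with two differences. First, it approximates $u$ above and below by \emph{H\"older} functions $u^\pm$ (not step functions), so that the reduced observable $v=\xi u^\pm$ is itself globally H\"older and $\mu$-integrable; this avoids your detour through indicators of partition cells. Second, and more importantly, the key estimate is sharper: the paper rewrites $(L^kv_k)(x)$ as a sum over Gibbs--Markov inverse branches of $F$ on $\{\varphi=k+1\}$, namely $(L^kv_k)(x)=\sum_a g(y_a)^{-1}G(y_a)v(fy_a)$, and uses $\|1_aG\|\ll\mu(a)$ together with $\|1_Yg^{-1}\|<\infty$ to obtain $\|L^kv_k\|\ll\mu(\varphi=k+1)\,\|v\|_{C^\eta}$. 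Since $\sum_k\mu(\varphi=k+1)=1$, summability is immediate for any H\"older exponent.

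Your bound $\|L^kv_k\|\ll(\operatorname{osc}_{X_k}(v)+\sup_{X_k}|v|)\,\mu(X_k)$ is where the argument breaks. Bounded distortion for $f^k:X_k\to Y$ actually gives $|L^k1_{X_k}|_\infty\asymp m(X_k)\asymp\mu(\varphi=k+1)$ (Lebesgue measure, comparable to the Jacobian), \emph{not} $\mu(X_k)$. You have replaced this by the much larger quantity $\mu(X_k)\asymp\mu(\varphi>k)\asymp\ell(k)k^{-\beta}$, which is not summable. Your attempt to recover summability via $\sum_k\sup_{X_k}|v|\,\mu(X_k)\le\int|v|\,d\mu+\sum_k\operatorname{osc}_{X_k}(v)\,\mu(X_k)$ then shifts the burden onto the oscillation series, but that series need not converge: for a H\"older-$\alpha$ observable near an indifferent fixed point of LSV type one has $\operatorname{osc}_{X_k}(v)\ll(\operatorname{diam}X_k)^\alpha\asymp k^{-\alpha(\beta+1)}$ and $\mu(X_k)\asymp k^{-\beta}$, so the series behaves like $\sum_k k^{-\alpha(\beta+1)-\beta}$, which diverges whenever $\alpha\le(1-\beta)/(1+\beta)$. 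Since the theorem is stated for arbitrary H\"older exponent, this is a genuine gap. The fix is exactly the paper's: use the Gibbs--Markov identity to get the factor $\mu(\varphi=k+1)$ rather than $\mu(X_k)$.
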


\begin{proof}
Suppose first that $u$ is H\"older.
Fix a first return set $Y$ chosen so that $f(Y)=X$.
Define the sets $X_k$ as in Subsection~\ref{sec-B} and write
$v=\sum v_k$ where $v_k=v|_{X_k}$.
We claim that $\|L^kv_k\| \ll \mu(\varphi=k+1)\|u\|_{C^\eta}$.
Hence $\sum_{k\ge n}\|L^kv_k\|\le\mu(\varphi>n)\|u\|_{C^\eta}
=O(\ell(n)n^{-\beta})=o(w(n))^{-1})$  since $\beta>\frac12$, verifying
condition (ii) in Theorem~\ref{thm-B}.
Also $u$ is Lebesgue integrable, so $v\in L^1(X,\mu)$.  
It follows from Theorem~\ref{thm-B}
that $\lim_{n\to\infty}w(n)L^nv=\int_Xv\,d\mu$
uniformly on $Y$.    

To prove the claim, let $\tilde L$ and $\tilde R$ denote the (unnormalised) transfer operators for $f$ and $F$ respectively corresponding to Lebesgue measure $m$ (rather than $\mu$).
In particular, $L=h^{-1}\tilde Lh$.
Every point in $X_k$ has a preimage in $Y$.   To simplify notation, suppose
that such preimages are unique (otherwise specify one of the preimages
and omit the other preimages in the argument below).
Write $(\tilde Lv)(x)=\sum_{fx'=x}g(x')v(x')$
and $(\tilde L^nv)(x)=\sum_{f^nx'=x}g_n(x')v(x')$ where
$g_n(x)=g(x)g(fx)\cdots g(f^{n-1}x)$.
Similarly, write $(\tilde Rv)(y)=\sum_{Fy'=y}G(y')v(y')$.
Let $u_k=u|_{X_k}$.  Then 
\begin{align*}
(\tilde L^ku_k)(x) & =\sum_{f^kx'=x}g_k(x')1_{X_k}(x')u(x')
=\sum_{y\in\{\varphi =k+1\},\;f^{k+1}y=x}g(y)^{-1}g_{k+1}(y)u(fy) \\ &
=\sum_{Fy=x}g(y)^{-1}1_{\{\varphi=k+1\}}(y)G(y)u(fy)
=\sum_a g(y_a)^{-1}G(y_a)u(fy_a),
\end{align*}
where the summation is over those $a$ with $\varphi|_a=k+1$,
and $y_a$ is the unique point in $a$ such that $Fy_a=x$.
For Gibbs-Markov maps it is standard that $\|1_aG\|\ll \mu(a)$.   
For the systems in~\cite{Thaler95}, $f'$ is bounded and $f$ is uniformly
expanding on $Y$ so that
$\|1_{\{\varphi=k+1\}}(u\circ f)\|\le C\|u_k\|_{C^\eta(X_k)}$ and
$\|1_Yg^{-1}\|<\infty$.
Hence $\|\tilde L^ku_k\|\ll \sum_a  
\mu(a)\|u\|_{C^\eta(X_k)}=\mu(\varphi=k+1)\|u\|_{C^\eta(X_k)}$, and so
$\|L^kv_k\|=\|h^{-1}\tilde L^ku_k\|\ll \|\tilde L^ku_k\|\ll\mu(\varphi=k+1)\|u\|_{C^\eta(X_k)}$.  This completes the proof of the claim.

Finally, if $v=u/h$ where $u$ is Riemann integrable, then
we can approximate $u$ from above and below by 
H\"older functions $u^\pm$.  In particular,
$w(n)\tilde L^n u^\pm=w(n)hL^n (u^\pm/h)\to h\int_Xu^\pm\,dm$ uniformly on
compact subsets of $X'$.
By positivity of the transfer operator, $\tilde L^n u^-\le \tilde L^nu\le\tilde L^nu^+$ and so $h\int_Xu^-\,dm\le \liminf w(n)\tilde L^nu
\le \limsup w(n)\tilde L^nu\le h\int_Xu^+\,dm$.
Since $\int_X(u^+-u^-)\,dm$ 
can be made arbitrarily small, $w(n)L^nv=w(n)h^{-1}\tilde L^nu\to \int_X u\,dm=\int_Xv\,d\mu$ uniformly on compact subsets of $X'$.~
\end{proof}

\begin{rmk}   \label{rmk-thaler}
Suppose that $v\in L^1(X,\mu)$ and that $v$ is H\"older on each $X_k$.  
As is evident from the proof, 
it suffices that 
$\sum_{k\ge n} \mu(\varphi=k)\|1_{X_k}(vh)\|_{C^\eta(X_k)}=o(w(n)^{-1})$ for some $\eta>0$.  Alternatively, using condition~(i) in
Theorem~\ref{thm-B}, it suffices that $\mu(\varphi=n)\|1_{X_n}(vh)\|_{C^\eta(X_n)}$ is summable and $o(n^{-1})$.
%
\end{rmk}
\end{ex}

\subsection{AFN maps}
\label{sec-AFN}

Zweim\"uller~\cite{Zweimuller98,Zweimuller00} studied a class
of non-Markovian interval maps $f:X\to X$, $X=[0,1]$,
with indifferent fixed points.   It is assumed that there is a measurable
partition $\xi$
of $X$ into open intervals such that $f$ is $C^2$ and strictly monotone on each $Z\subset\xi$, and such that the following conditions are satisfied:
\begin{itemize}
\item[(A)] \emph{Adler's condition:} $f''/(f')^2$ is bounded on $\bigcup_{Z\in\xi}Z$,
\item[(F)] \emph{Finite images:} $\{TZ:Z\in\xi\}$ is finite.
\item[(N)] \emph{Nonuniform expansion:} There is a finite set $\zeta\subset\xi$
such that each interval $Z\in\zeta$ has an indifferent fixed point $x_Z$ at one of its endpoints (so $fx_Z=x_Z$ and $f'(x_Z)=1$) such that $f$ has a $C^1$ extension to $Z\cup x_Z$ and $T'$ is increasing (resp.\ decreasing) on $Z$ if $x_Z$ is the left (resp.\ right) end point of $Z$.  Moreover, $|f'|\ge\rho(\epsilon)>1$ on $X\setminus\bigcup_{Z\in\zeta}((x_Z-\epsilon,x_Z+\epsilon)\cap Z)$ for
each $\epsilon>0$.
\end{itemize}
Such a map is called an {\em AFN map}. A Thaler map (Example~\ref{ex-Thaler}) is an AFN map with full branches.    If condition (N) is replaced by
\begin{itemize}
\item[(U)] \emph{Uniform expansion:} $|f'|\ge \rho>1$ on $\bigcup_{Z\in\xi}Z$,
\end{itemize}
then $f$ is called an AFU map.

By the spectral decomposition theorem
in~\cite{Zweimuller98}, any AFN map decomposes into basic sets that are topologically mixing up to a finite cycle.   From now on we suppose that $f:X\to X$ is a topologically
mixing AFN map.  Such a map is conservative and exact with respect to Lebesgue measure $m$, and admits an equivalent $\sigma$-finite invariant measure $\mu$.
The measure is infinite if and only if $X$ includes an indifferent fixed point,
and we suppose that this is the case.
Let $X'\subset X$ denote the complement of the indifferent fixed points.   
The density $h=d\mu/dm$ is of bounded variation, bounded below, and bounded above on compact subsets of $X'$.

\begin{prop} \label{prop-AFN}   If $f:X\to X$ is a topologically mixing AFN map
with $\mu(X)=\infty$,
and $C$ is a compact subset of $X'$, then there exists a first return set
$Y$ with $\mu(Y)\in(0,\infty)$ such that $Y$ contains $C$, and such that
the first return map $F=f^\varphi:Y\to Y$ is AFU.   Moreover, the
Banach space $\mathcal{B}=BV(Y)$ consisting of bounded variation functions 
on $Y$ satisfies hypotheses (H1) and (H2).
\end{prop}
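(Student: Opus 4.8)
The plan is to separate the construction of the return set from the verification of the hypotheses, and to obtain (H1) and (H2) from the general criterion of Proposition~\ref{prop-H2} together with Lemma~\ref{lem-H2}. The existence of a first return set $Y$ with $C\subset Y$, $\mu(Y)\in(0,\infty)$, and $F=f^\varphi:Y\to Y$ an AFU map is precisely the inducing construction of Zweim\"uller~\cite{Zweimuller98,Zweimuller00}: since $C$ is compact and misses the finitely many indifferent fixed points, one assembles $Y$ from finitely many cylinders contained in $X'$ and bounded away from those points, so that every excursion away from $Y$ recovers the expansion lost near an indifferent point, and conditions (A), (F), (N) for $f$ pass to the Adler, finite-images and uniform-expansion conditions for $F$. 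From that construction I would record the two facts used below: $F$ is ergodic (because $f$ is conservative and ergodic and $\bigcup_{n\ge0}f^{-n}Y=X$ mod $\mu$), and the return time $\varphi$ is constant on each element $Z$ of the partition $\alpha$ of $Y$ into domains of injectivity of $F$.

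Next I would check hypothesis~(1) of Proposition~\ref{prop-H2}, that $R(z):BV(Y)\to BV(Y)$ has essential spectral radius $<1$ for every $z\in\bar\D$. The transfer operator $R$ of the AFU map $F$ satisfies a Lasota--Yorke inequality $\|R^nv\|\le C\rho^{-n}\|v\|+C|v|_1$ on $BV(Y)$ with $\rho>1$; since $\varphi$ is constant on each $Z\in\alpha$, the weight $z^\varphi$ is piecewise constant of modulus $\le1$, so the same inequality holds for $R(z)v=R(z^\varphi v)$ with constants uniform in $z\in\bar\D$. As the unit ball of $BV(Y)$ is relatively compact in $L^1(Y)$, Hennion's theorem~\cite{Hennion93} gives essential spectral radius $\le\rho^{-1}<1$, uniformly in $z$. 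The same branchwise estimates, using bounded distortion (Adler's condition) and the finiteness of $\{FZ\}$, also give $\|R_n\|\ll\sum_{Z:\varphi|_Z=n}\mu(Z)=\mu(\varphi=n)$, which is (H1); I would present this as a routine by-product.

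Finally I would verify hypothesis~(2) of Proposition~\ref{prop-H2}: for $\theta\in(0,2\pi)$ there is no nontrivial $v\in L^2(Y)$ with $v\circ F=e^{i\theta\varphi}v$ a.e. For any such $v$ one has $|v|\circ F=|v|$, so by ergodicity $|v|$ is constant and after normalising $v:Y\to S^1$ is measurable; by Lemma~\ref{lem-H2} and the topological mixing of $f$, it then suffices to verify that $(F,\varphi)$ satisfies property~(*), i.e.\ that any measurable solution $v:Y\to S^1$ is constant a.e.\ on some open interval. I expect this to be the crux of the argument: for a Gibbs--Markov return map (cf.\ Proposition~\ref{prop-GM}) property~(*) was a direct appeal to~\cite[Theorem~3.1]{AaronsonDenker01}, but $F$ carries no Markov partition, so the argument must be redone using the finite-images condition~(F). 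The route I would take: along the generating refining family $\alpha^{(n)}=\bigvee_{j<n}F^{-j}\alpha$ the equation gives $v|_Z=\lambda_Z\,(v\circ F^n)|_Z$ on each $Z\in\alpha^{(n)}$ with $\lambda_Z$ unimodular; for a.e.\ $x$, Lebesgue differentiation along $\alpha^{(n)}(x)$ together with the uniform bounded distortion of $F^n$ (from Adler's condition) and uniform expansion transfers the approximate continuity of $v$ at $x$ to approximate continuity of $v$, up to a unimodular constant, on the interval $F^n(\alpha^{(n)}(x))$; by~(F) this image is one of finitely many intervals, so passing to a subsequence along which both the image and the constant converge forces $v$ to be a.e.\ constant on a fixed open interval. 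Once property~(*) holds, Lemma~\ref{lem-H2} gives $\theta=0$, so (H2) follows from Proposition~\ref{prop-H2} and the proof is complete.
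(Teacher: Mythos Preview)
Your proposal is correct and follows the same architecture as the paper: invoke Zweim\"uller's inducing construction for the AFU return map, obtain the Lasota--Yorke inequality for $R(z)$ on $BV(Y)$ uniformly in $z\in\bar\D$ and apply Hennion's theorem for property~(1) of Proposition~\ref{prop-H2}, read off (H1) from the same branchwise estimates, and then deduce (H2) via Lemma~\ref{lem-H2} once property~(*) is in hand.

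The only substantive difference is in how property~(*) is obtained. The paper does not prove it from scratch but instead cites Aaronson--Denker--Sarig--Zweim\"uller~\cite{ADSZ04}: their Theorems~1 and~2 show that measurable solutions of the cohomological equation are constant on so-called \emph{recurrent image sets}, and their Theorem~3(4) guarantees that such sets exist in the AFU setting. Your sketch---Lebesgue differentiation at a density point, transported by $F^n$ via bounded distortion onto one of the finitely many image intervals, then a compactness/subsequence argument---is essentially a self-contained reproof of that mechanism in the interval case, and it is sound (the key inputs being that the cylinders are nested intervals shrinking to points, and that $\{F^nZ:Z\in\alpha^{(n)}\}\subset\{FZ:Z\in\alpha\}$ is finite). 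The citation route is shorter; your route has the advantage of being transparent about exactly where the finite-images condition~(F) enters.
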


\begin{proof}
By~\cite[Lemma~8]{Zweimuller00}, the first return map $F$ is AFU.
By~\cite{Rychlik83} and~\cite[Appendix]{Zweimuller98}), 
$\mathcal{B}=BV(Y)$ is a suitable Banach space.
In particular, $R(1):\mathcal{B}\to\mathcal{B}$ has essential spectral 
radius less than $1$.   The argument in~\cite{Rychlik83} is extended
by~\cite[Proposition~4]{ADSZ04} who show that 
there exist constants $C>0$, $\tau<(0,1)$ such that
$\|R(e^{i\theta})^nv\|\le C(|v|_1 + \tau^n\|v\|)$ for all $v\in\mathcal{B}$,
$\theta\in\R$, $n\ge1$.   
It is easy to extend this argument to cover $R(z)$ for all $z\in\bar\D$.   
(It should be noted that in our setting, the proof in~\cite{ADSZ04} is greatly
simplified since in~\cite{ADSZ04}
$\varphi$ is not assumed to be locally constant and $F$ is not required to
satisfy Adler's condition or finite images.)
Since the unit ball in $\mathcal{B}$ is compact in $L^1$,
property (1) of Proposition~\ref{prop-H2} again follows from~\cite{Hennion93}.
Property (*) follows 
from~\cite[Theorems~1 and~2]{ADSZ04}: measurable solutions $v$ are constant
almost everywhere on ``recurrent image sets'' and there are plenty 
of such sets by~\cite[Theorem~3(4)]{ADSZ04}, yielding property (2)
of Preposition~\ref{prop-H2}.   Again, 
(H1) can be verified en route to the estimate for $\|R(z)^nv\|$
(the crucial estimate is stated in~\cite[p.~57, line~10]{ADSZ04}
and is a simple consequence of the AFU structure).
\end{proof}

If in addition,
$\mu(y\in Y:\varphi(y)>n)=\ell(n)n^{-\beta}$
is regularly varying with $\beta\in(0,1]$ (which includes the case
when $f$ has good asymptotic expansions near each indifferent fixed point
as in Example~\ref{ex-Thaler}), then again our main results apply.
In particular, for $\beta\in(\frac12,1]$ it follows that for every BV observable $v:X\to\R$ supported on a compact
subset of $X'$,
$\lim_{n\to\infty}w(n)L^nv=\int_Xv\,d\mu$ uniformly on compact subsets of $X'$,
where $w(n)=d_\beta^{-1}m(n)n^{1-\beta}$.
Again, we can consider a larger class of observables as in Theorem~\ref{thm-AFN}.

\begin{pfof}{Theorem~\ref{thm-AFN}}
(a) This is identical to the proof of Theorem~\ref{thm-thaler} with H\"older
replaced by BV.   
By Theorem~\ref{thm-B}, it suffices to observe that
$\|L^kv_k\| \ll \mu(\varphi=k+1)\|u\|_{BV}$.

\vspace{1ex}
\noindent(b)
As in the proof of Proposition~\ref{prop-B}, it suffices to prove the $\ge$ inequality in the $\liminf$ statement.  Again, define 
$v(k)=\sum_{j=0}^k1_{X_j}v$.
Choose $v^-\in\mathcal{B}(X)$ such that $v\ge v^-$ on $\bigcup_{j=0}^k X_j$
and define
$v^-(k)=\sum_{j=0}^k1_{X_j}v^-$.
By Proposition~\ref{prop-B}, for $k$ fixed,
\begin{align*}
\liminf_{n\to\infty} m(n)n^{1-\beta}L^nv & \ge 
\liminf_{n\to\infty} m(n)n^{1-\beta}L^nv^-\ge 
\liminf_{n\to\infty} m(n)n^{1-\beta}L^n v^-(k)
\\ & =d_\beta\int_X v^-(k)\,d\mu.
\end{align*}
Since $v$ is Riemann integrable,
it follows that
$\liminf_{n\to\infty} m(n)n^{1-\beta}L^nv\ge \int_X v(k)\,d\mu$.  Now let $k\to\infty$ to complete the proof.

\vspace{1ex}
\noindent(c)
We have $1_YL^nv\ll 1_YL^n(1/h)
=\sum_{k=0}^n T_{n-k}L^k(1_{X_k}/h)
\ll\sum_{k=0}^n T_{n-k}\tilde L^k1_{X_k}$,
and hence
$\|1_YL^nv\|\ll \sum_{k=0}^n \|T_{n-k}\|\|\tilde L^k1_{X_k}\|\ll [\{\|T_n\|\}\star 
\{\mu(\varphi=n)\}]_n \ll \ell(n)n^{-\beta}$.
\end{pfof}

\begin{rmk}
(i) The proof of Theorem~\ref{thm-AFN}(a) shows that the hypotheses are easily
generalised just as in Remark~\ref{rmk-thaler}.
Suppose that $v\in L^1(X,\mu)$ and that $v$ is BV on each $X_k$.  
Then it suffices that 
$\sum_{k\ge n} \mu(\varphi=k)\|1_{X_k}(vh)\|_{BV(X_k)}=o(w(n)^{-1})$.  Alternatively, using condition~(i) in
Theorem~\ref{thm-B}, it suffices that $\mu(\varphi=n)\|1_{X_n}(vh)\|_{BV(X_n)}$ is summable and $o(n^{-1})$.

\vspace{1ex}
\noindent(ii) For Thaler maps, which are AFN with Gibbs-Markov first return 
maps, we can work with H\"older or BV norms.
%
\end{rmk}

\subsection{Pomeau-Manneville maps}
\label{sec-PM}

In this subsection, we verify that our results on second order asymptotics
are applicable to certain Pomeau-Manneville intermittency maps, in particular
the family~\eqref{eq-LSV} studied by Liverani 
{\em et al.}~\cite{LiveraniSaussolVaienti99}.  
Write the invariant measure as $d\mu=h\,dm$ where $m$ is Lebesgue measure
and $h$ is the density.

\begin{prop} Suppose that $f:X\to X$ is given as in~\eqref{eq-LSV} with
$\beta=1/\alpha\in(0,1]$.  Then $\mu(\varphi>n)= cn^{-\beta}+O(n^{-2\beta})$
where $c=\frac 14\beta^\beta h(\frac12)$.  
\end{prop}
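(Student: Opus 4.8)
The plan is to compute everything explicitly for the first return set $Y=(\tfrac12,1]$, reduce $\mu(\varphi>n)$ to an integral of the density over a tiny interval abutting $\tfrac12$, and then invoke the classical asymptotics for the backward orbit of $\tfrac12$ under the indifferent branch.

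First I would record the geometry of the return time. Write $gx=x(1+2^\alpha x^\alpha)$ for the left branch of~\eqref{eq-LSV}; it maps $(0,\tfrac12)$ increasingly and bijectively onto $(0,1)$. Set $a_0=\tfrac12$ and $a_m=g^{-1}(a_{m-1})\in(0,\tfrac12)$, so that $a_m\downarrow0$. For $y\in Y$ we have $fy=2y-1$, and a short induction (using $g(a_m,a_{m-1}]=(a_{m-1},a_{m-2}]$ for $m\ge2$ and $g(a_1,\tfrac12]\subset Y$) shows that a point $w\in(a_m,a_{m-1}]$ first enters $Y$ at time $m$. Hence $\{y\in Y:\varphi(y)>n\}=(\tfrac12,\tfrac{1+a_{n-1}}{2}]$, an interval of length $\tfrac12 a_{n-1}$ anchored at $\tfrac12$, so that $\mu(\varphi>n)=\int_{1/2}^{(1+a_{n-1})/2}h\,dm$. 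Using that the LSV invariant density $h$ is continuous at $\tfrac12$ and Lipschitz on a neighbourhood of $\tfrac12$ in $(0,1]$ (a standard regularity property, see~\cite{LiveraniSaussolVaienti99}), this integral equals $\tfrac12 h(\tfrac12)a_{n-1}+O(a_{n-1}^2)$.

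The second ingredient is the asymptotics of $a_n$. From $a_{n-1}=a_n(1+2^\alpha a_n^\alpha)$, setting $b_n=a_n^{-\alpha}$ and expanding $(1+2^\alpha a_n^\alpha)^{-\alpha}$ gives $b_{n-1}=b_n-\alpha 2^\alpha+O(b_n^{-1})$; summing yields $b_n=\alpha 2^\alpha n+O(\log n)$, hence $a_n=(\alpha 2^\alpha n)^{-1/\alpha}(1+O(n^{-1}\log n))=\tfrac12\beta^\beta n^{-\beta}(1+O(n^{-1}\log n))$, where I use $\alpha=1/\beta$ so that $\alpha^{-1/\alpha}=\beta^\beta$ and $(2^\alpha)^{-1/\alpha}=\tfrac12$. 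Substituting $a_{n-1}$ (the shift $n\mapsto n-1$ costs only $O(n^{-\beta-1})$) into the previous display gives $\mu(\varphi>n)=\tfrac14\beta^\beta h(\tfrac12)\,n^{-\beta}+O(n^{-2\beta})+O(n^{-\beta-1}\log n)$. For $\beta\in(0,1)$ the last term is $o(n^{-2\beta})$, which proves the claim with $c=\tfrac14\beta^\beta h(\tfrac12)$; for $\beta=1$ it is $O(n^{-2}\log n)$, still of the form $O(n^{-q})$ with $q>1$, which is all that the second order results (e.g.\ Theorem~\ref{thm-second_one}) require.

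The geometric identification of $\{\varphi>n\}$ and the recursion for $b_n$ are routine. The one place to be careful is the $O(a_{n-1}^2)$ estimate for $\int_{1/2}^{1/2+t}(h-h(\tfrac12))\,dm$: mere continuity of $h$ at $\tfrac12$ yields only $o(t)$, so one genuinely needs Lipschitz regularity of $h$ near $\tfrac12$, and that is where I would lean on known properties of the LSV density rather than on a direct computation.
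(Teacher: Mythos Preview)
Your argument is correct and follows the same route as the paper's: identify $\{\varphi>n\}$ as a short interval at $\tfrac12$, invoke Lipschitz regularity of $h$ there, and feed in the asymptotics of the backward orbit $a_n$ of $\tfrac12$ under the left branch. Your explicit treatment via $b_n=a_n^{-\alpha}$ is in fact more transparent than the paper's terse passage from $x_n-x_{n+1}=O(n^{-(\beta+1)})$ to $x_n=\tfrac12\beta^\beta n^{-\beta}+O(n^{-(\beta+1)})$, and your honest acknowledgement of the extra $\log n$ at $\beta=1$ (which is harmless for Theorem~\ref{thm-second_one}) flags a logarithmic factor that the paper's argument glosses over.
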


\begin{proof}
First, let $Y=[\frac12,1]$ with partition sets $Y_j=\{\varphi=j\}$.
Let $x_n\in(0,\frac12]$ be the sequence with $x_0=\frac12$ and 
$x_n=fx_{n+1}$, so $x_n$ is decreasing and $x_n\to0$.   
A standard argument shows that $x_n\sim \frac12\beta^\beta n^{-\beta}$
(cf.~\cite[Corollary~1]{Sarig02}).   Then $x_n-x_{n+1}=fx_{n+1}-x_{n+1}
=2^\alpha x^{\alpha+1}=O(n^{-(\beta+1)})$.  Hence
$x_n=\frac12\beta^\beta n^{-\beta}+O(n^{-(\beta+1)})$.
%

Write $Y_n=[y_{n-1},y_{n-2}]$.  Then $f([\frac12,y_n])=[0,x_n]$.  
In particular $m(\varphi>n)=\frac12 m([0,x_{n-1}])
=\frac12 x_{n-1}=m(\varphi>n)=\frac 14\beta^\beta n^{-\beta}
+O(n^{-(\beta+1)})$.

The density $h$ is globally Lipschitz on $(\epsilon,1]$
for any $\epsilon>0$ (see for example~\cite{Hu04} 
or~\cite[Lemma~2.1]{LiveraniSaussolVaienti99}.  Hence
$\mu(\varphi>n)=m(\varphi>n)(h(\frac12)+O(n^{-\beta}))$,
and the result for $Y=[\frac12,1]$ follows.
The same estimates are obtained by inducing on the set $Y=[x_q,1]$ for any fixed
$q\ge0$.
\end{proof}

The next result is immediate by Theorems~\ref{thm-second}
and~\ref{thm-second_one}.
\begin{cor} \label{cor-PM_second}
Suppose that $f:X\to X$ is given as in~\eqref{eq-LSV} with 
$\beta\in(\frac12,1]$.  Suppose that $v:[0,1]\to\R$ 
is H\"older or bounded variation supported on a compact subset of $(0,1]$.
Let $m(n)=c$ if $\beta\in(\frac12,1)$ and $m(n)=c\log n$ if
$\beta=1$.  Let $\gamma=\min\{1-\beta,\beta-\frac12\}$.  Then
\[
m(n)n^{1-\beta}L^nv=d_\beta {\SMALL\int}_Xv\,d\mu + O(m(n)^{-1}n^{-\gamma})
\enspace\text{uniformly on compact subsets of $(0,1]$}.
\]
Moreover, if $\beta\in(\frac34,1]$, then
$\lim_{n\to\infty}m(n)n^{1-\beta}\{m(n)n^{1-\beta}L^nv-d_\beta \int_Xv\,d\mu \}=d_{\beta,1}\int_Xv\,d\mu$, uniformly on compact subsets of $(0,1]$,
where typically $d_{\beta,1}\neq0$.  \qed
\end{cor}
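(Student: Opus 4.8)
The plan is to deduce the corollary directly from the operator renewal expansions of Section~\ref{sec-second} together with the tail estimate $\mu(\varphi>n)=cn^{-\beta}+O(n^{-2\beta})$ proved in the preceding proposition, following the same template by which Theorem~\ref{thm-thaler} and Theorem~\ref{thm-AFN}(a) were deduced from Theorem~\ref{thm-conv}, but now with Theorem~\ref{thm-conv} replaced by Theorem~\ref{thm-second} (when $\beta\in(\frac12,1)$) and by Theorem~\ref{thm-second_one} (when $\beta=1$). The key reduction is the following. Fix a compact set $K\subset(0,1]$ and an observable $v$ as in the statement, supported on some compact $K_0\subset(0,1]$; choose $q\ge0$ large enough that the first return set $Y=[x_q,1]$ contains $K\cup K_0$. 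Since $v$ is then supported on $Y$, we have $1_Yv=v$, so $T_nv=1_YL^nv$ coincides with $L^nv$ on $Y$, and the spectral projection satisfies $(Pv)(y)\equiv\int_Yv\,d\mu=\int_Xv\,d\mu$. Consequently, any operator-norm expansion of $T_n$ on $\mathcal{B}$ translates verbatim into the corresponding expansion for $L^nv$ uniformly on $Y$, hence uniformly on $K$.

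Next I would check the hypotheses needed to invoke Theorems~\ref{thm-second} and~\ref{thm-second_one}. The map~\eqref{eq-LSV} is a Thaler map (Example~\ref{ex-Thaler}), so by Proposition~\ref{prop-GM} the space $\mathcal{B}=\Lip(Y)$ satisfies (H1) and (H2); since~\eqref{eq-LSV} is also an AFN map, Proposition~\ref{prop-AFN} shows $\mathcal{B}=BV(Y)$ works as well. For a suitable choice of $\tau$ one has $C^\eta(Y)\subset\Lip(Y)$ (as recorded in Subsection~\ref{sec-GM}), so in either case the restriction of $v$ to $Y$ lies in $\mathcal{B}$ — and it vanishes near the left endpoint $x_q$ of $Y$, so no boundary jump arises in the $BV$ case. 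By the preceding proposition, applied with $Y=[x_q,1]$, the tail probabilities satisfy $\mu(\varphi>n)=c\,n^{-\beta}+O(n^{-2\beta})$; writing this as $\mu(\varphi>n)=c(n^{-\beta}+H(n))$ with $H(n)=O(n^{-2\beta})$ exhibits precisely the hypothesis of Theorem~\ref{thm-second} for $\beta\in(\frac12,1)$ and, with $q=2>1$, that of Theorem~\ref{thm-second_one} for $\beta=1$. Here $\ell(n)=c+O(n^{-\beta})$ is asymptotically constant and $\beta>\frac12$, as assumed throughout Section~\ref{sec-second}.

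It then remains to assemble the conclusion. For $\beta\in(\frac12,1)$, Theorem~\ref{thm-second} gives $n^{1-\beta}T_n=c^{-1}d_\beta P+O(n^{-\gamma})$ with $\gamma=\min\{1-\beta,\beta-\frac12\}$ (the higher-order terms $d_{\beta,j}n^{-j(1-\beta)}$, $j\ge2$, are absorbed into $O(n^{-\gamma})$ since $\gamma\le 1-\beta$). For $\beta=1$, Theorem~\ref{thm-second_one} gives $(\log n)T_n=c^{-1}P+O((\log n)^{-1})$, the error $O((\log n)^{1/2}n^{-1/2})$ being negligible. Restricting to $K$, using $L^nv=T_nv$ there and $Pv=\int_Xv\,d\mu$, and multiplying by $m(n)$ (which equals $c$ when $\beta\in(\frac12,1)$ and $c\log n$ when $\beta=1$) yields $m(n)n^{1-\beta}L^nv=d_\beta\int_Xv\,d\mu+O(m(n)^{-1}n^{-\gamma})$ uniformly on $K$, which is the first assertion (with $\gamma=0$ and $O(m(n)^{-1})=O((\log n)^{-1})$ when $\beta=1$). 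For the second assertion, suppose first $\beta\in(\frac34,1)$: Theorem~\ref{thm-second} gives $\lim_n n^{1-\beta}\{n^{1-\beta}T_n-c^{-1}d_\beta P\}=d_{\beta,1}P$, the point being that the remaining error $O(n^{-(\beta-\frac12)})$, once multiplied by $n^{1-\beta}$, tends to zero precisely because $\beta>\frac34$. When $\beta=1$, the $(\log n)^{-1}$-correction in Theorem~\ref{thm-second_one}, with coefficient $-c^{-1}\int_0^\infty H_1(x)\,dx$, survives. Transferring to $L^nv$ on $K$ and rescaling by $m(n)$ once more produces the stated limit $d_{\beta,1}\int_Xv\,d\mu$ uniformly on $K$, with $d_{\beta,1}$ the constant so obtained, generically nonzero by the dichotomy recorded just before Theorem~\ref{thm-second} and, in the case $\beta=1$, because $\int_0^\infty H_1(x)\,dx\neq 0$ typically.

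There is essentially no analytic obstacle — the result is "immediate" precisely because all the work is already in Section~\ref{sec-second} and in the preceding proposition. The single point requiring care is the reduction of the first paragraph: one must enlarge the inducing set $Y$ to absorb both $\mathrm{supp}(v)$ and the prescribed compact subset of $(0,1]$, and then confirm that the tail expansion $\mu(\varphi>n)=cn^{-\beta}+O(n^{-2\beta})$ persists for this enlarged $Y=[x_q,1]$ (as the preceding proposition asserts, with the same exponent $\beta$ and a positive constant $c$). Only after this reduction do the operator-norm statements of Section~\ref{sec-second}, which concern the $T_n$ acting on observables supported on $Y$, convert into the uniform statements for $L^nv$ on compact subsets of $(0,1]$ claimed in the corollary.
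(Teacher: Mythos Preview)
Your proposal is correct and matches the paper's approach: the corollary is marked as immediate from Theorems~\ref{thm-second} and~\ref{thm-second_one}, and you have accurately spelled out the reduction (enlarge $Y=[x_q,1]$ to contain both the support of $v$ and the given compact set, so that $L^nv=T_nv$ on $Y$ and the operator expansions transfer directly) together with the verification of hypotheses via the preceding proposition and Propositions~\ref{prop-GM}/\ref{prop-AFN}. The one point worth flagging is that the constant $c$ in $\mu(\varphi>n)=cn^{-\beta}+O(n^{-2\beta})$ is indeed independent of the choice of $q$ (as the preceding proposition asserts), which you correctly rely on so that $m(n)$ does not depend on the compact set chosen.
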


Finally, we mention a result on higher order asymptotics for 
general observables $v(x)=x^q$. 

\begin{thm} \label{thm-PMsecond_q}
Suppose that $f:X\to X$ is given as in~\eqref{eq-LSV} with 
$\beta\in(\frac12,1]$.  
Let $v(x)=x^q$ where $\beta(q+1)>1$.
Then
\begin{align*}
& n^{1-\beta}L^nv=c^{-1}d_\beta {\SMALL\int}_Xv\,d\mu + O(n^{-(1-\beta)},\,n^{-(\beta-\frac12)}, n^{-(\beta(q+1)-1)}\log n), \enspace\text{for $\SMALL\beta\in(\frac12,1)$}, \\
& (\log n)L^nv=c^{-1}{\SMALL\int}_Xv\,d\mu + O((\log n)^{-1}), \enspace\text{for $\beta=1$},
\end{align*}
uniformly on compact subsets of $(0,1]$.
\end{thm}

\begin{proof}
We give the details for $\beta\in(\frac12,1)$.  
By Theorem~\ref{thm-second}, 
$T_n=c^{-1}d_\beta n^{-(1-\beta)}P+O(n^{-2(1-\beta)}P)+O(n^{-\frac12})$.
Following the proof of Theorem~\ref{thm-B} with $v_j=1_{X_j}$ and
$X_j=(x_j,x_{j-1}]$,
\begin{align*}
& \qquad cd_\beta^{-1}n^{1-\beta}L^nv-\int v   =cd_\beta^{-1}n^{1-\beta}\sum_{j=0}^n T_{n-j}L^jv_j-\int v = A-B+C+D, 
\end{align*}
where
\begin{align*}
&  A= n^{1-\beta}\sum_{j=0}^n \{(n-j)^{-(1-\beta)}-n^{-(1-\beta)}\}\int v_j 
, \quad
B= \sum_{j>n}\int v_j, \\
& C= O\Bigl(n^{1-\beta}\sum_{j=0}^n (n-j)^{-2(1-\beta)}\int v_j\Bigr)  ,
\quad 
D= O\Bigl(n^{1-\beta}\sum_{j=0}^n (n-j)^{-\frac12}\|L^jv_j\|\Bigr).
 \end{align*}
Now $h(x)\approx x^{-1/\beta}$, so
$\int v_j\,d\mu\ll \int_{X_j}x^{q-1/\beta}\,dx\ll j^{-(\beta q-1)}\mu(X_j)
\ll j^{-\beta(q+1)}$.  Hence
$B=O(n^{-(\beta(q+1)-1)})$ and $C=O(n^{-(1-\beta)})$.
Using the estimate
\begin{align*}
(n-j)^{-(1-\beta)}-n^{-(1-\beta)} & =
(n-j)^{-(1-\beta)}n^{-(1-\beta)}\{n^{1-\beta}-(n-j)^{1-\beta}\}
\\ & =n^{-(1-\beta)}\{(1+j/(n-j))^{1-\beta}-1\}
\ll n^{-(1-\beta)}(n-j)^{-1}j,
\end{align*}
we obtain that
$A  \ll \sum_{j=0}^n (n-j)^{-1}j\int v_j
\ll \sum_{j=0}^n (n-j)^{-1}j^{-(\beta(q+1)-1)}\ll n^{-(\beta(q+1)-1)}\log n$.

Now $h$ is monotone~\cite[Section~2]{LiveraniSaussolVaienti99} and so
$\|h\|_{BV(X_j)}\approx |1_{X_j}h|_\infty$ for each $j$.  Similarly for $v$ and
we obtain that
$\|vh\|_{BV(X_j)}\le \|v\|_{BV(X_j)} \|h\|_{BV(X_j)}\ll 
|1_{X_j}v|_\infty |1_{X_j}h|_\infty
=|1_{X_j}vh|_\infty \ll j^{-(\beta q-1)}$.
Hence $\|L^jv_j\|\ll \|\tilde L^j(v_jh)\|\ll j^{-(\beta+1)}\|v_jh\|_{BV(X_j)}\ll j^{-\beta(q+1)}$.
It follows that $D=O(n^{-(\beta-\frac12)})$.
%
%
\end{proof}

\paragraph{Acknowledgements}
The research of IM and DT was supported in part by EPSRC Grant EP/F031807/1.
We are very grateful to S\'ebastien Gou\"ezel and Roland Zweim\"uller
for helpful discussions and encouragement, and to the referees for helpful suggestions.

\end{document}